\title{Excursion theory for Brownian motion indexed by the Brownian tree}
\author{C\'eline Abraham, Jean-Fran\c cois Le Gall}
\date{}
\address{D\'epartement de math\'ematiques, Universit\'e Paris-Sud, 91405 ORSAY Cedex, FRANCE}
\email{cline.abraham@gmail.com, jean-francois.legall@math.u-psud.fr}
\keywords{Excursion theory, tree-indexed Brownian motion, continuum random tree, Brownian snake, 
exit measure, continuous-state branching process}
\subjclass[2010]{Primary 60J68, 60J80. Secondary 60J65}
\def\build#1_#2^#3{\mathrel{
\mathop{\kern 0pt#1}\limits_{#2}^{#3}}}
\newtheorem{theorem}{Theorem}
\newtheorem{proposition}[theorem]{Proposition}
\newtheorem{definition}[theorem]{Definition}
\newtheorem{lemma}[theorem]{Lemma}
\newtheorem{corollary}[theorem]{Corollary}
\def\w{\mathrm{w}}
\def\t{\mathcal{T}}
\def\f{\mathcal{F}}
\def\W{\mathcal{W}}
\def\S{\mathcal{S}}
\def\T{\mathbb{T}}
\def\N{\mathbb{N}}
\def\M{\mathbb{M}}
\def\Q{\mathbb{Q}}
\def\D{\mathbb{D}}
\def\P{\mathbb{P}}
\def\E{\mathbb{E}}
\def\R{\mathbb{R}}
\def\z{\mathcal{Z}}
\def\n{\mathcal{N}}
\def\ve{{\varepsilon}}
\def\la{\longrightarrow}
\def\ov{\overline}
\begin{document}

\begin{abstract}
We develop an excursion theory for Brownian motion indexed by the Brownian tree, which 
in many respects is analogous to the classical It\^o theory for linear Brownian motion. 
Each excursion is associated with a connected component of the complement of the zero set of 
the tree-indexed Brownian motion. Each such connected
component is itself a continuous tree, and we introduce a quantity measuring the length of its boundary. 
The collection of boundary lengths coincides with the collection of jumps of a continuous-state branching process
with branching mechanism $\psi(u)=\sqrt{8/3}\,u^{3/2}$. Furthermore, conditionally on the boundary lengths, the different excursions
are independent, and we determine their conditional distribution in terms of an excursion measure $\M_0$ which is the analog
of the It\^o measure of Brownian excursions. We provide various descriptions of the excursion measure $\M_0$, and we also
determine several explicit distributions, such as the joint distribution of the boundary length and the mass of an
excursion under $\M_0$. We use the Brownian snake as a convenient tool for defining and analysing the
excursions of our tree-indexed Brownian motion. 
\end{abstract}

\maketitle

\section{Introduction}

The concept of Brownian motion indexed by a Brownian tree has appeared in various settings in the
last 25 years. The Brownian tree of interest here is the so-called CRT (Brownian Continuum Random Tree)
introduced by Aldous \cite{Al1,Al2}, or more conveniently a scaled version of the CRT with a
random ``total mass''. The CRT is a universal model for a continuous random tree, in the sense
that it appears as the scaling limit of many different classes of discrete random trees
(see in particular \cite{Al2,HM,Stu}), and of other
discrete random structures (see the recent papers \cite{CHK,PSW}). At least informally, the meaning of
Brownian motion indexed by the Brownian tree should be clear: Labels, also called spatial positions, are assigned
to the vertices of the tree, in such a way that the root has label $0$ and labels evolve like linear Brownian motion 
when moving away from the root along a geodesic segment of the tree, and of course the increments of
the labels along  disjoint segments are independent.
Combining the branching structure of the CRT with Brownian displacements led Aldous to introduce the
Integrated Super-Brownian Excursion or ISE \cite{Al3}, which is closely related with the canonical measures
of super-Brownian motion. On the other hand, the desire to get a better understanding of the historical paths of superprocesses
motivated  the definition of the so-called Brownian snake \cite{snake}, 
which is a Markov process taking values in the space of all finite paths. Roughly speaking, the value of the Brownian snake
at time $s$ is the path recording the spatial positions along the ancestral line of 
the vertex visited at the same time $s$ in the contour exploration of the Brownian tree. One may view the Brownian snake as a convenient representation of 
Brownian motion indexed by the Brownian tree, avoiding the technical difficulty of dealing with 
a random process indexed by a random set. 

The preceding concepts have found many applications. The Brownian snake has proved a powerful
tool in the study of sample path properties of super-Brownian motion and of its connections
with semilinear partial differential equations \cite{snakesolutions,livrevert}. ISE, and more generally Brownian motion indexed by the Brownian tree and its variants, also appear
in the scaling limits of various models of statistical mechanics above the critical dimension, 
including lattice trees \cite{DS}, percolation \cite{HaS} or oriented percolation \cite{HoS}. More recently, scaling 
limits of large random planar maps have been described by the so-called Brownian map \cite{Uniqueness,Mie},
which is constructed as a quotient space of the CRT for an equivalence relation defined
in terms of Brownian labels assigned to the vertices of the CRT. 

Our main goal in this work is to show that a very satisfactory excursion theory can be developed 
for Brownian motion indexed by the Brownian tree, or equivalently for the Brownian snake, which in many aspects resembles the classical excursion theory
for linear Brownian motion due to It\^o \cite{Ito}. We also expect the associated excursion measure to be an interesting
probabilistic object, which hopefully will have significant applications in related fields. 

Let us give an informal description of the main results of our study. The underlying Brownian tree that we consider is denoted 
by $\t_\zeta$, for the tree coded by a Brownian excursion $(\zeta_s)_{s\geq 0}$
under the classical It\^o excursion measure (see Section \ref{coding} for more
details about this coding, and note that the It\^o excursion measure is a $\sigma$-finite measure). The tree $\t_\zeta$ may be viewed as a scaled version of the
CRT, for which $(\zeta_s)_{s\geq 0}$ would be a Brownian excursion with duration $1$. This tree is rooted at a particular vertex $\rho$. We write $V_u$ for the 
Brownian label assigned to the vertex $u$ of $\t_\zeta$. As explained above the collection
$(V_u)_{u\in \t_\zeta}$ should be interpreted as Brownian motion indexed by $\t_\zeta$,
starting from $0$ at the root $\rho$.
Similarly as in the case of linear Brownian motion, we may then consider the
connected components of the open set
$$\{u\in \t_\zeta : V_u\not =0\},$$
which we denote by $(\mathcal{C}_i)_{i\in I}$. Of course these connected components are
not intervals as in the classical case, but they are connected subsets of the tree $\t_\zeta$,
and thus subtrees of this tree. One then considers, for each component $\mathcal{C}_i$,
the restriction $(V_u)_{u\in \mathcal{C}_i}$ of the labels to $\mathcal{C}_i$, and  this restriction 
again yields a random process indexed by a continuous random tree, which we call the excursion
${E}_i$. Our main results completely determine the ``law'' of the collection
$(E_i)_{i\in I}$ (we speak about the law of this collection though we are working under 
an infinite measure). A first important ingredient of this description is an infinite excursion measure
$\M_0$, which plays a similar role as the It\^o excursion measure in the classical setting, in the sense
that $\M_0$ describes the distribution of a typical excursion $E_i$ (this is a little informal as 
$\M_0$ is an infinite measure). We can then completely describe  the law of the collection
$(E_i)_{i\in I}$ using the measure $\M_0$ and an independence property
analogous to the classical setting. For this description, we first need to
introduce a quantity $\z_i$, called the exit measure of $E_i$, that measures the size of the boundary of  $\mathcal{C}_i$: Note that in the 
classical setting the boundary of an excursion interval just consists of two points, but here of course the
boundary of $\mathcal{C}_i$ is much more complicated. Furthermore, one can define, for every $z\geq 0$, a
conditional probability measure $\M_0(\cdot \mid \z=z)$ which corresponds to the law of an excursion
conditioned to have boundary size $z$ (this is somehow the analog of the It\^o measure conditioned to have a fixed 
duration in the classical setting). Finally, we introduce a ``local time exit process'' $(\mathcal{X}_t)_{t\geq 0}$ such that,
for every $t>0$, $\mathcal{X}_t$ measures the quantity of vertices $u$ of the tree $\t_\zeta$ with label $0$ and such that
the total accumulated local time at $0$ of the label process along the geodesic segment between $\rho$ and $u$
is equal to $t$. The distribution of $(\mathcal{X}_t)_{t> 0}$ 
is known explicitly and can be interpreted as an excursion measure for
the continuous-state branching process with stable branching mechanism $\psi(\lambda)=\sqrt{8/3}\,\lambda^{3/2}$. 
With all these ingredients at hand, we can complete our description of the distribution of the collection 
of excursions: Excursions $E_i$ are in one-to-one correspondence with jumps of the local time exit process $(\mathcal{X}_t)_{t\geq 0}$,
in such a way that, for every $i\in I$, the boundary length $\z_i$ of $E_i$ is equal to the size $z_i$ of the corresponding jump, and furthermore,
conditionally on the process $(\mathcal{X}_t)_{t\geq 0}$, the excursions $E_i$, $i\in I$ are independent, and, for every
fixed $j$, $E_j$ is distributed according to $\M_0(\cdot\mid \z=z_j)$. There is a striking
analogy with the
classical setting (see e.g. \cite[Chapter XII]{RY}), where excursions of linear Brownian excursion are in one-to-one correspondence with
jumps of the inverse local time process, and the distribution of an excursion corresponding to a jump of size $\ell$
is the It\^o measure conditioned to have duration equal to $\ell$. 

The preceding discussion is somewhat informal, in particular because we did not give a mathematically
precise definition of the excursions $E_i$. It would be possible to view these excursions as 
random elements of the space of all ``spatial trees'' in the terminology of \cite{DLG} (compact $\R$-trees $\t$
equipped with a continuous mapping $\phi:\t\to \R$) but for technical reasons we prefer to use
the Brownian snake approach. We now describe this approach in order to give a more precise formulation
of our results. 
Let $\W$ stand for the set of all finite real paths. Here a finite real path is just a continuous function $\w:[0,\zeta]\to \R$,
where $\zeta=\zeta_{(\w)}\geq 0$ depends on $\w$ and is called the lifetime of $\w$,
and, for every $\w\in \W$, we write $\hat \w=\w(\zeta_{(\w)})$ for the endpoint of $\w$. 
The topology on $\W$ is induced by a distance whose definition is recalled at the beginning of Section \ref{statespace}.
The Brownian snake is the continuous Markov process $(W_s)_{s\geq 0}$ with values in $\W$ whose distribution 
is characterized as follows:
\begin{enumerate}
\item[(i)] The lifetime process $(\zeta_{(W_s)})_{s\geq 0}$ is a reflected Brownian motion on $\R_+$.
\item[(ii)] Conditionally on $(\zeta_{(W_s)})_{s\geq 0}$, $(W_s)_{s\geq 0}$ is time-inhomogeneous Markov,
with transition kernels specified as follows: for $0\leq s<s'$,
\begin{enumerate}
\item[$\bullet$] $W_{s'}(t)=W_s(t)$ for every $0\leq t\leq m(s,s'):=\min\{\zeta_{(W_r)}:s\leq r\leq s'\}$;
\item[$\bullet$] conditionally on $W_s$, $(W_{s'}(m(s,s')+t), 0\leq t\leq \zeta_{(W_{s'})}-m(s,s'))$ is a linear Brownian motion 
started from $W_s(m(s,s'))$, on the time interval $[0,\zeta_{(W_{s'})}-m(s,s')]$.
\end{enumerate}
\end{enumerate}
We will write $\zeta_s=\zeta_{(W_s)}$
to simplify notation. Informally, the value $W_s$ of the Brownian snake at time $s$
is a random path with lifetime $\zeta_s$ evolving like reflected Brownian motion on $\R_+$. When $\zeta_s$ decreases,
the path is erased from its tip, and when $\zeta_s$ increases, the path 
is extended by adding ``little pieces'' of Brownian paths at its tip.

For the sake of simplicity in this introduction, we may and will assume that $(W_s)_{s\geq 0}$
is the canonical process on the space $C(\R_+,\W)$ of all continuous mappings from $\R_+$ into $\W$.
Later, it will be more convenient to defined this process on an adequate canonical space
of ``snake trajectories'', see Section \ref{statespace} below.

The trivial path with initial point $0$ and zero lifetime is a regular recurrent point for the process $(W_s)_{s\geq 0}$, and thus we can introduce the 
associated excursion measure $\N_0$, which is called the Brownian snake excursion measure (from $0$). This is a $\sigma$-finite measure on
the space $C(\R_+,\W)$ -- as mentioned earlier, we will later view $\N_0$ as a measure on the smaller
space of snake trajectories. The measure $\N_0$
can be described via properties analogous to (i) and (ii), with the difference that in (i) the law of
reflecting Brownian motion is replaced by the It\^o measure of positive excursions of linear Brownian motion. 
In particular, under $\N_0$, the tree $\t_\zeta$ coded by $(\zeta_s)_{s\geq 0}$ has 
the distribution prescribed in the informal discussion at the beginning of this introduction -- this distribution is
a $\sigma$-finite measure on the space of trees. Recall that the coding of $\t_\zeta$ involves a canonical projection
$p_\zeta:[0,\sigma]\to \t_\zeta$, where $\sigma=\sup\{s\geq 0:\zeta_s>0\}$
(see \cite[Section 3.2]{LGM} or Section \ref{coding} below). Notice that the definition of $\sigma$, as well as the
definition of the tree $\t_\zeta$, are
relevant under $\N_0$.  Then, the Brownian labels $(V_u)_{u\in \t_\zeta}$ are generated by taking $V_u=\hat W_s$, where
$s\in[0,\sigma]$ is any instant such that $p_\zeta(s)=u$. Furthermore, the whole path $W_s$ records the values of labels
along the geodesic segment from the root $\rho$ to $u$, and we sometimes say that $W_s$ is the historical path of $u$.

From now on, we use the Brownian snake construction and argue under the excursion measure $\N_0$. This construction allows us to give a convenient representation for the excursions $(E_i)_{i\in I}$ discussed
above. We observe that, $\N_0$ a.e., the connected components $(\mathcal{C}_i)_{i\in I}$
of $\{u\in \t_\zeta : V_u\not =0\}$ are in one-to-one correspondence with the (countable) collection $(u_i)_{i\in I}$ of all vertices of $\t_\zeta$ such that
\begin{enumerate}
\item[(a)] $V_u=0$;
\item[(b)] $u$ has a strict descendant $v$ such that labels along the geodesic segment from $u$ to $v$ do not vanish except at $u$. 
\end{enumerate}
The correspondence is made explicit by saying that $\mathcal{C}_i$ consists of all strict descendants $v$ of $u_i$ such that property (b) holds, with $u=u_i$ (it is not hard to verify that, $\N_0$ a.e., no branching point of $\t_\zeta$ can satisfy property (b), and we discard the 
event of zero $\N_0$-measure where this might happen).
Then, for every $i\in I$, there are exactly two times $0<a_i<b_i<\sigma$ such that $p_\zeta(a_i)=p_\zeta(b_i)=u_i$. 
The paths $W_s$ for $s\in[a_i,b_i]$ are the historical paths of the descendants of $u_i$. This leads us to define, for every $s\geq 0$,
a random finite path $W^{(u_i)}_s$, with lifetime $\zeta^{(u_i)}_s=\zeta_{(a_i+s)\wedge b_i}-\zeta_{a_i}$, by setting
$$W^{(u_i)}_s(t)=W_{(a_i+s)\wedge b_i}(\zeta_{a_i}+t)\;,\quad 0\leq t\leq \zeta^{(u_i)}_s.$$
If $0<s<b_i-a_i$, , the path $W^{(u_i)}_s$ starts from $0$ (note that $W^{(u_i)}_s(0)=W_{(a_i+s)\wedge b_i}(\zeta_{a_i})=W_{a_i}(\zeta_{a_i})=V_{u_i}$), and
then stays positive during some time interval $(0,\eta)$, $\eta>0$. Of course if $s=0$ or $s\geq b_i-a_i$, $W^{(u_i)}_s$
is just the trivial path with initial point $0$. 

The endpoints $\hat W^{(u_i)}_s$ of the paths $W^{(u_i)}_s$ correspond to the  labels of all descendants of $u_i$ in $\t_\zeta$. In fact, we are only interested in those descendants of $u_i$ that belong to $\mathcal{C}_i$, and for this reason we introduce the following time change
$$\tilde W^{(u_i)}_s= W^{(u_i)}_{\pi^{(u_i)}_s}$$
where, for every $s\geq 0$,
$$\pi^{(u_i)}_s:=\inf\{r\geq 0:\int_0^r \mathrm{d}t\,\mathbf{1}_{\{\tau^*_0(W^{(u_i)}_t)\geq \zeta^{(u_i)}_t\}}> s\},$$
with the notation $\tau^*_0(\w):=\inf\{t>0:\w(t)=0\}$ for $\w\in\W$. The effect of this time change is to eliminate the paths $W^{(u_i)}_s$
that return to $0$ and survive for a positive amount of time after the return time. 

Then, for every $i\in I$, the collection $(\tilde W^{(u_i)}_s)_{s\geq 0}$, which we view as a random element of
the space $C(\R_+,\W)$,  provides a mathematically precise representation of the excursion $E_i$ -- 
in fact the tree $\mathcal{C}_i$ (or rather its closure in $\t_\zeta$) is just the tree coded by the lifetime process  $(\tilde \zeta^{(u_i)}_s)_{s\geq 0}$ of
 $(\tilde W^{(u_i)}_s)_{s\geq 0}$, and the labels on $\mathcal{C}_i$ correspond in this identification to the endpoints of the paths $\tilde W^{(u_i)}_s$.
 
 In order to state our first theorem, we need one more piece of notation. For every $i\in I$, we let $\ell_i$ be the total local time at $0$
 of the historical path $W_{a_i}$ of $u_i$.
 
 \begin{theorem}
 \label{construct}
 There exists a $\sigma$-finite measure $\M_0$ on $C(\R_+,\W)$ such that, for any nonnegative measurable function 
 $\Phi$ on $\R_+\times C(\R_+,\W)$, we have
 $$\N_0\Bigg( \sum_{i\in I} \Phi(\ell_i,\tilde W^{(u_i)})\Bigg)= \int_0^\infty \mathrm{d}\ell\,\M_0\Big(\Phi(\ell,\cdot)\Big).$$
 \end{theorem}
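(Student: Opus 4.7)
The plan is to use the strong Markov property of the Brownian snake together with a Palm-type decomposition to construct $\M_0$ and derive the identity. By a monotone class argument, it suffices to verify the formula on product test functions $\Phi(\ell,\omega)=f(\ell)g(\omega)$, which reduces the claim to
$$\N_0\Bigg(\sum_{i\in I} f(\ell_i)\,g(\tilde W^{(u_i)})\Bigg) = \Bigg(\int_0^\infty f(\ell)\,\mathrm{d}\ell\Bigg)\,\M_0(g),$$
and thereby identifies $\M_0$ via $g\mapsto$ right-hand side at $f\equiv \mathbf{1}_{[0,1]}$.

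The core of the argument is to show that, conditionally on $\ell_i$, the excursion $\tilde W^{(u_i)}$ has a law independent of both $\ell_i$ and of the past of the snake. The vertex $u_i$ is characterized by a global property of its subtree, so $a_i$ is not a priori a stopping time of the natural filtration $(\f_s)_{s\geq 0}$ of $W$. To handle this, I would fix $\varepsilon>0$ and restrict attention to visits of vertices $u$ with $V_u=0$ possessing a descendant at lifetime distance at least $\varepsilon$ with non-vanishing intermediate labels; these visits define genuine stopping times, and the full collection $\{a_i\}$ is recovered by a countable exhaustion as $\varepsilon\downarrow 0$. The strong Markov property at each such stopping time then implies that, after $a_i$, the snake evolves on the subtree rooted at $u_i$ as a fresh Brownian snake started from the trivial path at $\hat W_{a_i}=0$. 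After composition with the time-change $\pi^{(u_i)}$, which retains only the portion of the subtree making up $\mathcal{C}_i$, the resulting object $\tilde W^{(u_i)}$ has a universal law on $C(\R_+,\W)$, independent of $i$, $\ell_i$, and $\f_{a_i}$; this law is our candidate $\M_0$.

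To extract Lebesgue measure in the $\ell$-coordinate, I would identify $\ell_i$ with the total local time at $0$ of the Brownian path $W_{a_i}$ and invoke the classical occupation-time formula, which along each ancestor path gives local time at $0$ the intensity $\mathrm{d}\ell$; integrating over the tree structure coded by $\zeta$ yields $\N_0(\sum_i f(\ell_i)) = C\int_0^\infty f(\ell)\,\mathrm{d}\ell$, and combining with the previous step gives the claimed decomposition. The $\sigma$-finiteness of $\M_0$ is then checked by exhibiting events of finite $\M_0$-measure, e.g.\ those where the maximal absolute label exceeds a fixed $\delta>0$.

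The main obstacle is the non-stopping-time character of the times $a_i$: since $u_i$ is identified through a global property of its subtree, the rigorous application of the strong Markov property requires the $\varepsilon$-truncation above, followed by a careful verification that the time-change $\pi^{(u_i)}$ is compatible with the approximation and with the limit $\varepsilon\downarrow 0$. Once this framework is in place, both the independence of $\tilde W^{(u_i)}$ from $(\ell_i,\f_{a_i})$ and the identification of the $\ell$-intensity as Lebesgue measure are accessible through standard techniques from Brownian snake theory.
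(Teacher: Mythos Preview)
Your proposal has a genuine gap at its central step. You write that the strong Markov property implies ``the snake evolves on the subtree rooted at $u_i$ as a fresh Brownian snake started from the trivial path at $\hat W_{a_i}=0$.'' But the strong Markov property of the Brownian snake at a stopping time $\tau$ says the future is a Brownian snake started from the \emph{path} $W_\tau$, which here has positive lifetime $\zeta_{a_i}>0$ and is far from trivial. The subtrajectory $W^{(u_i)}$ does start from the trivial path at $0$, but its conditional law is not $\N_0$: its lifetime process is the excursion of $\zeta$ above level $\zeta_{a_i}$, and the requirement that $u_i$ satisfy property (b) is an additional conditioning on the labels that the Markov property does not remove. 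Your $\varepsilon$-truncation does not rescue this: the criterion ``possesses a descendant at lifetime distance at least $\varepsilon$ with nonvanishing intermediate labels'' is itself a condition on the \emph{future} of the snake after $a_i$, so it does not produce a stopping time of $(\f_s)_{s\geq 0}$. The assertion that $\tilde W^{(u_i)}$ has a ``universal law'' independent of $i$ and $\ell_i$ is precisely the content of the theorem and cannot be assumed. Likewise, $\N_0(\sum_i f(\ell_i))=C\int_0^\infty f(\ell)\,\mathrm{d}\ell$ does not follow from the occupation-time formula: the $\ell_i$ form a countable point set, and identifying its intensity as Lebesgue is part of the result, not an input.

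The paper proceeds quite differently. It first treats excursions \emph{above the minimum} (debuts $u$ with $V_u=\min_{v\in\llbracket\rho,u\rrbracket}V_v$) and uses the \emph{special} Markov property for the domains $(-k\varepsilon,\infty)$: conditionally on the snake truncated at level $-k\varepsilon$, the excursions outside form a Poisson measure with intensity $Z_{k\varepsilon}\,\N_{-k\varepsilon}$. Each excursion debut $u$ is approximated by the (shifted, truncated at $0$) excursion outside $(-k\varepsilon,\infty)$ that contains it; a convergence lemma together with a uniform integrability argument (discrete Burkholder--Davis--Gundy inequalities and the moment bound $\N_0(Z_a)=1$) yields the limiting measure $\N_0^*$ and the approximation $\N_0^*(G)=\lim_{\varepsilon\to0}\varepsilon^{-1}\N_\varepsilon(G(\tilde W))$. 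The Lebesgue intensity in the level variable arises from the Riemann sum $\varepsilon\sum_k g(-k\varepsilon)$ after using $\N_0(Z_{k\varepsilon})=1$, not from an occupation-time identity. Theorem~\ref{construct} for excursions away from $0$ is then deduced from this via L\'evy's identity between $(|B|,L^0(B))$ and $(B-\underline B,-\underline B)$, which converts local times $\ell_i$ into levels $-V_u$ and gives $\M_0=\tfrac12(\N_0^*+\check\N_0^*)$.
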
 
 
 The reason for considering a function depending on local times should be clear from the formula of the theorem: if
 $\Phi(\ell,\omega)$ does not depend on $\ell$, the right-hand side will be either $0$ or $\infty$.
 We may write $\M_0$ in the form
 $$\M_0=\frac{1}{2} ( \N^*_0 + \check \N^*_0)$$
 where $\N^*_0$ is supported on positive excursions and $\check \N^*_0$ is the image of $\N^*_0$ under $\omega\mapsto -\omega$. Then,
 for every $\delta>0$, $\N^*_0$
 gives a finite mass to ``excursions'' $\omega$ that hit $\delta$, and more precisely,
 $$\N^*_0(\{\omega: \sup\{\hat W_s(\omega):s\geq 0\} >\delta)= c_0 \delta^{-3}$$
 where $c_0$ is an explicit constant (see Lemma \ref{loi-max}). 
 
 In a way similar to the classical setting, one can give various representations of the measure $\N^*_0$. For $\ve>0$, let
 $\N_\ve$ be the Brownian snake excursion measure from $\ve$ (this is just the image of $\N_0$
 under the shift $\omega\mapsto \ve + \omega$). Consider under $\N_\ve$ the time-changed process $\tilde W$
obtained by removing those paths $W_s$ that hit $0$ and then survive for a positive amount of time (this is analogous
to the time change we used above to define $\tilde W^{(u_i)}$ from $W^{(u_i)}$). Then $\N^*_0$ may
be obtained as the limit when $\ve\to 0$ of $\ve^{-1}$ times the law of $\tilde W$ under $\N_\ve$. See 
Theorem \ref{construct-N} and Corollary \ref{approx-*} for precise statements. This result is analogous
to the classical result saying that the It\^o measure of positive excursions is the limit
(in a suitable sense) of $(2\ve)^{-1}$ times the law of linear Brownian motion started from $\ve$
and stopped upon hitting $0$. 

Similarly, one can give a description of $\N^*_0$ analogous to the well-known Bismut decomposition
for the It\^o measure \cite[Theorem XII.4.7]{RY}. Under $\N^*_0$, pick a vertex of the tree coded by $(\zeta_s)_{s\geq 0}$ according to
the volume measure on this tree, re-root the tree at that vertex and shift all labels so that the label of the new root 
is again $0$. This construction yields a new measure on $C(\R_+,\W)$, which turns out to be
the same (up to a simple density) as the measure obtained by
picking $x\leq 0$ according to
Lebesgue measure on $(-\infty,0)$ and then, under the measure 
$\N_0$ restricted to the event where one of the paths $W_s$ hits $-x$, removing all paths $W_s$ that go below level $x$. See Theorem 
\ref{reenracinement} below for a more precise statement.

We now introduce exit measures under $\M_0$. 

\begin{proposition}
\label{exit-definition}
One can choose a sequence $(\alpha_n)_{n\geq 1}$ of positive reals converging to $0$ so that, $\M_0$ a.e., the limit
$$Z^*_0:=\lim_{n\to\infty} \alpha_n^{-2}\int_0^\infty \mathbf{1}_{\{0<|\hat W_s|<\alpha_n\}}\,\mathrm{d}s$$
exists and defines a positive random variable. Furthermore, this limit does not depend 
on the choice of the sequence $(\alpha_n)_{n\geq 1}$.
\end{proposition}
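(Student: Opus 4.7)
The decomposition $\M_0=\tfrac12(\N^*_0+\check\N^*_0)$, together with the invariance of $|\hat W_s|$ under the map $\omega\mapsto-\omega$ that exchanges $\N^*_0$ and $\check\N^*_0$, reduces the question to the convergence under $\N^*_0$, where $|\hat W_s|=\hat W_s$. Set
$$Z^{(\alpha)}:=\alpha^{-2}\int_0^\infty \mathbf{1}_{\{0<\hat W_s<\alpha\}}\,\mathrm{d}s.$$
Since $\N^*_0$ is only $\sigma$-finite, I would work on the finite-mass events $A_\delta:=\{\sup_s\hat W_s>\delta\}$, for which the introduction already records $\N^*_0(A_\delta)=c_0\delta^{-3}<\infty$, and combine an $L^2$-estimate there with a standard Borel--Cantelli/diagonal extraction.

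The principal step is to prove that for every $\delta>0$,
$$\N^*_0\Bigl(\mathbf{1}_{A_\delta}\bigl(Z^{(\alpha)}-Z^{(\beta)}\bigr)^{2}\Bigr)\longrightarrow 0\qquad\text{as }\alpha,\beta\to0.$$
The natural route is to lift the computation to the measures $\N_\ve$ via the forthcoming Corollary \ref{approx-*}, which identifies $\N^*_0$ as the limit of $\ve^{-1}$ times the law of the time-changed snake under $\N_\ve$. Under $\N_\ve$ the first two moments of $Z^{(\alpha)}$ are accessible via the standard moment formulas for the Brownian snake: one rewrites $\N_\ve(Z^{(\alpha)}Z^{(\beta)})$ as an integral along a Brownian spine, onto which, at a uniformly-chosen branching time, an independent snake trajectory is grafted. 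The normalization $\alpha^{-2}$ is precisely what makes these integrals converge to finite limits as $\alpha,\beta\to0$, and the cancellations between $Z^{(\alpha)}$ and $Z^{(\beta)}$---most transparently obtained by invoking the special Markov property at the first exit time from $(0,\infty)$ to isolate a common contribution near the boundary---yield the desired Cauchy behaviour. Dividing by $\ve$ and letting $\ve\to0$ transfers the estimate to $\N^*_0$.

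Given the $L^2$-Cauchy property on every $A_\delta$, I pick a sequence $\alpha_n\downarrow 0$ decreasing fast enough that
$$\sum_{n\geq 1}\N^*_0\Bigl(\mathbf{1}_{A_{\delta_k}}\bigl(Z^{(\alpha_n)}-Z^{(\alpha_{n+1})}\bigr)^{2}\Bigr)<\infty$$
for an auxiliary sequence $\delta_k\downarrow 0$; Borel--Cantelli then yields almost-sure convergence along $(\alpha_n)$ on the full $\M_0$-measure set $\bigcup_k A_{\delta_k}$. Independence of the limit on the choice of subsequence follows by interleaving any two admissible sequences into a single decreasing one and applying the same argument. Positivity of $Z^*_0$ can be read off the Laplace transform $\N^*_0(\mathbf{1}_{A_\delta}(1-e^{-\lambda Z^*_0}))$, obtained as the $\ve\to 0$ limit of the Laplace transform of the classical exit measure from $(0,\infty)$ under $\N_\ve$; the latter is nontrivial by the familiar connection between the snake and semilinear elliptic equations. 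The main technical obstacle is precisely the $L^2$-control of $Z^{(\alpha)}-Z^{(\beta)}$: one must identify the shared leading term of the snake's two-point function and verify that the remainder vanishes at the correct rate, uniformly as $\ve\to0$, in order to transfer the estimate cleanly through the approximation $\N^*_0=\lim_{\ve\to0}\ve^{-1}\N_\ve(\,\cdot\,)$.
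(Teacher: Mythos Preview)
Your reduction to $\N^*_0$ via $\M_0=\tfrac12(\N^*_0+\check\N^*_0)$ matches the paper (Section~\ref{excupoint}). After that the paper takes a much shorter and genuinely different route, avoiding moment computations entirely.

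The paper's argument (Proposition~\ref{construct-exit}) rests on the re-rooting identity of Theorem~\ref{reenracinement},
\[
\N^*_0\Bigl(\int_0^\sigma G(W^{[r]})\,\mathrm{d}r\Bigr)=2\,\N_0\Bigl(\int_0^\infty Z_b\,G(\mathrm{tr}_{-b}(W))\,\mathrm{d}b\Bigr).
\]
Taking $G(\omega)$ to be the indicator that the limit $\lim_n\beta_n^{-2}\int_0^\sigma\mathbf{1}_{\{\hat W_s<W_*+\beta_n\}}\,\mathrm{d}s$ fails to exist, one notes that $G(\mathrm{tr}_{-b}(W))=0$ holds $\N_0$-a.e.\ for every $b>0$ by Lemma~\ref{approx} and~\eqref{approx-ps}; hence the right side vanishes. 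Since the functional inside $G$ is re-rooting invariant, this gives $G=0$ under $\N^*_0$ directly. The subsequence $(\beta_n)$ was extracted from an arbitrary sequence tending to $0$, so one gets convergence in probability along the full limit, and then the a.e.\ subsequence of the statement. Positivity is obtained afterwards from the explicit density of $(Z^*_0,\sigma)$ in Proposition~\ref{loisZ0sigma}.

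Your $L^2$ strategy is not unreasonable, but in its present form it has a real gap: you propose to transfer second-moment estimates from $\N_\ve$ to $\N^*_0$ through the weak convergence of Corollary~\ref{approx-*}, yet weak convergence says nothing about the unbounded functional $(Z^{(\alpha)}-Z^{(\beta)})^2$ without uniform integrability across the approximating family, which you have not established and which is at least as delicate as the result itself. (The paper confronts a cognate difficulty later, in Proposition~\ref{conv-exit-M}, but only \emph{after} $Z^*_0$ has been defined, and resolves it via a monotone coupling plus an exact Laplace-transform identity rather than a raw moment bound.) The re-rooting approach sidesteps all of this: it transports the a.e.\ convergence from $\N_0$, where it is essentially free (Lemma~\ref{approx}), to $\N^*_0$ with no integrability control needed.
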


\noindent{\it Remark}. At this point, a comment about our terminology is in order. Frequently in this article, we will argue on 
$\sigma$-finite measure spaces, and measurable functions defined 
on these spaces will still be called ``random variables'', as in the preceding proposition. Similarly we will speak about the ``law'' or the
``distribution'' of these random variables, though these laws will be infinite (not necessarily $\sigma$-finite)
measures. 

\smallskip
Theorem \ref{construct} and Proposition \ref{exit-definition} allow us to make sense of the quantity $Z^*_0(\tilde W^{(u_i)})$,
for every $i\in I$. 
Informally, $Z^*_0(\tilde W^{(u_i)})$ counts the number of paths $\tilde W^{(u_i)}$ that return to $0$,
and thus measures the size of the boundary of $\mathcal{C}_i$. On the other hand, the 
quantity $\sigma(\tilde W^{(u_i)})$ corresponds to the volume of $\mathcal{C}_i$. Quite remarkably, one can obtain
an explicit formula for the joint distribution of the pair $(Z^*_0,\sigma)$ under $\M_0$. This distribution has density
$$ f(z,s)=\frac{\sqrt{3}}{2 \pi} \sqrt{z} \,s^{-5/2} \exp \left( -\frac{z^2}{2s} \right)$$
with respect to Lebesgue measure on $\R_+\times \R_+$ (Proposition \ref{loisZ0sigma}).

Using scaling arguments, one can then canonically define, for every $z>0$, the conditional probability measure
$\M_0(\cdot\mid Z^*_0=z)$, which will play an important role in our description of the distribution of
the collection $(W^{(u_i)})_{i\in I}$. Before stating our theorem identifying this
distribution, we need a last ingredient. For every $s\geq  0$ and $t\in[0,\zeta_s]$, write 
$L^0_t(W_s)$ for the local time at level $0$ and at time $t$  of the path $W_s$ (this makes sense 
under the measure $\N_0$). We observe that, under the measure $\N_0$, the process
$$\mathbf{W}_s:=(W_s,L^0(W_s))=(W_s(t),L^0_t(W_s))_{0\leq t\leq \zeta_s}$$
can be viewed as the Brownian snake (under its excursion measure from $(0,0)$) associated 
with a spatial motion which is now the pair consisting of a linear Brownian motion and its
local time at $0$ (the 
Brownian snake associated with a Markov process is defined by properties analogous to (i) and (ii) above, with the
only difference that in (ii) linear Brownian motion is replaced by the Markov process in consideration). See \cite{livrevert}, and notice that the spatial motion used to define 
the Brownian snake needs to satisfy certain continuity properties which hold in the present situation.
Following \cite[Chapter V]{livrevert}, we can then define, for every $r>0$, the exit measure of
$\mathbf{W}$ from the open set $O_r=\R\times [0,r)$, and we denote this exit measure 
by $\mathcal{X}_r$ -- to be precise the exit measure is a measure on $\partial O_r$, but here it is easily seen to be
concentrated on the singleton $\{0\}\times\{r\}$, and $\mathcal{X}_r$ denotes its total mass. 
Informally, $\mathcal{X}_r$ measures the quantity of paths $W_s$ whose endpoint is $0$ and which have accumulated
a total local time at $0$ equal to $r$. 

One can explicitly determine the ``law'' of the exit measure process $(\mathcal{X}_r)_{r>0}$
under $\N_0$, 
using on one hand L\'evy's famous theorem relating the law of the local time process of
a linear Brownian motion $B$ to that of the supremum process of $B$, and on the other hand known results 
about exit measures from intervals. This process is Markovian, with the transition mechanism of the
continuous-state branching process with stable branching mechanism $\psi(\lambda)=\sqrt{8/3}\,\lambda^{3/2}$.
In particular the process $(\mathcal{X}_r)_{r>0}$ has a c\`adl\`ag modification, which we
consider from now on.

Recall that, for every $i\in I$, $\ell_i$
denotes the local time at $0$ of the historical path of $u_i$.

\begin{proposition}
\label{identi-jump}
The numbers $\ell_i$, $i\in I$ are exactly the jump times of the process
$(\mathcal{X}_r)_{r>0}$. Furthermore, for every $i\in I$,  the size $Z^*_0(\tilde W^{(u_i)})$ of the boundary of $\mathcal{C}_i$
is equal to the jump $\Delta\mathcal{X}_{\ell_i}$.
\end{proposition}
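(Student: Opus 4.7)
My plan is to combine the special Markov property of the Brownian snake $\mathbf{W} = (W, L^0(W))$ relative to the open set $O_r = \R \times [0, r)$ (see \cite[Chapter V]{livrevert}) with the excursion decomposition over the $u_i$ provided by Theorem~\ref{construct}. The guiding intuition is that a jump of $(\mathcal{X}_r)$ at a level $r$ corresponds to the ``birth'' of a new branching subtree in the snake, i.e. to the initiation of a new excursion from a zero-labeled vertex whose ancestral local time is exactly $r$. These birth events are naturally indexed by the $u_i$: the excursion $\tilde W^{(u_i)}$ begins at $u_i$, and the boundary length $Z^*_0(\tilde W^{(u_i)})$ should record the fresh ``mass'' that this sub-snake contributes to the exit process just above level $\ell_i$.

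The proof proceeds in three steps. First, I apply the special Markov property at level $r$: conditionally on the snake restricted to $O_r$ and on $\mathcal{X}_r$, the sub-snakes of $\mathbf{W}$ hanging off the boundary $\{L^0 = r\}$ form independent pieces distributed as snake excursions started from zero-labeled vertices, with intensity proportional to $\mathcal{X}_r$. Comparing this decomposition at two close levels $r < r'$ shows that any jump of $r\mapsto \mathcal{X}_r$ at $r$ corresponds to the appearance of a new sub-snake attached at a vertex with $V=0$ and ancestral local time exactly equal to $r$. Second, I observe that such a vertex is precisely some $u_i$ with $\ell_i = r$, and that the associated sub-snake, after the time change used to define $\tilde W^{(u_i)}$ (removing the portions that survive past a return to $0$), coincides with $\tilde W^{(u_i)}$. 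Third, I identify the jump size $\Delta \mathcal{X}_{\ell_i}$ by noting that the contribution of the new sub-snake to $\mathcal{X}_{\ell_i+}$ equals the total ``boundary mass'' of $\tilde W^{(u_i)}$, which by Proposition~\ref{exit-definition} is exactly $Z^*_0(\tilde W^{(u_i)})$.

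The hard part will be turning this distributional picture into a pathwise identification. The special Markov property naturally gives conditional distributions, whereas the claim is an almost sure statement, so one must carefully pass to the limit in the approximation of both $\mathcal{X}_r$ and $Z^*_0$ while commuting with the sum over the countably (and densely) many excursions $\mathcal{C}_i$; this will require uniform moment estimates derived from the special Markov property. A secondary subtlety is showing that, $\N_0$-a.e., the $\ell_i$'s are pairwise distinct: a coincidence $\ell_j = \ell_i$ would force $u_j$ to be a branching vertex of $\t_\zeta$ on the boundary of $\mathcal{C}_i$ carrying a second escaping subtree, which is a codimension event that should be excluded by a direct argument using the local structure of the CRT. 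As a sanity check, combining Theorem~\ref{construct} with the joint density of $(Z^*_0, \sigma)$ under $\M_0$ gives the intensity of $\{(\ell_i, Z^*_0(\tilde W^{(u_i)})) : i \in I\}$ as $\mathrm{d}\ell \otimes \M_0(Z^*_0 \in \mathrm{d}z)$ with $\M_0(Z^*_0 \in \mathrm{d}z) \propto z^{-5/2}\,\mathrm{d}z$, matching the L\'evy measure of the stable CSBP with mechanism $\psi(\lambda) = \sqrt{8/3}\,\lambda^{3/2}$.
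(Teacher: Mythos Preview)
Your plan is a reasonable outline, but it is only that---an outline---and the paper takes a substantially different and more efficient route that you should be aware of.

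The paper does \emph{not} attack $(\mathcal{X}_r)_{r>0}$ directly via the special Markov property of the two-dimensional snake $\mathbf{W}=(W,L^0(W))$. Instead, it uses L\'evy's theorem $(|B|,L^0(B))\stackrel{d}{=}(B-\underline B,-\underline B)$ to identify the snake $(|W|,L^0(W))$ in law with the snake $(W^\bullet,L^\bullet)$ built from the running-minimum decomposition of $W$. Under this identification, the exit measure $\mathcal{X}_r$ of $\mathbf{W}$ from $\R\times[0,r)$ becomes \emph{exactly} the one-dimensional exit measure $Z_r=\mathcal{Z}_{-r}$ of $W$ from the half-line $(-r,\infty)$, and the excursion debuts away from $0$ become the excursion debuts above the minimum, with $\ell_i$ corresponding to $-V_{u}$. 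The proposition then reduces to two results proved in that simpler setting: Proposition~\ref{discont} (discontinuities of $(Z_r)$ are exactly the levels $-V_u$, $u\in D$) and Proposition~\ref{identi-jump2} (the jump at $-V_u$ equals $Z^*_0(\tilde W^{(u)})$). The first of these is obtained by analyzing the more tractable increasing process $\mathcal{Y}_x=\int_0^\sigma \mathbf{1}_{\{\tau_{-x}(W_s)=\infty\}}\,\mathrm{d}s$ and invoking a result from \cite{Hull} matching the discontinuities of $\mathcal{Y}$ and $Z$; the second is proved by a discrete approximation at dyadic levels combined with Proposition~\ref{conv-exit-M}.

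What the L\'evy reduction buys is precisely the things you flag as hard. For the process $Z_r$ one has the explicit pathwise approximation \eqref{approx-ps} by $\mathcal{E}^{(y,\infty)}$-measurable functionals, the auxiliary process $\mathcal{Y}_x$, and a well-developed martingale/moment theory (Lemma~\ref{exit-moments}), all of which are used to convert distributional statements into almost sure ones. Working directly with $\mathcal{X}_r$ through the two-dimensional snake, you would have to redevelop analogues of all of this; in particular, your Step~2 (``a jump at level $r$ forces a vertex $u_i$ with $\ell_i=r$'') and Step~3 (identifying the jump size) are exactly Propositions~\ref{discont} and~\ref{identi-jump2}, whose proofs in the paper are already quite technical even after the L\'evy reduction. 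Your sanity check on the L\'evy measure is correct, but it only confirms intensities, not the pathwise bijection the proposition asserts.
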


We can now state the main result of this introduction. 
\begin{theorem}
\label{main-intro}
Under $\N_0$, conditionally on the local time exit process $(\mathcal{X}_r)_{r>0}$, the 
excursions $(\tilde W^{(u_i)})_{i\in I}$ are independent and, for every $j\in I$, the conditional
distribution of $\tilde W^{(u_j)}$ is $\M_0(\cdot \mid Z^*_0=\Delta \mathcal{X}_{\ell_j})$.
\end{theorem}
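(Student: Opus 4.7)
The plan is to derive the theorem from a Poisson point measure representation for the family $\{(\ell_i,\tilde W^{(u_i)})\}_{i\in I}$ under $\N_0$, combined with a disintegration of $\M_0$ along $Z^*_0$. Specifically, I would first show that under $\N_0$ this family is a Poisson point measure on $(0,\infty)\times C(\R_+,\W)$ with intensity $\mathrm{d}\ell\otimes \mathrm{d}\M_0$. Theorem~\ref{construct} already supplies the correct intensity, so the remaining content is independence, which I would extract from a special (spatial) Markov property of the Brownian snake applied at the zero level set of the endpoint process $s\mapsto\hat W_s$. Concretely, one indexes things by the accumulated local time at $0$ along the tree, and shows that for every $\ell>0$, conditionally on the snake ``observed up to accumulated local time $\ell$,'' the excursions $\tilde W^{(u_i)}$ with $\ell_i>\ell$ are distributed as a shifted copy of the original family. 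Packaged with Theorem~\ref{construct}, this yields the Poisson property, for instance via the exponential formula
\[
\N_0\Bigl(1-\exp\bigl(-\sum_{i\in I}F(\ell_i,\tilde W^{(u_i)})\bigr)\Bigr) = \int_0^\infty \mathrm{d}\ell\,\M_0\bigl(1-e^{-F(\ell,\cdot)}\bigr),
\]
valid for $F\geq 0$ measurable with $\int\mathrm{d}\ell\,\M_0(1-e^{-F(\ell,\cdot)})<\infty$.

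Granting the Poisson structure, the second step is to observe that Proposition~\ref{identi-jump} identifies the jump times and sizes of $(\mathcal{X}_r)_{r>0}$ with the pairs $(\ell_i,Z^*_0(\tilde W^{(u_i)}))$. Since $(\mathcal{X}_r)$ is a CSBP excursion with pure-jump stable branching mechanism $\psi(\lambda)=\sqrt{8/3}\,\lambda^{3/2}$, it is pathwise determined by its jumps, so the $\sigma$-field generated by $(\mathcal{X}_r)_{r>0}$ coincides with the one generated by the marked point process $\{(\ell_i,Z^*_0(\tilde W^{(u_i)}))\}_{i\in I}$. Consequently, conditioning on $(\mathcal{X}_r)_{r>0}$ is equivalent to conditioning on these projected marks.

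The third step is the disintegration: given a Poisson point measure on $(0,\infty)\times C(\R_+,\W)$ with intensity $\mathrm{d}\ell\otimes\mathrm{d}\M_0$, standard Palm theory shows that conditionally on the projection of each atom onto $(\ell, Z^*_0)$, the atoms are independent, and each $\tilde W^{(u_j)}$ has conditional distribution $\M_0(\cdot\mid Z^*_0=Z^*_0(\tilde W^{(u_j)}))$. The disintegration of $\M_0$ along $Z^*_0$ is licensed by Proposition~\ref{exit-definition} and by the explicit marginal of $Z^*_0$ coming from Proposition~\ref{loisZ0sigma}. Substituting $Z^*_0(\tilde W^{(u_j)})=\Delta\mathcal{X}_{\ell_j}$ via Proposition~\ref{identi-jump} finishes the argument.

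The main obstacle is the first step: upgrading the intensity identity of Theorem~\ref{construct} to a full Poisson point measure statement under the infinite measure $\N_0$. The Markov property has to be deployed with respect to accumulated local time at $0$ rather than snake time, so the underlying filtration and time-change must be built carefully, and the infinite total mass of $\N_0$ forces the argument to be localized (for instance by working on $\{\sup\hat W>\delta\}$, which has finite $\N^*_0$-mass, and then letting $\delta\to 0$). Verifying that excursions attached to disjoint local-time intervals are genuinely independent, and not merely compatible in intensity, is where the bulk of the technical work will reside.
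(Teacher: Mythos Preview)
Your overall strategy---Poisson structure plus disintegration along $Z^*_0$---is the right architecture, and steps~2 and~3 are sound. The gap is in step~1: the claim that $\{(\ell_i,\tilde W^{(u_i)})\}_{i\in I}$ is a Poisson point measure on $(0,\infty)\times C(\R_+,\W)$ with intensity $\mathrm{d}\ell\otimes\M_0$ is false, and the exponential formula you wrote down does not hold. The $\ell_i$ are the jump times of the process $(\mathcal{X}_r)_{r>0}$, which is a continuous-state branching process (under its excursion measure), not a L\'evy process. For a CSBP, the instantaneous rate of jumps at time $r$ is proportional to $\mathcal{X}_{r-}$, so the jump times cannot be Poisson in the variable $\ell$. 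Concretely: for fixed $0<a<b$ and $c>0$, your formula with $F(\ell,\omega)=\lambda\mathbf{1}_{\{a<\ell<b\}}\mathbf{1}_{\{Z^*_0(\omega)>c\}}$ and $\lambda\to\infty$ would give
\[
\N_0\bigl(\exists\, i:\ \ell_i\in(a,b),\ Z^*_0(\tilde W^{(u_i)})>c\bigr)=(b-a)\,\M_0(Z^*_0>c),
\]
but the left side is bounded by $\N_0(W_*<-a)=3/(2a^2)$ while the right side tends to $\infty$ as $c\downarrow 0$. Theorem~\ref{construct} is only a first-moment identity; it matches the Poisson intensity because $\N_0(\mathcal{X}_r)=1$ for every $r>0$, but the higher-order structure is different. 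Relatedly, your Markov-property step (``the excursions with $\ell_i>\ell$ are a shifted copy of the original family'') cannot hold as stated: the post-$\ell$ family depends on the random value $\mathcal{X}_\ell$.

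What the paper does instead is to pass through the Lamperti transformation. The excursions are reindexed not by the level $\ell_i$ but by the Lamperti time $t_i$ associated with the L\'evy process $X$ satisfying $\mathcal{X}_r=X_{\inf\{t:\int_0^t X_s^{-1}\,\mathrm{d}s>r\}}$; in that parameterization (and after enlarging the probability space to extend the sequence beyond the hitting time of $0$) one does obtain a Poisson measure $\mathcal{P}$ with intensity $\mathrm{d}t\otimes\N^*_0$ (Proposition~\ref{PoissonPP}). Lemma~\ref{exit-recover} then shows that $(\mathcal{X}_r)$ is a measurable function of the $Z^*_0$-projection $\mathcal{P}^\circ$ of $\mathcal{P}$ together with the initial value, and the conditioning argument you describe in step~3 goes through with $\mathcal{P}$ in place of your $\{(\ell_i,\tilde W^{(u_i)})\}$. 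So your proposal becomes correct once you replace the level variable $\ell$ by Lamperti time and add the auxiliary randomness; without that reparameterization the Poisson claim fails and the argument does not close.
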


In the classical theory, the collection of excursions of linear Brownian motion is described in terms
of a Poisson point process. Such a representation is also possible here and the relevant
Poisson point process is linked with the Poisson process of jumps of
the L\'evy process that corresponds to the continuous-state branching process $\mathcal{X}$
via Lamperti's transformation. We refrained 
from explaining this representation in this introduction because the formulation 
is somewhat more intricate than in the classical case (see however Proposition \ref{PoissonPP})
and requires to add extra randomness to get a complete construction of the Poisson point process. 

Let us make a few remarks. First, although we stated our main results under the infinite measure $\N_0$, one
can give equivalent statements in the more familiar setting of probability measures, for instance by 
conditioning $\N_0$ on specific events with finite mass (such as the event where at least one of the paths 
$W_s$ has accumulated a total local time at $0$ greater than $\delta$, for some fixed $\delta>0$) or by 
dealing with a Poisson measure with intensity $\N_0$ -- such Poisson measures are in fact needed 
when one studies the connections between the Brownian snake and superprocesses.
The second remark is that we could have considered excursions away from $a\not =0$ instead
of the particular case $a=0$. There is a minor difference, due to the special connected component
of $\{u\in\t_\zeta: V_u\not = a\}$ that contains the root. The study of the connected components 
other than the special one can be reduced to the case $a=0$ by an application of the so-called special Markov 
property (see Section \ref{exitSMP}). 
As a last and important remark, most of the following proofs and statements deal with excursions  ``above the minimum''
(see Section \ref{sec:construct} for the definition)
and not with the excursions away from $0$ that we considered in this introduction. 
However the results about excursions away from $0$ can then be derived using the already mentioned 
theorem of L\'evy, and we explain this derivation in detail in Section \ref{excupoint}. 
The reason for considering first excursions  above the minimum comes from the fact that
certain technical details become significantly simpler. In particular, the local time exit process
is replaced by the more familiar process of exit measures from intervals.

An important motivation for the present work comes from the construction
of the Brownian map as a quotient space of the CRT for an equivalence relation 
defined in terms of Brownian motion indexed by the CRT (see e.g. \cite[Section 2.5]{Uniqueness}).
The recent paper \cite{Hull} discusses the infinite volume version of the Brownian map
called the Brownian plane. In a way similar to the Brownian map, the Brownian plane is
obtained as a quotient space of an infinite Brownian tree equipped with nonnegative Brownian labels,
in such a way that these labels
correspond to distances from the root in the Brownian plane. The main goal
of \cite{Hull} is to study the process of hulls, where, for every $r>0$, the hull of
radius $r$ is obtained by filling in the bounded holes in the ball of radius $r$ centered at the root vertex of the
Brownian plane.  It turns out (see formula (16) of \cite{Hull}) that discontinuities of the process of hulls correspond
to excursions above the minimum for the process of labels, which is a tree-indexed Brownian motion
under a special conditioning. Such a discontinuity appears 
when the hull of radius $r$ ``swallows'' a connected component of the complement of the ball of radius $r$,
and this connected component consists of (the equivalence classes of) the vertices belonging to the
associated excursion above the minimum at level $r$. This relation explains why several
formulas and calculations below are reminiscent of those in \cite{Hull}. In particular the conditional
distribution of the mass $\sigma$ of an excursion given the boundary length $Z^*_0$ (see Proposition
\ref{loisZ0sigma}) appears in \cite[Theorem 1.3]{Hull}, as well as in the companion paper \cite{peeling},
where this distribution is interpreted as the limiting law of the number of faces of a Boltzmann triangulation
with a boundary of fixed size tending to infinity.

In the same direction, there are close relations between the present article and the recent work of Miller and Sheffield \cite{MS1,MS2,MS3}
aiming at proving the equivalence of the Brownian map and Liouville quantum gravity with parameter $\gamma=\sqrt{8/3}$. 
In particular, the paper \cite{MS1} uses what we call Brownian snake excursions above the minimum to define 
the notion of a Brownian disk, corresponding to bubbles appearing in the exploration of the Brownian map: See 
the definition of $\mu^L_{\rm DISK}$ in
Proposition 4.4, and its proof, in \cite{MS1}. A key idea of \cite{MS1} is the fact that one can use such Brownian disks
to reconstruct the Brownian map by filling in the holes of the so-called ``L\'evy net'', which itself
corresponds to the union of the boundaries of hulls centered at the root (to be precise, the definition of hulls
here requires that there is a marked vertex in addition to the root of the Brownian map). Interestingly,
Bettinelli and Miermont \cite{BM} have developed a different method, based on an approximation by
large planar maps with a boundary, to define the notion of a Brownian disk. The forthcoming paper
\cite{LG17} uses the excursion measure $\N^*_0$ introduced in the present work to unify these different approaches and 
derive new properties of Brownian disks. 

An obvious question is whether the excursion theory developed here can be extended to more general
tree-indexed processes. As a first remark, many of our arguments rely on the special Markov property
(Proposition \ref{SMP} below), which has been stated and proved rigorously only for processes indexed
by the Brownian tree. It is likely that some version of the special Markov property holds for
processes indexed by L\'evy trees \cite{DLG,We}, which are random $\R$-trees characterized by a branching property 
analogous to the one that holds for discrete Galton-Watson trees, but this has not been proven yet.
One may then ask whether Brownian motion can be replaced by another Markov process indexed by the 
Brownian tree. The recent paper \cite{LG16} shows that the special Markov property still holds provided
the underlying Markov process satisfies certain strong continuity assumptions. These assumptions are satisfied by
a ``nice'' diffusion process on the real line, and one may expect that analogs of our results will then hold in
that more general setting. Proving this would however require a different approach, since 
we can no longer use the L\'evy theorem mentioned above.

The present  paper is organized as follows.
Section \ref{sec:preli} below presents a number of preliminary observations. In contrast with the previous lines where we consider the
canonical space $C(\R_+,\W)$, we have chosen to define the measure $\N_0$ on a smaller canonical space, the
space of ``snake trajectories'' (see Section \ref{statespace}). The reason for this choice is that several transformations,
such as the re-rooting operation, or the truncation operation allowing us to eliminate paths $W_s$ hitting a certain level, are
more conveniently defined and analysed on this smaller space. Snake trajectories are in one-to-one correspondence
with tree-like paths (also defined in Section \ref{statespace}) via a homeomorphism theorem
of Marckert and Mokkadem \cite{MM}, and this bijection is useful to simplify certain convergence arguments. Section \ref{exitSMP}
gives a precise statement of the special Markov property which later plays an important role.

Section  \ref{sec:construct} provides a construction of the measure $\N^*_0$, by proving the analog of Theorem \ref{construct} for
excursions above the minimum. As a by-product, this proof also yields the above-mentioned approximation of
$\N^*_0$ in terms of the Brownian snake under $\N_\ve$, truncated at level $0$. 
Section \ref{sec:re-root} gives our analog of the Bismut decomposition theorem for the measure $\N^*_0$. The proof
is based on a re-rooting invariance property of the Brownian snake which can be found in \cite{LGWeill}. 
Then Section \ref{a.s.cons} describes 
an almost sure version of the approximation of Section \ref{sec:construct}, which is useful in further developments. 

Section \ref{sec:exit} contains the definition of the exit measure $Z^*_0$ under $\N^*_0$, and the derivation of the joint 
distribution of the pair $(Z^*_0,\sigma)$. As an important technical ingredient of the proof of our main results, we also verify that
the approximation of the measure $\N^*_0$  by a truncated Brownian snake under $\N_\ve$ can be stated jointly
with the convergence of the corresponding exit measures (Proposition \ref{conv-exit-M}). Section \ref{sec:excu-pro}
contains the proof of the results analogous to Proposition \ref{identi-jump} and Theorem \ref{main-intro} 
in the slightly different setting of excursions above the minimum. In a way very similar
to the classical theory, we introduce an auxiliary Poisson point process with intensity $\mathrm{d}t\otimes \N^*_0(\mathrm{d}\omega)$,
such that all excursions above the minimum can be recovered from the atoms of this process -- but as mentioned earlier the
construction of this Poisson point process is somewhat more delicate than in the classical case. Finally, Section \ref{excupoint} explains how
the results of the present introduction can be derived from those concerning excursions above the minimum.

\medskip
\noindent{\bf Warning.} As already mentioned, we define the Brownian snake below
on a smaller canonical space than $C(\R_+,\W)$, namely on the space $\S$ of all snake trajectories 
introduced in Definition \ref{def:snakepaths}. In particular, $(W_s)_{s\geq 0}$ will be the
canonical process on $\S$, and $\N_0$ and $\N^*_0$ will be viewed as
$\sigma$-finite measures on $\S$ rather than on $C(\R_+,\W)$. The notation used below is
therefore slightly different from the one in the Introduction, but this should create no confusion. 
 
 \medskip
 \noindent{\bf Main notation.}
 {\setlength{\leftmargini}{1em}
 \begin{itemize}
 \item $\t_h$ tree coded by a function $h$ (Section 2.1)
 \item $\prec$ genealogical order on $\t_\zeta$ (Section 2.1)
 \item $p_h$ canonical projection from $\R_+$ onto $\t_h$ (Section 2.1)
 \item $\W$ set of all finite paths, $\W_x$ set of all finite paths started at $x$ (Section 2.2)
 \item $\zeta_{(\w)}$ lifetime of $\w\in \W$ (Section 2.2)
 \item $\hat \w=\w_{\zeta_{(\w)}}$ for $\w\in \W$ (Section 2.2)
 \item $\underline{\w}=\min\{\w(t):0\leq t\leq \zeta_{(\w)}\}$ for $\w\in \W$ (Section 2.2)
  \item $\tau_y(\w)=\inf\{t\in [0,\zeta_{(\w)}]: \w(t)=y\}$, $\tau^*_y(\w)=\inf\{t\in (0,\zeta_{(\w)}]: \w(t)=y\}$ (Section 2.2)
 \item $\S$ set of all snake trajectories, $\S_x$ set of all snake trajectories with initial point $x$ (Section 2.2)
 \item $(W_s)_{s\geq 0}$ canonical process on $\S$ (Section 2.2)
 \item $\zeta_s(\omega)=\zeta_{(W_s(\omega))}$ lifetime process on $\S$ (Section 2.2)
 \item $\sigma(\omega)$ duration of the snake trajectory $\omega\in\S$ (Section 2.2)
 \item $\|\omega\|=\sup\{|\omega_s(t)|:0\leq s\leq \sigma(\omega), 0\leq t\leq \zeta_{(\omega_s)}\}$, for $\omega\in\S$ (Section 2.2)
 \item $\S^{(\delta)}=\{\omega\in \S: \|\omega\|>\delta\}$ (Section 3)
 \item $M(\omega)=\sup\{\omega_s(t):0\leq s\leq \sigma(\omega), 0\leq t\leq \zeta_{(\omega_s)}\}$, for $\omega\in\S$ (Section 2.2)
 \item $\T$ set of all tree-like paths, $\T_x$ set of all tree-like paths with initial point $x$ (Section 2.2)
 \item $\omega\mapsto \kappa_a(\omega)$ shift on snake trajectories (Section 2.2)
 \item $\omega\mapsto R_s(\omega)$ re-rooting on snake trajectories (Section 2.2)
 \item $\mathrm{tr}_y(\omega)$ truncation of $\omega\in \S$ at $y$ (Section 2.2)
 \item $\N_x$ Brownian snake excursion measure from $x$ (Section 2.3)
 \item $W_*$ minimum of the Brownian snake (Section 2.3)
 \item $\mathcal{E}^U_x$ $\sigma$-field generated by the Brownian snake paths under $\N_x$ before they exit $U$ (Section 2.4)
 \item $\z^U$ exit measure of the Brownian snake from $U$ (Section 2.4)
 \item $\z_y=\langle \z^{(y,\infty)},1\rangle$, $Z_a=\z_{-a}$ (Section 2.5)
  \item $V_u=\hat W_s$ if $u=p_\zeta(s)$, label of $u\in \t_\zeta$ (Section 3)
 \item $D$ set of all excursion debuts (Section 3)
 \item $C_u=\{w\in \t_\zeta: u\prec w \hbox{ and } V_v>V_u,\;\forall v\in \rrbracket u,w\llbracket\}$ for $u\in D$ (Section 3)
 \item $M_u=\sup\{V_v-V_u: v\in C_u\}$ height of the excursion debut $u$ (Section 3)
 \item $D_\delta$ set of all excursion debuts with height greater than $\delta$ (Section 3)
\item $W^{(u)}$ snake trajectory describing the labels of descendants of $u\in D$, shifted so that $W^{(u)}\in\S_0$ (Section 3)
\item $\tilde W^{(u)}=\mathrm{tr}_0(W^{(u)})$ truncation of $W^{(u)}$ at $0$, for $u\in D$ (Section 3)
\item $\tilde W=\mathrm{tr}_0(W)$ truncation at $0$ of the canonical process $W$ (Section 3)
\item $\tilde M=M(\tilde W)$ (Section 3)
\item $\N^*_0$ Brownian snake excursion measure ``above the minimum'' (Section 3)
\item $\displaystyle{\mathcal{N}_k^\ve(\omega) = \sum_{i\in I^\ve_k} \delta_{\omega_i^{k,\ve}}}$ point measure of excursions of 
$\omega\in \S_0$ outside $(-k\ve,\infty)$ (Section 3)
\item $\tilde \omega_i^{k,\ve} = \mathrm{tr}_0\circ \kappa_{(k+1)\ve}(\omega_i^{k,\ve})$ truncation at $0$ of the excursion $\omega_i^{k,\ve}$
shifted so that its initial point is $\ve$ (Section 3)
\item $\theta_\lambda$ scaling operator on $\S$ (Section 3)
\item $W^{[s]}(\omega)=\kappa_{-\hat W_s(\omega)}\circ R_s(\omega)$ snake trajectory $\omega$ re-rooted at $s$ and shifted 
so that the spatial position of the root is $0$ (Section 5)
\item $Z^*_0$ exit measure at $0$ under $\N^*_0$ (Section 6)
\item $\N^{*,z}_0=\N^*_0(\cdot \mid Z^*_0=z)$ (Section 6)
\item $\mathcal{Y}_b= \int_0^\sigma \mathrm{d}s\,\mathbf{1}_{\{\tau_{-b}(W_s)=\infty\}}$ for $b\geq 0$ (Section 6)
\item $\N^{(\beta)}_0=\N_0(\cdot \mid W_* <-\beta)$ (Section 7)
 \end{itemize}}

\section{Preliminaries}
\label{sec:preli}

\subsection{Coding a real tree by a function}
\label{coding}

In this subsection, we recall without proof a number of simple properties of the
coding of compact $\R$-trees by functions. We refer to \cite{DLG} and \cite{LGM}
for additional details.

Let $h:\R_+\to\R_+$ be a nonnegative continuous function on $\R_+$ such that $h(0)=0$. We assume that
$h$ has compact support, so that
$$\sigma_h:=\sup\{t\geq 0: h(t)>0\} <\infty.$$
Here and later we make the convention that $\sup\varnothing = 0$. 

For every $s,t\in\R_+$, we set
$$d_h(s,t):=h(s)+h(t)-2\min_{s\wedge t\leq r\leq s\vee t} h(r).$$
Then $d_h$ is a pseudo-distance on $\R_+$. We introduce the associated
 equivalence relation on $\R_+$, defined by setting
$s\sim_h t$ if and only if $d_h(s,t)=0$, or equivalently
$$h(s)=h(t)=\min_{s\wedge t\leq r\leq s\vee t} h(r).$$
Then, $d_h$ induces a distance on the quotient space $\R_+/\sim_h$.

\begin{lemma}
\label{cod-tree}
The quotient space $\t_h:=\R_+/\!\sim_h$ equipped with the distance $d_h$ is a compact $\R$-tree called the tree 
coded by $h$. The canonical projection from $\R_+$ onto $\t_h$
is denoted by $p_h$.
\end{lemma}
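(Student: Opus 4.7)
The plan is to proceed in four steps: show that $d_h$ is a pseudo-distance on $\R_+$, deduce that $\sim_h$ is an equivalence relation so that $d_h$ descends to an honest metric on the quotient, establish compactness of $\t_h$, and finally verify the $\R$-tree property.

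For the pseudo-distance, symmetry and vanishing on the diagonal are immediate from the defining formula. For the triangle inequality $d_h(r,t)\le d_h(r,s)+d_h(s,t)$, I would assume $r\le t$ and split into the cases $s\in[r,t]$ and $s\notin[r,t]$. Each case reduces to a short calculation combining $\min_{[a,b]}h\le h(x)$ for $x\in[a,b]$ with $\min_{[r,t]}h=\min(\min_{[r,s]}h,\min_{[s,t]}h)$ (when $s\in[r,t]$), and with an obvious analogue otherwise. Once this is established, reflexivity and symmetry of $\sim_h$ are trivial and transitivity follows directly from the triangle inequality, so $d_h$ induces a genuine distance on the quotient.

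Compactness of $\t_h$ follows from continuity of the projection $p_h$. If $s_n\to s$, uniform continuity of $h$ on $[0,\sigma_h+1]$ yields $\min_{[s\wedge s_n,s\vee s_n]}h\to h(s)$ and hence $d_h(s_n,s)\to 0$. Since $h(t)=0$ whenever $t\ge\sigma_h$, any such $t$ is $\sim_h$-equivalent to $0$, and therefore $\t_h=p_h([0,\sigma_h])$ is the continuous image of a compact set.

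The substantive point is the $\R$-tree property, which I would establish via the four-point condition: for any $s_1,s_2,s_3,s_4\in\R_+$, the two largest among
$$d_h(s_1,s_2)+d_h(s_3,s_4),\quad d_h(s_1,s_3)+d_h(s_2,s_4),\quad d_h(s_1,s_4)+d_h(s_2,s_3)$$
are equal. After relabelling so that $s_1\le s_2\le s_3\le s_4$ and writing $m_{ij}=\min_{[s_i,s_j]}h$, these three sums share the common term $\sum_i h(s_i)$ and the remainders are $-2(m_{12}+m_{34})$, $-2(\min(m_{12},m_{23})+\min(m_{23},m_{34}))$ and $-2(m_{23}+\min(m_{12},m_{23},m_{34}))$; the condition is then settled by a short case split on the relative order of $m_{12},m_{23},m_{34}$. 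To upgrade this into the $\R$-tree property, I would exhibit explicit geodesics: for $s<t$ and any $r^*\in[s,t]$ realizing $\min_{[s,t]}h$, the map $\lambda\mapsto p_h\bigl(\inf\{r\in[s,r^*]:h(r)\le h(s)-\lambda\}\bigr)$ for $\lambda\in[0,h(s)-h(r^*)]$ and its analogue on $[r^*,t]$ traversed in reverse concatenate into an isometric embedding of $[0,d_h(s,t)]$ joining $p_h(s)$ to $p_h(t)$. Since a connected metric space satisfying the four-point condition is an $\R$-tree (see e.g.\ \cite{DLG}), this completes the proof. The only mildly delicate step is the bookkeeping in the four-point verification; everything else is routine.
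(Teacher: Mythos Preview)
Your proof sketch is correct and follows the standard route. The paper itself does not prove this lemma: it simply refers the reader to \cite[Theorem 2.1]{DLG} for both the proof and the definition of $\R$-trees, noting at the outset of Section \ref{coding} that these coding properties are recalled without proof. Your argument is essentially the one found in that reference.

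One small point of phrasing: the implication ``connected $+$ four-point condition $\Rightarrow$ $\R$-tree'' is not literally true; one needs the space to be geodesic, not merely connected. Since you have already exhibited explicit geodesic segments between any two points, the space is geodesic and the conclusion follows, but you should state the hypothesis as ``geodesic'' rather than ``connected'' when invoking the characterisation.
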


See e.g. \cite[Theorem 2.1]{DLG} for a proof of this lemma as well as for the 
definition of $\R$-trees. For every $u,v\in \t_h$, the segment $\llbracket u,v \rrbracket$
is defined as the range of the (unique) geodesic from $u$ to $v$ in $(\t_h,d_h)$. The sets
 $\rrbracket u,v \llbracket$ or $\rrbracket u,v \rrbracket$ are then defined with the
 obvious meaning. 

Write $\rho$ for the equivalent class of $0$ in the quotient $\R_+\,/\!\sim_h$, and note that,
for every $s\geq 0$, $d_h(\rho,p_h(s))=h(s)$. We call 
$\rho$ the root of $\t_h$, and the ancestral line of 
a point $u\in\t_h$ is the geodesic segment $\llbracket \rho,u \rrbracket$. We can then define 
a genealogical relation on $\t_h$ by saying that $u$ is an ancestor of $v$
(or $v$ is a descendant of $u$) if $u$ belongs to $\llbracket \rho,v \rrbracket$. 
We will use the notation $u\prec v$ to mean that $u$ is an ancestor of $v$.
If $s,t\geq 0$, the property $p_h(s)\prec p_h(t)$ holds if and only if
$$h(s)=\min_{s\wedge t\leq r\leq s\vee t}h(r).$$
If $u,v\in \t_h$, the last common ancestor of $u$ and $v$ is the unique point, denoted by $u\wedge v$, such that
$$\llbracket \rho,u \rrbracket\cap\llbracket \rho,v \rrbracket=\llbracket \rho,u\wedge v\rrbracket.$$
If $u=p_h(s)$ and $v=p_h(t)$ then $u\wedge v=p_h(r)$, where $r$ is any time in $[s\wedge t,s\vee t]$
such that $h(r)=\min\{h(r'):r'\in [s\wedge t,s\vee t]\}$.

We call leaf of $\t_h$ any point $u\in \t_h$ which has no descendant other than itself. 
We let 
${\rm Sk}(\t_h)$, the skeleton of $\t_h$, be the set of all 
points of $\t_h$ that are not leaves. The multiplicity of a 
point $u\in\t_h$ is the number of
connected components of $\t_h\backslash\{u\}$. A point
$u\not = \rho$ is a leaf if and only if its multiplicity is $1$.

Suppose in addition that $h$ satisfies the following properties:
\begin{enumerate}
\item[(i)] $h$ does not vanish on $(0,\sigma_h)$; 
\item[(ii)]$h$  is not constant on any nontrivial subinterval
of $(0,\sigma_h)$;
\item[(iii)] the local minima of $h$ on $(0,\sigma_h)$ are distinct.
\end{enumerate}
All these properties hold
in the applications developed below, where $h$ is a Brownian excursion away from $0$. Then the multiplicity of 
any point of $\t_h$ is at most $3$. Furthermore, a point $u$ has multiplicity $3$ if
and only if $u$ is the form $u=p_h(r)$ where $r$ is a time of local minimum of $h$ on $(0,\sigma_h)$. In that case there
are exactly three values of $s$ such that $p_h(s)=u$, namely 
$s=\sup\{t<r:h(t)>h(r\}$, $s=r$ and $s=\inf\{t>r:h(t)\leq h(r)\}$. 
Points of multiplicity $3$ will be called branching points of $\t_h$. 
If $u$ and $v$ are two points of $\t_h$, and if $u\wedge v\not =u$ and $u\wedge v\not =v$, then
$u\wedge v$ is a branching point. Finally, if 
$u$ is a point of ${\rm Sk}(\t_h)$ which is not a branching point, then there are exactly two times $0\leq s_1<s_2\leq \sigma_h$
such that $p_h(s_1)=p_h(s_2)=u$, and the descendants of $u$ are the points $p_h(s)$ when $s$ varies over
$[s_1,s_2]$.

\subsection{Canonical spaces for the Brownian snake}
\label{statespace}

Before we recall the basic facts that we need about the Brownian snake, 
we start by discussing the canonical space on which this random process will be defined
(for technical reasons, we choose a canonical space suitable for the definition of
the Brownian snake excursion measures, which would not be appropriate for the 
Brownian snake starting from an arbitrary initial value as considered above in the Introduction).

Recall the notion of a finite path from the Introduction. We let 
$\W$ denote the space of all finite paths in $\R$, and write
$\zeta_{(\w)}$ for the lifetime of a finite path $\w\in\W$. The set $\W$ is a Polish space when equipped with the
distance
$$d_\W(\w,\w')=|\zeta_{(\w)}-\zeta_{(\w')}|+\sup_{t\geq 0}|\w(t\wedge
\zeta_{(\w)})-\w'(t\wedge\zeta_{(\w')})|.$$
The endpoint or tip of the path $\w$ is denoted by $\hat \w=\w(\zeta_{(\w)})$.
For every $x\in\R$, we set $\W_x=\{\w\in\W:\w(0)=x\}$. The trivial element of $\W_x$ 
with zero lifetime is identified with the point $x$ -- in this way we view $\R$
as the subset of $\W$ consisting of all finite paths with zero lifetime. 
We will also use the notation
$\underline\w=\min\{\w(t):0\leq t\leq \zeta_{(\w)}\}$.

We next turn to snake trajectories.

\begin{definition}
\label{def:snakepaths}
Let $x\in \R$. A snake trajectory with initial point $x$ is a continuous mapping
\begin{align*}
\omega:\ &\R_+\to \W_x\\
&s\mapsto \omega_s
\end{align*}
which satisfies the following two properties:
\begin{enumerate}
\item[\rm(i)] We have $\omega_0=x$ and $\sup\{s\geq 0: \omega_s\not =x\}<\infty$.
\item[\rm(ii)] For every $0\leq s\leq s'$, we have
$$\omega_s(t)=\omega_{s'}(t)\;,\quad\hbox{for every } 0\leq t\leq \min_{s\leq r\leq s'} \zeta_{(\omega_r)}.$$
\end{enumerate} 
We write $\S_x$ for the set of all snake trajectories  with initial point $x$, and 
$$\S:=\bigcup_{x\in \R} \S_x$$
for the set of all snake trajectories. 
\end{definition}

If $\omega\in \S_x$, we write
$$\sigma(\omega)=\sup\{s\geq 0: \omega_s\not =x\}$$
and call $\sigma(\omega)$ the {\it duration} of the snake trajectory $\omega$. 
For $\omega\in \S$, we will also use the notation $W_s(\omega)=\omega_s$ and $\zeta_s(\omega)=\zeta_{(\omega_s)}$
for every $s\geq 0$, so that in particular $(W_s)_{s\geq 0}$ is the canonical process on $\S$.

\smallskip
\noindent{\it Remark}. Property (ii) is called the snake property. It is not hard to verify that, for any mapping $\omega:\R_+\to \W_x$
such that both the lifetime function $s\mapsto \zeta_s(\omega)$ and the tip function $s\mapsto\hat \omega_s=\hat W_s(\omega)$ are
continuous, then the snake property (ii) implies that $\omega$ is continuous. 

\smallskip
The set $\S$ is equipped with the distance
$$d_\S(\omega,\omega')= |\sigma(\omega)-\sigma(\omega')|+ \sup_{s\geq 0} \,d_\W(W_s(\omega),W_{s}(\omega')).$$
Note that $\S$ is a measurable subset of the space $C(\R_+,\W)$, which is equipped as usual with the Borel
$\sigma$-field associated with the topology of uniform convergence on every compact interval. 

We will use the notation
\begin{align*}
&\|\omega\|=\sup\{|\omega_s(t)|:s\geq 0, 0\leq t\leq \zeta_s(\omega)\}=\sup\{|\hat \omega_s|:s\geq 0\},\\
&M(\omega)= \sup\{\omega_s(t):s\geq 0, 0\leq t\leq \zeta_s(\omega)\}=\sup\{\hat \omega_s:s\geq 0\},
\end{align*}
for $\omega\in \S$. The fact that the two suprema in the definition of $\|\omega\|$ (or in the definition
of $M(\omega)$) are equal is a simple consequence of the snake property, which implies that
$$\{\omega_s(t):s\geq 0, 0\leq t\leq \zeta_s(\omega)\}=\{\hat \omega_s:s\geq 0\}.$$

One easily checks that a snake trajectory $\omega$ is completely determined by the two functions 
$s\mapsto \zeta_s(\omega)$ and  $s\mapsto\hat W_s(\omega)$. We will state this in a more precise form, but
for this we first need to introduce tree-like paths.

\begin{definition}
\label{def:treelikepaths}
A tree-like path is a pair $(h,f)$ where $h:\R_+\to \R_+$ and $f:\R_+\to \R$
are continuous functions that satisfy the following properties:
\begin{enumerate}
\item[\rm(i)] We have $h(0)=0$ and $\sigma_h:=\sup\{s\geq 0:h(s)\not =0\}<\infty$.
\item[\rm(ii)] For every $0\leq s\leq s'$, the condition
$$h(s)=h(s')=\min_{s\leq r\leq s'} h(r)$$
implies that $f(s)=f(s')$. 
\end{enumerate}
The set of all tree-like paths is denoted by $\T$, and, for every $x\in \R$,
$\T_x:=\{(h,f)\in \T:f(0)=x\}$
denotes the set of all tree-like paths with initial point $x$.
\end{definition}

\noindent{\it Remark}. Our terminology is inspired by the work of Hambly and Lyons,
who give a slightly different definition of a tree-like path in a more general setting (see \cite[Definition 1.2]{HL}). 

\smallskip

It follows from property (ii) that, if $(h,f)\in \T_x$, we have $f(s)=x$ for every $s\geq \sigma_h$.
The set $\T$ is equipped with the distance
$$d_\T((h,f),(h',f')) = |\sigma_h -\sigma_{h'}| + \sup_{s\geq 0} (|h(s)-h'(s)| + |f(s)-f'(s)|).$$

If $(h,f)$ is a tree-like path, $h$ satisfies the assumptions required in Section \ref{coding}
to define the tree $\t_h$. Then, property (ii) just says that, for every $s\geq 0$, $f(s)$ only depends 
on $p_h(s)$, and thus $f$ can as well be viewed as a function on the tree $\t_h$. Furthermore the function induced by
$f$ on $\t_h$ is also continuous. For $u\in \t_h$, we then interpret $f(u)$ as a spatial position, or a label, assigned
to the point $u$. 

\begin{proposition}
\label{homeo}
The mapping $\Delta: \S \to \T$ defined by $\Delta(\omega)= (h,f)$, where $h(s)=\zeta_s(\omega)$ and $f(s)=\hat W_s(\omega)$,
is a homeomorphism from $\S$ onto $\T$.
\end{proposition}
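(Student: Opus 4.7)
The plan is to proceed in four stages: verify that $\Delta$ maps $\S$ into $\T$, establish continuity of $\Delta$, construct an explicit two-sided inverse, and finally prove that this inverse is continuous. Only the last step will present a genuine difficulty.

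For the first two stages, fix $\omega\in\S$ and set $h(s)=\zeta_s(\omega)$, $f(s)=\hat W_s(\omega)$. Compact support, continuity of $h$, and $h(0)=0$ come directly from Definition~\ref{def:snakepaths}, while continuity of $f$ uses the elementary inequality $|\hat\w-\hat\w'|\leq d_\W(\w,\w')$. Property~(ii) of $\T$ is then a consequence of the snake property: if $s\leq s'$ and $h(s)=h(s')=\min_{[s,s']}h$, then taking $t=h(s)$ in the identity $\omega_s(t)=\omega_{s'}(t)$ (valid for $t\leq \min_{[s,s']}h$) yields $f(s)=\hat\omega_s=\omega_{s'}(h(s'))=\hat\omega_{s'}=f(s')$. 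Continuity of $\Delta$ is immediate, since $|h_n(s)-h(s)|$, $|f_n(s)-f(s)|$ and $|\sigma_{h_n}-\sigma_h|$ are all bounded by $d_\S(\omega^n,\omega)$ (note that $\sigma(\omega)=\sigma_h$ because $\omega_s=x$ precisely when $\zeta_s=0$).

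To invert $\Delta$, given $(h,f)\in\T_x$ I set, for $s\geq 0$ and $t\in[0,h(s)]$,
\[
\alpha(s,t):=\sup\{r\in[0,s]:h(r)\leq t\},\qquad \omega_s(t):=f(\alpha(s,t)).
\]
Continuity of $h$ gives $h(\alpha(s,t))=t$, so $p_h(\alpha(s,t))$ is the ancestor of $p_h(s)$ at height $t$, and property~(ii) of $\T$ shows that the value $f(\alpha(s,t))$ is unambiguous (independent of the choice of representative in $p_h^{-1}(p_h(\alpha(s,t)))$). The snake property for $\omega$ follows from the same device: for $s\leq s'$ and $t\leq \min_{[s,s']}h$, both $\alpha(s,t)$ and $\alpha(s',t)$ project to the same point of $\t_h$, hence $f$ takes the same value there. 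The remark after Definition~\ref{def:snakepaths} then upgrades continuity of $s\mapsto h(s)$ and $s\mapsto f(s)$ to continuity of $s\mapsto\omega_s$ in $d_\W$, so $\omega\in\S_x$. The identity $\Delta\circ\Delta^{-1}=\mathrm{id}_\T$ is built into the construction ($\hat\omega_s=f(\alpha(s,h(s)))=f(s)$, and the lifetime is $h(s)$ by definition), while $\Delta^{-1}\circ\Delta=\mathrm{id}_\S$ follows from the snake property applied on $[\alpha(s,t),s]$, which forces $\omega_s(t)=\omega_{\alpha(s,t)}(t)=\hat\omega_{\alpha(s,t)}$.

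The main obstacle is the continuity of $\Delta^{-1}$. Assume $(h_n,f_n)\to (h,f)$ in $\T$, set $\eta_n:=\|h_n-h\|_\infty+\|f_n-f\|_\infty\to 0$, and for fixed $s,t\geq 0$ let $\alpha:=\alpha_h(s,t\wedge h(s))$ and $\alpha_n:=\alpha_{h_n}(s,t\wedge h_n(s))$, so that $\omega_s(t\wedge h(s))=f(\alpha)$ and $\omega^n_s(t\wedge h_n(s))=f_n(\alpha_n)$. The triangle inequality gives
\[
|f_n(\alpha_n)-f(\alpha)|\leq \|f_n-f\|_\infty+|f(\alpha_n)-f(\alpha)|,
\]
so the crux is bounding the second term. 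Although $\alpha_n$ and $\alpha$ may lie far apart in $[0,\sigma_h]$, the defining properties of the two suprema, combined with $|h_n-h|_\infty\leq \eta_n$, yield uniform bounds of order $\eta_n$ on both $|h(\alpha_n)-h(\alpha)|$ and $h(\alpha)-\min_{[\alpha\wedge\alpha_n,\alpha\vee\alpha_n]}h$ (using that $h_n(r)>t\wedge h_n(s)$ on the relevant open subinterval). Hence $d_h(\alpha_n,\alpha)=O(\eta_n)$ uniformly in $s,t$. Because $f$ descends to a continuous, and therefore uniformly continuous, function on the compact $\R$-tree $\t_h$, this uniform $d_h$-bound forces $|f(\alpha_n)-f(\alpha)|\to 0$ uniformly; combined with $|h_n-h|_\infty\to 0$ and $|\sigma_{h_n}-\sigma_h|\to 0$ it gives $d_\S(\omega^n,\omega)\to 0$. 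The whole argument rests on converting proximity in the tree pseudo-distance $d_h$ into proximity of $f$-values, an idea which makes essential use of property~(ii) of~$\T$.
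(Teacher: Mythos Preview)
Your argument is correct. The paper does not supply its own proof here; it simply cites the homeomorphism theorem of Marckert and Mokkadem \cite[Theorem 2.1]{MM}, noting that their normalisation $\sigma=1$ is inessential. What you have written is a self-contained reconstruction of that result: the explicit inverse via $\alpha(s,t)=\sup\{r\le s:h(r)\le t\}$, and the key observation that although $|\alpha_n-\alpha|$ need not be small, the tree pseudo-distance $d_h(\alpha_n,\alpha)$ is $O(\eta_n)$ uniformly in $s,t$, so that uniform continuity of the induced function on the compact tree $\t_h$ closes the estimate for $|f(\alpha_n)-f(\alpha)|$. One small point is left implicit: before invoking the remark after Definition~\ref{def:snakepaths} you need each $\omega_s$ to lie in $\W_x$, in particular $t\mapsto f(\alpha(s,t))$ must be continuous; this follows from the same device, since $d_h(\alpha(s,t),\alpha(s,t'))=|t-t'|$.
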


This is essentially the homeomorphism theorem of Marckert and Mokkadem \cite[Theorem 2.1]{MM}. Marckert and Mokkadem
impose the extra condition $\sigma=1$ for snake trajectories, and the similar condition for tree-like paths, but the proof is the same without this
condition. We mention that $\sigma(\omega)=\sigma_h$ if $(h,f)=\Delta(\omega)$. 

Let us briefly explain why Proposition \ref{homeo} is relevant to our purposes. Much of what follows is devoted to
studying the convergence of certain (random) snake trajectories. By Proposition \ref{homeo}, this convergence is
equivalent to that of the associated tree-like paths, which is often easier to establish.

\smallskip
\noindent{\it Remark}. Let $(h,f)$ be a tree-like path, and let $\omega$ be the associated snake trajectory. We already
noticed that $f$ can be viewed as a continuous function on the tree $\t_h$ coded by $\zeta$. 
The same holds for the mapping $s\mapsto \omega_s$. More precisely, for every
$s\geq 0$, and every $t\leq \zeta_s(\omega)=h(s)$, $\omega_s(t)$ is the value of $f$ at the
unique ancestor of $p_h(s)$ at distance $t$ from the root (recall that $d_h(\rho,p_h(s))=h(s)$). Thus the
finite path $\omega_s=(\omega_s(t))_{0\leq t\leq \zeta_s(\omega)}$ provides the values of $f$ along the ancestral line of
$p_h(s)$. We say that $\omega_s$ is the historical path of $p_h(s)$.

\begin{lemma}
\label{subtrajectory}
Let $\omega$ be a snake trajectory and $(h,f)=\Delta(\omega)$. Let $0< s<s' < \sigma(\omega)$ such that
$$h(s)=h(s')=\min_{s\leq r\leq s'} h(r).$$
Set, for every $r\geq 0$,
\begin{align*}
h'(r)&= h((s+r)\wedge s')-h(s)\\
f'(r)&=f((s+r)\wedge s').
\end{align*}
Then, $(h',f')$ is a tree-like path and the corresponding snake trajectory $\omega'=\Delta^{-1}(h',f')$ 
is called the subtrajectory of $\omega$ associated with the interval $[s,s']$.
\end{lemma}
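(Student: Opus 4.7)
The plan is to verify conditions (i) and (ii) of Definition \ref{def:treelikepaths} directly for the pair $(h',f')$, the only content of the lemma being that this pair lies in $\T$ (continuity of $h'$, $f'$ is inherited from that of $h$, $f$). The key observation that makes everything work is that, by hypothesis, $h(r)\geq h(s)$ for all $r\in[s,s']$, so $h'$ is nonnegative, and $h'(0)=h(s)-h(s)=0$ while $h'(r)=h(s')-h(s)=0$ for all $r\geq s'-s$; this yields $\sigma_{h'}\leq s'-s<\infty$ and takes care of (i).

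The substance is property (ii). Fix $0\leq r<r'$ with $h'(r)=h'(r')=\min_{r\leq t\leq r'}h'(t)$; I want to show $f'(r)=f'(r')$. I will split according to whether the truncation by $s'$ is active. If $r'\leq s'-s$, setting $u=s+r$ and $u'=s+r'$ gives $s\leq u<u'\leq s'$ and the assumption translates verbatim into $h(u)=h(u')=\min_{u\leq v\leq u'}h(v)$; since $(h,f)=\Delta(\omega)$ is itself a tree-like path, we get $f(u)=f(u')$, i.e.\ $f'(r)=f'(r')$. If on the other hand $s'-s<r\leq r'$, then $(s+r)\wedge s'=(s+r')\wedge s'=s'$, so $f'(r)=f'(r')=f(s')$ trivially.

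The only remaining case is $r\leq s'-s<r'$. Here $h'(r')=h(s')-h(s)=0$, and the equality $h'(r)=h'(r')$ together with $h'\geq 0$ forces $h'(r)=0$, i.e.\ $h(s+r)=h(s)$. But $s\leq s+r\leq s'$ and, by the hypothesis on $[s,s']$, $h(s)$ is the minimum of $h$ on that interval, so
\[
h(s+r)=h(s')=\min_{s+r\leq v\leq s'}h(v).
\]
Applying the tree-like property of $(h,f)$ on the pair $(s+r,s')$ yields $f(s+r)=f(s')$, i.e.\ $f'(r)=f'(r')$. This exhausts all cases and establishes (ii).

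There is no serious obstacle; the argument is just a careful translation of the tree-like property from $(h,f)$ to $(h',f')$, using in an essential way that the hypothesis $h(s)=h(s')=\min_{[s,s']}h$ guarantees $p_h(s)=p_h(s')$ in $\t_h$, so that the portion of $\omega$ indexed by $[s,s']$ corresponds to a genuine subtree rooted at this common point and the subtrajectory $\omega'=\Delta^{-1}(h',f')$ is well defined.
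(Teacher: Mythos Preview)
Your proof is correct and complete: the paper itself omits the proof of this lemma, calling it easy, and your verification of conditions (i) and (ii) of Definition~\ref{def:treelikepaths} via the three-case split is exactly the routine check that was left to the reader.
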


We omit the easy proof. The assumption of the lemma is equivalent to saying that $p_h(s)=p_h(s')$. Suppose in addition that
$\{r\geq 0: p_h(r)=p_h(s)\}=\{s,s'\}$. Then $u:=p_h(s)$ is a point of multiplicity $2$ of ${\rm Sk}(\t_h)$, and the 
subtree of descendants of $u$ is coded by $f'$. Furthermore the snake trajectory $\omega'$
describes the spatial positions of the descendants of $u$.

\smallskip
Let us finally introduce three useful operations on snake trajectories. The first one is just the obvious translation. If $a\in \R$ and
$\omega\in \S$, $\kappa_a(\omega)$ is obtained by adding $a$ to all paths $\omega_s$: In other words 
$\zeta_s(\kappa_a(\omega))=\zeta_s(\omega)$ and $\hat W_s(\kappa_a(\omega))=\hat W_s(\omega)+a$
for every $s\geq 0$.

The second operation is the {\it re-rooting operation}. Let $\omega$ be a snake trajectory and let
$(h,f)$ be the associated tree-like path. Fix $s\in[0,\sigma(\omega)]$. We will define a new
snake trajectory $R_s(\omega)$, which is more conveniently described in terms of its associated tree-like path $(h^{[s]},f^{[s]})=\Delta(R_s(\omega))$.
Roughly speaking, $h^{[s]}$ is the coding function for the tree $\t_h$ re-rooted at $p_h(s)$ (this is informal 
since the coding function of a tree is not unique) and $f^{[s]}$ describes the ``same function'' as $f$ but viewed on
the re-rooted tree. To make this more precise, we set 
for every $r\in[0,\sigma(\omega)]$,
$$h^{[s]}(r)= h({s \oplus r}) + h(s) - 2 \underset{s \wedge (s \oplus r) \leq t \leq s \vee (s\oplus r)} {\min} h(t),$$
where  $s \oplus r = s+r  \ \text{if} \ s+r \leq \sigma(\omega)$ and $s \oplus r =s+r-\sigma(\omega)$ otherwise.
We also set $h^{[s]}(r)=0$ if $r>\sigma(\omega)$.
Furthemore we set $f^{[s]}(r)=f(s \oplus r)$ if $r\in[0,\sigma(\omega)]$ and $f^{[s]}(r)=f(s)$ if $r>\sigma(\omega)$. See \cite[Lemma 2.2]{DLG}
for the fact that the mapping $[0,\sigma(\omega)]\ni r\mapsto s\oplus r$ induces an isometry from the tree $\t_{h^{[s]}}$ onto the tree $\t_h$ (this in particular implies that $(h^{[s]},f^{[s]})$ is a tree-like path), and \cite[Section 2.3]{LGWeill}
for more details about this re-rooting operation.

The third and last operation is the {\it truncation} of snake trajectories, which will be important in this
work. Roughly speaking, if $\omega\in \S_x$ and $y\not =x$, the truncation of $\omega$ at $y$ is the new snake trajectory
$\omega'$ such that the values $\omega'_s$ are exactly all values $\omega_s$ for $s$ such that $\omega_s$ does not hit $y$, or
hits $y$ for the first time at its lifetime. Let us give a more precise definition. First, for any $\w\in\W$ and $y\in\R$, we set
$$\tau_y(\w):=\inf\{t\in[0,\zeta_{(\w)}]: \w(t)=y\}\;,\quad \tau^*_y(\w):=\inf\{t\in(0,\zeta_{(\w)}]: \w(t)=y\}\;,$$
with the usual convention $\inf\varnothing =\infty$. Note that $\tau^*_y(\w)$ may be different from $\tau_y(\w)$
only if $\w(0)=y$, but this case will be important in what follows.

\begin{proposition}
\label{truncation}
Let $x,y\in \R$. Let $\omega\in \S_x$, and
for every $s\geq 0$, set
$$A_s(\omega)=\int_0^s \mathrm{d}r\,\mathbf{1}_{\{\zeta_r(\omega)\leq\tau^*_y(\omega_r)\}},$$
and
$$\eta_s(\omega)=\inf\{r\geq 0:A_r(\omega)>s\}.$$
Then setting $\omega'_s=\omega_{\eta_s(\omega)}$ for every $s\geq 0$ defines an element of $\S_x$,
which will be denoted by  $\omega'={\rm tr}_y(\omega)$ and called the truncation of $\omega$ at $y$.
\end{proposition}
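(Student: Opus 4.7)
The plan is to verify that $\omega'_s := \omega_{\eta_s}$ satisfies all the conditions of Definition \ref{def:snakepaths}: initial point $x$, finite duration, snake property and continuity. Note first that $A$ is continuous and nondecreasing, and $A_r \to \infty$ as $r \to \infty$: for $r > \sigma(\omega)$ the path $\omega_r = x$ is trivial with $\zeta_r = 0 \leq \infty = \tau^*_y(\omega_r)$, so the integrand is identically $1$. Hence $\eta_s$ is finite for every $s \geq 0$, it is the right-continuous inverse of $A$, and $\sigma(\omega') \leq A_{\sigma(\omega)} < \infty$. The snake property of $\omega'$ is inherited directly from that of $\omega$: for $0 \leq s \leq s'$ we have $\{\eta_u : s \leq u \leq s'\} \subseteq [\eta_s, \eta_{s'}]$, so
$$\min_{s \leq u \leq s'} \zeta_u(\omega') = \min_{s \leq u \leq s'} \zeta_{\eta_u}(\omega) \geq \min_{\eta_s \leq r \leq \eta_{s'}} \zeta_r(\omega),$$
and the snake property of $\omega$ applied at the times $\eta_s \leq \eta_{s'}$ forces $\omega'_s$ and $\omega'_{s'}$ to agree on $[0, \min_{s \leq u \leq s'} \zeta_u(\omega')]$, as required.

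The substantive point is continuity of $s \mapsto \omega_{\eta_s}$. Introduce the \emph{bad set} $B := \{r \geq 0 : \zeta_r(\omega) > \tau^*_y(\omega_r)\}$, i.e., the times at which $\omega_r$ reaches $y$ strictly before its tip. The snake property of $\omega$ shows that $B$ is open: if $r \in B$ and $t^* := \tau^*_y(\omega_r) < \zeta_r$, then for $r'$ close enough to $r$ one has $\min_{u \in [r \wedge r', r \vee r']} \zeta_u(\omega) > t^*$, so $\omega_{r'}(t^*) = \omega_r(t^*) = y$ and $\tau^*_y(\omega_{r'}) \leq t^* < \zeta_{r'}$, whence $r' \in B$. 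Write $B = \bigsqcup_i (\alpha_i, \beta_i)$. The map $\eta$ is continuous off the jump times $c_i = A_{\alpha_i} = A_{\beta_i}$, where it jumps from $\alpha_i$ to $\beta_i$; away from these points $\omega'$ is continuous by continuity of $\omega$, and continuity of $\omega'$ at each $c_i$ reduces to the key equality
$$\omega_{\alpha_i} = \omega_{\beta_i}.$$

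To prove this equality, I would first show that at each boundary point the path hits $y$ precisely at its tip, i.e., $\tau^*_y(\omega_{\alpha_i}) = \zeta_{\alpha_i}$ and $\hat{\omega}_{\alpha_i} = y$ (and similarly at $\beta_i$). Pick $r_n \in (\alpha_i, \beta_i)$ with $r_n \searrow \alpha_i$ and set $t_n := \tau^*_y(\omega_{r_n}) \in (0, \zeta_{r_n})$; since $\zeta_{r_n} \to \zeta_{\alpha_i}$, one may extract a subsequence with $t_n \to t^* \in [0, \zeta_{\alpha_i}]$, and passing to the limit in $\omega_{r_n}(t_n) = y$ via the joint continuity of path evaluation on $\W$ yields $\omega_{\alpha_i}(t^*) = y$; but $\alpha_i \notin B$ forbids any hitting of $y$ by $\omega_{\alpha_i}$ in $(0, \zeta_{\alpha_i})$, which forces $t^* = \zeta_{\alpha_i}$. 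Next set $m := \min_{[\alpha_i, \beta_i]} \zeta_r(\omega)$: if $m < \zeta_{\alpha_i}$ were attained at some $r^* \in (\alpha_i, \beta_i)$, then by the snake property $\omega_{r^*}$ would coincide with the initial segment of $\omega_{\alpha_i}$ of length $\zeta_{r^*}$, which by the previous step does not hit $y$ in $(0, \zeta_{r^*}]$, forcing $r^* \notin B$, a contradiction. Hence $m = \zeta_{\alpha_i} = \zeta_{\beta_i}$, and the snake property of $\omega$ gives $\omega_{\alpha_i} = \omega_{\beta_i}$. Finally, the initial condition $\omega'_0 = x$ follows from this continuity together with $\omega_0 = x$.

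The main obstacle is this continuity step: $\eta$ genuinely jumps across each component of $B$, and one must combine the openness of $B$ with repeated use of the snake property of $\omega$ to control both the behavior of the path at the precise endpoints $\alpha_i, \beta_i$ and the minimum of the lifetime across a bad interval. Everything else (the snake property, finiteness of $\sigma(\omega')$, and $\omega'_0 = x$) is essentially formal once this continuity claim is in hand.
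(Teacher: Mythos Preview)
Your continuity argument is essentially the same as the paper's: both establish the key structural fact that on each connected component $(\alpha_i,\beta_i)$ of the bad set $B$, one has $\zeta_r \geq \zeta_{\alpha_i}=\zeta_{\beta_i}$ for all $r\in[\alpha_i,\beta_i]$, and hence $\omega_{\alpha_i}=\omega_{\beta_i}$. However, your snake-property paragraph contains a genuine logical error. The displayed inequality
$$\min_{s\leq u\leq s'}\zeta_u(\omega')\ \geq\ \min_{\eta_s\leq r\leq \eta_{s'}}\zeta_r(\omega)$$
goes the \emph{wrong way} for what you need. The snake property of $\omega$ at the pair $(\eta_s,\eta_{s'})$ only gives agreement of $\omega'_s$ and $\omega'_{s'}$ on $[0,\min_{\eta_s\leq r\leq\eta_{s'}}\zeta_r]$; to conclude agreement on the larger interval $[0,\min_{s\leq u\leq s'}\zeta_u(\omega')]$ you need the reverse inequality. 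The fix is precisely the structural fact you prove in your continuity step: since $\zeta$ never dips below its boundary values on any component of $B$, the minimum of $\zeta$ over $[\eta_s,\eta_{s'}]$ is actually attained on the range of $\eta$, so the two minima are \emph{equal}. The paper makes this explicit (its displayed equation \eqref{key-truncation}) and then uses it both for continuity and for the snake property; you have the same ingredient but invoke it in the wrong order and with the inequality pointing the wrong direction.

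There is a second, smaller gap. Your sentence ``$\omega'_0=x$ follows from this continuity together with $\omega_0=x$'' is fine when $y\neq x$ (then $\eta_0=0$), but when $y=x$ it is possible that $\eta_0>0$, and the interval $(0,\eta_0)$ need not be a single component of $B$ --- it could meet $B^c$ in a measure-zero set, and your ``$\omega_{\alpha_i}=\omega_{\beta_i}$'' across individual components does not by itself force $\omega_{\eta_0}=\omega_0$ (think of a Cantor-staircase obstruction). The paper handles this case by a direct contradiction: if $\zeta_{\eta_0}>0$, the fact that $\omega_{\eta_0}$ avoids $x$ on $(0,\zeta_{\eta_0})$ together with the snake property produces a set of positive measure of $r\in(0,\eta_0)$ with $\tau^*_x(\omega_r)=\infty$, contradicting $A_{\eta_0}=0$. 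You should supply an argument of this type.
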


\begin{proof} First note that, by property (i) of the definition of a snake trajectory, we have 
$A_s(\omega)\to\infty $ as $s\to\infty$ (because $\zeta_r(\omega)\leq\tau^*_y(\omega_r)$ if $r\geq \sigma(\omega)$), and therefore $\eta_s(\omega)<\infty$ for every $s\geq 0$, so
that the definition of $\omega'$ makes sense. 

We need to verify that $\omega'\in\S_x$. To this end, we observe 
that the mapping $s\mapsto\eta_s(\omega)$ is right-continuous with left limits given by
$$\eta_{s-}(\omega)= \inf\{r\geq 0:A_r(\omega)=s\}\;,\quad\forall s>0.$$
To simplify notation, we write $\eta_s=\eta_s(\omega), \;\eta_{s-}=\eta_{s-}(\omega),\;A_s=A_s(\omega)$ and $\zeta_s=\zeta_s(\omega)$ in what follows.

We first verify the continuity of  the mapping $s\mapsto \omega'_s$.  
Let $s\geq 0$ such that $\zeta_{\eta_s}>0$. By the definition of $\eta_s$ there are values of $r>\eta_s$ arbitrarily close to $\eta_s$ such that 
$\zeta_r\leq \tau^*_y(\omega_r)$. Using the snake property, it then follows that the path $(\omega_{\eta_s}(t))_{0<t\leq \zeta_{\eta_s}}$ does
not hit $y$, or hits $y$ only at time $\zeta_{\eta_s}$ (notice that we excluded the value $t=0$ 
because of the particular case $y=x$, since we have trivially $\omega_{\eta_s}(0)=y$ in that case). Similarly, for every $s>0$
such that $\zeta_{\eta_{s-}}>0$, the path 
$(\omega_{\eta_{s-}}(t))_{0<t\leq \zeta_{\eta_{s-}}}$
does not hit $y$, or hits $y$ only at time $\zeta_{\eta_{s-}}$. 

Let $s>0$ be such that $\eta_{s-}<\eta_s$. The key observation is to note that 
\begin{equation}
\label{key-truncation}
\zeta_r\geq \zeta_{\eta_{s-}}=\zeta_{\eta_s}\;,\quad\forall r\in [\eta_{s-},\eta_s].
\end{equation}
In fact, suppose that \eqref{key-truncation} fails, so that certain values of $\zeta$
on the time interval $(\eta_{s-},\eta_s)$ are strictly smaller that $\zeta_{\eta_s}\vee \zeta_{\eta_{s-}}$.
Suppose for definiteness that $\zeta_{\eta_{s-}}\leq \zeta_{\eta_s}$ (the other case 
$\zeta_{\eta_{s-}}\geq \zeta_{\eta_s}$ is treated similarly).
Then
 we can find $r\in(\eta_{s-},\eta_s)$ such that 
$0<\zeta_r<\zeta_{\eta_s}$ and $\zeta_r=\min\{\zeta_u:u\in[r,\eta_s]\}$. By the snake property this means that $\omega_r$
is the restriction of $\omega_{\eta_s}$ to $[0,\zeta_r]$, and, since we know that $(\omega_{\eta_s}(t))_{0<t<\zeta_{\eta_s}}$
does not hit $y$, it follows that $\tau^*_y(\omega_r)=\infty$. Hence we have also $\tau^*_{y}(\omega_{r'})=\infty$, for all $r'$ sufficiently close to $r$, and therefore $A_{\eta_s}>A_{\eta_{s-}}$, which is a contradiction. 

The mapping $s\mapsto \omega_{\eta_s}$ is right-continuous and its left limit at $s>0$ is $\omega_{\eta_{s-}}$.
Property \eqref{key-truncation} and the snake property show that, for every $s$ such that $\eta_{s-}<\eta_s$,
we have $\omega_{\eta_{s-}}=\omega_{\eta_s}$, so that the mapping $s\mapsto \omega_{\eta_s}=\omega'_s$
is continuous. 

Furthermore, it also follows from \eqref{key-truncation} that, for every $s\leq s'$,
$$\min_{r\in[s,s']} \zeta_{\eta_r}=\min_{r\in[\eta_s,\eta_{s'}]} \zeta_r$$
and the snake property for $\omega'$ is a consequence of the same property for $\omega$.

We also need to verify that $\omega'_0=x$. This is immediate if
$y\not =x$ (because clearly $\eta_0=0$ in that case) but an argument is required in the case $y=x$,
which we consider now. It suffices to verify that $\zeta_{\eta_0}=0$. We argue by contradiction
and assume that $\zeta_{\eta_0}>0$, which implies that $\eta_0>0$. By previous observations, the path
$\omega_{\eta_0}$ does not hit $x$ during the time interval $(0,\zeta_{\eta_0})$. However, by the snake property again,
this implies that there is a set of positive Lebesgue measure of values of $r\in(0,\eta_0)$ such that
$\tau^*_x(\omega_r)=\infty$, which contradicts the definition of $\eta_0$. 

We finally notice that, for $s\geq \int_0^{\sigma(\omega)} \mathrm{d}r\,\mathbf{1}_{\{\zeta_r(\omega)\leq \tau^*_y(\omega)\}}$, we
have $\eta_s(\omega)\geq \sigma(\omega)$ and thus $\omega'_s=x$. This completes the proof of the property
$\omega'\in \S_x$.
\end{proof}

\noindent{\it Remark}. If $s>0$ is such that $\eta_{s-}<\eta_s$, and furthermore $\zeta_{\eta_s}>0$, then
we have $\tau^*_y(\omega_{\eta_s})=\zeta_s$. Indeed, 
since $A_{\eta_s}=A_{\eta_{s-}}=s$,
there exist values of $r<\eta_s$ arbitrarily close to $\eta_s$ such that 
$\tau^*_y(\omega_r)<\zeta_r$, and by the snake property it follows that we have $\hat \omega_{\eta_s}=y$.
Since we saw in the previous proof that $(\omega_{\eta_s}(t))_{0<t< \zeta_{\eta_s}}$ does not hit $y$, we get that $\tau^*_y(\omega_{\eta_s})=\zeta_s$.

\medskip

 The truncation operation
${\rm tr}_y$  is a measurable mapping from $\S_x$ into $\S_x$.  If $y\not =x$, and 
 if $\omega'={\rm tr}_y(\omega)$ is the truncation of a snake trajectory $\omega\in \S_x$, 
the paths $\omega'_s$ stay in $[y,\infty)$ (if $y<x$) or in $(-\infty,y]$ (if $y>x$) and can
only hit $y$ at their lifetime. 

The following lemma gives a simple continuity property of the truncation operations.

\begin{lemma}
\label{conti-trunc}
Let $\omega\in\S_0$ and $b<0$. Suppose that
$$\int_0^{\sigma(\omega)} \mathrm{d}s\,\mathbf{1}_{\{\tau_b(\omega_s)=\zeta_s(\omega)\}}=0.$$
Then, for any sequence $(b_n)_{n\geq 1}$ such that $b_n\downarrow b$ as $n\to\infty$, we have $\mathrm{tr}_{b_n}(\omega)\la \mathrm{tr}_{b}(\omega)$
in $\S$ as $n\to\infty$.
\end{lemma}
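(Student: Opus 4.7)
The plan is to reduce the convergence in $\S$ to a uniform control of the time-change functions $A^{(n)},\eta^{(n)}$ and $A,\eta$ from Proposition~\ref{truncation} associated with $b_n$ and $b$, exploiting the fact that $\mathrm{tr}_b(\omega)_s=\omega_{\eta_s}$ is a continuous function of $s$. Note first that $\omega\in\S_0$ and $b<0$ force $\omega_r(0)=0\neq b$, so $\tau_b=\tau^*_b$ and the hypothesis reads: the set $\{r:\zeta_r(\omega)=\tau^*_b(\omega_r)\}$ has Lebesgue measure zero.

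First I would establish uniform convergence of $A^{(n)}$ to $A$ on compacts. Since $b_n>b$ decreases to $b$, continuity of each path $\omega_r$ yields $\tau^*_{b_n}(\omega_r)\uparrow\tau^*_b(\omega_r)$, so the indicator $\mathbf{1}_{\{\zeta_r(\omega)\leq\tau^*_{b_n}(\omega_r)\}}$ is nondecreasing in $n$ and converges to $\mathbf{1}_{\{\zeta_r(\omega)\leq\tau^*_b(\omega_r)\}}$ off the null set singled out by the hypothesis. Monotone convergence gives $A^{(n)}_s\uparrow A_s$ pointwise, and since these are all Lipschitz of constant one, Dini's theorem upgrades this to uniform convergence on compacts; set $\varepsilon_n:=\sup_s(A_s-A^{(n)}_s)\to 0$.

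The crucial observation is that $\{r:\zeta_r(\omega)\leq\tau^*_{b_n}(\omega_r)\}\subset\{r:\zeta_r(\omega)\leq\tau^*_b(\omega_r)\}$, so at any point where $A^{(n)}$ is not right-flat, neither is $A$. Applied to $r=\eta^{(n)}_s$, which is by definition a right-non-flat point of $A^{(n)}$, this yields the identity $\eta^{(n)}_s=\eta(A(\eta^{(n)}_s))$. Moreover, combining $A^{(n)}(\eta^{(n)}_s)=s$ (by continuity of $A^{(n)}$) with $A^{(n)}\leq A\leq A^{(n)}+\varepsilon_n$ gives $A(\eta^{(n)}_s)\in[s,s+\varepsilon_n]$.

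Writing $\tilde\omega(s):=\omega_{\eta_s}=\mathrm{tr}_b(\omega)_s$, which is a continuous function of $s$ (as shown in the proof of Proposition~\ref{truncation}) defined on a bounded interval and hence uniformly continuous with some modulus $\tilde\psi$, we obtain $\mathrm{tr}_{b_n}(\omega)_s=\omega_{\eta^{(n)}_s}=\tilde\omega(A(\eta^{(n)}_s))$, so $d_\W(\mathrm{tr}_{b_n}(\omega)_s,\mathrm{tr}_b(\omega)_s)\leq\tilde\psi(\varepsilon_n)$ uniformly in $s$. Combined with $|\sigma(\mathrm{tr}_{b_n}(\omega))-\sigma(\mathrm{tr}_b(\omega))|=|A^{(n)}_{\sigma(\omega)}-A_{\sigma(\omega)}|\leq\varepsilon_n$, this gives $d_\S(\mathrm{tr}_{b_n}(\omega),\mathrm{tr}_b(\omega))\to 0$. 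The main subtle point is the identity $\eta^{(n)}_s=\eta(A(\eta^{(n)}_s))$ in the third paragraph, which sidesteps the obstruction that a naive ``sandwich'' $\eta^{(n)}_s\in[\eta_s,\eta_{s+\varepsilon_n}]$ would create near jump times of $\eta$.
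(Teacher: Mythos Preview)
Your proof is correct. The paper actually omits the proof of this lemma entirely (``We omit the easy proof of this lemma''), so there is nothing to compare against directly.

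Your argument is clean and complete. The monotonicity $\tau^*_{b_n}(\omega_r)\uparrow\tau^*_b(\omega_r)$ is straightforward from continuity of the paths and $b_n\downarrow b$, and the hypothesis is used exactly where needed to handle the boundary case $\zeta_r=\tau^*_b(\omega_r)$. The central identity $\eta^{(n)}_s=\eta(A(\eta^{(n)}_s))$ is the right idea: since the integrand defining $A^{(n)}$ is dominated by that of $A$, any point of right-increase for $A^{(n)}$ is one for $A$, and $\eta^{(n)}_s$ is such a point by construction. This neatly bypasses the difficulty that $\eta$ may jump. One small point worth noting (which you implicitly use in the last step): the equality $\sigma(\mathrm{tr}_y(\omega))=A^{(y)}_{\sigma(\omega)}$ holds because for $r$ close to $\sigma(\omega)$ the path $\omega_r$ is close to the trivial path at $0$ and therefore cannot reach $y<0$, so $A^{(y)}$ is strictly increasing near $\sigma(\omega)$; this is what guarantees the truncated trajectory does not ``die early''.
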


We omit the easy proof of this lemma.
We conclude this subsection with another lemma that 
will be useful in the proof of one of our main results. The proof is somewhat technical and may be omitted at
first reading. Recall the notation
$\underline\w=\min\{\w(t):0\leq t\leq \zeta_{(\w)}\}$ for $\w\in\W$. 

\begin{lemma}
\label{continuity-trunc}
Let $\omega\in \S$, and let $\omega'$ be a subtrajectory of $\omega$ associated with the interval
$[a,b]$. Assume that $\omega'\in\S_0$ and, for every $n\geq 1$, let $\omega^{(n)}$ be a subtrajectory of $\omega$ associated with the interval
$[a_n,b_n]$, such that $[a,b]\subset [a_n,b_n]$ for every $n\geq 1$ and $a_n\to a$, $b_n\to b$ as $n\to\infty$. 
Assume furthermore that  the following properties hold:
\begin{enumerate}
\item[(i)] $\omega_a(t)\geq 0$ for every $0\leq t\leq \zeta_{(\omega_a)}$;
\item[(ii)] for every $s\in(0,b-a)$, $\tau^*_0(\omega_s)\wedge
\zeta_{(\omega'_s)}>0$ and $\omega'_s(t)\geq 0$ for $0\leq t\leq \tau^*_0(\omega_s)\wedge
\zeta_{(\omega'_s)}$;
\item[(iii)] for every $s\in(0,b-a)$ such that $\zeta_{(\omega'_s)}>\tau^*_0(\omega'_s)$, we have $\underline\omega'_s<0$.
\end{enumerate}
Then, if $(\delta_n)_{n\geq 1}$ is any sequence of negative real numbers converging to $0$, we have 
$\mathrm{tr}_{\delta_n}(\omega^{(n)})\la \mathrm{tr}_{0}(\omega')$ in $\S$ as $n\to\infty$.
\end{lemma}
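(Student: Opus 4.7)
My plan is to use the homeomorphism of Proposition~\ref{homeo} to reduce the desired convergence $\mathrm{tr}_{\delta_n}(\omega^{(n)})\to\mathrm{tr}_0(\omega')$ in $\S$ to uniform convergence of the associated tree-like paths $(\zeta,\hat W)$. Write $h=\zeta(\omega)$ and $\epsilon_n:=h(a)-h(a_n)$, which tends to $0$ by continuity of $h$ at $a$. First I will establish a structural comparison of $\omega^{(n)}$ with $\omega'$: using the snake property together with the identities $h(a_n)=\min_{[a_n,b_n]}h$ and $h(a)=\min_{[a,b]}h$, I will show that for $s$ in the \emph{main interval} $[a-a_n,b-a_n]$, setting $s':=s-(a-a_n)\in[0,b-a]$, the path $\omega^{(n)}_s$ is the concatenation of an initial ``head'' $t\mapsto\omega_a(h(a_n)+t)$ on $[0,\epsilon_n]$ (the same finite path for all such $s$, and $\geq 0$ throughout by hypothesis~(i)) followed by the body $\omega'_{s'}$ shifted by $\epsilon_n$. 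On the complementary \emph{prelude} $[0,a-a_n]$ and \emph{postlude} $[b-a_n,b_n-a_n]$, joint continuity of the map $(r,u)\mapsto\omega_r(u\wedge\zeta_{(\omega_r)})$ near $r=a$ and $r=b$ forces $\omega^{(n)}_s$ to converge uniformly to the trivial path at $0$, while the lengths of these intervals themselves tend to $0$.

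Combining hypotheses~(ii) and~(iii), each path $\omega'_{s'}$ with $s'\in(0,b-a)$ falls into exactly one of two types: type~(a), $\zeta_{(\omega'_{s'})}\leq\tau^*_0(\omega'_{s'})$, in which case $\omega'_{s'}\geq 0$ throughout and $\mathrm{tr}_0$ retains the path; or type~(b), $\zeta_{(\omega'_{s'})}>\tau^*_0(\omega'_{s'})$ with $\underline{\omega'_{s'}}<0$, in which case $\mathrm{tr}_0$ discards the path. For the corresponding $s$ in the main interval of $\omega^{(n)}$: in type~(a) we have $\omega^{(n)}_s\geq 0>\delta_n$, so $\tau^*_{\delta_n}(\omega^{(n)}_s)=\infty$ and $\mathrm{tr}_{\delta_n}$ keeps the path; in type~(b) with $\underline{\omega'_{s'}}<\delta_n$, the body hits $\delta_n$ at some time strictly before $\zeta_{(\omega'_{s'})}$, and the nonnegativity of the head together with $\delta_n<0\leq\omega^{(n)}_s(0)$ yields $0<\tau^*_{\delta_n}(\omega^{(n)}_s)<\zeta_s(\omega^{(n)})$, so the path is discarded. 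The only discrepancy concerns type~(b) paths with $\delta_n<\underline{\omega'_{s'}}<0$; since $\underline{\omega'_{s'}}<0$ for every type~(b) index, the indicator $\mathbf{1}_{\{\delta_n<\underline{\omega'_{s'}}<0\}}$ tends pointwise to $0$, and dominated convergence shows that the Lebesgue measure of this borderline set vanishes as $n\to\infty$.

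It follows that the time-change
$A^{(n)}(s):=\int_0^s\mathbf{1}_{\{\zeta_r(\omega^{(n)})\leq\tau^*_{\delta_n}(\omega^{(n)}_r)\}}\,\mathrm{d}r$
converges uniformly, after the shift by $a-a_n$, to the analogous time-change $A$ associated with $\mathrm{tr}_0(\omega')$, with error bounded by $|a-a_n|+|b_n-b|$ plus the vanishing borderline measure. Combined with the structural decomposition of the previous paragraphs and the uniform continuity of $\omega$ on compact subsets of its time-axis, this delivers uniform convergence of the lifetime and tip functions of the truncated snake trajectories; Proposition~\ref{homeo} then concludes the proof. The main obstacle is precisely the passage from the pointwise identification of kept/discarded paths to uniform control of the time-change and hence of the truncated trajectory; this is resolved by the borderline-measure estimate, which is the essential use of hypothesis~(iii).
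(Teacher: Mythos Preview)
Your setup and the paper's are close: both reduce to analyzing the time-changes $A^{(n)}$ and $A'$, both exploit the dichotomy between paths of type~(a) (kept) and type~(b) (discarded) that hypotheses~(ii) and~(iii) provide, and both use the head/body decomposition of $\omega^{(n)}_s$ on the main interval. Your borderline-measure estimate is essentially the paper's argument for $A^{(n)}_s\to A'_s$, stated in integrated form.

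The gap is in your final paragraph. Uniform convergence of $A^{(n)}$ to $A'$ does \emph{not} by itself deliver uniform convergence of the truncated trajectories $s\mapsto\omega^{(n)}_{\eta^{(n)}_s}$, and neither your structural decomposition nor uniform continuity of $\omega$ fills this in. The difficulty sits at the jump points of the right-continuous inverse $\eta'$. If $\eta'_{s_\infty-}<\eta'_{s_\infty}$, then $\eta^{(n)}_{s_n}$ can accumulate at any $r\in[\eta'_{s_\infty-},\eta'_{s_\infty}]$, and on that interval the lifetime $\zeta_r(\omega')$ is \emph{not} constant: by the key property~\eqref{key-truncation} one has $\zeta_r(\omega')\geq\zeta_{\eta'_{s_\infty}}(\omega')$, with strict inequality possible. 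So the composed lifetime and tip functions could a priori fail to converge at such $s_\infty$. The paper closes this with an extra argument: if $r$ is an accumulation point of $\eta^{(n)}_{s_n}$ with $\zeta_r(\omega')>\zeta_{\eta'_{s_\infty}}(\omega')$, then $\omega'_r$ extends $\omega'_{\eta'_{s_\infty}}$ past its return time to~$0$, and hypothesis~(iii) forces $\underline{\omega'_r}<0$; but $\tilde\omega^{(n)}_{s_n}=\omega^{(n)}_{\eta^{(n)}_{s_n}}$ takes values in $[\delta_n,\infty)$ with $\delta_n\to 0$, so $\omega'_r$ cannot be a limit point of this sequence. This contradiction rules out the bad~$r$ and yields $\omega'_r=\omega'_{\eta'_{s_\infty}}$ for every admissible accumulation point, which is what gives uniform convergence. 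Your proof invokes hypothesis~(iii) only to control the Lebesgue measure of the borderline set (hence $A^{(n)}$), but this second, pathwise use of~(iii) on the jump intervals of $\eta'$ is where the real obstruction lies and must be addressed.
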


\begin{proof} The first step is to verify that $\omega^{(n)}$ converges to $\omega'$ in $\S$. To this end,
let $(h,f)$ be the tree-like path associated with $\omega$, and
notice that the tree-like path associated with $\omega^{(n)}$ is 
$(h^{(n)},f^{(n)})$, with $h^{(n)}(r)=h((a_n+r)\wedge b_n)-h(a_n)$ and 
$f^{(n)}(r)=f((a_n+r)\wedge b_n)$. From the convergences $a_n\to a$, $b_n\to b$, it immediately follows that the pair $(h^{(n)},f^{(n)})$
converges to the tree-like path $(h',f')$ associated with $\omega'$, and Proposition \ref{homeo} 
implies that $\omega^{(n)}$ converges to $\omega'$.

We also note that, for every $n\geq 1$, we have $f^{(n)}(0)=f(a_n)=\omega_{a_n}(h(a_n))= \omega_a(h(a_n))$, 
where the last equality holds because 
$p_h(a_n)$ is an ancestor of $p_h(a)$. Using (i), we get that $f^{(n)}(0)\geq 0$. By preceding remarks, we know that the
paths of $\mathrm{tr}_{\delta_n}(\omega^{(n)})$ stay in $[\delta_n,\infty)$.  

Set $\tilde \omega^{(n)}=\mathrm{tr}_{\delta_n}(\omega^{(n)})$
and $\tilde \omega'= \mathrm{tr}_{0}(\omega')$ to simplify notation.
Then set, for every $s\geq 0$,
$$A^{(n)}_s:=\int_0^s \mathrm{d}r\,\mathbf{1}_{\{ h^{(n)}(r)\leq \tau^*_{\delta_n}(\omega^{(n)}_r)\}},\quad
A'_s:=\int_0^s \mathrm{d}r\,\mathbf{1}_{\{ h'(r)\leq \tau^*_0(\omega'_r)\}},$$
and 
$$\eta^{(n)}_s:=\inf\{r\geq 0: A^{(n)}_r>s\},\quad \eta'_s:=\inf\{r\geq 0:A'_r>s\},$$
in such a way that $\tilde \omega^{(n)}_{s}=\omega^{(n)}_{\eta^{(n)}_s}$ and 
$\tilde \omega'_s=\omega'_{\eta'_s}$ by the definition of truncations. We observe that, for every
$s\geq 0$, we have
\begin{equation}
\label{trunc-tech}
A^{(n)}_s\build{\la}_{n\to\infty}^{} A'_s.
\end{equation}
To see this, note that, for $r\in[a,b]$, the paths $\omega_r$ are the same as $\omega_a$
up to time $h(a)=\zeta_a(\omega)$, and thus stay nonnegative on the time interval $[0,h(a)]$
by assumption (i). From our definitions, it follows that the paths $\omega^{(n)}_{a-a_n+r}$, for
$0\leq r\leq b-a$, stay nonnegative up to time $h(a)-h(a_n)\geq 0$. Then, for
$r\in [0,b-a]$, we have $\omega'_r(\cdot)=\omega^{(n)}_{a-a_n+r}(h(a)-h(a_n)+\cdot)$, and by (ii) we get that,
if  $h'(r)\leq \tau^*_0(\omega'_r)$, the path $\omega^{(n)}_{a-a_n+r}$ does not hit
$\delta_n<0$ between times $h(a)-h(a_n)$ and $h^{(n)}(a-a_n+r)$. Hence, 
we have, for every $r\in[0,b-a]$,
$$\mathbf{1}_{\{ h'(r)\leq \tau^*_0(\omega'_r)\}}\leq 
\mathbf{1}_{\{ h^{(n)}(a-a_n+r)\leq \tau^*_{\delta_n}(\omega^{(n)}_{a-a_n+r})\}}.$$
It follows that $A'_s\leq A^{(n)}_{a-a_n+s}\leq A^{(n)}_s + (a-a_n)$, which  implies
$$\liminf_{n\to\infty} A^{(n)}_s \geq A'_s,$$
for every $s\geq 0$. Conversely, we claim that, for every $r\in(0,b-a)$,
$$\limsup_{n\to\infty} \mathbf{1}_{\{ h^{(n)}(r)\leq \tau^*_{\delta_n}(\omega^{(n)}_r)\}}
\leq \mathbf{1}_{\{ h'(r)\leq \tau^*_0(\omega'_r)\}}.$$
Indeed, if $\tau^*_0(\omega'_r)<h'(r)$, then assumption (iii) implies that
$\omega'_r$ takes negative values before its lifetime. From the convergence 
of $\omega^{(n)}_r$ to $\omega'_r$, we get that we must have $\tau^*_{\delta_n}(\omega^{(n)}_r)<h^{(n)}(r)$
for $n$ large, proving our claim. The claim now gives
$$\limsup_{n\to\infty} A^{(n)}_s \leq A'_s,$$
completing the proof of \eqref{trunc-tech}. Notice that  \eqref{trunc-tech} also
implies that $A^{(n)}_{b_n-a_n} \la A'_{b-a}$, from which one gets that
$\sigma(\tilde\omega^{(n)})\la \sigma(\tilde \omega')$, noting that $\sigma(\tilde\omega')=A'_{b-a}$ as a consequence of (ii)
(if $0<s<A'_{b-a}$, $\tilde \omega'_s=\omega'_{\eta'_s}$ is not a trivial path by (ii)
and the fact that $0<\eta'_s<b-a$).

It follows from \eqref{trunc-tech} that we have $\eta^{(n)}_s\la \eta'_s$, and 
consequently $\tilde\omega^{(n)}_s \la \tilde\omega'_s$, as $n\to\infty$, for
every $s\geq 0$ such that $\eta'_s=\eta'_{s-}$. To see that this implies the
uniform convergence of $\tilde\omega^{(n)}$ toward $\tilde\omega'$, we argue
by contradiction. Suppose that this uniform convergence does not hold, so that
(modulo the extraction of a subsequence of $(\tilde \omega^{(n)})_{n\geq 1}$)
we can find a sequence $(s_n)_{n\geq 1}$ and a real $\xi>0$ such that,
for every $n$,
\begin{equation}
\label{contradict-trunc}
d_\S(\tilde\omega^{(n)}_{s_n},\tilde\omega'_{s_n})>\xi.
\end{equation}
Since both $\tilde\omega^{(n)}_{r}$ and $\tilde\omega'_{r}$ are constant
(and equal to a trivial path) when $r\geq \sigma(\omega)$, we can assume
that $s_n\in[0,\sigma(\omega)]$ for every $n$ and then, modulo the
extraction of  a subsequence, that $s_n\la s_\infty$ as $n\to\infty$. We must then 
have $\eta'_{s_\infty-}<\eta'_{s_\infty}$ because otherwise \eqref{trunc-tech} 
would imply that $\eta^{(n)}_{s_n}\la \eta_{s_\infty}$
and therefore $\tilde\omega^{(n)}_{s_n}\la \tilde \omega'_{s_\infty}$,
contradicting \eqref{contradict-trunc}. 
We can also assume that $0<s_\infty<\sigma(\tilde\omega')$, and therefore
$0<\eta'_{s_\infty}<b-a$, since it follows from
assumption (ii) that $\eta'$ is continuous at $\sigma(\tilde \omega')=A'_{b-a}$
(if $0<s<b-a$, property (ii) and the snake property imply that the interval $[s,b-a]$
contains a set of positive Lebesgue measure of values of $r$ such that
$\tau^*_0(\omega(r))=\infty$, and this is what we need to get the latter
continuity property).
Also notice that (ii) implies $h'(r)>0$ for $0<r<b-a$ and consequently 
$h'(\eta'_r)>0$ for $0<r<\sigma(\tilde \omega')$. 

From \eqref{trunc-tech}, we get that any 
accumulation point of the sequence $(\eta^{(n)}_{s_n})_{n\geq 1}$ must lie in the 
interval $[\eta'_{s_\infty-},\eta'_{s_\infty}]$. We claim that for any such accumulation point $r$ we have $\omega'_r=\omega'_{\eta'_{s_\infty}}$.
This implies that $\tilde\omega^{(n)}_{s_n}=\omega^{(n)}_{\eta^{(n)}_{s_n}}$
converges to $\omega'_{\eta'_{s_\infty}}=\tilde \omega'_{s_\infty}$ and contradicts \eqref{contradict-trunc}.
To verify our claim, let $r\in [\eta'_{s_\infty-},\eta'_{s_\infty}]$   be an accumulation point of 
the sequence $(\eta^{(n)}_{s_n})_{n\geq 1}$. By property \eqref{key-truncation}
in the proof of Proposition \ref{truncation}, we know that the path $\omega'_r$
coincides with $\omega'_{\eta'_{s_\infty}}$ up to $h'(\eta'_{s_\infty})=\tau^*_0(\omega'_{\eta'_{s_\infty}})$
(the last equality by the remark following Proposition \ref{truncation}).
However, $h'(r)>h'(\eta'_{s_\infty})$ is impossible since assumption (iii) would imply that
$\omega'_r$ takes negative values and cannot be an accumulation point of the sequence 
$\tilde\omega^{(n)}_{s_n}$ (because $\tilde\omega^{(n)}_{s_n}$ takes values in $[\delta_n,\infty)$
and $\delta_n$ tends to $0$ as $n\to\infty$). Therefore 
we have $h'(r)=h'(\eta'_{s_\infty})$ meaning that $\omega'_r=\omega'_{\eta'_{s_\infty}}$
as desired. This completes the proof. 
\end{proof}

\subsection{The Brownian snake}
In this section we discuss the (one-dimensional) Brownian snake excursion measures. We
avoid defining the Brownian snake starting from a general initial value (which
is briefly presented in the Introduction above) as this definition is not required in what
follows, except in the proof of one technical lemma (Lemma \ref{no-local-minimum}) which the 
reader can skip at first reading.

Let $h:\R_+\to\R_+$ satisfy the assumptions of Section \ref{coding}
(including assumptions (i)--(iii) from  the end of this subsection) and also assume that $h$
is H\"older continuous with exponent $\delta$
for some $\delta>0$. Let $(G^h_s)_{s\geq 0}$ be the centered real Gaussian process with covariance 
\begin{equation}
\label{cov-snake}
{\rm cov}(G^h_s,G^h_t)=\min_{s\wedge t\leq r\leq s\vee t} h(r),
\end{equation}
for every $s,t\geq 0$. We leave it as an exercise to verify that the right-hand side of \eqref{cov-snake}
is a covariance function (see Lemma 4.1 in \cite{LGM}).
Note that we have then 
\begin{equation}
\label{dist-snake}
E[(G^h_s-G^h_t)^2]=d_h(s,t).
\end{equation}
An application of the classical Kolmogorov lemma shows that $(G^h_s)_{s\geq 0}$ has 
a continuous modification, which we consider from now on. Then
property \eqref{dist-snake} entails that, for every fixed $0\leq s\leq t$ such that $d_h(s,t)=0$,
we have $P(G^h_s=G^h_t)=1$. A continuity argument, using the assumptions satisfied by $h$, then shows that,
a.s., for every $0\leq s\leq t$, the property $d_h(s,t)=0$
implies $G^h_s=G^h_t$. This means that apart from a set of probability $0$ which we may discard,
the pair $(h,G^h)$ is a (random) tree-like path in the sense of the preceding subsection.

The (one-dimensional) Brownian snake driven by $h$ is the random snake trajectory $W^h=(W^h_s)_{s\geq 0}$
associated with the tree-like path $(h,G^h)$. 
We write $\mathbf{P}_h(\mathrm{d}\omega)$ for the law of $W^h$ on the space $\S_0$.

We next randomize $h$: We let $\mathbf{n}(\mathrm{d}h)$ stand for It\^o's excursion measure
of positive excursions of linear Brownian motion (see e.g. \cite[Chapter XII]{RY}) normalized so
that, for every $\ve >0$,
$$\mathbf{n}\Big(\max_{s\geq 0} h(s) >\ve\Big) = \frac{1}{2\ve}.$$
Notice that $\mathbf{n}$ is supported on functions $h$ that satisfy the assumptions required above to
define $W^h$ and the probability measure $\mathbf{P}_h(\mathrm{d}\omega)$.
The Brownian snake excursion measure $\N_0$ is then the $\sigma$-finite measure on $\S_0$
defined by
$$\N_0(\mathrm{d}\omega)= \int \mathbf{n}(\mathrm{d}h)\, \mathbf{P}_h(\mathrm{d}\omega).$$
In other words, the ``lifetime process'' $(\zeta_s)_{s\geq 0}$ is distributed under $\N_0(\mathrm{d}\omega)$
according to It\^o's measure $\mathbf{n}(\mathrm{d}h)$, and, conditionally on $(\zeta_s)_{s\geq 0}$, 
$(W_s)_{s\geq 0}$ is distributed as the Brownian snake driven by $(\zeta_s)_{s\geq 0}$.
The reader will easily check that the preceding definition of $\N_0$ is consistent 
with the slightly different presentation given in the Introduction above
(see \cite{livrevert} for more details about the Brownian snake).
For every $x\in \R$, we also define $\N_x$ as the measure on $\S_x$ which is the image of $\N_0$ under the translation $\kappa_x$. 

Let us recall the first-moment formula for the Brownian snake \cite[Section IV.2]{livrevert}. For 
every nonnegative measurable function $\phi$ on $\W$,
\begin{equation}
\label{first-moment}
\N_x\Big(\int_0^\sigma \mathrm{d}s\,\phi(W_s)\Big) = \E_x\Big[\int_0^\infty \mathrm{d}t\,\phi\big((B_r)_{0\leq r\leq t}\big)\Big],
\end{equation}
where $B=(B_r)_{r\geq 0}$ stands for a linear Brownian motion starting from $x$ under the 
probability measure $\P_x$. Here we recall that $\N_x$ is a measure on $\S_x$, and so the duration $\sigma$
is well-defined under $\N_x$ as in Definition \ref{def:snakepaths}.

We define the range $\mathcal{R}$ by
$$\mathcal{R}:=\{\hat W_s:s\geq 0\}=\{W_s(t):s\geq 0, 0\leq t\leq \zeta_s\},$$
and we set
$$W_*:=\min\mathcal{R}.
$$
Then, if $x,y\in \R$ and $y<x$, we have
\begin{equation}
\label{law-mini}
\N_x(W_*\leq y)=\frac{3}{2(x-y)^2}.
\end{equation}
See e.g. \cite[Section VI.1]{livrevert}. 

\subsection{Exit measures and the special Markov property}
\label{exitSMP}
In this section, we briefly describe a key result of \cite{snakesolutions} that plays a crucial role in
the present work.
Let $U$ be a nonempty open interval of $\R$, such that $U\not =\R$. For any $\w\in \W$, set
$$\tau^U(\w):=\inf\{t\in[0,\zeta_{(\w)}]: \w(t)\notin U\}.$$
 If $x\in U$, the limit 
 \begin{equation}
 \label{limit-exit}
 \langle \z^U,\phi\rangle = \lim_{\ve\to 0} \frac{1}{\ve} \int_0^\sigma \mathrm{d}s\,\mathbf{1}_{\{\tau^U(W_s)<\zeta_s<\tau^U(W_s)+\ve\}}\,\phi(W_s(\tau^U(W_s)))
 \end{equation}
 exists $\N_x$ a.e. for any function $\phi$ on $\partial U$ and defines a
 finite random measure $\z^U$ supported on $\partial U$
(see  \cite[Chapter V]{livrevert}). Notice that here $\partial U$ has at most two points, but the 
preceding definition holds in the same form for the Brownian snake in higher dimensions with an arbitrary open set $U$.
Informally, the measure $\z^U$ ``counts'' the exit points of the paths $W_s$ from $U$, for those values
of $s$ such that $W_s$ exits $U$. In particular, $\z^U=0$ if none of the paths
$W_s$ exits $U$. 

Exit measures are needed to state the
so-called special Markov property. 
Before stating this property, we  introduce the excursions outside $U$
of a snake trajectory.
We fix $x\in U$ and we let $\omega\in \S_x$. We observe that the set
$$\{s\geq 0: \tau^U(\omega_s)<\zeta_s\}$$
is open and can therefore be written as a union of disjoint open intervals
$(a_i,b_i)$, $i\in I$, where $I$ may be empty. From the fact that $\omega$
is a snake trajectory, it is not hard to verify that we must have $p_\zeta(a_i)=p_\zeta(b_i)$ for every $i\in I$,
where $p_\zeta$ is the canonical projection from $\R_+$ onto
the tree $\t_\zeta$ coded by $(\zeta_s(\omega))_{s\geq 0}$. Furthermore the path 
$\omega_{a_i}=\omega_{b_i}$ exits $U$ exactly at its lifetime $\zeta_{a_i}=\zeta_{b_i}$.
We can then define the excursion $\omega_i$, for every $i\in I$, as the subtrajectory of $\omega$ associated with the interval $[a_i,b_i]$
(equivalently $W_s(\omega_i)$
is the finite path $(\omega_{(a_i+s)\wedge b_i}(\zeta_{a_i}+t))_{0\leq t\leq \zeta^i(s)}$
with lifetime $\zeta^i(s)=\zeta_{(a_i+s)\wedge b_i}-\zeta_{a_i}$, for every $s\geq 0$). 
The $\omega_i$'s are the ``excursions'' of the snake trajectory $\omega$ outside $U$
-- the word ``outside'' is a little misleading here, because although these excursions start from $\partial U$, they will 
typically come back inside $U$. We define the point measure of excursions of $\omega$
outside $U$ by
$$\mathcal{P}^U(\omega):=\sum_{i\in I} \delta_{\omega_i}.$$

We also need to define the $\sigma$-field on $\S_x$ containing the information given by the paths
$\omega_s$ before they exit $U$. To this end we generalize a little the definition of truncations in Section \ref{statespace}.
If $\omega\in \S_x$, we set
$$\mathrm{tr}^U(\omega)_s:=\omega_{\eta^U_s}$$
where 
$$\eta^U_s:=\inf\{r\geq 0:\int_0^r \mathrm{d}t\,\mathbf{1}_{\{\zeta_t(\omega)\leq \tau^U(\omega_t)\}} >s\}.$$
Just as in Proposition \ref{truncation}, we can verify that this defines a measurable mapping
from $\S_x$ into $\S_x$. We define the $\sigma$-field  $\mathcal{E}^{U}_x$ on $\S_x$ as the
$\sigma$-field generated by this mapping and completed by the measurable sets of $\S_x$ of $\N_x$-measure $0$.

We can now state the special Markov property.

\begin{proposition}
\label{SMP}
Let $x\in U$. 
The random measure $\z^U$ is $\mathcal{E}^U_x$-measurable. Furthermore,
under the probability measure $\N_x(\cdot\mid\mathcal{R}\cap U^c\not =\varnothing)$, conditionally on $\mathcal{E}^{U}_x$, the point measure
$\mathcal{P}^U$
is Poisson with intensity 
$$\int \z^U(\mathrm{d}y)\,\N_y(\cdot).$$
\end{proposition}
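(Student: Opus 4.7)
The plan is to follow the standard Le Gall--Dynkin approach to the special Markov property for the Brownian snake: first establish that $\z^U$ is indeed $\mathcal{E}^U_x$-measurable, and then characterise the conditional law of $\mathcal{P}^U$ via its Laplace functional, using a discretisation and the strong Markov property of the snake.

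For the measurability of $\z^U$, I would work directly from the approximating formula \eqref{limit-exit}. At each parameter $s$, the integrand depends on $W_s$ only through its restriction to $[0,\tau^U(W_s)]$ and on the fact that $\zeta_s$ exceeds $\tau^U(W_s)$ by at most $\ve$. The truncated trajectory $\mathrm{tr}^U(\omega)$ retains precisely these sub-paths, encoding each exit point as the tip of a path that touches $\partial U$ at its lifetime. A time change then rewrites the approximating functional in \eqref{limit-exit} as a functional of $\mathrm{tr}^U(\omega)$, and passing to the limit $\ve \to 0$ yields the desired $\mathcal{E}^U_x$-measurability of $\z^U$.

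For the Poisson structure, the target is the Laplace identity
$$\N_x\Bigl( F\,\exp\bigl(-\langle\mathcal{P}^U,\Phi\rangle\bigr)\Bigr) = \N_x\Bigl( F\,\exp\bigl(-\langle\z^U,\N_\cdot(1-e^{-\Phi})\rangle\bigr)\Bigr),$$
for nonnegative $\mathcal{E}^U_x$-measurable $F$ and nonnegative measurable $\Phi:\S\to\R_+$ that vanishes on snake trajectories of diameter less than some $\delta>0$ (by \eqref{law-mini} and its symmetric counterpart, this restriction ensures $\N_y(1-e^{-\Phi})<\infty$ for every $y$). The strategy is to enumerate in depth-first order the excursions outside $U$ of diameter exceeding some cutoff $\alpha>0$ --- there are only finitely many such excursions, $\N_x$-a.e.\ on $\{\mathcal{R}\cap U^c\neq\varnothing\}$ --- and to apply the strong Markov property of the snake recursively at the start of each such excursion, peeling them off one at a time. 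Each step contributes an independent factor involving $\N_{y_i}$, where $y_i = W_{a_i}(\tau^U(W_{a_i}))$ is the exit point. This produces the finite-$\alpha$ analogue of the identity, and monotone convergence as $\alpha\to 0$, together with the approximation of $\z^U$ provided by \eqref{limit-exit}, delivers the full formula.

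The principal obstacle is implementing this recursive discretisation rigorously. The times $a_i$ at which the excursions outside $U$ begin are not stopping times of the snake's natural filtration, because they are defined by a global analysis of the trajectory. A workable substitute is to define, recursively in $k$, the start of the $k$-th excursion of diameter exceeding $\alpha$ (in depth-first order) as a stopping time of the filtration enlarged by the previously identified excursion intervals. One must then verify that the recursive construction is well-defined, that the snake's strong Markov property applies at each step in such a way that the subtrajectories of $\omega$ corresponding to these intervals are conditionally independent copies of $\N_{y_i}$, and finally that the independence structure and the $\mathcal{E}^U_x$-measurability persist in the limit $\alpha\to 0$; these verifications account for the technical bulk of the proof.
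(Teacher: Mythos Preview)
The paper does not supply its own proof of this proposition: it simply refers to \cite[Theorem 2.4]{snakesolutions}. Your outline is essentially the standard Le Gall argument from that reference --- measurability of $\z^U$ via an approximation by functionals of the truncated snake, followed by a Laplace-functional computation driven by the strong Markov property and excursion-theoretic bookkeeping --- so at the level of strategy you are aligned with the cited source.

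There is, however, a genuine slip in your measurability step. You propose to time-change the approximating integral in \eqref{limit-exit} into a functional of $\mathrm{tr}^U(\omega)$. But the integrand $\mathbf{1}_{\{\tau^U(W_s)<\zeta_s<\tau^U(W_s)+\ve\}}$ is supported precisely on times $s$ for which $\zeta_s>\tau^U(W_s)$, and these are exactly the times \emph{removed} by the truncation time change (the definition of $\eta^U$ integrates $\mathbf{1}_{\{\zeta_t\leq\tau^U(\omega_t)\}}$). So the integral in \eqref{limit-exit} is not a functional of $\mathrm{tr}^U(\omega)$ after any time change; that approximation lives on the excised part of the trajectory. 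What survives in $\mathrm{tr}^U(\omega)$ is the collection of paths that hit $\partial U$ exactly at their lifetime, i.e.\ the tips that touch the boundary. To extract $\z^U$ from the truncation one needs a \emph{different} approximation based on those tips --- for instance, the paper's Lemma \ref{approx}, which in the case $U=(y,\infty)$ uses $\ve^{-2}\int_0^\sigma \mathbf{1}_{\{\zeta_s\leq\tau_y(W_s),\,\hat W_s<y+\ve\}}\,\mathrm{d}s$, an integrand that is manifestly a function of $\mathrm{tr}_y(\omega)$. Your measurability argument should be rerouted through an approximation of this type rather than through \eqref{limit-exit}.

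Your treatment of the Poisson part is accurate in spirit, including your honest identification of the stopping-time difficulty. In the original proof this is handled not by a recursive peeling at successive excursion starts but via excursion theory for the process $s\mapsto\zeta_s-\tau^U(W_s)\wedge\zeta_s$ above $0$: one conditions on the path $\mathrm{tr}^U(\omega)$ and uses the snake's transition mechanism to see that, given the exit data, the subtrajectories over the excursion intervals are independent with laws $\N_{y_i}$. This sidesteps the need to realise the $a_i$ as stopping times of an enlarged filtration. Your recursive scheme can be made to work, but it is more laborious than the excursion-theoretic route.
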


See \cite[Theorem 2.4]{snakesolutions} for a proof in a
much more general setting.
Note that, on the event $\{\mathcal{R}\cap U^c =\varnothing\}$, there are no excursions outside $U$, and this is the reason
why we restrict our attention to the event $\{\mathcal{R}\cap U^c \not =\varnothing\}$, which has finite $\N_x$-measure 
by \eqref{law-mini} (in fact, since $\z^U=0$ on $\{\mathcal{R}\cap U^c =\varnothing\}$, we could as well give a statement 
similar to Proposition \ref{SMP} without conditioning). 

\subsection{The exit measure process}
\label{sec:exitprocess}

We now specialize the discussion of the previous subsection to the case $U=(y,\infty)$ and $x>y$. 
The exit measure $\z^{(y,\infty)}$ is then a random multiple
of the Dirac mass at $y$, and is determined by its total mass, which will be denoted
by $\z_y=\langle \z^{(y,\infty)},1\rangle
$. We have
$$\{\z_y>0\}=\{W_*<y\}=\{W_*\leq y\}\;,\quad \N_x\hbox{ a.e.}$$
Note that the identity $\{W_*<y\}=\{W_*\leq y\}$, $\N_x$ a.e., follows from the fact that the 
right-hand side of \eqref{law-mini} is a continuous function of $y$. The fact that $\{\z_y>0\}=\{W_*<y\}$, $\N_x$ a.e., 
can then be deduced from the special Markov property (Proposition \ref{SMP}).

The Laplace transform of $\z_y$ under $\N_x$ can be computed from the connections between 
exit measures and semilinear partial differential equations \cite[Chapter V]{livrevert}. For every
$\lambda >0$,
\begin{equation}
\label{Laplace-exit}
\N_x(1-\exp(-\lambda \z_y))= \Big(\lambda^{-1/2} + \sqrt{\frac{2}{3}}\,(x-y)\Big)^{-2}.
\end{equation}
See formula (6) in \cite{Hull} for a brief justification. Note that letting $\lambda\to\infty$
in \eqref{Laplace-exit} is consistent with \eqref{law-mini}. A consequence of \eqref{Laplace-exit}
is the fact that
\begin{equation}
\label{first-moment-exit}
\N_x(\z_y)=1.
\end{equation}

Let us discuss Markovian properties of the process of
exit measures. If $y'<y<x$, an application of the special Markov property combined with
formula \eqref{Laplace-exit} gives on the event $\{W_*\leq y\}$, for every $\lambda >0$,
$$\N_x\Big(\exp-\lambda\z_{y'}\,\Big|\, \mathcal{E}^{(y,\infty)}_x\Big)
= \exp \Big(-\z_y\,\N_y(1-\exp(-\lambda \z_{y'}))\Big) = \exp- \z_y \Big(\lambda^{-1/2} + \sqrt{\frac{2}{3}}\,(y-y')\Big)^{-2}.$$
It follows that the process $(\z_{x-a})_{a>0}$ is Markovian under $\N_x$, with the transition kernels of the
continuous-state branching process with branching mechanism $\psi(\lambda)=\sqrt{8/3}\;\lambda^{3/2}$
(see e.g. \cite[Section 2.1]{Hull} for the definition and properties of this process). Although $\N_x$ is an infinite measure, the
previous statement makes sense by arguing on the event $\{W_*\leq x-\delta\}$, which has finite
$\N_x$-measure for any $\delta>0$, and considering $(\z_{x-\delta-a})_{a\geq0}$. 

We will use an approximation of $ \z_y$ by $\mathcal{E}^{(y,\infty)}_x$-measurable random variables
(notice that this is not the case for \eqref{limit-exit}).
Recall our notation $\tau_y(\w):=\inf\{t\in[0,\zeta_{(\w)}]: \w(t)=y\}$ for $\w\in\W$. 

\begin{lemma}
\label{approx}
Let $y<x$. 
We have
$$\ve^{-2}\int_0^\sigma \mathrm{d}s\,\mathbf{1}_{\{\zeta_s\leq \tau_y(W_s), \hat W_s<y+\ve\}}
\build{\la}_{\ve\to 0}^{} \z_y$$
where the convergence holds in probability under $\N_x(\cdot\mid W_*\leq y)$.
\end{lemma}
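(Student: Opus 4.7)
\emph{Proof sketch.} Write $Y^\ve:=\ve^{-2}\int_0^\sigma\!\mathrm{d}s\,\mathbf{1}_{\{\zeta_s\leq\tau_y(W_s),\,\hat W_s<y+\ve\}}$ and, by translation, assume $y=0$. The strategy is to prove convergence in probability by first establishing that the joint Laplace transform of the pair $(Y^\ve,\z_0)$ under $\N_x$ converges to the joint Laplace of the diagonal pair $(\z_0,\z_0)$, and then invoking L\'evy's continuity theorem together with the continuous map $(a,b)\mapsto|a-b|$.

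The first step is to apply the special Markov property (Proposition~\ref{SMP}) with $U=(\ve,\infty)$. Any path $W_s$ whose tip lies in $(0,\ve)$ must have exited $U$, and similarly any path contributing to $\z_0$ must have exited $U$; consequently both $Y^\ve$ and $\z_0$ decompose as sums over the excursions $\omega_i$ of $W$ outside $U$. Under $\N_x(\cdot\mid\mathcal{E}^U_x)$ these form a Poisson point measure with intensity $\z_\ve\,\N_\ve$, so the Poisson exponential formula combined with \eqref{Laplace-exit} gives
$$\N_x\!\left(1-e^{-\alpha Y^\ve-\beta\z_0}\right)=\N_x\!\left(1-e^{-\z_\ve\phi_\ve(\alpha,\beta)}\right)=\left(\phi_\ve(\alpha,\beta)^{-1/2}+\sqrt{2/3}\,(x-\ve)\right)^{-2}\!,$$
where $\phi_\ve(\alpha,\beta):=\N_\ve\!\left(1-e^{-\alpha Y^\ve-\beta\z_0}\right)$.

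The heart of the matter is then to verify that $\phi_\ve(\alpha,\beta)\to\alpha+\beta$ as $\ve\downarrow 0$. For this I use the scaling property of the Brownian snake: if $W\sim\N_\ve$, the rescaled trajectory $\tilde W=W^{(1/\ve)}$ (spatial factor $1/\ve$, snake-time factor $\ve^{-4}$) satisfies $\tilde W\sim\ve^{-2}\N_1$, and direct changes of variable give $Y^\ve(W)=\ve^2\,Y^1(\tilde W)$ and $\z_0(W)=\ve^2\,\z_0(\tilde W)$. Therefore
$$\phi_\ve(\alpha,\beta)=\ve^{-2}\,\N_1\!\left(1-e^{-\ve^2(\alpha Y^1+\beta\z_0)}\right),$$
and since $\ve^{-2}(1-e^{-\ve^2 X})\leq X$ converges pointwise to $X$, dominated convergence applies with integrable dominator $\alpha Y^1+\beta\z_0$, whose $\N_1$-integral equals $\alpha+\beta$ by \eqref{first-moment} and \eqref{first-moment-exit}. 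This yields $\phi_\ve(\alpha,\beta)\to\alpha+\beta$, and comparing with \eqref{Laplace-exit} applied to $\z_0$ gives $\N_x(1-e^{-\alpha Y^\ve-\beta\z_0})\to\N_x(1-e^{-(\alpha+\beta)\z_0})$.

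To extract convergence in probability, I restrict to the finite-mass event $\{W_*\leq 0\}$, which has $\N_x$-mass $3/(2x^2)$ by \eqref{law-mini}. Since $\z_0\equiv 0$ on $\{W_*>0\}$ and $Y^\ve\equiv 0$ on $\{W_*>\ve\}$, the remaining contribution of $\{W_*>0\}$ is bounded by $\N_x(0<W_*\leq\ve)\to 0$, and normalizing yields convergence of the conditional joint Laplace of $(Y^\ve,\z_0)$ under $\N_x(\cdot\mid W_*\leq 0)$ to the Laplace of $(\z_0,\z_0)$. L\'evy's theorem then gives joint convergence in distribution to the diagonal pair, and the continuous map $(a,b)\mapsto|a-b|$ forces $|Y^\ve-\z_0|\to 0$ in distribution and hence in probability, since the limit is the constant $0$. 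The main obstacle is that neither $Y^\ve$ nor $\z_0$ is square-integrable with respect to $\N_x$---the $3/2$-stable tails of $\z_0$ visible in \eqref{Laplace-exit} make any direct second-moment/Chebyshev bound diverge---and routing the argument through joint Laplace transforms combined with the scaling identity is what bypasses this difficulty.
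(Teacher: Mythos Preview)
Your proof is correct and shares the two key ingredients with the paper's argument—both apply the special Markov property to the domain $(y+\ve,\infty)$ and exploit the scaling of the Brownian snake—but the way the conclusion is extracted differs. The paper represents $\Lambda_\ve$ conditionally on $\mathcal{E}^{(y+\ve,\infty)}$ as a subordinator $S_\ve$ evaluated at $\z_{y+\ve}$, computes $\N_{y+\ve}(\Lambda_\ve)=\ve^2$ via the first-moment formula, uses scaling to identify $S_\ve(t)$ with $\ve^4 S_1(\ve^{-2}t)$, and then appeals to the law of large numbers $t^{-1}S_1(t)\to 1$ together with the separate continuity $\z_{y+\ve}\to\z_y$ in probability. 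You instead decompose the \emph{pair} $(Y^\ve,\z_y)$ simultaneously over the excursions outside $(y+\ve,\infty)$, compute its joint Laplace transform under $\N_x$ via the Poisson exponential formula, and use scaling plus dominated convergence (with the same first-moment identities $\N_1(Y^1)=\N_1(\z_0)=1$) to obtain $\phi_\ve(\alpha,\beta)\to\alpha+\beta$, after which L\'evy's continuity theorem and the continuous map $(a,b)\mapsto|a-b|$ finish the job. Your route is slightly more direct in that it targets $\z_y$ from the outset and avoids the intermediate step $\z_{y+\ve}\to\z_y$; the paper's subordinator representation is perhaps more structurally transparent and, as a byproduct, yields the approximation \emph{conditionally} on $\z_{y+\ve}$, which can be useful in its own right.
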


\begin{proof} 
This follows from arguments similar to the proof of Proposition 1.1 in \cite[Section 4.1]{Hull}, and we only sketch the proof.
For every $\ve>0$, set
$$\Lambda_\ve=\int_0^\sigma \mathrm{d}s\,\mathbf{1}_{\{\zeta_s\leq \tau_y(W_s), \hat W_s<y+\ve\}}.$$
If $\ve\in(0,x-y)$, the special Markov property applied to the domain $(y+\ve,\infty)$ shows that the conditional 
distribution of $\Lambda_\ve$, under $\N_x(\cdot\mid W_*\leq y+\ve)$ and knowing $\mathcal{E}^{(y+\ve,\infty)}$,
is the law of $S_\ve(\z_{y+\ve})$, where $(S_\ve(t))_{t\geq 0}$ is a subordinator whose L\'evy measure is
the  law of $\Lambda_\ve$ under $\N_{y+\ve}$ (recall the comments following
Proposition \ref{exit-definition} about laws of random variables under $\sigma$-finite measures), and $S_\ve$ is 
assumed to be independent of $\z_{y+\ve}$.
The first-moment formula for the Brownian snake \eqref{first-moment} gives $\N_{y+\ve}(\Lambda_\ve)=\ve^2$,
so that $S_\ve(t)$ has mean $\ve^2t$.
On the other hand, scaling arguments entail that $(S_\ve(t))_{t\geq 0}$ has the same 
distribution as $(\ve^4S_1(\ve^{-2}t))_{t\geq 0}$. Hence, under $\N_x(\cdot\mid W_*\leq y+\ve)$ and conditionally on $\mathcal{E}^{(y+\ve,\infty)}$,
$\ve^{-2}\Lambda_\ve$ has the law of $\ve^{2}S_1(\ve^{-2}\z_{y+\ve})$, and the latter random variable 
is close in probability to $\z_{y+\ve}$ by the law of large numbers ($t^{-1}S_1(t)$ converges in probability to $1$ as $t\to\infty$).
The result of the lemma follows since $\z_{y+\ve}$ converges to $\z_y$ in probability when $\ve\to 0$.
\end{proof}

We note that the quantities $\int_0^\sigma \mathrm{d}s\,\mathbf{1}_{\{\zeta_s\leq \tau_y(W_s), \hat W_s<y+\ve\}}$
are functions of 
the truncation ${\rm tr}_y(\omega)$, and therefore
$\mathcal{E}^{(y,\infty)}_x$-measurable. As a consequence of Lemma \ref{approx}, we can fix a sequence
$(\alpha_n)_{n\geq 1}$ of positive reals converging to $0$ such that
\begin{equation}
\label{approx-ps}
\z_y= \lim_{n\to\infty} \alpha_n^{-2}\int_0^\sigma \mathrm{d}s\,\mathbf{1}_{\{\zeta_s\leq\tau_y(W_s), \hat W_s<y+\alpha_n\}}\;,\qquad \N_x\hbox{ a.e.}
\end{equation}
and we can even choose the sequence $(\alpha_n)_{n\geq 1}$ independently of the pair $(x,y)$ such that $y<x$
(observe that if \eqref{approx-ps} holds for $y=x-\delta$, then an application of the special Markov property (Proposition \ref{SMP}) shows that
it holds for every $y\in(-\infty,x-\delta]$). It will be convenient to define $\z_y(\omega)$ for every $\omega\in \S_x$, by setting
$$\z_y(\omega)= \liminf_{n\to\infty} \alpha_n^{-2}\int_0^{\sigma(\omega)} \mathrm{d}s\,
\mathbf{1}_{\{\zeta_s(\omega)\leq\tau_y(W_s(\omega)), \hat W_s(\omega)<y+\alpha_n\}}.$$
By the previous considerations, this definition is consistent with (\ref{limit-exit}) up to an $\N_x$-negligible set.
Furthermore, we have $\z_y(\omega)=\z_y(\mathrm{tr}_y(\omega))$ for every $\omega\in\S_x$. 

\smallskip
In much of what follows, we will argue under the measure $\N_0$, and we simply write 
$\mathcal{E}^{(y,\infty)}$ instead of $\mathcal{E}^{(y,\infty)}_0$, for every $y<0$. For $\omega\in \S_0$, we use the
notation
$$Z_a(\omega)=\z_{-a}(\omega)$$
for every $a>0$.  Because continuous-state branching processes are Feller processes, we know that
the process $(Z_a)_{a>0}$ has a c\`adl\`ag modification under $\N_0$, and we will always consider
this modification. We call $(Z_a)_{a>0}$ the exit measure process.

We will need some bounds on the moments of $Z_a$. By \eqref{first-moment-exit}, we already
know that $\N_0(Z_a)=1$ for every $a>0$. Moreover, an application of the special Markov property
shows that the process $(Z_{\delta+a})_{a\geq 0}$ is a martingale under $\N_0(\cdot\mid W_*\leq -\delta)$,
for every $\delta>0$ (this also follows from the fact that $\psi(\lambda)=\sqrt{8/3}\,\lambda^{3/2}$ is
a critical branching mechanism).

\begin{lemma}
\label{exit-moments}
Let $p\in(1,3/2)$. For every $0<b\leq a$, we have $\N_0((Z_b)^p)\leq \N_0((Z_a)^p)<\infty$.
\end{lemma}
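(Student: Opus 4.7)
The plan is to handle the two assertions separately: the monotonicity in $b$ will come from the martingale property of the exit measure process combined with Jensen's inequality, while the finiteness for $p\in(1,3/2)$ will follow from the explicit Laplace transform \eqref{Laplace-exit} together with a classical integral representation of $t\mapsto t^p$.

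For the monotonicity, I would invoke the fact recalled just before the lemma: under $\N_0(\cdot\mid W_*\leq -b)$, the process $(Z_{b+s})_{s\geq 0}$ is a nonnegative martingale. Since $x\mapsto x^p$ is convex for $p\geq 1$, the $L^p$-norm of a nonnegative martingale is nondecreasing in time, so $\N_0(Z_b^p\mid W_*\leq -b)\leq\N_0(Z_a^p\mid W_*\leq -b)$. Multiplying by the finite factor $\N_0(W_*\leq -b)=3/(2b^2)$ from \eqref{law-mini}, this rewrites as $\N_0(Z_b^p\,\mathbf{1}_{\{W_*\leq -b\}})\leq\N_0(Z_a^p\,\mathbf{1}_{\{W_*\leq -b\}})$. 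The left-hand side equals $\N_0(Z_b^p)$ because $Z_b=0$ on $\{W_*>-b\}$, and the right-hand side equals $\N_0(Z_a^p)$ because $Z_a>0$ forces $W_*\leq -a\leq -b$.

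For the finiteness, since $\{Z_a>0\}=\{W_*\leq -a\}$ has $\N_0$-mass $3/(2a^2)<\infty$, the ``law'' $\mu_a(\mathrm{d}t):=\N_0(Z_a\in \mathrm{d}t)$ is a genuine finite measure on $(0,\infty)$, with first moment $\int t\,\mu_a(\mathrm{d}t)=\N_0(Z_a)=1$ by \eqref{first-moment-exit}. I would plug into the elementary identity valid for $p\in(1,2)$,
\[
t^p=\frac{p(p-1)}{\Gamma(2-p)}\int_0^\infty(e^{-\lambda t}-1+\lambda t)\,\lambda^{-p-1}\,\mathrm{d}\lambda,\qquad t\geq 0,
\]
and exchange integrals by Tonelli (all integrands are nonnegative), which yields
\[
\N_0(Z_a^p)=\frac{p(p-1)}{\Gamma(2-p)}\int_0^\infty \lambda^{-p-1}\,\Psi_a(\lambda)\,\mathrm{d}\lambda,\qquad \Psi_a(\lambda):=\lambda-\bigl(\lambda^{-1/2}+\sqrt{2/3}\,a\bigr)^{-2},
\]
where $\Psi_a\geq 0$ because $\N_0(1-e^{-\lambda Z_a})\leq\lambda\,\N_0(Z_a)=\lambda$.

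It then remains to verify that $\lambda^{-p-1}\Psi_a(\lambda)$ is integrable on $(0,\infty)$. Writing $(\lambda^{-1/2}+\sqrt{2/3}\,a)^{-2}=\lambda(1+\sqrt{2/3}\,a\,\sqrt\lambda)^{-2}=\lambda-2\sqrt{2/3}\,a\,\lambda^{3/2}+O(\lambda^2)$ near $\lambda=0$, one gets $\Psi_a(\lambda)\sim 2\sqrt{2/3}\,a\,\lambda^{3/2}$, so the integrand behaves like $\lambda^{1/2-p}$ near $0$, which is integrable iff $p<3/2$. As $\lambda\to\infty$, $(\lambda^{-1/2}+\sqrt{2/3}\,a)^{-2}\to 3/(2a^2)$, so $\Psi_a(\lambda)\sim \lambda$ and the integrand decays like $\lambda^{-p}$, integrable iff $p>1$. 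Both conditions hold exactly when $p\in(1,3/2)$, giving $\N_0(Z_a^p)<\infty$. The only delicate point is the Tonelli exchange, which is free because every integrand is nonnegative and $\mu_a$ has finite first moment.
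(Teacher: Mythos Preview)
Your proof is correct. The monotonicity argument via Jensen and the martingale property of $(Z_{b+s})_{s\geq 0}$ under $\N_0(\cdot\mid W_*\leq -b)$ matches the paper's exactly.

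For the finiteness, you take a genuinely different route. The paper works under the probability measure $\N_0^{(a)}=\N_0(\cdot\mid W_*\leq -a)$, expands $\N_0^{(a)}(e^{-\lambda Z_a})-(1-\lambda\N_0^{(a)}(Z_a))$ to order $\lambda^{3/2}$ as $\lambda\to 0$, and then invokes a Tauberian theorem (\cite[Theorem~8.1.6]{Bingham}) to convert this into the tail bound $\N_0^{(a)}(Z_a>x)\leq C\,x^{-3/2}$, from which finiteness of the $p$-th moment for $p<3/2$ follows by integrating the tail. Your argument bypasses the Tauberian step entirely: you use the integral representation $t^p=\frac{p(p-1)}{\Gamma(2-p)}\int_0^\infty(e^{-\lambda t}-1+\lambda t)\,\lambda^{-p-1}\,\mathrm{d}\lambda$ valid for $p\in(1,2)$, apply Tonelli, and then check directly from the explicit formula \eqref{Laplace-exit} that $\lambda^{-p-1}\Psi_a(\lambda)$ is integrable precisely when $1<p<3/2$. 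Your approach is more self-contained and slightly more elementary, since it avoids the external Tauberian reference; the paper's approach, on the other hand, yields the extra information that the tail of $Z_a$ decays like $x^{-3/2}$, which is not needed for the lemma but is of independent interest.
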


\begin{proof}
Write $\N^{(a)}_0:=\N_0(\cdot\mid W_*\leq -a)$ to simplify notation. As a consequence of
\eqref{Laplace-exit} and \eqref{law-mini}, we get that, for every $\lambda>0$,
$$\N^{(a)}_0\Big( e^{-\lambda Z_a}\Big) = 1- \Big(1+a^{-1}\sqrt{\frac{3}{2\lambda}}\Big)^{-2},$$
and we have also $\N^{(a)}_0(Z_a)=2a^2/3$. From a Taylor expansion, we get
$$\N^{(a)}_0\Big( e^{-\lambda Z_a}\Big) - (1-\lambda\N^{(a)}_0(Z_a)) = 
2\Big(\frac{2}{3}\Big)^{3/2}\,a^3\,\lambda^{3/2} + o(\lambda^{3/2}),
$$
as $\lambda\to 0$. 
By \cite[Theorem 8.1.6]{Bingham}, this implies the existence of 
a constant $C$ such that $\N^{(a)}_0(Z_a>x)\leq C\,x^{-3/2}$ for every
$x>0$. Consequently, $\N^{(a)}_0((Z_a)^p)<\infty$ if $1<p<3/2$. 

Finally, if $b\in(0,a)$, we get by using the martingale property of the
exit measure process,
$$\N_0((Z_b)^p)=\frac{3}{2b^2}\,\N_0^{(b)}((Z_b)^p)\leq \frac{3}{2b^2}\,\N_0^{(b)}((Z_a)^p)=\N_0((Z_a)^p)<\infty.$$
\end{proof}

\subsection{A technical lemma}
\label{sec:tech-lemma}
We finally give a technical lemma concerning local minima of the process $\hat W$.

\begin{lemma}
\label{no-local-minimum} $\N_0$ a.e., there exists no value of $s\in (0,\sigma)$ such that:
\begin{enumerate}
\item[\rm(i)] $s$ is a time of local minimum of $\hat W$, in the sense that there exists $\ve>0$
such that $\hat W_r\geq \hat W_s$ for every $r\in (s-\ve,s+\ve)$.
\item[\rm(ii)] $\hat W_s=\underline W_s$ and there exists $t\in(0,\zeta_s)$ such that $W_s(t)=\underline W_s$.
\end{enumerate}
\end{lemma}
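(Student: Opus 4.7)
The plan is to argue by contradiction: assume there exists $s \in (0,\sigma)$ satisfying both (i) and (ii), let $t_0 \in (0,\zeta_s)$ be as in (ii) so that $W_s(t_0)=\underline{W_s}=\hat W_s$, and produce times $r$ arbitrarily close to $s$ with $\hat W_r < \hat W_s$, contradicting (i). The natural split is by the local behavior of the lifetime process $\zeta$ at $s$, and the two transition rules in the definition of the Brownian snake are the main tools.

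The easy case is when one can find a sequence $r_n\to s$ (from at least one side) with $\zeta_{r_n}>\zeta_s$ and $m(s,r_n):=\min_{u\in[s\wedge r_n,s\vee r_n]}\zeta_u=\zeta_s$. By transition rule (ii) of the snake, $W_{r_n}$ then coincides with $W_s$ on $[0,\zeta_s]$ and continues beyond $\zeta_s$ as an independent Brownian increment of length $\zeta_{r_n}-\zeta_s$ starting from $\hat W_s$. Since nontrivial Brownian increments almost surely take negative values in every neighborhood of $0$, we get $\hat W_{r_n}<\hat W_s$ for infinitely many $n$, contradicting (i).

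The remaining case is when $\zeta$ dips strictly below $\zeta_s$ in every one-sided neighborhood of $s$, and splits into two sub-cases. If $s$ is a strict local maximum of $\zeta$, we use that the set of strict local maxima of a Brownian excursion is a.s.\ countable and admits a measurable enumeration via rational intervals (for instance, as argmax of $\zeta$ over $[q_1,q_2]$ for rationals $0<q_1<q_2$). Conditionally on $\zeta$, by the Gaussian construction of Section~\ref{statespace}, the path $W_{s_0}$ at a fixed time $s_0$ is a linear Brownian motion on $[0,\zeta_{s_0}]$, whose minimum is a.s.\ attained at a unique point. The event in (ii)---minimum attained both at the endpoint and at an interior point---therefore has conditional probability zero for each such $s_0$, and a countable union rules out this sub-case. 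If $s$ is not a local maximum of $\zeta$ but $\zeta$ oscillates around $\zeta_s$, one can still extract $r_n\to s$ with $\zeta_{r_n}>\zeta_s$ but now $m(s,r_n)<\zeta_s$; using the snake property, $W_{r_n}(m(s,r_n)+t)-W_s(m(s,r_n))$ is an independent Brownian piece of length $\zeta_{r_n}-m(s,r_n)$, while $W_s(m(s,r_n))-\hat W_s\geq 0$ is of order $\sqrt{\zeta_s-m(s,r_n)}$ by Brownian continuity of $W_s$ at $\zeta_s$. A Blumenthal $0$--$1$ law argument applied to the independent Brownian extension shows that $\hat W_{r_n}<\hat W_s$ still occurs for infinitely many $n$, again contradicting (i).

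The main obstacle is precisely the oscillation sub-case, where the upward ``bias'' $W_s(m(s,r_n))-\hat W_s$ and the fresh Brownian fluctuation are of the same order $\sqrt{\zeta_s-m(s,r_n)}$ and must be compared quantitatively. A cleaner alternative, which I would pursue in parallel, is to apply the special Markov property (Proposition~\ref{SMP}) to the domain $(\hat W_s-\delta,\infty)$ for small $\delta>0$: the excursions below level $\hat W_s-\delta$ then inherit (via the branching structure at $u=p_\zeta(t_0)$) an independent snake starting from the ancestor with label $\hat W_s$, and the return-to-$0$ structure of an independent snake, combined with the Brownian-bridge observation that on the segment $\llbracket u,v\rrbracket$ the labels must dip strictly below $\hat W_s$, would directly contradict condition (i).
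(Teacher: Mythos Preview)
Your case split by the local behavior of $\zeta$ at $s$ is natural, but the core gap---which you flag in sub-case 2(b) and which in fact already bites in Case~1---is that the conditional-law reasoning (``independent Brownian increment'', Blumenthal $0$--$1$) is only valid at times measurable with respect to $\zeta$, whereas the hypothetical $s$ is determined by conditions (i)--(ii), which depend on $\hat W$. You therefore cannot condition on $\zeta$, read off the law of the snake near $s$, and conclude anything about the particular realization. Case~2(a) is fine (local maxima of $\zeta$ form a countable $\zeta$-measurable set, so the unique-minimum argument goes through), and Case~1 can be repaired by invoking the ``no simultaneous increase point for $\zeta$ and $\hat W$'' result (the same Lemma~2.2 of \cite{IM} used in the proof of Lemma~\ref{setsCu}) in place of your independence claim. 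But sub-case~2(b), where $\zeta$ genuinely oscillates around $\zeta_s$, remains open, and your special Markov property alternative has the same defect: the level $\hat W_s-\delta$ depends on $\hat W$, so you cannot apply Proposition~\ref{SMP} at it.

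The paper's proof sidesteps the random-time issue by anchoring at \emph{rational} times. It first records (property \eqref{hitting-strict}) that under $\N_x$ the tip process, upon first hitting a level $y<x$, immediately goes strictly below $y$. Via the snake Markov property at each rational $r$ for which $W_r$ has its unique minimum at an interior time $t_0$, this yields: if after $r$ the tip process reaches $\underline W_r$ before $\zeta$ drops to $t_0$, it must go strictly below $\underline W_r$ at that instant (display \eqref{hitting-strict2}). For the contradiction one picks a rational $r<s_0$ so close that (a) $\hat W\geq \hat W_{s_0}$ on $[r,s_0+\chi]$ by (i), and (b) $\zeta$ stays close enough to $\zeta_{s_0}$ on $[r,s_0]$ that $W_r$ agrees with $W_{s_0}$ past $t_0$ and $\zeta$ does not reach $t_0$ on $[r,s_0]$. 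Then $\underline W_r=\hat W_{s_0}$, the tip reaches this level at $s_0$ before $\zeta$ drops to $t_0$, hence goes strictly below---contradicting (a). The entire argument lives at countably many fixed times, which is precisely what your approach is missing in the oscillation case.
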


\begin{proof} The proof uses more involved properties of the Brownian snake, which we have not recalled but for which we refer the reader
to \cite{livrevert}. We start by observing that, for every reals $y<x$, we have $\N_x$ a.e.
\begin{equation}
\label{hitting-strict}
\inf\{s\geq 0: \hat W_s<y\}=\inf\{s\geq 0:\hat W_s\leq y\}.
\end{equation}
In other words, when the Brownian snake hits $y$, it immediately hits values strictly smaller than $y$. See the proof
of Theorem VI.9 in \cite{livrevert} for an argument in a more general setting.

Then, fix $\w\in\W_0$ and let $(W'_s)_{s\geq 0}$ be a Brownian snake that starts from $\w$
under the probability measure $\P_\w$ (we write $W'_s$ and not $W_s$ because $\P_\w$ 
is not defined on the space $\S$ of snake trajectories). We let 
$(\zeta'_s)_{s\geq 0}$ be the lifetime process of $(W'_s)_{s\geq 0}$. Suppose that
there is a unique time $t_0\in(0,\zeta_{(\w)})$ such that $\w(t_0)=\underline\w$, and introduce the stopping time 
$$\tau:=\inf\{s\geq 0: \zeta'_s\leq t_0\}.$$
Notice that  the path $W'_\tau$ is equal to the restriction of $\w$ to $[0,t_0]$, and thus $\hat W'_\tau=\w(t_0)=\underline \w$.
We then claim that, $\P_\w$ a.s. on the event where $\inf\{s> 0: \hat W'_s\leq \underline\w\}<\tau$, we have
$$\inf\{s> 0: \hat W'_s\leq \underline\w\}=\inf\{s> 0: \hat W'_s< \underline\w\}.$$
This follows by using the subtree decomposition of the Brownian snake started at $\w$
(see \cite[Lemma V.5]{livrevert}) together with property \eqref{hitting-strict} above.

We can now combine the previous observations with the Markov property of the Brownian snake under $\N_0$. 
We obtain that $\N_0$ a.e. for every rational $r\in(0,\sigma)$ such that $t\mapsto W_r(t)$
attains its minimum at a (necessarily unique) time $t_0\in (0,\zeta_r)$, the property
$$\inf\{s> r: \hat W_s\leq \underline W_r\}<\inf\{s\geq r: \zeta_s\leq t_0\}$$
implies
\begin{equation}
\label{hitting-strict2}
\inf\{s>r:\hat W_{s}<\underline W_{r}\}=\inf\{s>r:\hat W_{s}\leq\underline W_r\}.
\end{equation}

Let show that this implies the statement of the lemma. We argue by contradiction, assuming that there is a value 
$s_0\in(0,\sigma)$ such that properties (i) and (ii) hold for $s=s_0$. Write $t_0$ for the
(unique) time in $(0,\zeta_{s_0})$ such that $W_{s_0}(t_0)=\underline W_{s_0}$
and choose $\delta>0$ such that $t_0<\zeta_{s_0}-\delta$. Then, using property (i) for $s=s_0$ and the properties
of the Brownian snake, we can find a rational $r<s_0$ sufficiently close to $s_0$ so that, for some $\chi>0$,
\begin{enumerate}
\item[(a)]
$\hat W_{s}\geq \hat W_{s_0}$ for every $s\in[r,s_0+\chi]$;
\item[(b)]
$\zeta_r+\delta/2>\zeta_{s}>\zeta_r-\delta/2$ for every $s\in[r,s_0]$.
\end{enumerate}
 We note that $W_r$ coincides with $W_{s_0}$ at least
up to time $\zeta_r-\delta/2>\zeta_{s_0}-\delta>t_0$.
In particular $t_0$ is also the unique time of the minimum 
of $t\mapsto W_r(t)$ on $(0,\zeta_r)$, and $\underline W_r= \underline W_{s_0}=\hat W_{s_0}$ (it already follows from property (a) that $\underline W_r\geq \hat W_{s_0}$). 
Property (b) then gives
$$\inf\{s> r: \hat W_s\leq \underline W_r\}\leq s_0<\inf\{s\geq r: \zeta_s\leq t_0\}.$$
This allows us to apply \eqref{hitting-strict2} and to get
$$\inf\{s>r:\hat W_{s}<\underline W_{r}\}=\inf\{s>r:\hat W_{s}\leq\underline W_r\}\leq s_0.$$
Since $\underline W_r=\hat W_{s_0}$, this contradicts property (a) above, and this contradiction completes the proof.
\end{proof}

\section{Construction of the excursion measure above the minimum}
\label{sec:construct}

The main goal of this section is to construct the positive excursion measure  $\mathbb{N}_0^*$. For this 
construction, we will be arguing under
the measure $\N_0$. Several properties stated below hold
only outside an $\N_0$-negligible set, but we will frequently omit the words $\N_0$ a.e.
Recall the notation $\t_\zeta$ for the random real tree coded by $(\zeta_s)_{s\geq 0}$, and 
$\text{Sk}(\mathcal{T}_{\zeta})$ for the skeleton of $\t_\zeta$. If $u\in \t_\zeta$ and $s\geq 0$ is such that
$p_\zeta(s)=u$, we already noticed that $W_s$ does not depend on the choice of $s$, and it will be
convenient to write $V_u=\hat W_s$. Then $V_u$ is interpreted as the label or spatial position of $u$.

\begin{definition}
A vertex $u \in \mathcal{T}_{\zeta}$ is an excursion debut above the minimum if
the following three properties hold:
\begin{enumerate}
\item 
 $u \in \mathrm{Sk}(\mathcal{T}_{\zeta})\;$;
 \item $V_u=\min\{V_v:v\in\llbracket \rho,u\rrbracket\}\;$;
 \item $u$ has a strict descendant  $w$ such that such that $V_v >V_u $ for all $v \in \rrbracket u,w \rrbracket$.
 \end{enumerate} We write $D$ for the set of all
excursion debuts above the minimum. If $u\in D$, $V_u$ is called the level of the excursion debut $u$.
\end{definition}

In what follows, except in Section \ref{excupoint}, we will be interested only in 
excursions above the minimum, and for this reason we will say excursion debut instead of
excursion debut above the minimum.
By definition, excursion debuts belong to the skeleton of $\t_\zeta$. Clearly, $\N_0$ a.e., the root $\rho$ is not an excursion debut (it is easy to see 
that property (3) fails for $u=\rho$) and we have $V_u<0$ for every $u\in D$. Furthermore, the quantities
$V_u,u\in D$ are pairwise distinct, $\N_0$ a.e., as a consequence of the fact that local minima of Brownian paths are a.s. distinct (this fact implies that
two local minima of labels that correspond to disjoint segments of the tree $\t_\zeta$ must be distinct).

\begin{lemma}
\label{debut-branching}
$\N_0$ a.e., no branching point is an excursion debut.
\end{lemma}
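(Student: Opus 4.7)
Proof plan:

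I would argue by contradiction, combining a countability observation with a direct Brownian analysis at a single branching point. First, I would use that $\N_0$-a.e., the branching points of $\t_\zeta$ are in bijection with the strict local minima of $\zeta$ on $(0,\sigma)$, which form a countable set. By countable subadditivity, it therefore suffices to show that any \emph{fixed} branching point $u$ of $\t_\zeta$ is an excursion debut with $\N_0$-probability zero.

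Fix a branching point $u$ with associated times $s_1<s_2<s_3$ (so that $s_2$ is the corresponding strict local minimum of $\zeta$ and $\hat W_{s_j}=V_u$ for $j=1,2,3$). Assume $u$ is an excursion debut, and by condition (3) select a strict descendant $w$ with $V_v>V_u$ for all $v\in\rrbracket u,w\rrbracket$; without loss of generality $w$ lies in the subtree of $u$ coded by $\zeta|_{[s_2,s_3]}$ (arm 2), and $p_\zeta(t_w)=w$ for a unique $t_w\in(s_2,s_3)$. The labels along the geodesic $\llbracket u,w\rrbracket$, parameterized by depth $t$ from $u$, are given by a standard Brownian motion $B$ starting at $V_u$: this is because the portion of the snake path $W_{t_w}$ on $[\zeta_{s_2},\zeta_{t_w}]$ is, by the definition of the Brownian snake and the Markov property at the strict local minimum $s_2$ of $\zeta$, a Brownian motion starting from $W_{t_w}(\zeta_{s_2})=V_u$. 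Condition (3) then forces $B(t)>V_u$ for all $t\in(0,d(u,w)]$.

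However, for a standard Brownian motion $B$ with $B(0)=V_u$, the event $\{\exists T>0:B(t)>V_u \text{ for all } t\in(0,T]\}$ has probability zero, since $\inf_{t\in(0,T]}B(t)<V_u$ almost surely for every $T>0$ (Brownian motion oscillates around its starting value in every initial neighborhood). This contradicts condition (3), so $u$ cannot be an excursion debut with positive probability. The case where $w$ lies in arm 1 is treated symmetrically, and a countable union over the branching points of $\t_\zeta$ then completes the proof.

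The main technical point is making precise the identification of the labels along $\llbracket u,w\rrbracket$ with a standard Brownian motion starting at $V_u$, \emph{given} that $u$ is a (random) branching point. One delicate aspect is that one fixes $u$ by fixing the associated local minimum of $\zeta$ (say, via an enumeration of strict local minima of $\zeta$), and then uses the special Markov property or the snake's local structure at $s_2$ to recover the independent Brownian extension describing arm 2 from $u$. Once this identification is in place, the argument above reduces condition (3) to the elementary (and impossible) Brownian event described.
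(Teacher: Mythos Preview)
Your core idea --- that Brownian motion oscillates around its starting value, so condition (3) of the definition fails almost surely at a branching point --- is exactly the mechanism the paper uses. However, your execution has a circularity that you flag but do not resolve: you define the Brownian motion $B$ as the restriction of $W_{t_w}$ to $[\zeta_{s_2},\zeta_{t_w}]$, where $w$ (and hence $t_w$) is chosen \emph{after} looking at the labels so as to satisfy condition (3). Once you condition on the existence of such a $w$, you have conditioned on a label-dependent event, and you can no longer assert that this restriction is a standard Brownian motion. Invoking ``the Markov property at the strict local minimum $s_2$'' does not help, because $t_w$ is not $\zeta$-measurable.

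The paper sidesteps this cleanly by indexing branching points not through an enumeration of local minima, but through pairs of \emph{rationals} $s<t$: the branching point is $p_\zeta(r)$ where $r$ minimises $\zeta$ on $[s,t]$. For fixed rationals $s,t$, the paths $W_s$ and $W_t$ are genuinely Brownian on $[0,\zeta_s]$ and $[0,\zeta_t]$ (conditionally on $\zeta$), with no conditioning on labels involved. The key structural observation is that \emph{every} strict descendant $w$ of $p_\zeta(r)$ has its historical path coinciding with either $W_s$ or $W_t$ up to a time strictly greater than $\zeta_r$ --- this is where multiplicity three is used, and it is the step your argument is missing. The classical fact that a Brownian path almost surely takes values below $W_s(\zeta_r)$ immediately after time $\zeta_r$ then rules out condition (3) for all $w$ simultaneously. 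A countable union over rational pairs finishes the proof.

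Your approach can be repaired along the same lines: condition on $\zeta$, and in each arm of a branching point choose a \emph{$\zeta$-measurable} reference vertex $v$ (rather than the label-dependent $w$); then use multiplicity three to see that every descendant in that arm shares an initial segment of its historical path with the path to $v$. But once you do this, you have essentially reproduced the paper's rational-pair argument.
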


\begin{proof}
Any
branching point can be represented as $p_\zeta(r)$, where $r\in(s,t)$ and $\zeta_r=\min\{\zeta_{r'}:s\leq r'\leq t\}$, for rationals
$s$ and $t$ such that $0<s<t<\sigma$. Then, for any strict descendant $w$ of $p_\zeta(r)$, the historical path
of $w$ coincides either with $W_s$ or with $W_t$, up to a time (strictly) greater than $\zeta_r$. Since,
conditionally on the lifetime process $\zeta$, $W_s$ is just a Brownian
path over the time interval $[0,\zeta_s]$, it must take values smaller than $W_s(\zeta_r)$ immediately after time $\zeta_r$, a.s.,
and the same holds for $W_t$. We conclude that $p_\zeta(r)$ is a.s. not an excursion debut, and by varying $s$ and $t$ we get 
the desired result outside
a countable union of negligible sets.
\end{proof}

 Let $u$  be an excursion debut. We set
$$C_u=\lbrace w \in \mathcal{T}_{\zeta} : u \prec w \ \text{and} \ V_v >V_u, \ \forall v \in \rrbracket u, w \llbracket \rbrace,$$
where we recall that the notation $v \prec w$ means that $v$ is an ancestor of $w$. Note that $u\in C_u$ and that saying that $u$ is an excursion debut implies that $C_u\not = \{u\}$.
We have clearly $V_w\geq V_u$ for every $w\in C_u$.
Also, if $w\in C_u$, then $w'\in C_u$ for every $w'\in \llbracket u,w\llbracket$. 

\begin{lemma}
\label{setsCu}
$\N_0$ a.e., for every $u\in D$, the
set $C_u$ is a closed subset of $\mathcal{T}_{\zeta}$ and its interior is 
\begin{equation}
\label{interior}
\mathrm{Int}({C}_u)= \lbrace w \in C_u : V_w >V_u \rbrace.
\end{equation}
\end{lemma}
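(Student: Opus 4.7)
The proof hinges on three ingredients: the continuity of the label map $V:\t_\zeta\to\R$ (inherited from the continuity of $\hat W$ via $V_{p_\zeta(s)}=\hat W_s$), the fact that in an $\R$-tree the ancestor relation is closed (since $u\prec v\Leftrightarrow d(\rho,u)+d(u,v)=d(\rho,v)$) and the set of strict descendants of a non-leaf vertex is open, and Lemma~\ref{no-local-minimum}, which will be crucial for handling the boundary case $V_w=V_u$.

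For closedness of $C_u$, take $w_n\in C_u$ with $w_n\to w$. By the closedness of $\prec$, $u\prec w$. Fix $v\in \rrbracket u,w\llbracket$: if $v\not\prec w_n$, then $v\wedge w_n\in \llbracket\rho,v\llbracket$, and a short $\R$-tree argument shows that $w\wedge w_n=v\wedge w_n$, which forces $d(w,w_n)\geq d(v,w)>0$, a contradiction. Hence $v\prec w_n$ for all $n$ large, and clearly $v\neq w_n$ eventually, so $V_v>V_u$; varying $v$ gives $w\in C_u$.

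For the inclusion $\{w\in C_u:V_w>V_u\}\subset\mathrm{Int}(C_u)$, fix such a $w$. Since $V_w>V_u$, we have $w\neq u$, so $w$ is a strict descendant of $u$ and one can choose $\epsilon_0>0$ with $B(w,\epsilon_0)$ contained in the (open) set of strict descendants of $u$. By continuity of $V$, pick $\delta\in(0,\epsilon_0)$ with $|V_v-V_w|<(V_w-V_u)/2$ whenever $d_\zeta(v,w)<2\delta$. For any $w'\in B(w,\delta)$, set $u':=w\wedge w'$ (so $u\prec u'\prec w'$); the open segment decomposes as $\rrbracket u,w'\llbracket\subset \rrbracket u,u'\rrbracket\cup\rrbracket u',w'\llbracket$. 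On $\rrbracket u,u'\rrbracket$ the labels exceed $V_u$ because this set lies in $\rrbracket u,w\rrbracket$, where $V_v>V_u$ by hypothesis together with $V_w>V_u$ at the endpoint. On $\rrbracket u',w'\llbracket$, any vertex $v$ satisfies $d_\zeta(v,w)\leq d_\zeta(v,u')+d_\zeta(u',w)\leq 2d_\zeta(w,w')<2\delta$, so $V_v>V_u$ by the choice of $\delta$. Hence $B(w,\delta)\subset C_u$ and $w\in\mathrm{Int}(C_u)$.

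Conversely, let $w\in C_u$ with $V_w=V_u$. If $w=u$, Lemma~\ref{debut-branching} combined with $u\in\mathrm{Sk}(\t_\zeta)$ forces $u$ to have multiplicity exactly $2$, and the component of $\t_\zeta\setminus\{u\}$ on the ancestor side intersects every ball around $u$ and lies outside $C_u$, so $u\notin\mathrm{Int}(C_u)$. Otherwise $w$ is a strict descendant of $u$ with $V_w=V_u$. Pick $s$ with $p_\zeta(s)=w$; then $\hat W_s=V_u$, and since $w\in C_u$ the minimum of $W_s$ on $[0,\zeta_s]$ equals $V_u$ and is attained at the time $t_0\in(0,\zeta_s)$ corresponding to the vertex $u$ (note $u\neq\rho$ and $u\neq w$). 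Thus condition~(ii) of Lemma~\ref{no-local-minimum} holds at $s$, so condition~(i) must fail: in every neighborhood of $s$ there exists $r$ with $\hat W_r<V_u$, and the vertices $p_\zeta(r)$ converge to $w$ while having label $<V_u$, hence do not lie in $C_u$. The main obstacle is precisely this last case: without Lemma~\ref{no-local-minimum} one could a priori imagine a leaf of $\t_\zeta$ sitting exactly at level $V_u$ whose only neighbors are already in $C_u$, contradicting the claimed formula for $\mathrm{Int}(C_u)$.
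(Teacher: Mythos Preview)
Your proof is correct and follows the same overall strategy as the paper: closedness via a limit argument on $\rrbracket u,w\llbracket$, openness of $\{w\in C_u:V_w>V_u\}$ by continuity of $V$ and tree geometry, and the exclusion of points with $V_w=V_u$, $w\neq u$, via Lemma~\ref{no-local-minimum}.

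The one genuine difference is your treatment of the case $w=u$. The paper argues by contradiction: if $u\in\mathrm{Int}(C_u)$ then, letting $s_1$ be the first time with $p_\zeta(s_1)=u$, one would have $\hat W_s\geq V_u$ for all $s\geq s_1$ close to $s_1$, so that $s_1$ would be a point of simultaneous right increase for both $\zeta$ and $\hat W$; this is then ruled out by an external sample-path result (Lemma~2.2 of \cite{IM}). Your route is more elementary: since $u\neq\rho$ ($\N_0$ a.e.\ for $u\in D$), the component of $\t_\zeta\setminus\{u\}$ containing $\rho$ is open, accumulates at $u$, and consists of non-descendants of $u$, hence is disjoint from $C_u$. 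This bypasses any fine pathwise analysis of the pair $(\zeta,\hat W)$ and is all that is needed here.
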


\begin{proof}
The fact that $C_u$ is closed is easy: If $(w_n)$ is a sequence in $C_u$ that converges to $w$ for the metric of $\t_\zeta$, then
we have $u\prec w$ and the ``interval''  $\rrbracket u, w \llbracket $ is contained in the union of the intervals 
$\rrbracket u, w_n \llbracket $. 

To verify \eqref{interior}, first note that the set $\lbrace w \in C_u : V_w >V_u \rbrace$ is open (if $w$ belongs to this set
and if $w'$ is sufficiently close to $w$, then $w'$ is still a descendant of $u$ and 
 $V_v >V_u$ for all $ v \in \rrbracket u, w' \rrbracket$).
 
We also need to check that, if $w \in C_u$ and $V_w=V_u$, then $w$ does not belong to the interior of $C_u$. Consider first the case $w=u$. 
Letting $s_1$ be the first time such that $p_\zeta(s)=u$, the fact that $u$ belongs to the interior of $C_u$ would imply that $\hat W_s\geq \hat W_{s_1}=V_u$
for all $s\geq s_1$ sufficiently close to $s_1$. But then $s_1$ would a point of (right) increase for both $\zeta$ and $\hat W$, and by Lemma 2.2 in \cite{IM} we
know that this cannot occur. Suppose then that $w \in C_u$, $V_w=V_u$ and $w\not =u$. Let $s\in (0,\sigma)$ such that $p_\zeta(s)=w$. Then 
property (ii) of
Lemma \ref{no-local-minimum} holds, and thus property (i) of the same lemma cannot hold. This shows that, for any neighborhood $\mathcal{N}$ of $w$ we can find $w'\in\mathcal{N}$
such that $V_{w'}<V_u$ and therefore $w'\notin C_u$. 
\end{proof}

\begin{proposition}
\label{connec-compo}
$\N_0$ a.e., the sets $\mathrm{Int}({C}_u)$, when $u$ varies in $D$, are exactly the connected components of the open set $\lbrace w \in \t_\zeta : V_w >\min\{V_v:v\in\llbracket \rho,w\rrbracket\rbrace\rbrace$.
\end{proposition}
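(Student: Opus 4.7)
The plan is to establish four claims which, together with the openness of each $\mathrm{Int}(C_u)$ supplied by Lemma~\ref{setsCu}, identify the family $\{\mathrm{Int}(C_u) : u \in D\}$ with the connected components of $O := \{w \in \t_\zeta : V_w > \min_{v \in \llbracket\rho,w\rrbracket} V_v\}$: (a) $\mathrm{Int}(C_u) \subset O$ for every $u \in D$; (b) these sets are pairwise disjoint as $u$ varies in $D$; (c) each $\mathrm{Int}(C_u)$ is connected; (d) every $w \in O$ lies in $\mathrm{Int}(C_u)$ for some $u \in D$. For (a), if $w \in \mathrm{Int}(C_u)$, then $u \prec w$ and, writing $\llbracket \rho, w\rrbracket = \llbracket \rho, u\rrbracket \cup \llbracket u, w\rrbracket$, property~(2) of the excursion debut definition yields $V_v \geq V_u$ for $v \in \llbracket \rho, u\rrbracket$, while the definition of $C_u$ combined with $V_w > V_u$ (Lemma~\ref{setsCu}) gives $V_v > V_u$ for $v \in \rrbracket u, w\rrbracket$; hence $\min_{\llbracket \rho, w\rrbracket} V = V_u < V_w$ and $w \in O$. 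For (b), if $w \in \mathrm{Int}(C_{u_1}) \cap \mathrm{Int}(C_{u_2})$, then $u_1, u_2 \prec w$ are comparable; assuming $u_1 \prec u_2$, property~(2) for $u_2$ forces $V_{u_2} \leq V_{u_1}$, while $u_2 \in \rrbracket u_1, w\rrbracket$ combined with the argument from (a) gives $V_{u_2} > V_{u_1}$, a contradiction unless $u_1 = u_2$.

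For (c), take $w_1, w_2 \in \mathrm{Int}(C_u)$ and consider the geodesic $\llbracket w_1, w_2\rrbracket$, which passes through $w_1 \wedge w_2$. By Lemma~\ref{debut-branching}, $u$ is not a branching point, and since $w_1, w_2$ are strict descendants of $u$ this rules out $w_1 \wedge w_2 = u$ (which would make $u$ a branching point); hence $w_1 \wedge w_2$ is itself a strict descendant of $u$, every vertex on $\llbracket w_1, w_2\rrbracket$ lies in $\rrbracket u, w_1\rrbracket \cup \rrbracket u, w_2\rrbracket$, and the argument from (a) gives $V_v > V_u$ throughout, so $\llbracket w_1, w_2\rrbracket \subset \mathrm{Int}(C_u)$. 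For (d), given $w \in O$, set $m := \min_{\llbracket \rho, w\rrbracket} V$ (so $m < V_w$), parametrize $\llbracket \rho, w\rrbracket$ by distance from $\rho$, and denote the resulting continuous label function on $[0, d_\zeta(\rho, w)]$ by $\phi$. The supremum $t^* := \sup\{t : \phi(t) = m\}$ is attained and satisfies $t^* < d_\zeta(\rho, w)$; letting $u$ be the ancestor of $w$ at distance $t^*$ from $\rho$, one has $V_u = m$ equal to the minimum on $\llbracket \rho, u\rrbracket$ (property~(2)), and the maximality of $t^*$ gives $V_v > V_u$ for $v \in \rrbracket u, w\rrbracket$ (property~(3) with witness $w$), and simultaneously $w \in \mathrm{Int}(C_u)$. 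Since $w$ is a strict descendant of $u$, $u$ is not a leaf, so $u \in \mathrm{Sk}(\t_\zeta)$ as long as $u \neq \rho$.

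The main obstacle is this last exclusion in (d), equivalently showing $m < 0$ $\N_0$ a.e. Here one invokes the $\N_0$ a.e.\ facts already recorded just before Lemma~\ref{debut-branching}: property~(3) fails at $u = \rho$, and $V_u < 0$ for every $u \in D$. Indeed, if $m = 0$ with $u = \rho$, then $w$ would witness property~(3) at $\rho$, contradicting the first fact; and if $m = 0$ with $u \neq \rho$, then the verification above would exhibit $u$ as an excursion debut with $V_u = 0$, contradicting the second. Hence $m < 0$, which forces $u \neq \rho$ and $u \in D$, completing the proof of (d) and of the proposition.
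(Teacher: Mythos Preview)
Your proof is correct and follows essentially the same approach as the paper's: establish that $O$ is the disjoint union of the open connected sets $\mathrm{Int}(C_u)$. The paper picks, for $w\in O$, the \emph{unique} ancestor realizing the minimum (uniqueness being an a.e.\ property of Brownian paths), whereas you take the \emph{last} such ancestor via $t^*$; this sidesteps the uniqueness issue nicely and immediately yields property~(3). One small remark: your final paragraph is more than you need, because under the paper's conventions $\rho$ belongs to $\mathrm{Sk}(\t_\zeta)$ (it has strict descendants, hence is not a leaf), so your construction already gives properties (1)--(3) and therefore $u\in D$ directly, without any separate treatment of the case $u=\rho$ or $m=0$.
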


\begin{proof}
If $w\in \t_\zeta$ is such that $V_w>\min\{V_v:v\in\llbracket \rho,w\rrbracket\rbrace$, then $w\in \mathrm{Int}({C}_u)$, where $u$
is the (unique) ancestor of $w$ such that $V_u=\min\{V_v:v\in\llbracket \rho,w\rrbracket\rbrace$. This shows that $\lbrace w \in \t_\zeta : V_w >\min\{V_v:v\in\llbracket \rho,w\rrbracket\rbrace \rbrace$
is the union of all sets $\mathrm{Int}({C}_u)$, when $u$ varies in $D$. 
Then, if $u\in D$ and $w$ and $w'$ are two vertices in $\mathrm{Int}({C}_u)$, their last common ancestor also belongs to 
$\mathrm{Int}({C}_u)$ (because $u$ is not a branching point, by Lemma \ref{debut-branching}), and the whole interval $\llbracket w,w' \rrbracket$ is contained in $\mathrm{Int}({C}_u)$. 
It follows that, for every $u\in D$, the set $\mathrm{Int}({C}_u)$ is connected. 
 Finally, if $u$ and $u'$ are  two distinct vertices  in $D$, the sets $\mathrm{Int}({C}_u)$ and $\mathrm{Int}({C}_{u'})$ are disjoint. 
 To see this, argue by contradiction and suppose that there exists $v \in  \mathrm{Int}({C}_u) \cap \mathrm{Int}({C}_{u'})$, then $u$ and $u'$ are both ancestors of $v$, hence $u$ is an ancestor of $u'$ (or $u'$ is an ancestor of $u$). 
 However, the properties $u\prec u'\prec v$ 
 and $v\in \mathrm{Int}({C}_u)$ imply that $V_{u'}>V_u$, which contradicts property (2)
 in the definition of
an excursion debut.
\end{proof}

\noindent{\it Remark}. A minor modification of the end of the proof shows in fact that the sets $C_u$, $u\in D$ are pairwise disjoint, which is
slightly stronger.

\smallskip

The last proposition implies that the set $D$ is countable, which can also be seen directly. 

\begin{definition}
\label{height-excu}
If $u$ is an excursion debut, we set
$$M_u:=\sup\{V_v-V_u: v\in C_u\} >0$$
and we call $M_u$ the height of the excursion debut $u$. For every
$\delta>0$, we set $D_\delta:=\{u\in D: M_u>\delta\}$. 
\end{definition}

\begin{lemma}
\label{large-excu}
Let $\delta>0$. The set $D_\delta$ is finite $\N_0$ a.e.
\end{lemma}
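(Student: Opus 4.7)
The plan is to work on events of finite $\N_0$-measure: since $\N_0(W_* \leq -\beta) = 3/(2\beta^2)$ by \eqref{law-mini}, it suffices to show $|D_\delta| < \infty$ $\N_0$-a.e.\ on $\{W_* \leq -\beta\}$ for every $\beta > 0$. Letting $\beta \downarrow 0$ through a countable sequence then yields the result, since every $u \in D_\delta$ satisfies $V_u < 0$ and hence $W_* < 0$ wherever $D_\delta \neq \varnothing$.

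For each $u \in D_\delta$, the subtree $C_u$ contains a vertex $v$ with $V_v > V_u + \delta$, so it contains a geodesic segment $\llbracket u, v\rrbracket$ along which labels increase by more than $\delta$. By the remark following Proposition \ref{connec-compo}, the sets $C_u$ for $u \in D_\delta$ are pairwise disjoint; the goal is to exploit this disjointness to rule out infinite accumulation. My first attempt would be to apply the special Markov property (Proposition \ref{SMP}) with $U = (-\delta/2, \infty)$: on $\{W_* \leq -\delta/2\}$, conditionally on $\mathcal{E}^{U}$, the excursions of the snake outside $U$ form a Poisson point process with intensity $Z_{\delta/2} \cdot \N_{-\delta/2}$. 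Partition $D_\delta$ into shallow debuts ($V_u \geq -\delta/2$) and deep debuts ($V_u < -\delta/2$): each deep debut lies in exactly one sub-excursion and is an excursion debut (still of height $>\delta$) of that sub-excursion when viewed under $\N_{-\delta/2}$. Iterating at deeper levels $-k\delta/2$, one reduces $|D_\delta|$ to a sum of contributions of shallow debuts, where the number of sub-excursions reaching a given depth is controlled by the Poisson intensity $Z_{k\delta/2}$.

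The main obstacle, which I expect to be the hardest step, is to bound the expected number of shallow debuts of height greater than $\delta$ under $\N_0(\cdot \mid W_* \leq -\delta/2)$, and to ensure that the total summed over all levels converges. The natural tool is the first-moment formula \eqref{first-moment} applied to a functional that enumerates local minima of $\hat W$, together with the Markov property of the underlying reflected Brownian path; alternatively, one could try to derive a lower bound on an ``invariant'' of $C_u$ (for instance, an exit measure of $C_u$ at a relative level just below $\delta$, or the mass $\int_{[a_u,b_u]} ds$) whose total over $u \in D_\delta$ is bounded by a $\N_0$-a.e.\ finite quantity such as $Z_{\delta/2}$ or $\sigma$. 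A natural candidate is to use, for each $u \in D_\delta$, the exit measure at level $V_u+\delta/2$ of the sub-trajectory associated with $C_u$: by the strong Markov--like property of the snake applied inside $C_u$, this is an $\N_0$-a.e.\ positive random variable with conditional expectation $1$ given the ``entry'' at $u$, while its sum over $u \in D_\delta$ should be dominated by the total exit measure $Z_{\delta/2}$, which is $\N_0$-a.e.\ finite. Formulating this domination rigorously without circularity (i.e.\ without invoking the yet-to-be-constructed measure $\N^*_0$) is the crux of the argument, and the step I would need to work hardest to get right.
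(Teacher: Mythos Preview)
Your proposal is not a proof but a strategy sketch, and you explicitly flag the crux as unresolved: you have not shown how to bound the number of ``shallow'' debuts, nor have you formulated the claimed domination of exit measures without circularity. The iterative scheme via the special Markov property could perhaps be made to work, but as written it stops at exactly the hard step.

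More importantly, you are working much harder than necessary. The paper's argument is a two-line compactness/uniform-continuity trick, with no recourse to the special Markov property, exit measures, or moment bounds. Since $\t_\zeta$ is compact and $u\mapsto V_u$ is continuous, there is $\N_0$-a.e.\ a (random) $\chi>0$ such that $d_\zeta(v,v')\leq\chi$ implies $|V_v-V_{v'}|\leq\delta$. For each $u\in D_\delta$, pick $v\in C_u$ with $V_v-V_u>\delta$; then $d_\zeta(u,v)>\chi$, so the ball $B_{d_\zeta}(v,\chi/2)$ consists of descendants of $u$, and for any $v'$ in this ball and any $w\in\llbracket v,v'\rrbracket$ one has $V_w\geq V_v-\delta>V_u$, forcing $v'\in\mathrm{Int}(C_u)$. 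Thus each $u\in D_\delta$ contributes a ball of fixed radius $\chi/2$ inside its own $\mathrm{Int}(C_u)$; since these interiors are pairwise disjoint (Proposition~\ref{connec-compo}) and $\t_\zeta$ is compact, only finitely many such balls can coexist. No probabilistic input beyond the a.e.\ continuity of labels is needed.
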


\begin{proof}
By a uniform continuity argument, there exists a (random) $\chi>0$ such that, for every $v,v'\in\t_\zeta$,
the condition $d_\zeta(v,v')\leq \chi$ implies $|V_v-V_{v'}|\leq \delta$. 
Then let $u\in D_\delta$, and let $v\in C_u$ such that $V_v-V_u>\delta$.
 We claim that the ball of radius $\chi/2$ centered at $v$ in $\t_\zeta$, which we denote by $B_{d_\zeta}(v,\chi/2)$,
 is contained in $\mathrm{Int}(C_u)$. If the claim holds, the result of the lemma follows since the sets $\mathrm{Int}(C_u)$ are disjoint
 when $u$ varies (Proposition \ref{connec-compo}), and there can be only finitely many values of $v$ such that  
the balls $B_{d_\zeta}(v,\chi/2)$ are disjoint.
  
  To verify our claim, we first note that we must have $d_\zeta(u,v)>\chi$ by our choice of $\chi$, and it follows
  that the ball $B_{d_\zeta}(v,\chi/2)$ is contained in the set of descendants of $u$. Next, if 
  $v'\in B_{d_\zeta}(v,\chi/2)$, we have, for every $w\in\llbracket v,v'\rrbracket$,
  $V_w\geq V_v-\delta>V_u$, showing that $v'\in \mathrm{Int}(C_u)$ since $\llbracket u,v'\rrbracket \subset 
  \llbracket u,v\rrbracket \cup \llbracket v,v'\rrbracket $. This gives our claim and completes the proof.
 \end{proof}

Let $u$ be an excursion debut.
Since $u\in \text{Sk}(\mathcal{T}_{\zeta})$ and $u$   is not a branching point, there are  two uniquely defined times $0< s_1 < s_2<\sigma$ such that $p_\zeta({s_1})=p_\zeta({s_2})=u$. Note that $\hat W_{s_1}=\hat W_{s_2}=V_u$
and $\zeta_{s_1}=\zeta_{s_2}=d_\zeta(\rho,u)$. 
We then define a random snake trajectory $W^{(u)} \in \S_0$ as the image under the
translation $\kappa_{-V_u}$ of the subtrajectory of $\omega$
associated with the interval $[s_1,s_2]$ (recall that the latter subtrajectory corresponds to the
spatial displacements of the descendants of $u$). Note that $W^{(u)}$ has duration $\sigma(W^{(u)})=s_2-s_1$.
Alternatively, the tree-like path corresponding to $W^{(u)}$  is 
$(\zeta_{(s_1+s)\wedge s_2}-\zeta_{s_1},\hat W_{(s_1+s)\wedge s_2}-V_u)_{s\geq 0}$.
By the definition of $D$, each of the paths $W^{(u)}_s$, for $0<s<s_2-s_1$, stays strictly above $0$
during a small interval $(0,\delta)$, for some $\delta>0$. We are in fact not interested in the behavior of these paths 
after they return to $0$ (if they do) and, for this reason, we introduce the truncation of
$W^{(u)}$ at $0$,
$$\tilde W^{(u)}:={\rm tr}_0(W^{(u)}),$$
with the notation introduced in Section \ref{statespace}. We also write $\tilde\zeta^{(u)}_s$
for the lifetime of $\tilde W^{(u)}_s$, for every $s\geq 0$. 
For every $s\in (0,\sigma(\tilde W^{(u)}))$, the path $\tilde W^{(u)}_s$ starts from $0$, stays positive during 
the interval $(0,\tilde\zeta^{(u)}_s)$ and may or may not return to $0$ at time $\tilde\zeta^{(u)}_s$.

It follows from our definitions that the paths $\tilde W^{(u)}_s$, $0\leq s\leq \sigma(\tilde W^{(u)})$, correspond 
to the historical paths after time $d_\zeta(\rho,u)$ of all vertices $v\in C_u$, provided these paths
are shifted by $-V_u$ so that they start from $0$. In particular, $M(\tilde W^{(u)})=M_u$
is the height of the excursion debut $u$. We sometimes call 
$\tilde W^{(u)}$ the excursion above the minimum starting from $u$.

Before stating the main theorem of this section, we introduce one more piece of notation. On the canonical space $\S$, we let $\tilde W=\mathrm{tr}_0(W)$ 
stand for
the truncation at $0$ of the canonical process $(W_s)_{s\geq 0}$.

\begin{theorem}
\label{construct-N}
There exists a $\sigma$-finite measure denoted by $\mathbb{N}_0^*$ on the space $\S$,
which is supported on $\S_0$, 
such that for every nonnegative measurable function $\Phi$ on $\R_+\times\S$, we have 
\begin{equation}
\label{N0*bis}
\mathbb{N}_0\left( \sum_{u \in D} \Phi(V_u,\tilde{W}^{(u)}) \right)= \int_{-\infty}^{0} d\ell\,\int \mathbb{N}_0^*(\mathrm{d}\omega)\,\Phi(\ell,\omega).
\end{equation}
The measure $\N^*_0$ gives finite mass to the set $\S^{(\delta)}:=\{\omega\in\S: \|\omega\|>\delta\}$, for every $\delta>0$. Moreover,
if $G$ is a bounded continuous real function on $\S$, and if there exists $\delta>0$ such that $G$
vanishes on $\S\backslash\S^{(\delta)}$, we have
\begin{equation}
\label{approx-N}
\lim_{\ve\to 0} \frac{1}{\ve}\,\N_\ve\Big(G(\tilde W)\Big)= \N^*_0(G).
\end{equation}
\end{theorem}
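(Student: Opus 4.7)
The main tool will be the special Markov property (Proposition \ref{SMP}) applied to the half-lines $U_k=(-k\ve,\infty)$ for integers $k\geq 1$. Under $\N_0$, the excursion point measure $\mathcal{N}_k^\ve$ is, conditionally on $\mathcal{E}^{(-k\ve,\infty)}$, Poisson with intensity $Z_{k\ve}\,\N_{-k\ve}$; since $\N_0(Z_{k\ve})=1$ by \eqref{first-moment-exit}, Campbell's formula gives $\N_0(\sum_i F(\omega_i^{k,\ve}))=\N_{-k\ve}(F)$ for every nonnegative measurable $F$ on $\S$.

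The central step is to apply this with $F(\omega')=\sum_{u\in D(\omega'),\,V_u\in[-(k+1)\ve,-k\ve)}G(\tilde W^{(u)})$. The key pathwise observation is that an excursion debut $u$ of $\omega$ with $V_u<-k\ve$ is a descendant of the starting vertex $v_i$ of a unique $\omega_i^{k,\ve}$ and is also an excursion debut of $\omega_i^{k,\ve}$ viewed as a snake trajectory with root at $-k\ve$, because labels along $\llbracket\rho,v_i\rrbracket$ remain $\geq -k\ve$ so the minimum of labels on $\llbracket\rho,u\rrbracket$ is attained on $\llbracket v_i,u\rrbracket$. Since $\tilde W^{(u)}$ depends only on the subtrajectory at $u$ and is invariant under global translations, combining this with $\N_{-k\ve}=(\kappa_{-k\ve})_*\N_0$ yields, for every $k\geq 0$, the translation identity
$$\N_0\Bigl(\sum_{u\in D,\,V_u\in[-(k+1)\ve,-k\ve)}G(\tilde W^{(u)})\Bigr)=\phi_\ve(G),\qquad \phi_\ve(G):=\N_0\Bigl(\sum_{u\in D,\,V_u\in[-\ve,0)}G(\tilde W^{(u)})\Bigr).$$
Applying the same argument to slabs at arbitrary levels gives the additivity $\phi_{\ve_1+\ve_2}(G)=\phi_{\ve_1}(G)+\phi_{\ve_2}(G)$; combined with monotonicity in $\ve$ for $G\geq 0$, this forces $\phi_\ve(G)=\ve\,\N_0^*(G)$ for a well-defined constant $\N_0^*(G)\geq 0$. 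The resulting $\N_0^*$ is a positive measure supported on $\S_0$. The bound $\N_0^*(\S^{(\delta)})<\infty$ follows from $\phi_\ve(\mathbf{1}_{\S^{(\delta)}})<\infty$ for $\ve<\delta/2$: any relevant debut lies in $D_\delta$, and Lemma \ref{large-excu} together with a first-moment exit bound at level $-\delta/2$ via \eqref{first-moment-exit} and \eqref{law-mini} provides integrability.

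The formula \eqref{N0*bis} is then obtained by summing the translation identity over $k\geq 0$. For $\Phi(\ell,\omega)=f(\ell)G(\omega)$ with $f$ continuous compactly supported in $(-\infty,0)$ and $G$ bounded supported in $\S^{(\delta)}$,
$$\N_0\Bigl(\sum_{u\in D}f(V_u)G(\tilde W^{(u)})\Bigr)=\N_0\Bigl(\sum_{u\in D,\,V_u\in[-\ve,0)}G(\tilde W^{(u)})\,\sum_{k\geq 0}f(V_u-k\ve)\Bigr);$$
as $\ve\to 0$ the Riemann sum $\ve\sum_k f(V_u-k\ve)$ converges to $\int_{-\infty}^0 f(\ell)\,d\ell$ while $\ve^{-1}\phi_\ve(G)=\N_0^*(G)$, so the right-hand side tends to $\N_0^*(G)\int_{-\infty}^0 f(\ell)\,d\ell$; a monotone class argument extends \eqref{N0*bis} to all nonnegative measurable $\Phi$.

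Finally, for the approximation \eqref{approx-N}, applying Campbell's identity with $k=1$ to $F(\omega')=G(\mathrm{tr}_0\circ\kappa_{2\ve}(\omega'))$ and using $\kappa_{2\ve}\circ\kappa_{-\ve}=\kappa_\ve$ together with $(\kappa_\ve)_*\N_0=\N_\ve$ yields
$$\N_\ve(G(\tilde W))=\N_0\Bigl(\sum_{i\in I_1^\ve}G(\tilde\omega_i^{1,\ve})\Bigr).$$
Combined with the representation $\phi_\ve(G)=\N_0\bigl(\sum_{i\in I_1^\ve}\sum_{u\in D(\omega_i^{1,\ve}),\,V_u\in[-2\ve,-\ve)}G(\tilde W^{(u)})\bigr)$ (the translation identity at slab $[-2\ve,-\ve)$), the approximation reduces to showing $|\N_\ve(G(\tilde W))-\phi_\ve(G)|=o(\ve)$. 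This pathwise comparison is the main obstacle: for each excursion $\omega_i^{1,\ve}$ one must match the single object $G(\tilde\omega_i^{1,\ve})$ with the sum of $G(\tilde W^{(u)})$ over the debuts of $\omega_i^{1,\ve}$ with $V_u\in[-2\ve,-\ve)$. Using Lemma \ref{continuity-trunc} and the uniform continuity of $G$ on $\S^{(\delta)}$, one argues that as $\ve\to 0$ the dominant contribution comes from excursions with a unique dominating debut $u_i^*$, for which $\tilde W^{(u_i^*)}$ is close to $\tilde\omega_i^{1,\ve}$ in $\S$, while the residual mass from excursions containing several relevant debuts is controlled via a second-moment estimate derived from \eqref{Laplace-exit}.
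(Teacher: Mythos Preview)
Your translation-invariance argument for \eqref{N0*bis} is genuinely different from the paper's route and is correct as far as it goes. The identity $\phi_\ve(G)=\ve\,\N_0^*(G)$ obtained from Cauchy additivity is clean, and the Riemann-sum passage to \eqref{N0*bis} works once one knows $\N_0^*(\S^{(\delta)})<\infty$. The paper instead proves \eqref{N0*bis} and \eqref{approx-N} simultaneously: it approximates $\sum_{u\in D}g(V_u)G(\tilde W^{(u)})$ by $\sum_k\sum_i g(-k\ve)G(\tilde\omega_i^{k,\ve})$ using Lemma \ref{construct-tech}, justifies the $L^1$ convergence by a uniform-integrability argument, and then reads off both conclusions from the special Markov property. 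Your decoupling of the two parts is conceptually appealing, but it shifts all the analytic difficulty into the second half, which you do not carry out.

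There is a real gap in the finiteness claim $\N_0^*(\S^{(\delta)})<\infty$. Lemma \ref{large-excu} only gives that $\#D_\delta<\infty$ a.e.; it says nothing about integrability of $\#\{u\in D_\delta:V_u\in[-\ve,0)\}$ under $\N_0$, which is what $\phi_\ve(\mathbf{1}_{\S^{(\delta)}})<\infty$ means. Neither \eqref{first-moment-exit} nor \eqref{law-mini} bounds a count of excursion debuts; your sentence here is not a proof. In the paper this finiteness is a by-product of the limit \eqref{loi-max-tech} (borrowed from \cite{Cactus}), i.e.\ essentially of \eqref{approx-N} itself.

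The gap in \eqref{approx-N} is more serious. You correctly reduce the question to comparing, for each excursion $\omega_i^{1,\ve}$, the single term $G(\tilde\omega_i^{1,\ve})$ with the sum of $G(\tilde W^{(u)})$ over debuts in the slab, and you cite Lemma \ref{continuity-trunc} for the pathwise matching. But the step from a.e.\ convergence to convergence of expectations is exactly where the work lies, and your proposed ``second-moment estimate derived from \eqref{Laplace-exit}'' cannot succeed: the exit measures $Z_a$ have \emph{infinite} second moment under $\N_0$ (the expansion of \eqref{Laplace-exit} gives a $\lambda^{3/2}$ correction, not $\lambda^2$). The paper's solution is to work in $L^p$ for $p\in(1,3/2)$ (Lemma \ref{exit-moments}) and to control the martingale of centered Poisson counts via the discrete Burkholder--Davis--Gundy inequality; this is the key technical input, and nothing in your sketch replaces it.
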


The proof of Theorem \ref{construct-N}  relies on an important technical lemma, which we state after introducing some
 notation. We consider a fixed sequence $(\ve_n)_{n\geq 1}$
of positive real numbers converging to $0$. We let $\ve$ be an element of this sequence, then for every $\omega\in\S_0$ and for every
integer $k\geq 1$, we let $\n^\ve_k(\omega)$
be the point measure of excursions of $\omega$ outside $(-k\ve,\infty)$, and we write
$$\n^\ve_k(\omega)=\sum_{i\in I^\ve_k} \delta_{\omega^{k,\ve}_i}.$$
By construction, for every $i\in I^\ve_k$, $\omega^{k,\ve}_i$
is a subtrajectory of $\omega$, and we write $[r_i^{k,\ve},s_i^{k,\ve}]$ for the
corresponding interval.
We will also use the notation $\tilde\omega^{k,\ve}_i$ for $\omega^{k,\ve}_i$  translated so that its starting point is $\ve$ and then truncated at level $0$: with the notation of Section \ref{statespace}, 
$\tilde\omega^{k,\ve}_i= {\rm tr}_{0}\circ \kappa_{(k+1)\ve} (\omega^{k,\ve}_i)\in \S_\ve$. 

Recall our notation $Z_a$ for the total mass of the
exit measure from $(-a,\infty)$. By the special Markov property (Proposition \ref{SMP}), we know that the conditional distribution of 
$\n^\ve_k$ under $\N_0(\cdot\mid Z_{k\ve}\not = 0)$ and given 
$Z_{k\ve}$ is that of a Poisson point measure with intensity
$$Z_{k\ve}\,\N_{-k\ve}(\cdot).$$
On the other hand we have $\n^\ve_k(\omega)=0$, $\N_0$ a.e. on $\{Z_{k\ve}=0\}$. 

\begin{lemma}
\label{construct-tech}
The following properties hold $\N_0$ a.e.
Let $u\in D$, and let $0<s_1<s_2<\sigma$ be determined by $p_\zeta(s_1)=p_\zeta(s_2)=u$. Then, 
for every sufficiently small $\ve$ in the sequence $(\ve_n)_{n\geq 1}$, if $k_{u,\ve}\geq 1$ is the integer 
determined by $-(k_{u,\ve}+1)\ve<V_u\leq -k_{u,\ve}\ve$, there exists a unique index $i_{u,\ve}\in I^\ve_{k_{u,\ve}}$ such that
$$(s_1,s_2)\subset (r_{i_{u,\ve}}^{k_{u,\ve},\ve},s_{i_{u,\ve}}^{k_{u,\ve},\ve}),$$
and we have
$$\tilde\omega^{k_{u,\ve},\ve}_{i_{u,\ve}} \longrightarrow \tilde W^{(u)}$$
as $\ve\to 0$ along the sequence $(\ve_n)_{n\geq 1}$.
\end{lemma}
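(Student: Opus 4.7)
The plan is in three stages. First, for each $u\in D$ and each sufficiently small $\ve$ in the sequence $(\ve_n)$, I would establish the existence and uniqueness of $i_{u,\ve}$. After excluding an $\N_0$-negligible set (on which $V_u=-k\ve$ for some $u\in D$, $k\geq 1$ and $\ve\in(\ve_n)$ — a countable union of null events, using the continuous distribution of local minima of $\hat W$), one may assume $V_u<-k_{u,\ve}\ve$ strictly. Then the ancestral path $W_{s_1}$ of $u$ attains its minimum $V_u$ uniquely at its tip $\zeta_{s_1}$, and hence first reaches $-k_{u,\ve}\ve$ at some time $\tau^\ve<\zeta_{s_1}$. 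Combining Lemma~\ref{debut-branching} (which ensures $u$ is not a branching point, so $\zeta_r>\zeta_{s_1}$ for $r\in(s_1,s_2)$) with the snake property, every $W_r$ for $r\in(s_1,s_2)$ extends $W_{s_1}$ beyond $\zeta_{s_1}$, and therefore exits $(-k_{u,\ve}\ve,\infty)$ strictly before its lifetime. This places $(s_1,s_2)$ inside a single excursion interval $(r^*_\ve,s^*_\ve)$, uniquely determining $i_{u,\ve}$.

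Second, I would show $r^*_\ve\to s_1$ and $s^*_\ve\to s_2$ as $\ve\to 0$. The left endpoint $r^*_\ve$ is characterized by $W_{r^*_\ve}$ first hitting the exit level $-k_{u,\ve}\ve$ exactly at its tip. Since $-k_{u,\ve}\ve\to V_u$ and $s\mapsto W_s$ is continuous, any subsequential limit $r^\infty\leq s_1$ of $(r^*_\ve)$ would give $\min W_{r^\infty}=V_u$ attained at the tip $\zeta_{r^\infty}$; by the distinct-local-minima property this forces $r^\infty=s_1$, and the symmetric argument gives $s^*_\ve\to s_2$.

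Third, I would invoke Lemma~\ref{continuity-trunc}. Using the identity $\mathrm{tr}_0\circ\kappa_c=\kappa_c\circ\mathrm{tr}_{-c}$, one computes
\begin{equation*}
\tilde\omega^{k_{u,\ve},\ve}_{i_{u,\ve}}=\kappa_{\alpha_\ve}\,\mathrm{tr}_{-\alpha_\ve}\bigl(\kappa_{-V_u}(\omega^{k_{u,\ve},\ve}_{i_{u,\ve}})\bigr),\qquad \alpha_\ve:=V_u+(k_{u,\ve}+1)\ve\in(0,\ve].
\end{equation*}
Applying Lemma~\ref{continuity-trunc} to the shifted snake $\kappa_{-V_u}(\omega)$ with $\omega'=W^{(u)}$ (its subtrajectory on $[s_1,s_2]$), $\omega^{(n)}=\kappa_{-V_u}(\omega^{k_{u,\ve_n},\ve_n}_{i_{u,\ve_n}})$, and $\delta_n=-\alpha_{\ve_n}\to 0^-$, Step~2 supplies the interval convergences, and hypothesis~(i) is immediate since $W_{s_1}\geq V_u$ by the defining property of excursion debuts. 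The conclusion $\mathrm{tr}_{\delta_n}(\omega^{(n)})\to\tilde W^{(u)}$, combined with $\kappa_{\alpha_{\ve_n}}\to\mathrm{id}$ uniformly, yields $\tilde\omega^{k_{u,\ve_n},\ve_n}_{i_{u,\ve_n}}\to\tilde W^{(u)}$ in $\S$.

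The main obstacle will be verifying hypotheses~(ii)–(iii) of Lemma~\ref{continuity-trunc} for $\omega'=W^{(u)}$. When $p_\zeta(s_1+s)\in C_u$, (ii) holds trivially since labels on the open ancestral segment from $u$ are strictly above $V_u$, giving $\tau^*_0(W^{(u)}_s)\geq\zeta_{(W^{(u)}_s)}>0$. When $p_\zeta(s_1+s)\notin C_u$, the path $W^{(u)}_s$ is a Brownian-type continuation from $0$ whose oscillation around $0$ near $t=0$ threatens the condition $\tau^*_0(W^{(u)}_s)>0$; the delicate point is to exploit the ``first return to the minimum'' structure of the excursion debut $u$ together with the distinct-minima property and Lemma~\ref{no-local-minimum} to ensure the path makes a strict descent below $V_u$ before first returning to $V_u$. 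Hypothesis~(iii) is then a consequence of the same dichotomy, since any path hitting $0$ strictly before its lifetime must subsequently descend strictly below $V_u$.
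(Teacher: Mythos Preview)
Your three-step plan matches the paper's proof exactly: identify the unique excursion containing $(s_1,s_2)$, show the interval endpoints converge to $s_1,s_2$, and apply Lemma~\ref{continuity-trunc} with the same choices $\omega'=W^{(u)}$, $\omega^{(n)}=\kappa_{-V_u}(\omega^{k_n,\ve_n}_{i_n})$, $\delta_n=-(k_n+1)\ve_n-V_u$. The one place where you overcomplicate matters is the verification of hypothesis~(ii). Your worry about ``oscillation around $0$ near $t=0$'' when $p_\zeta(s_1+s)\notin C_u$ is unfounded: for \emph{every} $s\in(0,s_2-s_1)$, the snake property forces $W^{(u)}_s$ to coincide on an initial interval $(0,m]$ (with $m=\min_{[s_1+s\wedge s^*,\,s_1+s\vee s^*]}\zeta-\zeta_{s_1}>0$) with the path $W^{(u)}_{s^*-s_1}$ toward the witness $w=p_\zeta(s^*)$ of property~(3) in the definition of excursion debut, and that path is strictly positive on $(0,\zeta]$. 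Hence $\tau^*_0(W^{(u)}_s)>0$ and nonnegativity up to $\tau^*_0$ follow immediately, with no need for Lemma~\ref{no-local-minimum}. For hypothesis~(iii), the clean argument (which the paper gives in one line) is that failure would produce a path $W_{s_1+s}$ whose global minimum $V_u$ is attained at two distinct interior times $\zeta_{s_1}$ and $\zeta_{s_1}+\tau^*_0$, contradicting the fact that Brownian paths have distinct local minima; this is precisely the ``distinct-minima'' ingredient you mention, but it should be applied directly to~(iii) rather than woven into the discussion of~(ii).
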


\noindent{\it Remark.} The convergence in the last assertion of the lemma holds in $\S$, noting
that $\tilde W^{(u)}\in\S_0$ whereas $\tilde\omega^{k_{u,\ve},\ve}_{i_{u,\ve}}\in \S_\ve$. 

\begin{proof}
Note that a priori we could
have $k_{u,\ve}= 0$, but this does not occur for $\ve$ small enough since $V_u<0$. Then the index 
$i_{u,\ve}$ is determined by the fact that the excursion $\omega^{k_{u,\ve},\ve}_{i_{u,\ve}} $ corresponds to
the descendants of the first ancestor of $u$ at spatial position $-k_{u,\ve}\ve$. More specifically, the
index $i_{u,\ve}$ is determined by 
\begin{equation}
\label{construct-tech0}
r_{i_{u,\ve}}^{k_{u,\ve},\ve}=\sup\{s\leq s_1: \zeta_s\leq \tau_{-k_{u,\ve}\ve}(W_{s_1})\}
\end{equation}
where we recall the notation $\tau_a(\w)=\inf\{t\geq 0: \w(t)=a\}$. Since the image under  $p_\zeta$ of the interval $(r_{i_{u,\ve}}^{k_{u,\ve},\ve},s_{i_{u,\ve}}^{k_{u,\ve},\ve})$
corresponds to descendants of an ancestor of $u$, the inclusion
$$(s_1,s_2)\subset (r_{i_{u,\ve}}^{k_{u,\ve},\ve},s_{i_{u,\ve}}^{k_{u,\ve},\ve})$$
is immediate. For the last property of the lemma, we first verify that
\begin{equation}
\label{construct-tech1}
r_{i_{u,\ve}}^{k_{u,\ve},\ve}\la s_1\ ,\quad s_{i_{u,\ve}}^{k_{u,\ve},\ve}\la s_2
\end{equation}
as $\ve\to 0$ along the sequence $(\ve_n)_{n\geq 1}$. 

To this end, let $s$ be such that $0<s<s_1$,
and observe that we have then
$$\inf_{r\in[s,s_1]} \zeta_r < \zeta_{s_1}$$
(otherwise $u$ would be a branching point). On the other hand, for any $\gamma>0$, there exists $\chi>0$
such that $W_{s_1}(t)\geq V_u+ \chi$ if $0\leq t\leq \zeta_{s_1}-\gamma$ (by
property (2) of the definition of an excursion debut, and the fact that a Brownian path
cannot have two local minima at the same level). It follows that $\tau_{-k_{u,\ve}\ve}(W_{s_1})\la \zeta_{s_1}$
as $\ve\to 0$, and together with \eqref{construct-tech0}
the preceding observations imply that $r_{i_{u,\ve}}^{k_{u,\ve},\ve}>s$ for $\ve$ small enough, giving the
desired convergence $r_{i_{u,\ve}}^{k_{u,\ve},\ve}\la s_1$. The proof of the other convergence $s_{i_{u,\ve}}^{k_{u,\ve},\ve}\la s_2$
is analogous. 

Once we have obtained the convergences \eqref{construct-tech1}, we deduce the last assertion of the lemma
from Lemma \ref{continuity-trunc}. With the notation of this lemma, we take $\omega'=W^{(u)}$
and $\omega^{(n)}=\kappa_{-V_u}(\omega^{k_n,\ve_n}_{i_n})$, where we write $k_n=k_{u,\ve_n}$
and $i_n=i_{u,\ve_n}$ to simplify notation. We also take $\delta_n= -(k_n+1)\ve_n -V_u\in (-\ve_n,0)$. The conclusion 
of the lemma then yields the fact that $\mathrm{tr}_{\delta_n}(\omega^{(n)})$ converges to $\mathrm{tr}_0(\omega')=\tilde W^{(u)}$.
This is the result we need since one easily checks that $\mathrm{tr}_{\delta_n}(\omega^{(n)})$ coincides 
with $\tilde\omega^{k_{n},\ve_n}_{i_{n}}$ translated by $\delta_n$. We still need to verify that
assumptions (i)--(iii) of Lemma \ref{continuity-trunc} hold with our choice of $\omega'$.
Assumptions (i) and (ii) hold by the definition of an excursion debut. Assumption (iii)
holds because otherwise this would mean that there are two distinct  
local minimum times corresponding to the same local minimum of a path $W_s$, which
is impossible. This completes the proof of the last assertion of the lemma.
\end{proof}

\begin{proof}[Proof of Theorem \ref{construct-N}] In order to prove the first part of the theorem, it is enough to 
construct the $\sigma$-finite measure $\mathbb{N}^*_0$ such that the identity \eqref{N0*bis} holds
whenever $\Phi(\ell,\omega)=g(\ell)\,G(\omega)$, where $g$ and $G$ are nonnegative measurable
functions defined on $\R$ and on $\S$ respectively. 
We fix two such functions $g$ and $G$, 
and, in a first step, we assume that both $g$ and $G$ are bounded and continuous and 
take nonnegative values. Moreover, we assume that $g$ is nontrivial and is supported on a compact subinterval
of $(-\infty,0)$, and that there exists $\delta >0$ such that $G(\omega)=0$ if $\omega\notin \S^{(\delta)}$. The functions 
$G$ and $g$ will be fixed until the last lines of the proof, where we explain how to get rid of the extra assumptions on $G$ and $g$. 

By our assumptions on $G$,
the quantity
$ G(\tilde{W}^{(u)})$ is zero if $u\notin D_\delta$, and a fortiori if $u\notin D_{\delta/2}$. 
Since $D_{\delta/2}$ is a.e. finite (Lemma \ref{large-excu})
we get, using the notation and the conclusion of Lemma \ref{construct-tech},
$$ \sum_{u \in D} g(V_u) G(\tilde{W}^{(u)}) = \sum_{u \in D_{\delta/2}} g(V_u) G(\tilde{W}^{(u)})=\lim_{\ve \to 0}
 \sum_{u \in D_{\delta/2}} g(-\ve k_{u,\ve} ) G(\tilde\omega^{k_{u,\ve},\ve}_{i_{u,\ve}} ),$$
 $\N_0$ a.e. (here and in the remaining part of the proof, we consider only values of $\ve$
 in the sequence $(\ve_n)_{n\geq 1}$, even if this is not mentioned explicitly). We next observe that we have
\begin{equation}
\label{sum-ident}
\sum_{u \in D_{\delta/2}} g(-\ve k_{u,\ve} ) G(\tilde\omega^{k_{u,\ve},\ve}_{i_{u,\ve}} )
 = \sum_{k=1}^\infty \sum_{i\in I^\ve_k} g(-\ve k)\,G(\tilde \omega^{k,\ve}_i)
 \end{equation}
for $\ve$ small enough, $\N_0$ a.e. 
 To see this, suppose that 
 $\ve<\delta/2$, and fix $k\geq 1$ and $i\in I^\ve_k$. If $G(\tilde \omega^{k,\ve}_i)\not =0$, there exists 
a real $s\geq 0$ such that the path $W_s(\tilde \omega^{k,\ve}_i)$ hits level $\delta$. This also means that there exists a real $s'\geq 0$
 such that the path $W_{s'}(\omega^{k,\ve}_i)$ hits $-(k+1)\ve+\delta$ before hitting $-(k+1)\ve$, and we can take the smallest such real  $s'$. Let $s''$
 such that  $W_{s''}(\omega^{k,\ve}_i)$ coincides with $W_{s'}(\omega^{k,\ve}_i)$ truncated at the (unique) time where it reaches 
 its minimum before hitting $-(k+1)\ve+\delta$ (in the tree coded by $\zeta(\omega^{k,\ve}_i)$, $s''$ corresponds to the unique ancestor
 with minimal spatial position
 of the vertex $s'$). Then it follows from our definitions that $u:=p_\zeta(r^{k,\ve}_i+s'')$ is an excursion
 debut, with $k_{u,\ve}=k$ and $i_{u,\ve}=i$ by construction, and the height of $u$ is at least 
 $\delta-\ve>\delta/2$, so that $u\in D_{\delta/2}$. Thus any (nonzero) term appearing in the right-hand side
 of \eqref{sum-ident} also appears, at least once, in the left-hand side. To complete the proof of \eqref{sum-ident}, we must still verify that,
 for $\ve$ small enough,
 no (nonzero) term in the right-hand side appears twice in the left-hand side. But this follows from the fact
 that the values of $V_u$ for $u\in D$ are all distinct: since
 $D_{\delta/2}$ is finite, for $\ve$ small enough, there cannot be two distinct elements $u$, $u'$ of $D_{\delta/2}$ 
 such that $V_u$ and $V_{u'}$ lie in the same interval $(-(k+1)\ve,-k\ve)$. 
 
 From the preceding considerations, we get that
 $$ \sum_{u \in D} g(V_u) G(\tilde{W}^{(u)}) = \lim_{\ve\to 0} 
 \sum_{k=1}^\infty \sum_{i\in I^\ve_k} g(-\ve k)\,G(\tilde \omega^{k,\ve}_i),$$
$\N_0$ a.e. We then notice that we can fix $\chi >0$
such that $g(x)=0$ if $x\geq -\chi$, and restrict our attention to the set
$\{W_*\leq -\chi\}$, which has finite $\N_0$-measure. 
The next step is to deduce from the preceding convergence that we have also
\begin{equation}
\label{key-conv-N*}
\N_0\Bigg(\sum_{u \in D} g(V_u) G(\tilde{W}^{(u)})\Bigg) = \lim_{\ve\to 0} 
 \N_0\Bigg(\sum_{k=1}^\infty \sum_{i\in I^\ve_k} g(-\ve k)\,G(\tilde \omega^{k,\ve}_i)\Bigg).
 \end{equation}
 For this, some uniform integrability is needed. For every integer $k\geq 1$, set
 $$n^\ve_k:= \sum_{i\in I^\ve_k} \mathbf{1}_{\{ \|\tilde \omega^{k,\ve}_i\| >\delta\}}.$$
 Recalling our assumptions on $g$ and $G$, we see that in order to deduce 
 \eqref{key-conv-N*} from the preceding convergence, it suffices to verify that,
 for $p\in (1,3/2)$, and for every $A>\chi$,
\begin{equation}
\label{key-conv-N*-tech}
\N_0\Bigg[\Bigg( \sum_{k= \lfloor \chi/\ve\rfloor+1}^{\lfloor A/\ve\rfloor} n^\ve_k\Bigg)^p\Bigg]
\end{equation}
 is bounded independently of $\ve$. By the special Markov property (Proposition \ref{SMP}), conditionally 
 on the $\sigma$-field $\mathcal{E}^{(-k\ve,\infty)}$, $n^\ve_k$ is Poisson with intensity
$c_{\ve,\delta}\,Z_{k\ve}$, 
where
 $c_{\ve,\delta}=\N_\ve(\|\tilde W\|>\delta)$. In particular,
 $$\N_0\Big((n^\ve_k-c_{\ve,\delta}\,Z_{k\ve})^2\,\Big|\,\mathcal{E}^{(-k\ve,\infty)}\Big)= c_{\ve,\delta}\,Z_{k\ve}$$
 and 
 $$\mathcal{M}^\ve_k:=\sum_{j= \lfloor \chi/\ve\rfloor+1}^k (n^\ve_j-c_{\ve,\delta}\,Z_{j\ve})\;,\quad k\geq \lfloor \chi/\ve\rfloor$$
 is a martingale with respect to the filtration $(\mathcal{E}^{(-(k+1)\ve,\infty)})_{ k\geq \lfloor \chi/\ve\rfloor}$ -- note that, by the
 construction of the truncated excursions $\tilde \omega^{k,\ve}_i$, $n_k^\ve$ is $\mathcal{E}^{(-(k+1)\ve,\infty)}$-measurable. 
 The discrete Burkholder-Davis-Gundy inequalities (see e.g. \cite[Th\'eor\`eme 5]{Mey}) now give, for $p\in (1,3/2)$ and for some constant $K_{(p)}$
 depending only on $p$,
 \begin{align}
 \label{loi-max-tech07}
 \N_0\Big(|\mathcal{M}^\ve_{\lfloor A/\ve\rfloor}|^p\Big)
& \leq K_{(p)}\,\N_0\Bigg[\Bigg(\sum_{j= \lfloor \chi/\ve\rfloor+1}^{\lfloor A/\ve\rfloor} (\mathcal{M}^\ve_j-\mathcal{M}^\ve_{j-1})^2
 \Bigg)^{p/2}\Bigg]\\
& \leq K_{(p)}\,\N_0(M_*<-\chi)^{1-p/2}\,\Bigg(\N_0\Bigg[\sum_{j= \lfloor \chi/\ve\rfloor+1}^{\lfloor A/\ve\rfloor} (\mathcal{M}^\ve_j-\mathcal{M}^\ve_{j-1})^2
\Bigg]\Bigg)^{p/2}\nonumber\\
&=K_{(p)}\,\N_0(M_*<-\chi)^{1-p/2}\,c_{\ve,\delta}^{p/2}\, 
\Bigg(\N_0\Bigg[\sum_{j= \lfloor \chi/\ve\rfloor+1}^{\lfloor A/\ve\rfloor} Z_{j\ve}\Bigg]\Bigg)^{p/2}\nonumber\\
&= K_{(p)}\,\N_0(M_*<-\chi)^{1-p/2}\,c_{\ve,\delta}^{p/2}\,(\lfloor A/\ve\rfloor-\lfloor \chi/\ve\rfloor)^{p/2},\nonumber
\end{align} 
using Jensen's inequality (with respect to the probability measure $\N_0(\cdot\mid W_*<-\chi)$) in the second line, 
and in the last line the fact that $\N_0(Z_r)=1$ for every $r>0$ (see \eqref{first-moment-exit}). 

Then observe that
$$c_{\ve,\delta}=\N_\ve(\|\tilde W\|>\delta)=\N_\ve(M(\tilde W)>\delta)=\N_\ve(\langle \z^{(0,\delta)},\mathbf{1}_{\{\delta\}}\rangle >0)
=\N_0(\langle \z^{(-\ve,\delta-\ve)},\mathbf{1}_{\{\delta-\ve\}}\rangle >0),$$
where the third equality follows from the special Markov property (Proposition \ref{SMP}). 
It follows from formula (9) in \cite[Section 4]{Cactus}, together with a monotonicity argument, that
\begin{equation}
\label{loi-max-tech}
\lim_{\ve\to 0} \ve^{-1}\,c_{\ve,\delta}= c_0\,\delta^{-3},
\end{equation}
where $c_0$ is a positive constant (made explicit in Lemma \ref{loi-max} below).
In particular, there 
exists a constant $c_\delta<\infty$ such that
$c_{\ve,\delta}\leq c_\delta\,\ve$ for every $\ve<\delta/2$. From
\eqref{loi-max-tech07}, we then get that the quantities $\N_0(|\mathcal{M}^\ve_{\lfloor A/\ve\rfloor}|^p)$ are uniformly bounded when $\ve<\delta/2$.  Finally, we write
$$ \sum_{k= \lfloor \chi/\ve\rfloor+1}^{\lfloor A/\ve\rfloor} n^\ve_k= \mathcal{M}^\ve_{\lfloor A/\ve\rfloor}
+ c_{\ve,\delta}\sum_{k= \lfloor \chi/\ve\rfloor+1}^{\lfloor A/\ve\rfloor} Z_{k\ve}$$
and we use again the bound $c_{\ve,\delta}\leq c_\delta\,\ve$ together with the fact that the random variables
$Z_a$, $0< a\leq A$ are bounded in $L^p(\N_0)$ when $1<p<3/2$ (Lemma \ref{exit-moments}). This gives us the desired bound for the
quantities in \eqref{key-conv-N*-tech}, and justifies the passage to the limit under the integral in \eqref{key-conv-N*} -- incidentally
this also shows that the left-hand side of  \eqref{key-conv-N*} is a finite quantity.

We then use the special Markov property once again to obtain
$$ \N_0\Bigg(\sum_{k=1}^\infty \sum_{i\in I^\ve_k} g(-\ve k)\,G(\tilde \omega^{k,\ve}_i)\Bigg)
= \sum_{k=1}^\infty g(-\ve k) \N_0\big(Z_{-k \ve}\,\N_\ve(G(\tilde W))\big)
=\Bigg( \sum_{k=1}^\infty g(-\ve k)\Bigg)\,\N_\ve(G(\tilde W)),$$
where the last equality holds because $\N_0(Z_{-k \ve})=1$, by (\ref{first-moment-exit}). 
Now note that 
$$\ve \sum_{k=1}^\infty g(-\ve k) \build{\la}_{\ve\to 0}^{} \int_{-\infty}^0 g(x)\,\mathrm{d}x,$$
and so we deduce from   \eqref{key-conv-N*} and the preceding two displays that
$$\ve^{-1}\N_\ve(G(\tilde W)) \build{\la}_{\ve\to 0}^{} K_{G}$$
where the limit $K_G<\infty$ is such that
$$\N_0\Bigg(\sum_{u \in D} g(V_u) G(\tilde{W}^{(u)})\Bigg) = K_G\,\int_{-\infty}^0 g(x)\,\mathrm{d}x.$$
We now set, for every measurable subset $F$ of $\S$,
\begin{equation}
\label{definiN*}
\N^*_0(F):= \frac{\N_0\Big(\sum_{u \in D} g(V_u) \mathbf{1}_F(\tilde{W}^{(u)})\Big)}{\int_{-\infty}^0 g(x)\,\mathrm{d}x}.
\end{equation}
This defines a positive measure on $\S$, which is
supported on $\S_0$ since $W^{(u)}\in \S_0$ for every $u\in D$. 
Furthermore, we have
$$\N^*_0(G)=K_G<\infty.$$
Noting that the definition (\ref{definiN*}) of $\N^*_0$ does not involve the choice of $G$, this
implies that the sets $\S^{(\delta)}$ have a finite
$\N^*_0$-measure. Since it is clear that $\N^*_0(\|\omega\|=0)=0$, we get that $\N^*_0$ is $\sigma$-finite. Furthermore, we have also
$$\N^*_0(G)=\lim_{\ve \to 0} \ve^{-1}\N_\ve(G(\tilde W)),$$
which gives  \eqref{approx-N} for the function $G$ we had fixed, and then also for any function $G$ satisfying the same assumptions, since
(\ref{definiN*}) does not depend on the choice of $G$
(note that we considered a fixed sequence of values of $\ve$, but the same would hold
for any such sequence). 

Finally, the last display shows that $\N^*_0$ does not depend on the choice of $g$, since a measure on $\S$ supported on $\S_0$ and which is finite 
on the sets $\S^{(\delta)}$ and puts no mass on $\{\omega:\|\omega\|=0\}$ is determined by its values against
functions $G$ satisfying the assumptions of the beginning of the proof. 
By 
(\ref{definiN*}), formula (\ref{N0*bis}) holds if $\Phi(\ell,\omega)=g(\ell)G(\omega)$,  when $G$ is an indicator function and the function $g$ satisfies the previous assumptions. By standard
monotone class arguments, it holds when $\Phi(\ell,\omega)=g(\ell)G(\omega)$, for any nonnegative measurable functions $g$ and $G$. 
This completes the proof.
\end{proof}

Recall the notation $M(\omega)=\sup\{\omega_s(t):s\geq 0, 0\leq s\leq \zeta_s\}$. We also set 
$$\tilde M=M(\tilde W)=\sup\{W_s(t):s\geq 0, t\leq \zeta_s\wedge \tau^*_0(W_s)\}.$$
We can derive the
distribution of $M$ under $\N^*_0$.

\begin{lemma}
\label{loi-max}
For every $\delta>0$, we have
$$\N^*_0(M>\delta)= c_0\,\delta^{-3},$$
where the constant $c_0$ is given by
$$c_0= 3\,\pi^{-3/2}\,\Gamma(\frac{1}{3})^3\Gamma(\frac{7}{6})^3.$$
\end{lemma}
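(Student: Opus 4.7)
The plan is to combine the approximation formula \eqref{approx-N} of Theorem \ref{construct-N} with an explicit solution of a semilinear ODE for the associated exit-measure function. Writing $c_{\varepsilon,\delta}:=\N_\varepsilon(M(\tilde W)>\delta)$ (the same quantity that appeared in the proof of Theorem \ref{construct-N}), my first step is to verify that $\N^*_0(M>\delta)=\lim_{\varepsilon\to 0}\varepsilon^{-1}c_{\varepsilon,\delta}$. Since $M$ is continuous on $\S$ (which follows directly from the definition of $d_\S$) but the indicator $\mathbf{1}_{\{M>\delta\}}$ is not, I would sandwich: for $\eta\in(0,\delta)$ let $G^-_\eta,G^+_\eta:\S\to[0,1]$ be continuous functions of $M(\omega)$ alone, equal to $1$ on $\{M\geq\delta+\eta\}$ (resp.\ $\{M\geq\delta\}$), vanishing on $\{M\leq\delta\}$ (resp.\ $\{M\leq\delta-\eta\}$), and affine in between; since $M\leq\|\omega\|$, both functions vanish outside $\S^{(\delta-\eta)}$. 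Applying \eqref{approx-N} to $G^\pm_\eta$ and then letting $\eta\downarrow 0$ (monotone convergence is legitimate since $\N^*_0(\S^{(\delta-\eta_0)})<\infty$ for any fixed $\eta_0\in(0,\delta)$) yields
\[
\N^*_0(M>\delta)\;\leq\;\liminf_{\varepsilon\to 0}\varepsilon^{-1}c_{\varepsilon,\delta}\;\leq\;\limsup_{\varepsilon\to 0}\varepsilon^{-1}c_{\varepsilon,\delta}\;\leq\;\N^*_0(M\geq\delta).
\]
Once the common value is computed and shown to be continuous in $\delta$, applying these bounds at $\delta''>\delta$ (resp.\ $\delta''<\delta$) and passing to the limit $\delta''\downarrow\delta$ (resp.\ $\delta''\uparrow\delta$) pinches both $\N^*_0(M>\delta)$ and $\N^*_0(M\geq\delta)$ to this common value.

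To compute $\lim_\varepsilon\varepsilon^{-1}c_{\varepsilon,\delta}$ I would use the connection between exit measures and semilinear PDEs \cite[Chapter V]{livrevert}. The event $\{M(\tilde W)>\delta\}$ under $\N_\varepsilon$ coincides with $\{\langle\z^{(0,\delta)},\mathbf{1}_{\{\delta\}}\rangle>0\}$, so with $u(x):=\N_x(\langle\z^{(0,\delta)},\mathbf{1}_{\{\delta\}}\rangle>0)$ for $x\in(0,\delta)$ we have $u(\varepsilon)=c_{\varepsilon,\delta}$. A direct check on \eqref{Laplace-exit} shows that the underlying branching mechanism is $\psi(v)=2v^2$, hence $u_\lambda(x):=\N_x(1-\exp(-\lambda\langle\z^{(0,\delta)},\mathbf{1}_{\{\delta\}}\rangle))$ solves $u_\lambda''=4u_\lambda^2$ on $(0,\delta)$ with $u_\lambda(0+)=0$ and $u_\lambda(\delta-)=\lambda$; letting $\lambda\uparrow\infty$, the pointwise limit $u$ satisfies the same ODE with $u(0+)=0$ and $u(\delta-)=+\infty$. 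Consequently $\lim_{\varepsilon\to 0}\varepsilon^{-1}c_{\varepsilon,\delta}=u'(0+)$.

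Finally I solve the ODE explicitly: multiplying $u''=4u^2$ by $u'$ and integrating from $0$ gives $(u')^2=\tfrac{8}{3}u^3+u'(0+)^2$, whence separation of variables produces
\[
\delta\;=\;\int_0^\infty\frac{\mathrm{d}u}{\sqrt{\tfrac{8}{3}u^3+u'(0+)^2}}.
\]
The substitution $u=u'(0+)^{2/3}(3/8)^{1/3}t$ followed by $r=t^3$ converts this into a Beta integral, $\int_0^\infty(1+t^3)^{-1/2}\,\mathrm{d}t=\tfrac{1}{3}B(\tfrac{1}{3},\tfrac{1}{6})=\Gamma(1/3)\Gamma(1/6)/(3\sqrt\pi)$, so that $u'(0+)^{1/3}\,\delta=(3/8)^{1/3}\Gamma(1/3)\Gamma(1/6)/(3\sqrt\pi)$. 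Using $\Gamma(1/6)=6\,\Gamma(7/6)$ together with the arithmetic identity $2(3/8)^{1/3}=3^{1/3}$, cubing gives $u'(0+)=c_0\delta^{-3}$ with $c_0=3\pi^{-3/2}\Gamma(1/3)^3\Gamma(7/6)^3$, which combined with the first paragraph finishes the proof. The main obstacle is not the Beta-function manipulation, which is routine, but the PDE setup --- in particular justifying the infinite boundary value at $\delta$ by passing to the limit $\lambda\uparrow\infty$ in the PDE for $u_\lambda$ --- together with the sandwiching step that handles the discontinuity of $\mathbf{1}_{\{M>\delta\}}$ in \eqref{approx-N}.
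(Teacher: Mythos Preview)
Your proof is correct and follows the same overall architecture as the paper's: both first reduce to showing $\N^*_0(M>\delta)=\lim_{\varepsilon\to 0}\varepsilon^{-1}c_{\varepsilon,\delta}$ by sandwiching the indicator $\mathbf{1}_{\{M>\delta\}}$ between continuous functions to which \eqref{approx-N} applies, exploiting the continuity of the limit in $\delta$ to close the sandwich. The paper states this step a bit more tersely (``Noting that the limit \dots\ depends continuously on $\delta$, we can approximate the indicator function''), but the content is identical to your argument with $G^\pm_\eta$.

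Where you genuinely diverge is in the evaluation of $\lim_{\varepsilon\to 0}\varepsilon^{-1}c_{\varepsilon,\delta}$ itself. The paper simply invokes \eqref{loi-max-tech}, which was recorded during the proof of Theorem~\ref{construct-N} by citing formula~(9) of \cite[Section~4]{Cactus}, and then refers again to \cite{Cactus} for the explicit value of $c_0$. You instead rederive this from scratch: identifying $c_{\varepsilon,\delta}=u(\varepsilon)$ where $u$ solves $u''=4u^2$ on $(0,\delta)$ with $u(0+)=0$, $u(\delta-)=+\infty$, and then integrating the first-order reduction $(u')^2=\tfrac{8}{3}u^3+u'(0+)^2$ to express $u'(0+)$ through the Beta integral $\int_0^\infty(1+t^3)^{-1/2}\mathrm{d}t=\tfrac{1}{3}B(\tfrac{1}{3},\tfrac{1}{6})$. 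This is exactly the computation that \cite{Cactus} carries out, so what your route buys is self-containment: the lemma becomes independent of that external reference at the cost of a page of ODE analysis. Your verification that the relevant branching mechanism gives $u''=4u^2$ (which you check against \eqref{Laplace-exit}) and the passage $\lambda\uparrow\infty$ to obtain the singular boundary condition at $\delta$ are the right ingredients; the latter is standard from \cite[Chapter~V]{livrevert}.
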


\begin{proof}
By (\ref{loi-max-tech}), we have
\begin{equation}
\label{loi-max-tech2}
\lim_{\ve \to 0} \ve^{-1} \N_\ve(\tilde M>\delta) = c_0\,\delta^{-3}
\end{equation}
and the value of the constant $c_0$ is determined in \cite[Section 4]{Cactus}. On the other hand,
we know that
$$\N^*_0(G)=\lim_{\ve \to 0} \ve^{-1}\N_\ve(G(\tilde W)),$$
for any bounded continuous function $G$ vanishing on the complement 
of $\S^{(\chi)}$ for some $\chi>0$. 
Noting that the limit in \eqref{loi-max-tech2} depends continuously
on $\delta$, we can approximate the indicator function of the
 set $\{M>\delta\}$ by such functions $G$, and obtain 
 $$\N^*_0(M>\delta)=\lim_{\ve \to 0} \ve^{-1} \N_\ve(\tilde M>\delta) = c_0\,\delta^{-3}.$$
 This completes the proof.
\end{proof}

We may now restate the last assertion of Theorem \ref{construct-N} in a way more
suitable for our applications.

\begin{corollary}
\label{approx-*}
Let $\delta>0$. As $\ve\to 0$, the law of $\tilde W$ under $\N_\ve(\cdot\mid \tilde M>\delta)$
converges weakly to $\N^*_0(\cdot \mid M>\delta)$.
\end{corollary}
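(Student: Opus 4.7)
The plan is to reduce this conditional weak convergence to the unconditional limit already established in Theorem \ref{construct-N}. Testing against an arbitrary bounded continuous $F : \S \to \R$, I want to show that
$$\frac{\N_\ve(F(\tilde W)\,\mathbf{1}_{\{\tilde M > \delta\}})}{\N_\ve(\tilde M > \delta)} \longrightarrow \frac{\N^*_0(F\,\mathbf{1}_{\{M > \delta\}})}{\N^*_0(M > \delta)}.$$
After multiplying numerator and denominator by $\ve^{-1}$, the denominator already converges to $c_0\delta^{-3} = \N^*_0(M > \delta)$ by \eqref{loi-max-tech2} and Lemma \ref{loi-max}, and this limit is strictly positive. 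So everything reduces to proving that $\ve^{-1}\,\N_\ve(F(\tilde W)\mathbf{1}_{\{\tilde M > \delta\}}) \to \N^*_0(F\,\mathbf{1}_{\{M > \delta\}})$.

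The main obstacle is that the indicator $\mathbf{1}_{\{M>\delta\}}$ is not continuous on $\S$, so Theorem \ref{construct-N} cannot be applied to it directly. I would get around this by a standard Portmanteau-style sandwich. Introduce continuous cutoffs $\phi_k^\pm : \R \to [0,1]$ with $\phi_k^-$ vanishing on $(-\infty,\delta]$ and equal to $1$ on $[\delta+1/k,\infty)$, and $\phi_k^+$ vanishing on $(-\infty,\delta-1/k]$ and equal to $1$ on $[\delta,\infty)$. Since the functional $\omega \mapsto M(\omega)$ is continuous on $(\S,d_\S)$ (a point I would briefly verify, noting that $d_\S$-convergence forces uniform convergence of the tip functions $s\mapsto \hat\omega_s$, hence convergence of their suprema), the composite functions $G_k^\pm := F\cdot(\phi_k^\pm \circ M)$ are bounded continuous on $\S$. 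Moreover $G_k^+$ vanishes outside $\S^{(\delta-1/k)}$ and $G_k^-$ outside $\S^{(\delta)}$, which is exactly the hypothesis under which Theorem \ref{construct-N} yields $\ve^{-1}\,\N_\ve(G_k^\pm(\tilde W)) \to \N^*_0(G_k^\pm)$.

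Writing $F = F^+ - F^-$ to reduce to the case $F \geq 0$, I would then exploit the sandwich
$$G_k^-(\tilde W) \leq F(\tilde W)\,\mathbf{1}_{\{\tilde M > \delta\}} \leq G_k^+(\tilde W)$$
(valid because $\phi_k^- \leq \mathbf{1}_{(\delta,\infty)} \leq \phi_k^+$ pointwise, and the same inequalities hold at $\tilde M$) to squeeze the $\liminf$ and $\limsup$ of $\ve^{-1}\N_\ve(F(\tilde W)\mathbf{1}_{\{\tilde M > \delta\}})$ between $\N^*_0(G_k^-)$ and $\N^*_0(G_k^+)$. Letting finally $k \to \infty$, I get $G_k^- \uparrow F\mathbf{1}_{\{M>\delta\}}$ and $G_k^+ \downarrow F\mathbf{1}_{\{M\geq\delta\}}$; monotone convergence on the lower side, and dominated convergence on the upper side (with integrable majorant $\|F\|_\infty\mathbf{1}_{\S^{(\delta/2)}}$, whose $\N^*_0$-mass is finite by Theorem \ref{construct-N}), both give the common limit $\N^*_0(F\,\mathbf{1}_{\{M > \delta\}})$, provided $\N^*_0(\{M = \delta\}) = 0$. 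This last fact is free from Lemma \ref{loi-max} since $\delta \mapsto c_0\delta^{-3}$ is continuous, so the distribution of $M$ under $\N^*_0$ carries no atoms.
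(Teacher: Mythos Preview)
Your proof is correct and follows essentially the same approach as the paper. The paper's proof is more compressed: it simply tests against bounded continuous $G$ vanishing outside $\S^{(\delta)}$, obtains $\N_\ve(G(\tilde W)\mid \tilde M>\delta)\to \N^*_0(G\mid M>\delta)$ directly from \eqref{approx-N} and \eqref{loi-max-tech2}, and then asserts that ``the desired result follows''---implicitly using that both conditional laws are supported on the open set $\S^{(\delta)}$, so such $G$ form a convergence-determining class. Your sandwich argument with $\phi_k^\pm$ and the observation $\N^*_0(M=\delta)=0$ makes this last step explicit, which is a perfectly reasonable way to fill in that detail.
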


\begin{proof}
Let $G$ be bounded and continuous on $\S$ and such that $G(\omega)=0$
if $\omega\notin \S^{(\delta)}$. Then, for $\ve\in(0,\delta)$,
$$\N_\ve\Big(G(\tilde W)\,\Big|\, \tilde M>\delta\Big)
=\frac{\N_\ve(G(\tilde W))}{\N_\ve(\tilde M>\delta)}\ 
\build{\la}_{\ve\to0}^{} \ \frac{\N^*_0(G)}{\N^*_0(M>\delta)}= \N^*_0\Big(G\,\Big|\, M>\delta\Big),$$
using \eqref{approx-N}, \eqref{loi-max-tech2}, and Lemma \ref{loi-max}. The desired result follows.
\end{proof}

We conclude this section by deriving a useful scaling property of $\N^*_0$. For $\lambda>0$, for every 
$\omega\in \S$, we define $\theta_\lambda(\omega)\in \S$
by $\theta_\lambda(\omega)=\omega'$, with
$$\omega'_s(t)= \sqrt{\lambda}\,\omega_{s/\lambda^2}(t/\lambda)\;,\qquad
\hbox{for } s\geq 0,\;0\leq t\leq \zeta'_s=\lambda\zeta_{s/\lambda^2}.$$
Note that, for every $x\geq 0$, $\theta_\lambda(\N_x)= \lambda\, \N_{x\sqrt{\lambda}}$. The measure $\N^*_0$ enjoys a similar scaling property.

\begin{lemma}
\label{scaling-N*}
For every $\lambda>0$, $\theta_\lambda(\N^*_0)= \lambda^{3/2}\, \N^*_0$.
\end{lemma}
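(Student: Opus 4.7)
The plan is to obtain the identity by passing to the limit in the approximation \eqref{approx-N} of $\N^*_0$ by $\ve^{-1}\N_\ve(\cdot\circ\mathrm{tr}_0)$, combined with the known scaling property $\theta_\lambda(\N_x) = \lambda\,\N_{x\sqrt{\lambda}}$ of the Brownian snake excursion measure.

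First I would check the compatibility of the various operations with the scaling $\theta_\lambda$. Since $\tau^*_0(\theta_\lambda(\omega)_s) = \lambda\,\tau^*_0(\omega_{s/\lambda^2})$ and $\zeta'_s = \lambda\zeta_{s/\lambda^2}$, the set $\{s\geq 0:\zeta'_s\leq \tau^*_0(\theta_\lambda(\omega)_s)\}$ is the image under $s\mapsto \lambda^2 s$ of $\{r\geq 0:\zeta_r\leq \tau^*_0(\omega_r)\}$, which gives the commutation identity
\[
\mathrm{tr}_0(\theta_\lambda(\omega)) = \theta_\lambda(\mathrm{tr}_0(\omega)),\qquad \omega\in\S.
\]
Next, since $\|\theta_\lambda(\omega)\| = \sqrt{\lambda}\,\|\omega\|$ and $\theta_\lambda$ is continuous on $\S$, whenever $G:\S\to\R$ is bounded continuous and vanishes off $\S^{(\delta)}$, the function $G\circ\theta_\lambda$ is bounded continuous and vanishes off $\S^{(\delta/\sqrt{\lambda})}$; in particular both $G$ and $G\circ\theta_\lambda$ lie in the class of test functions for which \eqref{approx-N} applies.

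Now I would compute, for such $G$, using $\theta_\lambda(\N_\ve)=\lambda\N_{\ve\sqrt{\lambda}}$ and the commutation identity above,
\[
\N_\ve\bigl(G(\theta_\lambda(\tilde W))\bigr) = \N_\ve\bigl((G\circ\mathrm{tr}_0)\circ\theta_\lambda\bigr) = \lambda\,\N_{\ve\sqrt{\lambda}}(G(\tilde W)).
\]
Applying \eqref{approx-N} and then performing the change of variable $\ve' = \ve\sqrt{\lambda}$,
\[
\theta_\lambda(\N^*_0)(G) = \N^*_0(G\circ\theta_\lambda) = \lim_{\ve\to 0}\frac{\lambda}{\ve}\,\N_{\ve\sqrt{\lambda}}(G(\tilde W)) = \lambda^{3/2}\,\lim_{\ve'\to 0}\frac{1}{\ve'}\,\N_{\ve'}(G(\tilde W)) = \lambda^{3/2}\,\N^*_0(G).
\]

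Finally, to conclude, I would invoke the uniqueness argument already used at the end of the proof of Theorem \ref{construct-N}: a measure on $\S$ that is supported on $\S_0$, puts no mass on $\{\omega:\|\omega\|=0\}$, and is finite on each $\S^{(\delta)}$ is entirely determined by its values on bounded continuous functions vanishing off some $\S^{(\delta)}$. Both $\theta_\lambda(\N^*_0)$ and $\lambda^{3/2}\N^*_0$ satisfy these conditions (the support property because $\theta_\lambda(\S_0)\subset \S_0$, and the finiteness on $\S^{(\delta)}$ because $\theta_\lambda^{-1}(\S^{(\delta)})=\S^{(\delta/\sqrt{\lambda})}$ combined with Lemma \ref{loi-max}), so the identity obtained on this test class extends to arbitrary nonnegative measurable functions. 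I do not foresee a serious obstacle here; the only point requiring a moment of care is the verification that $\mathrm{tr}_0$ commutes with $\theta_\lambda$, which is a direct consequence of the fact that $\theta_\lambda$ rescales both lifetimes and first hitting times of $0$ by the same factor $\lambda$.
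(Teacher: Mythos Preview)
Your proof is correct and follows essentially the same route as the paper's: both use the approximation \eqref{approx-N} together with the scaling relation $\theta_\lambda(\N_\ve)=\lambda\,\N_{\ve\sqrt{\lambda}}$ and a change of variable in $\ve$. You are more explicit than the paper in spelling out the commutation $\mathrm{tr}_0\circ\theta_\lambda=\theta_\lambda\circ\mathrm{tr}_0$ and in justifying the passage from the test class to arbitrary measurable functions, but these are exactly the implicit steps in the paper's four-line computation.
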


\begin{proof} Let $G$ be a function on $\S$ satisfying the conditions required for \eqref{approx-N}. Then,
\begin{align*}
\N^*_0(G)&=\lim_{\ve \to 0} \ve^{-1}\N_\ve(G(\tilde W))\\
&=\lim_{\ve \to 0} \ve^{-1}\lambda^{-1}\N_{\ve/\sqrt{\lambda}}(G(\theta_\lambda(\tilde W)))\\
&=\lim_{\ve \to 0} \lambda^{-3/2}\times (\ve/\sqrt{\lambda})^{-1}\N_{\ve/\sqrt{\lambda}}(G(\theta_\lambda(\tilde W)))\\
&=\lambda^{-3/2}\,\N^*_0(G\circ \theta_\lambda)
\end{align*}
giving the desired result.
\end{proof}

\section{The re-rooting representation}
\label{sec:re-root}

In this section, we provide a formula connecting the measures $\N_0$ and $\N^*_0$ via a re-rooting technique.
We first need to introduce some notation. 

Recall the re-rooting operator $R_s$ from Section \ref{statespace}. For every $\omega\in\S_0$, for every $s\in[0,\sigma(\omega)]$, we set
$$W^{[s]}(\omega)=\kappa_{-\hat W_s(\omega)}\circ R_s(\omega).$$
In other words,
$W^{[s]}(\omega)$ is just $\omega$ re-rooted at $s$ and then shifted 
so that the spatial position of the root is again $0$. Note that we slightly abuse notation here because it would have been more consistent with the notation of 
Section \ref{statespace} to take $W^{[s]}(\omega)=R_s(\omega)$.

\begin{theorem}
\label{reenracinement}
For every nonnegative measurable function $G$ on $\S$, the following equality holds.
$$ \mathbb{N}_0^*\left(\int_0^{\sigma} \mathrm{d}r \,G(W^{[r]})\right)= 2\,\mathbb{N}_0\left( \int_0^{\infty} \mathrm{d}b \,G({\rm tr}_{-b}(W)) Z_b\right)$$
where we recall that $Z_b$ stands for the total mass of the exit measure outside $(-b, \infty)$. 
\end{theorem}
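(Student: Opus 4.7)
The plan is to combine the defining identity of $\N^*_0$ from Theorem \ref{construct-N} with the re-rooting invariance of the Brownian snake from \cite{LGWeill}, and to identify the resulting occupation measure at each level with the exit-measure process via Lemma \ref{approx}. First, I would integrate against a bounded density $g$ on $(-\infty,0)$ with $\int g = 1$ and apply Theorem \ref{construct-N} to $\Phi(\ell,\omega) = g(\ell)\int_0^{\sigma(\omega)} G(\omega^{[r]})\,dr$, initially assuming $G$ is bounded continuous with support in $\S^{(\delta)}$ so that $\sigma$-finiteness of $\N^*_0$ on $\S^{(\delta)}$ ensures integrability. This rewrites the LHS as
$$\N_0\bigg(\sum_{u\in D} g(V_u)\int_0^{\sigma(\tilde W^{(u)})} G\big((\tilde W^{(u)})^{[r]}\big)\,dr\bigg).$$

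Next, I would repackage the double sum as a single integral over $s\in[0,\sigma]$ under $\N_0$. For each $s$ with $\hat W_s>\underline W_s$, Proposition \ref{connec-compo} singles out a unique excursion debut $u_s\in D$ (the ancestor of $p_\zeta(s)$ at level $\underline W_s$), and the truncation time-change from Proposition \ref{truncation} sends $\mathbf{1}_{\{\hat W_s>\underline W_s\}}\,ds$ to $dr$ on $[0,\sigma(\tilde W^{(u_s)})]$. The key structural identity, which I would establish via the tree-like-path representation of Proposition \ref{homeo} by tracking how labels transform under each operation, is
$$\big(\tilde W^{(u_s)}\big)^{[r(s)]} = \mathrm{tr}_{\underline W_s - \hat W_s}\big(W^{[s]}\big);$$
both sides describe the closure of $C_{u_s}$, re-rooted at $p_\zeta(s)$, with labels shifted so that $p_\zeta(s)$ sits at level $0$ and the paths truncated at the minimum level $\underline W_s-\hat W_s$. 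After substitution the expression collapses to
$$\N_0\bigg(\int_0^\sigma \mathbf{1}_{\{\hat W_s>\underline W_s\}}\,g(\underline W_s)\,G\big(\mathrm{tr}_{\underline W_s - \hat W_s}(W^{[s]})\big)\,ds\bigg).$$

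Then I would invoke the re-rooting invariance of $\N_0$ from \cite[Section~2.3]{LGWeill} in a pointed form: the triple (snake, new root, old root) obtained by re-rooting at an $ds$-random point has the same $\N_0$-joint distribution as (snake, root, uniformly marked vertex). Since $\underline W_s-\hat W_s$ equals the minimum of the historical path from the new root to the old root in $W^{[s]}$, and $-\hat W_s$ is the label of the old root, the integrand is a functional of $W^{[s]}$ together with its distinguished old-root vertex. Applying the invariance to swap the two roots yields an $\N_0$-integral of $g(\cdot)\,G(\mathrm{tr}_{-b}(W))$ weighted by an occupation measure for a marked vertex at level-gap $b$. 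Performing the co-area change of variables $s\mapsto b=\hat W_s-\underline W_s$ and integrating the marked-vertex position over each level set, the marginal measure on $b$ is identified with $2\,Z_b\,db$ via Lemma \ref{approx}, and dividing by $\int g=1$ yields the claimed identity for continuous $G$ supported in $\S^{(\delta)}$; a monotone class argument then extends it to all nonnegative measurable $G$.

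The most delicate point is the structural identity at stage two, since truncation and re-rooting do not obviously commute: the careful proof must use that along the path from $p_\zeta(s)$ to $u_s$ the minimum of the labels is attained only at $u_s$ (a consequence of Lemma \ref{no-local-minimum}) so that the truncation at level $\underline W_s-\hat W_s$ of the re-rooted snake cuts exactly at the old location of $u_s$. The numerical constant $2$ is also subtle; I would pin it down by cross-checking both sides against a simple test functional such as $G=\mathbf{1}_{\{M>\delta\}}$, using Lemma \ref{loi-max} on the LHS and the Laplace transform \eqref{Laplace-exit} on the RHS.
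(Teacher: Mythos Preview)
Your skeleton matches the paper's proof: both combine the re-rooting invariance \eqref{reenracinementLGWeill} with Theorem~\ref{construct-N}, and the structural identity $(\tilde W^{(u_s)})^{[r(s)]}=\mathrm{tr}_{\underline W_s-\hat W_s}(W^{[s]})$ is precisely what the paper uses (stated there in integrated form as $\int_{\{p_\zeta(s)\in C_u\}}G(\mathrm{tr}_{\underline W_s-\hat W_s}(W^{[s]}))\,ds=H(\tilde W^{(u)})$). After this reduction you correctly land on
\[
\N_0\Big(\int_0^\sigma g(\underline W_s-\hat W_s)\,G(\mathrm{tr}_{\underline W_s}(W))\,ds\Big).
\]

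The gap is in your final step. First, the change of variables is misidentified: the truncation level is governed by $b=-\underline W_s$, not by $\hat W_s-\underline W_s$; the latter is the argument of $g$, and the two are genuinely different variables that must be disintegrated jointly. Second, Lemma~\ref{approx} is not the right tool here: it approximates $Z_b$ by $\varepsilon^{-2}\int \mathbf{1}_{\{\zeta_s\le\tau_{-b}(W_s),\,\hat W_s<-b+\varepsilon\}}\,ds$, which does not produce a density in $b$ for the occupation measure of the pair $(-\underline W_s,\,\hat W_s-\underline W_s)$. The paper instead discretizes over the level $\underline W_s\in(-(k+1)/n,-k/n]$, applies the special Markov property (Proposition~\ref{SMP}) to factor out $Z_{k/n}\,G(\mathrm{tr}_{-k/n}(W))$, and then uses the first-moment formula \eqref{first-moment} to reduce the remaining piece to an explicit Brownian integral
\[
\E_0\Big[\int_0^\infty \mathbf{1}_{\{\min_{r\le t}B_r>-\varepsilon\}}\,g\big(\min_{r\le t}B_r-B_t\big)\,dt\Big]=2\varepsilon\int_{-\infty}^0 g,
\]
which is the true source of the factor $2$. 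Pinning the constant down by testing against $G=\mathbf{1}_{\{M>\delta\}}$ is a sanity check, not a proof; you need the special Markov property plus the first-moment formula to close the argument.
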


\begin{proof}
We start from the re-rooting theorem in \cite[Theorem 2.3]{LGWeill}.
For every nonnegative measurable function $F$ on $\mathbb{R}_+ \times \S$,
\begin{equation}
\label{reenracinementLGWeill}
\mathbb{N}_0\left( \int_0^{\sigma} \mathrm{d}s\, F(s, W^{[s]})\right)= \mathbb{N}_0\left(\int_0^{\sigma} \mathrm{d}s\, F(s,W)\right)
\end{equation}

We apply this result to a function $F$ of the form
$$ F(s,\omega)=G\big({\rm tr}_{\underline{\omega}_{\sigma-s}}(\omega)\big)\,g(\underline{\omega}_{\sigma-s}-\hat\omega_{\sigma-s}),$$
where we recall the notation $\underline{\w}=\min\{\w(t):0\leq t\leq \zeta_{(\w)}\}$, and we suppose that $G$ and $g$ satisfy the assumptions stated at the beginning of the proof of
Theorem \ref{construct-N}, and the additional assumption that there exists a constant $K>0$
such that $G(\omega)=0$ if $\|\omega\|\geq K$. We note that
our definitions give under $\N_0$,
\begin{align*}
\hat W_{\sigma-s}^{[s]} &= - \hat W_s\\
\underline{W}_{\sigma-s}^{[s]}&= \underline{W}_s-\hat W_s.
\end{align*}
Consequently, we have
$$F(s, W^{[s]})= G\big({\rm tr}_{\underline{W}_s-\hat W_s}(W^{[s]})\big)\,g(\underline{W}_s).$$
We can then decompose the integral
$$\int_0^{\sigma} \mathrm{d}s\, F(s, W^{[s]})$$
as a sum over the sets $\{s\in[0,\sigma]: p_\zeta(s)\in C_u\}$ where $u$ varies over $D$. These sets cover $[0,\sigma]$ 
(except for a Lebesgue negligible subset) and they are pairwise disjoint. Furthermore, if $u\in D$, it follows from our
definitions that we have $\underline{W}_s=V_u$ for every  $s\in[0,\sigma]$ such that $p_\zeta(s)\in C_u$,
and 
$$\int_{\{s\in[0,\sigma]: p_\zeta(s)\in C_u\}} \mathrm{d}s\, G\big({\rm tr}_{\underline{W}_s-\hat W_s}(W^{[s]})\big)
= H(\tilde W^{(u)}),$$
where
$$H(\omega)=\int_0^{\sigma(\omega)} \mathrm{d}r\,G(W^{[r]}(\omega)).$$
Summarizing, the left-hand side of \eqref{reenracinementLGWeill} is equal to
\begin{equation}
\label{re-root-tech1}
\N_0\Bigg(\sum_{u\in D} g(V_u)\,H(\tilde W^{(u)})\Bigg) = \Big(\int_{-\infty}^0 g(x)\,\mathrm{d}x\Big)\,\N^*_0(H)
\end{equation}
by Theorem \ref{construct-N}. 

On the other hand, the right-hand side of \eqref{reenracinementLGWeill} is equal
to 
$$\N_0\Big(\int_0^\sigma \mathrm{d}s\,G({\rm tr}_{\underline{W}_s}(W))\,g(\underline{W}_s-\hat W_s)\Big).$$
We can evaluate this quantity via a discrete approximation. Using Lemma \ref{conti-trunc}, we have $\N_0$ a.e.
$$\int_0^\sigma \mathrm{d}s\,G({\rm tr}_{\underline{W}_s}(W))\,g(\underline{W}_s-\hat W_s)
=\lim_{n\to \infty} \int_0^\sigma \mathrm{d}s\,g(\underline{W}_s-\hat W_s)
\sum_{k=1}^\infty \mathbf{1}_{\{\underline{W}_s\in (-(k+1)/n,-k/n]\}}\,G({\rm tr}_{-k/n}(W)),$$
and we note that, if $g$ is supported on $[-A,0]$, the quantities in the 
right-hand side are bounded independently of $n\geq 1$ by a constant times
$$\int_0^\sigma \mathrm{d}s\,\mathbf{1}_{\{\underline{W}_s\geq -K-1\}}\mathbf{1}_{\{\underline{W}_s-\hat W_s\geq -A\}}.$$
The point is that if $s\in [0,\sigma]$ is such that $\underline{W}_s<-K-1$, then the unique integer $k$ such that 
$\underline{W}_s\in (-(k+1)/n,-k/n]$ also satisfies $-k/n<-K$ and we have $G({\rm tr}_{-k/n}(W))=0$ by our assumption on $G$. 
The quantity in the last display is integrable under $\N_0$ as a simple application of the first-moment formula for the Brownian snake
\eqref{first-moment}. 
This makes it possible to use dominated convergence and to get that
\begin{align}
\label{re-root-limit}
&\N_0\Big(\int_0^\sigma \mathrm{d}s\,G(\mathrm{tr}_{\underline{W}_s}(W))\,g(\underline{W}_s-\hat W_s)\Big)\\
&=\lim_{n\to \infty} \sum_{k=1}^\infty
\N_0\Big(\int_0^\sigma \mathrm{d}s\,g(\underline{W}_s-\hat W_s)
\, \mathbf{1}_{\{\underline{W}_s\in (-(k+1)/n,-k/n]\}}\,G({\rm tr}_{-k/n}(W))\Big).\nonumber
\end{align}
Then, for every integer $k\geq 1$, an application of the special Markov property (note that 
$G({\rm tr}_{-k/n}(W))$ is $\mathcal{E}^{(-k/n,\infty)}$-measurable by the very definition of this $\sigma$-field) gives 
\begin{align*}
&\N_0\Big(\int_0^\sigma \mathrm{d}s\,g(\underline{W}_s-\hat W_s)
\, \mathbf{1}_{\{\underline{W}_s\in (-(k+1)/n,-k/n]\}}\,G({\rm tr}_{-k/n}(W))\Big)\\
&=\N_0\Bigg(Z_{k/n}\,G({\rm tr}_{-k/n}(W))\,\N_{-k/n}\Big(\int_0^\sigma\mathrm{d}s\,\mathbf{1}_{\{\underline{W}_s>-(k+1)/n\}}
\,g(\underline{W}_s-\hat W_s)\Big)\Bigg)\\
&=\N_0\big(Z_{k/n}\,G({\rm tr}_{-k/n}(W))\big)\times\N_{-k/n}\Big(\int_0^\sigma\mathrm{d}s\,\mathbf{1}_{\{\underline{W}_s>-(k+1)/n\}}
\,g(\underline{W}_s-\hat W_s)\Big)\\
&=\N_0\big(Z_{k/n}\,G({\rm tr}_{-k/n}(W))\big)\,\E_{-k/n}\Big[\int_0^\infty \mathrm{d}t\,
\mathbf{1}_{\{\min\{B_r:0\leq r\leq t\}>-(k+1)/n\}}\,g(\min\{B_r:0\leq r\leq t\}-B_t)\Big]\\
&=\frac{2}{n}\,\Big(\int_{-\infty}^0 \mathrm{d}x\,g(x)\Big)\,\N_0\big(Z_{k/n}\,G({\rm tr}_{-k/n}(W))\big),
\end{align*}
using again the first-moment formula for the Brownian snake \eqref{first-moment} in the third equality, and in the
last one the property
$$\E_0\Big[\int_0^\infty \mathrm{d}t\, \mathbf{1}_{\{\min\{B_r:0\leq r\leq t\}>-\ve\}}\,g(\min\{B_r:0\leq r\leq t\}-B_t)\Big] = 2\ve\int_{-\infty}^0  \mathrm{d}x\,g(x),$$
which holds for every $\ve>0$, by direct
calculations since the law of $(B_t,\min\{B_r:0\leq r\leq t\})$ is known explicitly (or via a simple application of standard excursion theory).
From \eqref{re-root-limit}, we then deduce that
\begin{align*}
\N_0\Big(\int_0^\sigma \mathrm{d}s\,G(\mathrm{tr}_{\underline{W}_s}(W))\,g(\underline{W}_s-\hat W_s)\Big)
&=\lim_{n\to\infty}\frac{2}{n}\,\Big(\int_{-\infty}^0 \mathrm{d}x\,g(x)\Big)\,
\sum_{k=1}^\infty \N_0\big(Z_{k/n}\,G({\rm tr}_{-k/n}(W))\big)\\
&=2\,\Big(\int_{-\infty}^0 \mathrm{d}x\,g(x)\Big)\,\N_0\Big(\int_0^\infty \mathrm{d}b\,Z_b\,G({\rm tr}_{-b}(W))\Big),
\end{align*}
where the last equality is justified by Lemma \ref{conti-trunc} together with our assumptions on $G$ 
and the integrability properties of the
exit measure process $Z$ that were already used in the proof of Theorem \ref{construct-N}. 

Finally, the equality
between the right-hand side of the last display and the right-hand side of \eqref{re-root-tech1} gives the
identity of the theorem under our special assumptions on $G$. However, since both sides 
of this identity define $\sigma$-finite measures (which are finite on sets of the form $\{\delta < \|\omega\| < K\}$),
the fact that these measures take the same values on the particular functions $G$ considered in the proof implies
that they are equal. 
\end{proof}

\section{An almost sure construction}
\label{a.s.cons}

In this section, we fix $\delta>0$ and we give an almost sure construction of 
a snake trajectory distributed according to $\N^*_0(\cdot \mid M>\delta)$. This 
construction will be useful later when we discuss exit measures.

Let $0<\ve<\ve'<\delta$, and let $W^{\delta,\ve}$ be a random snake trajectory 
 distributed according to $\N_\ve(\cdot\mid \tilde M >\delta)$. 
Consider  the excursions of $W^{\delta,\ve}$ outside the interval $(0,\ve')$.
The conditioning on $\{\tilde M >\delta\}$ implies that there is at least one
such excursion $\omega'$ starting from $\ve'$ and such that $\tilde M(\omega')>\delta$. Furthermore, 
if we pick uniformly at random one of the excursions $\omega'$ starting from $\ve'$ that satisfy
$\tilde M(\omega')>\delta$, the special Markov property (Proposition \ref{SMP}) ensures that this excursion will be
distributed according to $\N_{\ve'}(\cdot\mid \tilde M >\delta)$. For $\omega\in\S_\ve$ such that $\tilde M(\omega)>\delta$,
let $\Theta_{\ve,\ve'}(\omega,\mathrm{d}\omega')$ be the probability measure on $\S_{\ve'}$ 
defined as the law of an excursion of $\omega$ outside $(0,\ve')$ chosen uniformly at random among those
excursions that satisfy $\tilde M>\delta$. Then, the preceding considerations show that the second marginal
of the probability measure $\Pi_{\ve,\ve'}$ defined on $\S_\ve\times \S_{\ve'}$ by
$$\Pi_{\ve,\ve'}(\mathrm{d}\omega\,\mathrm{d}\omega')= \N_\ve(\mathrm{d}\omega\mid \tilde M >\delta)\,\Theta_{\ve,\ve'}(\omega,\mathrm{d}\omega')$$
is 
$\N_{\ve'}(\cdot\mid \tilde M >\delta)$.

Now let $(\ve_n)_{n\geq 1}$ be a sequence of positive reals in $(0,\delta)$ decreasing to $0$. We claim
that we can construct, on  a suitable
probability space, a sequence $(W^{\delta,\ve_n})_{n\geq 1}$ 
of random variables with values in $\S$
 such that the following holds:
\begin{enumerate}
\item[(i)] For every $n\geq 1$, $W^{\delta,\ve_n}$ is distributed according to $\N_{\ve_n}(\cdot\mid \tilde M >\delta)$.
\item[(ii)] For every $1\leq n<m$, $W^{\delta,\ve_n}$ is an excursion of $W^{\delta,\ve_m}$ outside $(0,\ve_n)$.
\end{enumerate}
Indeed, we use the Kolmogorov extension theorem to construct the  
sequence $(W^{\delta,\ve_n})_{n\geq 1}$ so that, for every $n\geq 1$, the law of $(W^{\delta,\ve_n},W^{\delta,\ve_{n-1}},\ldots,W^{\delta,\ve_1})$
is
$$\N_{\ve_n}(\mathrm{d}\omega_n\mid \tilde M >\delta)\,\Theta_{\ve_n,\ve_{n-1}}(\omega_n,\mathrm{d}\omega_{n-1})
\Theta_{\ve_{n-1},\ve_{n-2}}(\omega_{n-1},\mathrm{d}\omega_{n-2})\ldots
\Theta_{\ve_2,\ve_{1}}(\omega_2,\mathrm{d}\omega_{1})$$
and properties (i) and (ii) hold by construction. 

For every $n\geq 1$,  set $\tilde W^{\delta,\ve_n}=\mathrm{tr}_0(W^{\delta,\ve_n})$, and let
$\sigma_n=\sigma(\tilde W^{\delta,\ve_n})$ be the duration of $\tilde W^{\delta,\ve_n}$. Clearly, it is still true that,
for $1\leq n<m$, $\tilde W^{\delta,\ve_n}$ is an excursion of $\tilde W^{\delta,\ve_m}$ outside $(0,\ve_n)$.
Therefore, for every $1\leq n<m$,  $\tilde W^{\delta,\ve_n}$ is a subtrajectory of $\tilde W^{\delta,\ve_m}$ and we write
$[a_{n,m},b_{n,m}]\subset[0,\sigma_m]$ for the associated interval. Note that $b_{n,m}-a_{n,m}=\sigma_n$.
 Furthermore, 
if $1\leq n<m<\ell$, we have $[a_{n,\ell},b_{n,\ell}]\subset [a_{m,\ell},b_{m,\ell}]$, and more precisely
\begin{align}
a_{n,\ell}&= a_{n,m}+a_{m,\ell}\,,\label{as-conv-tech1}\\
\sigma_\ell-b_{n,\ell}&= (\sigma_m-b_{n,m})+ (\sigma_\ell-b_{m,\ell})\,. \label{as-conv-tech2}
\end{align}
In particular, for $n$ fixed, the sequence $(a_{n,m})_{m>n}$ is increasing, and we denote its limit by $a_{n,\infty}$ (the fact 
that this limit is finite will be obtained at the beginning of the proof of the next proposition).

\begin{proposition}
\label{as-conv}
We have a.s.
$$\tilde W^{\delta,\ve_n}\build{\la}_{n\to\infty}^{} W^{\delta,0},\quad\hbox{in }\S\,,$$
where the a.s. limit $W^{\delta,0}$ is distributed according to $\N^*_0(\cdot\mid M>\delta)$.
Furthermore,  $\tilde W^{\delta,\ve_n}$ is a subtrajectory of $W^{\delta,0}$, for every $n\geq 1$,
and $\sigma(\tilde W^{\delta,\ve_n})\uparrow \sigma(W^{\delta,0})$ as $n\to\infty$. 
\end{proposition}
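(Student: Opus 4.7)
The plan is to exploit the a.s.\ nested structure of $(\tilde W^{\delta,\ve_n})_{n\geq 1}$ to build the limit explicitly, and then identify its law via Corollary \ref{approx-*}. First, since $\tilde W^{\delta,\ve_n}$ is a subtrajectory of $\tilde W^{\delta,\ve_m}$ for $n<m$, the duration sequence $(\sigma_n)$ is nondecreasing, so $\sigma_n\uparrow\sigma_\infty$ in $[0,\infty]$ a.s. By Corollary \ref{approx-*}, $\sigma_n$ converges in distribution to the (a.s.\ finite) duration $\sigma$ under $\N^*_0(\cdot\mid M>\delta)$, so $\sigma_\infty<\infty$ a.s.\ and inherits that law.

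Next, fix $n$. By \eqref{as-conv-tech1}, $(a_{n,m})_{m>n}$ is nondecreasing, converging to some $a_{n,\infty}\geq 0$; similarly by \eqref{as-conv-tech2}, $c_{n,m}:=\sigma_m-b_{n,m}$ is nondecreasing with limit $c_{n,\infty}\geq 0$. Passing to the limit in $\sigma_m=a_{n,m}+\sigma_n+c_{n,m}$ gives $a_{n,\infty}+c_{n,\infty}=\sigma_\infty-\sigma_n$, and since the right-hand side tends to $0$ as $n\to\infty$ and both summands are nonnegative, $a_{n,\infty}\to 0$ and $c_{n,\infty}\to 0$.

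I would then construct $W^{\delta,0}$ via its tree-like path, using Proposition \ref{homeo}. Setting $(h_m,f_m):=\Delta(\tilde W^{\delta,\ve_m})$ and $\mu_{n,m}:=h_m(a_{n,m})$, the subtrajectory relation reads $h_n(r)=h_m(r+a_{n,m})-\mu_{n,m}$ and $f_n(r)=f_m(r+a_{n,m})$ for $r\in[0,\sigma_n]$. From this one reads off the cocycle identities $a_{n,\infty}=a_{m,\infty}+a_{n,m}$ and $\mu_{n,k}=\mu_{n,m}+\mu_{m,k}$ for $n<m<k$; the latter shows $(\mu_{n,m})_{m>n}$ is nondecreasing, and its finiteness follows from tightness of $\max h_m$ (itself a consequence of Corollary \ref{approx-*}). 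Call the limit $\mu_{n,\infty}$. On each interval $[a_{n,\infty},b_{n,\infty}]$, I define $h_\infty(s):=h_n(s-a_{n,\infty})+\mu_{n,\infty}$ and $f_\infty(s):=f_n(s-a_{n,\infty})$; the cocycle relations guarantee consistency across different $n$. Since $a_{n,\infty}\downarrow 0$ and $b_{n,\infty}\uparrow\sigma_\infty$, these intervals cover $(0,\sigma_\infty)$, so $(h_\infty,f_\infty)$ is defined on $(0,\sigma_\infty)$ and extends continuously to the endpoints with value $0$.

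It remains to check that $(h_\infty,f_\infty)$ is a genuine tree-like path and that the extensions of $(h_n,f_n)$ to $[0,\sigma_\infty]$ (by constants past $\sigma_n$) converge uniformly to $(h_\infty,f_\infty)$. The uniform convergence follows from $a_{n,\infty},c_{n,\infty},\mu_{n,\infty},\ve_n\build{\la}_{n\to\infty}^{}0$ combined with uniform continuity of $(h_\infty,f_\infty)$ on the compact interval $[0,\sigma_\infty]$. Proposition \ref{homeo} then yields $\tilde W^{\delta,\ve_n}\to W^{\delta,0}$ in $\S$, and since each $\tilde W^{\delta,\ve_n}$ has law $\N_{\ve_n}(\cdot\mid\tilde M>\delta)$, Corollary \ref{approx-*} forces $W^{\delta,0}$ to have law $\N^*_0(\cdot\mid M>\delta)$. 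I expect the main obstacle to be verifying property (ii) of Definition \ref{def:treelikepaths} for the glued $(h_\infty,f_\infty)$ at ``boundary times'' of the nested intervals $[a_{n,\infty},b_{n,\infty}]$, and establishing the uniform control needed to pass from pointwise to uniform convergence near these times.
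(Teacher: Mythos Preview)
Your overall strategy is sound and runs parallel to the paper's: exploit the nested subtrajectory structure, work with the associated tree-like paths, and identify the law of the limit via Corollary~\ref{approx-*}. The cocycle identities for $a_{n,\cdot}$ and $\mu_{n,\cdot}$ are correct, and your observation that finiteness of $\mu_{n,\infty}$ together with $\mu_{n,\infty}=\mu_{n,m}+\mu_{m,\infty}$ forces $\mu_{m,\infty}\to 0$ is fine. Your concern about property~(ii) of Definition~\ref{def:treelikepaths} at the boundary is in fact not the issue: once continuity at the endpoints is known, one checks that $h_\infty>0$ on $(0,\sigma_\infty)$, so property~(ii) involving an endpoint is vacuous.

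The genuine gap is precisely the claim that $(h_\infty,f_\infty)$ ``extends continuously to the endpoints with value $0$''. Knowing $h_\infty(a_{n,\infty})=\mu_{n,\infty}\to 0$ along the sequence $a_{n,\infty}\downarrow 0$ is not enough: for $s\in[a_{m,\infty},a_{n,\infty}]$ with $m>n$ you have
\[
h_\infty(s)=\mu_{m,\infty}+h_m(s-a_{m,\infty}),\qquad s-a_{m,\infty}\in[0,a_{n,m}],
\]
and nothing you have written prevents $h_m$ from spiking on $[0,a_{n,m}]$. Continuity of $h_\infty$ at $0$ is therefore equivalent to
\[
\lim_{n\to\infty}\sup_{m>n}\ \sup_{0\leq r\leq a_{n,m}} h_m(r)=0,
\]
which is exactly the paper's estimate \eqref{as-conv-t1} (and symmetrically \eqref{as-conv-t2} at the right endpoint, plus the analogous statements for the tip process). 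Your appeal to ``uniform continuity of $(h_\infty,f_\infty)$'' to deduce uniform convergence is circular: uniform continuity on the \emph{closed} interval is what you are trying to establish.

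The paper supplies this missing estimate by a tightness contradiction: the quantity $\sup_{m>n}\sup_{[0,a_{n,m}]}h_m$ is monotone in $n$, so if its limit $L$ satisfies $P(L>\xi)>0$ for some $\xi>0$, one can find, on that event, an increasing sequence $n_1<m_1<n_2<m_2<\cdots$ with $\sup_{[0,a_{n_i,m_i}]}h_{m_i}>\xi$. Since the intervals $[a_{n_i+1,m_j},a_{n_i,m_j}]$ are disjoint inside $[0,\sigma_{m_j}]$ for $i\leq j$, this yields, for every $k$ and all large $n$, at least $k$ disjoint intervals on which $h_n$ oscillates by more than $\xi$, contradicting tightness of the laws of $\tilde W^{\delta,\ve_n}$ in $\S$ (equivalently of $(h_n,f_n)$ in $\T$). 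Once \eqref{as-conv-t1} and \eqref{as-conv-t2} are in hand, either your gluing construction or the paper's Cauchy-sequence argument for the shifted functions $\zeta^{(n)}$ goes through without further difficulty.
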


\begin{proof}
By Corollary \ref{approx-*}, we already know that the sequence $(\tilde W^{\delta,\ve_n})_{n\geq 1}$
converges in distribution to $\N^*_0(\cdot\mid M>\delta)$, and in particular $\sigma_n=\sigma(\tilde W^{\delta,\ve_n})$
converges in distribution to the law of $\sigma$ under $\N^*_0(\cdot\mid M>\delta)$. On the other
hand, the sequence $(\sigma_n)_{n\geq 1}$ is increasing and thus has an a.s.
limit $\sigma_\infty$. We conclude that $\sigma_\infty$ is distributed as $\sigma$ under $\N^*_0(\cdot\mid M>\delta)$,
and in particular, $\sigma_\infty<\infty$ a.s.  

 Since $a_{n,m}\leq \sigma_m- \sigma_n$ if $n<m$, we obtain
that, for every $n$,
$$a_{n,\infty}\leq \sigma_\infty- \sigma_n.$$
It follows that
\begin{equation}
\label{as-conv-t00}
\lim_{n\to\infty} a_{n,\infty} =0,\ \hbox{a.s.}
\end{equation}
Then, for every fixed $n$, $b_{n,m}=a_{n,m}+\sigma_n$
converges as $m\uparrow\infty$ to $b_{n,\infty}=a_{n,\infty}+\sigma_n$, and, by letting $\ell$ tend to $\infty$ in
\eqref{as-conv-tech1} and \eqref{as-conv-tech2}, we get, for $n<m$, 
\begin{equation}
\label{as-conv-tech3}
a_{n,\infty}=a_{n,m}+a_{m,\infty}\;,\qquad \sigma_\infty-b_{n,\infty}
= (\sigma_\infty-b_{m,\infty})+(\sigma_m-b_{n,m})\,.
\end{equation}

Set $\tilde\zeta^{\delta,\ve_n}_s= \zeta_s(\tilde W^{\delta,\ve_n})$ to simplify notation. By the definition of 
subtrajectories we know that  $\tilde\zeta^{\delta,\ve_n}_s=
\tilde\zeta^{\delta,\ve_m}_{(a_{n,m}+s)\wedge b_{n,m}}-\tilde\zeta^{\delta,\ve_m}_{a_{n,m}}$ 
if $n<m$. We claim that we have a.s.
\begin{equation}
\label{as-conv-t1}
\lim_{n\to\infty} \Bigg(\sup_{m>n}\Big(\sup_{0\leq s\leq a_{n,m}} \tilde\zeta^{\delta,\ve_m}_s\Big)\Bigg)=0
\end{equation}
To verify this claim, first observe that, if $n<n'<m$, we have
$$\sup_{0\leq s\leq a_{n',m}} \tilde\zeta^{\delta,\ve_m}_s\leq \sup_{0\leq s\leq a_{n,m}} \tilde\zeta^{\delta,\ve_m}_s$$
because $a_{n',m}\leq a_{n,m}$. It then follows that the supremum over $m>n$ in
\eqref{as-conv-t1} is a decreasing function of $n$, and so the limit in the left-hand side  of
\eqref{as-conv-t1} exists a.s. as a decreasing limit. Call $L$ this limit. We argue by contradiction assuming
that $P(L>0)>0$. Then we choose $\xi>0$ such that $P(L>\xi)>0$, and we note that, 
on the event $\{L>\xi\}$, we
can find a sequence $n_1<m_1<n_2<m_2<\cdots$, such that, for every
$i=1,2,\ldots$, we have
$$\sup_{0\leq s\leq a_{n_i,m_i}} \tilde\zeta^{\delta,\ve_{m_i}}_s>\xi.$$
It then follows that, on the same event $\{L>\xi\}$ of positive probability, for any integer $k\geq 1$,
and for every large enough $n$, there exist $k$ disjoint intervals $[r_1,s_1],\ldots,[r_k,s_k]$
such that 
$\tilde\zeta^{\delta,\ve_n}_{s_i}-\tilde\zeta^{\delta,\ve_n}_{r_i} >\xi$ for every $1\leq i\leq k$. The latter property contradicts the 
tightness of the sequence of the laws of $\tilde W^{\delta,\ve_n}$ in $\S$, and this contradiction proves our claim 
\eqref{as-conv-t1}. 

By the same argument,
we have also
\begin{equation}
\label{as-conv-t2}
\lim_{n\to\infty} \Bigg(\sup_{m>n}\Big(\sup_{b_{n,m}\leq s\leq \sigma_m} \tilde\zeta^{\delta,\ve_m}_s\Big)\Bigg)=0.
\end{equation}

We can now use  \eqref{as-conv-t1} and  \eqref{as-conv-t2}
to verify that $(\tilde\zeta^{\delta,\ve_n}_s)_{s\geq 0}$ converges uniformly as $n\to\infty$, a.s.
To this end, we define
$$\zeta^{(n)}_s=\left\{
\begin{array}{ll}
0&\hbox{if }s\leq a_{n,\infty},\\
\tilde\zeta^{\delta,\ve_n}_{s-a_{n,\infty}}\quad&\hbox{if }a_{n,\infty}\leq s\leq b_{n,\infty},\\
0&\hbox{if } s\geq b_{n,\infty}.
\end{array}
\right.
$$
Recalling the formula $\tilde\zeta^{\delta,\ve_n}_s=
\tilde\zeta^{\delta,\ve_m}_{(a_{n,m}+s)\wedge b_{n,m}}-\tilde\zeta^{\delta,\ve_m}_{a_{n,m}}$, and using \eqref{as-conv-tech3},
we get for $n<m$,
$$\sup_{s\geq 0} |\zeta^{(n)}_s- \zeta^{(m)}_s| \leq \sup_{0\leq s\leq a_{n,m}} \tilde\zeta^{\delta,\ve_m}_s
+ \sup_{b_{n,m}\leq s\leq \sigma_m} \tilde\zeta^{\delta,\ve_m}_s,$$
and the right-hand side tends to $0$ a.s. as $n$ and $m$ tend to $\infty$ with $n<m$, by \eqref{as-conv-t1} and  \eqref{as-conv-t2}. 
This gives the a.s. uniform convergence of $(\zeta^{(n)}_s)_{s\geq 0}$ as $n\to\infty$. Write $(\zeta^{\delta,0}_s)_{s\geq 0}$ for the limit. The 
a.s. uniform convergence of $(\tilde\zeta^{\delta,\ve_n}_s)_{s\geq 0}$ 
toward the same limit $(\zeta^{\delta,0}_s)_{s\geq 0}$ then follows using now \eqref{as-conv-t00},
and we have also
$\sup\{s\geq 0: \tilde\zeta^{\delta,\ve_n}_s>0\}=\sigma_n\la \sigma_\infty=\sup\{s\geq 0: \zeta^{\delta,0}_s>0\}$ as $n\to\infty$.

Let $\Gamma^{\delta,\ve_n}_s$ stand for the endpoint of the path ${ \tilde W}^{\delta,\ve_n}_s$. Very similar arguments show that 
the analogs of \eqref{as-conv-t1} and \eqref{as-conv-t2} where $\tilde\zeta^{\delta,\ve_m}_s$ is replaced by
$\Gamma^{\delta,\ve_m}_s$ hold, and it follows that
$(\Gamma^{\delta,\ve_n}_s)_{s\geq 0}$ also converges uniformly
to a limit denoted by  $(\Gamma^{\delta,0}_s)_{s\geq 0}$, a.s. The pair $(\zeta^{\delta,0},\Gamma^{\delta,0})$
is then a random tree-like path, and letting $W^{\delta,0}$ be the associated snake trajectory, we have
obtained that $\tilde W^{\delta,\ve_n}$ converges a.s. to $W^{\delta,0}$. Since we know
that $\tilde W^{\delta,\ve_n}$
converges in distribution to $\N^*_0(\cdot\mid M>\delta)$, $W^{\delta,0}$ is distributed according to
$\N^*_0(\cdot\mid M>\delta)$. 

Finally, it follows from our construction that, for every $n\geq 1$,  $\tilde W^{\delta,\ve_n}$ is the subtrajectory of $W^{\delta,0}$
associated with the interval $[a_{n,\infty},b_{n,\infty}]$, and the property 
$\sigma(\tilde W^{\delta,\ve_n})\uparrow \sigma(W^{\delta,0})$ is just the fact that $\sigma_n\uparrow \sigma_\infty$. This completes the proof. 
\end{proof}

\section{The exit measure}
\label{sec:exit}

We now define the exit measure from $(0,\infty)$ under $\N^*_0$. Informally, this exit 
measure corresponds to the quantity of snake trajectories that return to $0$. 

\begin{proposition}
\label{construct-exit}
The limit
$$\lim_{\ve\to 0} \ve^{-2}\int_0^\sigma \mathrm{d}s\,\mathbf{1}_{\{\hat W_s<\ve\}}$$
exists in probability under $\N^*_0(\cdot\mid \sigma>\chi)$, for every $\chi >0$, and defines a finite random variable 
denoted by $Z^*_0$.
\end{proposition}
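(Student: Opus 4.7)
The plan is to first establish the statement under each probability measure $\N^*_0(\cdot\mid M>\delta)$ with $\delta>0$, and then to deduce the statement under $\N^*_0(\cdot\mid \sigma>\chi)$ by writing $\{\sigma>\chi\}\subset\bigcup_{\delta>0}\{M>\delta\}$ ($\N^*_0$-a.e., since nontrivial snake trajectories have $M>0$) and splitting into $\{\sigma>\chi, M>\delta\}\cup\{\sigma>\chi, M\leq\delta\}$, the latter having $\N^*_0$-measure tending to $0$ as $\delta\to 0$. Fix $\delta>0$ and place ourselves on the probability space from Proposition~\ref{as-conv}, on which $\tilde W^{\delta,\ve_n}\to W^{\delta,0}$ a.s.\ in $\S$, with $W^{\delta,0}$ of law $\N^*_0(\cdot\mid M>\delta)$ and with $\tilde W^{\delta,\ve_n}$ realized as the subtrajectory of $W^{\delta,0}$ on intervals $[a_{n,\infty},b_{n,\infty}]$ that exhaust $[0,\sigma_\infty]$ as $n\to\infty$.

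The candidate $Z^*_0$ will arise as a monotone limit of pre-truncation exit measures. For each $n$, the snake $W^{\delta,\ve_n}\in\S_{\ve_n}$ admits an exit measure from $(0,\infty)$, namely $\widetilde Z_n:=\z_0(W^{\delta,\ve_n})$, which is $\mathcal{E}^{(0,\infty)}_{\ve_n}$-measurable and hence a function of $\tilde W^{\delta,\ve_n}$. Applying the subsequence version \eqref{approx-ps} at level $y=0$ (with a single subsequence $(\alpha_k)_{k\geq 1}$ chosen once and for all, independently of $n$), one has a.s.
$$\widetilde Z_n=\lim_{k\to\infty}\alpha_k^{-2}\int_0^{\sigma_n}\mathbf{1}_{\{\hat{\tilde W}^{\delta,\ve_n}_s<\alpha_k\}}\,\mathrm{d}s=\lim_{k\to\infty}\alpha_k^{-2}\int_{a_{n,\infty}}^{b_{n,\infty}}\mathbf{1}_{\{\hat W^{\delta,0}_u<\alpha_k\}}\,\mathrm{d}u,$$
the second equality by the subtrajectory identification $\hat{\tilde W}^{\delta,\ve_n}_s=\hat W^{\delta,0}_{(a_{n,\infty}+s)\wedge b_{n,\infty}}$. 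Since for each fixed $k$ the integrand is non-decreasing in $n$, $(\widetilde Z_n)$ is non-decreasing and converges a.s.\ to a limit $Z^*_0\in[0,\infty]$.

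It remains to show $Z^*_0<\infty$ a.s.\ and to identify it as the limit in probability of $X_\alpha(\omega):=\alpha^{-2}\int_0^{\sigma(\omega)}\mathbf{1}_{\{\hat\omega_s<\alpha\}}\,\mathrm{d}s$ as $\alpha\to 0$ under $\N^*_0(\cdot\mid M>\delta)$. The lower bound $\liminf_{k\to\infty}X_{\alpha_k}(W^{\delta,0})\geq Z^*_0$ is immediate by restricting the defining integral to $[a_{n,\infty},b_{n,\infty}]$, passing to the limit in $k$ then in $n$. For the matching upper bound along \emph{any} sequence $\alpha\to 0$, I decompose
$$X_\alpha=\alpha^{-2}\int_{a_{n,\infty}}^{b_{n,\infty}}\mathbf{1}_{\{\hat W^{\delta,0}_s<\alpha\}}\,\mathrm{d}s+Y^n_\alpha,\qquad Y^n_\alpha:=\alpha^{-2}\int_{[0,a_{n,\infty}]\cup[b_{n,\infty},\sigma_\infty]}\mathbf{1}_{\{\hat W^{\delta,0}_s<\alpha\}}\,\mathrm{d}s,$$
and control the boundary contribution $Y^n_\alpha$ uniformly in $\alpha$. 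In the tree $\mathcal{T}_{\zeta^{\delta,0}}$, the complement of $[a_{n,\infty},b_{n,\infty}]$ consists of the ancestor line of the distinguished vertex $u_n$ at label $\ve_n$ (along which labels lie in $[0,\ve_n]$) together with snake excursions branching off that ancestor line at various levels in $(0,\ve_n)$. Each hanging excursion is (conditionally) a Brownian snake starting from a level $\leq \ve_n$, to which \eqref{approx-ps} applies to give its own exit measure at $0$. Combining the first-moment formula \eqref{first-moment-exit}, the moment bounds of Lemma~\ref{exit-moments}, and the sharp asymptotic $\N_\ve(\tilde M>\delta)\sim c_0\delta^{-3}\ve$ from \eqref{loi-max-tech}, I obtain $\N^*_0(Y^n_\alpha\,\mathbf{1}_{\{M>\delta\}})\leq C(\delta)\,\ve_n$ uniformly in $\alpha$. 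Taken with the monotone-convergence identification of $Z^*_0$, this simultaneously yields $\N^*_0(Z^*_0\mathbf{1}_{\{M>\delta\}})<\infty$ (so $Z^*_0<\infty$ a.s.) and the $L^1$-convergence $X_\alpha\to Z^*_0$, hence the desired convergence in probability.

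The main obstacle is this uniform-in-$\alpha$ control of $Y^n_\alpha$: the naive bound $\alpha^{-2}$ times the Lebesgue measure of a short interval diverges, so one must carefully analyze the contribution of the ancestor line and of each hanging excursion, exploiting independence (via a special-Markov-type decomposition of $W^{\delta,0}$ around the ancestor line at level $\ve_n$) and the sharp $O(\ve_n)$ scaling of the relevant exit-measure moments, in order to trade the factor $\alpha^{-2}$ against the small geometric scale $\ve_n$. Once this bound is in hand, the argument is complete.
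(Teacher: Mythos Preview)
The paper's proof is short and uses a different idea: it exploits the re-rooting identity of Theorem~\ref{reenracinement}. The functional $\omega\mapsto\alpha^{-2}\int_0^{\sigma}\mathbf{1}_{\{\hat W_s<W_*+\alpha\}}\,ds$ is invariant under $W\mapsto W^{[r]}$, and one already knows (Lemma~\ref{approx}) that on $\mathrm{tr}_{-b}(W)$ under $\N_0$ this converges to $Z_b$ along an appropriate subsequence. Plugging the indicator ``the limit along $(\beta_n)$ fails to exist'' into Theorem~\ref{reenracinement} shows that it fails on a re-rooting-invariant set of $\N^*_0$-measure zero, hence the limit exists $\N^*_0$-a.e.\ along $(\beta_n)$; the standard subsequence characterization then gives convergence in probability.

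Your route through Proposition~\ref{as-conv} is genuinely different, and the monotone limit $Z^*_0:=\sup_n\widetilde Z_n$ is exactly the quantity the paper later (in the proof of Proposition~\ref{conv-exit-M}) identifies with the exit measure. But there is a real gap where you say ``main obstacle''.

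First, the spinal picture you invoke for $W^{\delta,0}$ (``ancestor line of $u_n$ with labels in $[0,\ve_n]$, plus snake excursions branching off at various levels'') is not available at this point: no spine or special-Markov decomposition has been established under $\N^*_0$. The only bridge between $\N_0$ and $\N^*_0$ in the paper is Theorem~\ref{reenracinement}, which you do not use.

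Second, even granting such a decomposition, the cited tools do not yield the uniform bound $\N^*_0(Y^n_\alpha\,\mathbf{1}_{\{M>\delta\}})\le C(\delta)\ve_n$. The first-moment formula gives $\N_{\ve_m}\bigl(\alpha^{-2}\int_0^\sigma\mathbf{1}_{\{\zeta_s\le\tau_0,\ \hat W_s<\alpha\}}ds\bigr)\le 1$ uniformly in $\alpha$, but dividing by the conditioning probability $\N_{\ve_m}(\tilde M>\delta)\sim c_0\delta^{-3}\ve_m$ blows up; the conditioning on $\{\tilde M>\delta\}$ biases the exit measure at $\ve_n$ and hence the ``complement'' contribution in a way that \eqref{first-moment-exit} and Lemma~\ref{exit-moments} do not control. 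The asserted $L^1$-convergence $X_\alpha\to Z^*_0$ is in particular unproved (and is not what the paper uses). Note that the paper closes the analogous gap in Proposition~\ref{conv-exit-M} only via an explicit Laplace-transform identity (formula~\eqref{conv-exit-tech4}), \emph{after} $Z^*_0$ has already been defined by the re-rooting argument; your sketch reverses this logical order without supplying the missing estimate.
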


\begin{proof}
We rely on the re-rooting property of Section \ref{sec:re-root}.
Let $(\ve_n)_{n\geq 1}$ be a sequence of positive reals converging to $0$. Recalling Lemma \ref{approx} and 
the subsequent remarks, we can extract from the sequence  $(\ve_n)_{n\geq 1}$
a subsequence $(\beta_n)_{n\geq 1}$ such that, for every $b<0$,
\begin{equation}
\label{approx-ps2}
Z_b= \lim_{n\to\infty} \beta_n^{-2}\int_0^\sigma \mathrm{d}s\,\mathbf{1}_{\{\zeta_s\leq\tau_{-b}(W_s), \hat W_s<-b+\beta_n\}}\;,\qquad \N_0\hbox{ a.e.}
\end{equation}
Then, for $\omega\in \S$, we set $G(\omega)=0$
if the limit
$$\lim_{n\to\infty} \beta_n^{-2}\int_0^{\sigma(\omega)} \mathrm{d}s\,\mathbf{1}_{\{\hat W_s(\omega)<W_*(\omega)+\beta_n\}}$$
exists (and is finite), and $G(\omega)=1$ otherwise. By \eqref{approx-ps2}, we have $G({\rm tr}_{-b}(W))=0$,
$\N_0$ a.e. on the event $\{W_*\leq -b\}=\{Z_b>0\}$, for every $b>0$. By Theorem \ref{reenracinement},
we have then
$$\N^*_0\Bigg(\int_0^\sigma \mathrm{d}s\,G(W^{[s]})\Bigg)=0.$$
We have thus obtained that $\N^*_0$ a.e., for Lebesgue a.e. $r\in [0,\sigma]$, $G(W^{[r]})=0$. By considering
just one value of $r$ for which $G(W^{[r]})=0$, this says that the convergence of the proposition holds $\N^*_0$ a.e. along the
sequence $(\beta_n)_{n\geq 1}$. We have thus shown that from any sequence of positive real
numbers converging to $0$ we can extract a subsequence along which the convergence of the proposition
holds $\N^*_0$ a.e. The statement of the proposition follows.
\end{proof}

Recall from Section \ref{sec:exitprocess} that we have fixed a  sequence $(\alpha_n)_{n\geq 1}$ such that \eqref{approx-ps} holds. We then
define $Z^*_0(\omega)$ for {\it every} $\omega\in \S$, by setting
\begin{equation}
\label{def-exit-gene}
Z^*_0(\omega)=\liminf_{n\to\infty} \alpha_n^{-2}\int_0^\sigma \mathrm{d}s\,\mathbf{1}_{\{\hat W_s(\omega)<W_*(\omega)+\alpha_n\}}.
\end{equation}
By the argument we have just given in the proof of Proposition \ref{construct-exit},
the liminf is a limit $\N^*_0$ a.e. 
In what follows, we will be concerned by the values of $Z^*_0(\omega)$ under $\N^*_0$, and we note that
the quantity $W_*(\omega)$ in (\ref{def-exit-gene}) can be replaced by $0$, $\N^*_0$ a.e., so that 
\eqref{def-exit-gene} is consistent with Proposition \ref{construct-exit}. 

\smallskip
Our next goal is to compute the joint distribution of the pair $( Z^*_0, \sigma)$ under 
$\N^*_0$.

\begin{proposition}
\label{loisZ0sigma}
 The distribution of the pair $(Z^*_0, \sigma)$ under $\mathbb{N}_0^*$ has a density $f$ given for $z>0$ and $s>0$ by
$$ f(z,s)=\frac{\sqrt{3}}{2 \pi} \sqrt{z} \,s^{-5/2} \exp \left( -\frac{z^2}{2s} \right).$$
In particular, the respective densities $g$ of $Z^*_0$ and $h$ of $\sigma$ under $\mathbb{N}_0^*$
are given by
$$ g(z)= \sqrt{\frac{3}{2 \pi}} z^{-5/2}, \quad z>0,$$
and
$$h(s)= \frac{\sqrt{3}}{2 \pi} 2^{-1/4} \Gamma(3/4) s^{-7/4},\quad s>0.$$
\end{proposition}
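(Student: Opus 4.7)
The plan is to compute the joint Laplace transform of $(Z^*_0,\sigma)$ under $\N^*_0$ via the re-rooting identity of Theorem \ref{reenracinement}, and then recover the density by a two-step Laplace inversion. First I observe that both $Z^*_0$ and $\sigma$ are invariant under the re-rooting map $\omega\mapsto W^{[r]}(\omega)$: invariance of $\sigma$ is immediate, and invariance of $Z^*_0$ follows from \eqref{def-exit-gene} by the change of variable $s\mapsto r\oplus s$, using the identity $\hat W_s(W^{[r]})-W_*(W^{[r]})=\hat W_{r\oplus s}(\omega)-W_*(\omega)$. Moreover, for $\N_0$-a.e.\ $\omega$ and every $b>0$, $Z^*_0(\mathrm{tr}_{-b}(W))=Z_b$ (this is exactly \eqref{approx-ps} combined with $W_*(\mathrm{tr}_{-b}(W))=-b$ on $\{W_*\leq -b\}$), while $\sigma(\mathrm{tr}_{-b}(W))=\mathcal{Y}_b$ for Lebesgue-a.e.\ $b$ (the discrepancy is supported on $\{s:\hat W_s=-b\}$, which has zero Lebesgue measure). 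Applying Theorem \ref{reenracinement} to $G(\omega)=F(Z^*_0(\omega),\sigma(\omega))$ then yields
\begin{equation}\label{rootkeyplan}
\N^*_0\bigl(\sigma\,F(Z^*_0,\sigma)\bigr)\;=\;2\int_0^\infty \mathrm{d}b\,\N_0\bigl(Z_b\,F(Z_b,\mathcal{Y}_b)\bigr).
\end{equation}

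Next I specialise \eqref{rootkeyplan} to $F(z,s)=e^{-\lambda z-\mu s}$ and evaluate the right-hand side through the standard Brownian snake ODE. With $u_\infty:=\sqrt{\mu/2}$, the function $v(x):=\N_x(1-e^{-\lambda Z_b-\mu\mathcal{Y}_b})$ is the unique bounded nonnegative solution on $(-b,\infty)$ of $v''=4v^2-2\mu$ with $v(-b)=\lambda$ and $v(x)\to u_\infty$ as $x\to\infty$; a first integration gives $(v')^2=\tfrac{8}{3}(v-u_\infty)^2(v+2u_\infty)$. Setting $V(b):=v(0)$, the implicit relation $\int_{V(b)}^{\lambda}\bigl((v-u_\infty)\sqrt{v+2u_\infty}\bigr)^{-1}\mathrm{d}v=\sqrt{8/3}\,b$ can be differentiated in $\lambda$ to yield
$$\partial_\lambda V(b)\;=\;\frac{(V(b)-u_\infty)\sqrt{V(b)+2u_\infty}}{(\lambda-u_\infty)\sqrt{\lambda+2u_\infty}},$$
and $\N_0(Z_b\,e^{-\lambda Z_b-\mu\mathcal{Y}_b})=\partial_\lambda V(b)$. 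Crucially, the factor $(V-u_\infty)\sqrt{V+2u_\infty}$ in the numerator of $\partial_\lambda V(b)$ is exactly cancelled by the Jacobian $\mathrm{d}b=\mathrm{d}V/V'(b)=\mp\bigl(\sqrt{8/3}(V-u_\infty)\sqrt{V+2u_\infty}\bigr)^{-1}\mathrm{d}V$, so the integration over $b$ reduces to an integration of $\mathrm{d}V$ from $\lambda$ to $u_\infty$:
$$\int_0^\infty \partial_\lambda V(b)\,\mathrm{d}b\;=\;\sqrt{\frac{3}{8(\lambda+\sqrt{2\mu})}}\,.$$
Combined with \eqref{rootkeyplan} this gives $\N^*_0(\sigma\,e^{-\lambda Z^*_0-\mu\sigma})=\sqrt{3/(2(\lambda+\sqrt{2\mu}))}$.

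Finally I recover the density by inverting this double Laplace transform in two steps. The elementary identity $(\lambda+c)^{-1/2}=\int_0^\infty e^{-\lambda z}(\pi z)^{-1/2}e^{-cz}\,\mathrm{d}z$ with $c=\sqrt{2\mu}$ inverts the transform in $\lambda$ and gives $\int_0^\infty e^{-\mu s}s\,f(z,s)\,\mathrm{d}s=\sqrt{3/(2\pi z)}\,e^{-z\sqrt{2\mu}}$; then the classical L\'evy formula $\int_0^\infty e^{-\mu s}\tfrac{z}{\sqrt{2\pi s^3}}e^{-z^2/(2s)}\,\mathrm{d}s=e^{-z\sqrt{2\mu}}$ inverts the transform in $\mu$ and produces $s\,f(z,s)=\tfrac{\sqrt{3}}{2\pi}\sqrt{z}\,s^{-3/2}e^{-z^2/(2s)}$, i.e.\ the claimed joint density. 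The marginals $g$ and $h$ then follow by routine Gamma-integral calculations using the substitution $u=z^2/(2s)$. The main obstacle is the ODE analysis: one has to invoke the standard Brownian snake--PDE theory to characterise $v$ and justify the interchange of differentiation and integration, and also to handle the case $\lambda<u_\infty$ in which $v$ is increasing rather than decreasing; however the formula for $\partial_\lambda V$ and its integral over $b$ turn out to be the same in both cases, and the final identity is in any event analytic in $(\lambda,\mu)$.
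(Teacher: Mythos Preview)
Your proof is correct and follows the same overall architecture as the paper: apply Theorem \ref{reenracinement} to $G(\omega)=e^{-\lambda Z^*_0(\omega)-\mu\sigma(\omega)}$ to reduce to the identity
\[
\N^*_0\bigl(\sigma\,e^{-\lambda Z^*_0-\mu\sigma}\bigr)=2\int_0^\infty \mathrm{d}b\,\partial_\lambda u_{\lambda,\mu}(b),
\qquad u_{\lambda,\mu}(b)=\N_0\bigl(1-e^{-\lambda Z_b-\mu\mathcal{Y}_b}\bigr),
\]
evaluate this integral to $\sqrt{3/2}\,(\lambda+\sqrt{2\mu})^{-1/2}$, and then match against the Laplace transform of the proposed density via the classical formula for the hitting-time density of Brownian motion.

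The only real difference is in the evaluation of $\int_0^\infty\partial_\lambda u_{\lambda,\mu}(b)\,\mathrm{d}b$. The paper imports the explicit closed form for $u_{\lambda,\mu}(b)$ from \cite[Lemma 4.5]{Hull} (a $\tanh$ expression), differentiates it in $\lambda$, and integrates in $b$. You instead work directly with the ODE $v''=4v^2-2\mu$: after the first integral $(v')^2=\tfrac{8}{3}(v-u_\infty)^2(v+2u_\infty)$, implicit differentiation of the separated equation gives $\partial_\lambda V(b)$, and the substitution $b\mapsto V$ produces an exact cancellation that collapses the $b$-integral to $\int_{u_\infty}^{\lambda}\mathrm{d}V$ divided by a constant. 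This is a pleasant, self-contained variant that avoids any external citation; the paper's version is shorter to write down but relies on having the explicit solution already in hand. Both routes are essentially computations of the same quantity, and your handling of the two regimes $\lambda\gtrless u_\infty$ via analyticity is adequate.
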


\begin{proof}
We fix $\lambda>0$ and $\mu>0$, and compute
$$\N^*_0\Big(\sigma\,\exp(-\lambda Z^*_0-\mu \sigma)\Big).$$
Recalling \eqref{def-exit-gene}, and using  Lemma \ref{approx}, we get that
$Z^*_0(\mathrm{tr}_{-b}(W))=Z_b$, $\N_0$ a.e. on $\{Z_b>0\}$, for every $b>0$.
Hence, by applying Theorem \ref{reenracinement} to the function $G(\omega)=\exp(-\lambda Z^*_0(\omega)-\mu \sigma(\omega))$,
we obtain
$$\N^*_0\Big(\sigma\,\exp(-\lambda Z^*_0-\mu \sigma)\Big)
=2\int_0^\infty \mathrm{d}b\,\N_0\Big(Z_b\exp(-\lambda Z_b-\mu \mathcal{Y}_b)\Big)$$
with the notation 
$$\mathcal{Y}_b= \int_0^\sigma \mathrm{d}s\,\mathbf{1}_{\{\tau_{-b}(W_s)=\infty\}}$$
(note that $\mathcal{Y}_b=\sigma(\mathrm{tr}_{-b}(W))$, $\N_0$ a.e.). Set
$$u_{\lambda, \mu}(b)=\mathbb{N}_{0}\Big(1-\exp(-\lambda Z_b-\mu \mathcal{Y}_b)\Big),$$
and note that 
$$\frac{\mathrm{d}}{\mathrm{d}\lambda}u_{\lambda, \mu}(b)
= \N_0\Big(Z_b\exp(-\lambda Z_b-\mu \mathcal{Y}_b)\Big).$$
The quantity $u_{\lambda,\mu}(b)$ is computed explicitly in \cite[Lemma 4.5]{Hull}: If $ \lambda < \sqrt{\frac{\mu}{2}}$,
$$u_{\lambda, \mu} (b)= \sqrt{\frac{\mu}{2}} \left(3 \left( \tanh^2 \left( (2 \mu)^{1/4} b + \tanh^{-1} \sqrt{\frac{2}{3}+\frac{1}{3} \sqrt{\frac{2}{\mu}}\lambda} \right) \right)- 2\right),$$
and a similar formula holds if $ \lambda > \sqrt{\frac{\mu}{2}}$. From this explicit formula,  in the case 
$ \lambda < \sqrt{\frac{\mu}{2}}$ one gets
\begin{align*}
\frac{\mathrm{d}}{\mathrm{d}\lambda} u_{\lambda, \mu} (b)= &K_{\lambda, \mu}^{-1} \tanh \left( (2 \mu)^{1/4} b + \tanh^{-1} \sqrt{\frac{2}{3}+\frac{1}{3} \sqrt{\frac{2}{\mu}}\lambda} \right)\\
&\times \left(\cosh^{2} \left( (2 \mu)^{1/4} b + \tanh^{-1} \sqrt{\frac{2}{3}+\frac{1}{3} \sqrt{\frac{2}{\mu}}\lambda} \right)\right)^{-1}
\end{align*}
where 
$$K_{\lambda, \mu}= \frac{1}{3}\left( 1-\sqrt{\frac{2}{\mu}}\lambda\right) \sqrt{\frac{2}{3}+\frac{1}{3} \sqrt{\frac{2}{\mu}}\lambda}.$$
By integrating the last formula between $b=0$ and $b=\infty$, we arrive at
$$\int_0^\infty \mathrm{d}b\,\N_0\Big(Z_b\exp(-\lambda Z_b-\mu \mathcal{Y}_b)\Big)
= \int_0^\infty \mathrm{d}b\,\frac{d}{d\lambda} u_{\lambda, \mu} (b)
= \frac{1}{2}\,\sqrt{\frac{3}{2}} \Big(\lambda + \sqrt{2\mu}\Big)^{-1/2}.$$
Similar calculations give the same result when $ \lambda > \sqrt{\frac{\mu}{2}}$ (and also
in  the case $ \lambda = \sqrt{\frac{\mu}{2}}$ by a suitable passage to the limit). Summarizing, we have proved that,
for every $\lambda >0$ and $\mu>0$,
$$\N^*_0\Big(\sigma\,\exp(-\lambda Z^*_0-\mu \sigma)\Big)=\sqrt{\frac{3}{2}} \Big(\lambda + \sqrt{2\mu}\Big)^{-1/2}.$$
At this stage, we only need to verify that, with the function $f$ defined in the proposition, we have also
$$\int_0^{\infty} \int_0^{\infty} s \exp(-\lambda z -\mu s) \,f(z,s)\,\mathrm{d}z\mathrm{d}s
=  \sqrt{\frac{3}{2}} \Big(\lambda + \sqrt{2\mu}\Big)^{-1/2}.$$
To see this, first note that, for every $z>0$,
$$z\int_0^\infty s^{-3/2} \exp \Big( -\frac{z^2}{2s}-\mu z \Big)\,\mathrm{d}s= \sqrt{2\pi}\,e^{-z\sqrt{2\mu}},$$
by the classical formula for the Laplace transform of hitting times of a standard linear Brownian motion. 
The desired result easily follows.
\end{proof}

We now state a technical result that will be important for our purposes. Let us fix $\delta>0$, and, for every
$\ve\in(0,\delta)$, write
$W^{\delta,\ve}$ for a random snake trajectory distributed according to $\N_\ve(\cdot\mid \tilde M >\delta)$, where we
recall the notation $\tilde M=\sup\{W_s(t):s\geq 0, t\leq \zeta_s\wedge \tau_0(W_s)\}$. As usual, write
$\tilde W^{\delta,\ve}$ for $W^{\delta,\ve}$ truncated at level $0$. By Corollary \ref{approx-*}, 
the distribution of $\tilde W^{\delta,\ve}$ converges to $\N^*_0(\cdot\mid M>\delta)$ as $\ve\to 0$. The next proposition
shows that this convergence holds jointly with that of the exit measures from $(0,\infty)$. Recall the notation
$\z_0(W^{\delta,\ve})$ for the (total mass of the) exit measure of $W^{\delta,\ve}$ from $(0,\infty)$.

\begin{proposition}
\label{conv-exit-M}
As $\ve\to 0$, the distribution of the pair $(\tilde W^{\delta,\ve},\z_0(W^{\delta,\ve}))$ converges weakly to that
of the pair $(W^{\delta,0},Z^*_0(W^{\delta,0}))$, where $W^{\delta,0}$ is distributed according to $\N^*_0(\cdot\mid M>\delta)$.
\end{proposition}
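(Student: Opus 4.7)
The plan is to exploit the almost-sure coupling from Proposition \ref{as-conv}. For any sequence $\ve_n\downarrow 0$, that proposition provides, on a common probability space, snake trajectories $\tilde W^{\delta,\ve_n}$ and $W^{\delta,0}$ with the prescribed marginal laws, such that $\tilde W^{\delta,\ve_n}$ is a subtrajectory of $W^{\delta,0}$ (on an interval $[a_{n,\infty},b_{n,\infty}]$ that fills $[0,\sigma(W^{\delta,0})]$ as $n\to\infty$) and $\tilde W^{\delta,\ve_n}\to W^{\delta,0}$ almost surely in $\S$. Since every sequence $\ve_n\downarrow 0$ can be treated this way, it will suffice to prove that along this coupling
\[
\z_0(W^{\delta,\ve_n})\ \longrightarrow\ Z^*_0(W^{\delta,0})\qquad \hbox{in probability.}
\]
The joint weak convergence of the statement then follows by combining this with Corollary \ref{approx-*} and a standard subsequence argument.

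The first step is to observe that $\z_0(W^{\delta,\ve_n})=\z_0(\tilde W^{\delta,\ve_n})$, since the exit measure from $(0,\infty)$ is a functional of the truncation at level $0$ (cf.\ Section \ref{sec:exitprocess}). With the sequence $(\alpha_k)_{k\geq 1}$ fixed in \eqref{approx-ps}, Lemma \ref{approx} and the argument of Proposition \ref{construct-exit} yield the almost-sure representations
\[
\z_0(\tilde W^{\delta,\ve_n})\ =\ \lim_{k\to\infty}\alpha_k^{-2}\,\Phi_k(\tilde W^{\delta,\ve_n})\,,\qquad Z^*_0(W^{\delta,0})\ =\ \lim_{k\to\infty}\alpha_k^{-2}\,\Phi_k(W^{\delta,0})\,,
\]
where
\[
\Phi_k(\omega):=\int_0^{\sigma(\omega)}\mathrm{d}s\,\mathbf{1}_{\{\zeta_s(\omega)\leq \tau_0(W_s(\omega)),\ \hat W_s(\omega)<\alpha_k\}}.
\]
For each fixed $k$, the almost-sure convergence of the trajectories together with the subtrajectory structure (so that integrals over $[0,\sigma(\tilde W^{\delta,\ve_n})]$ approximate the corresponding integral over $[0,\sigma(W^{\delta,0})]$) and Lemma \ref{no-local-minimum} (ruling out pathological behavior at the threshold) give $\Phi_k(\tilde W^{\delta,\ve_n})\to\Phi_k(W^{\delta,0})$ a.s.\ as $n\to\infty$.

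The main obstacle is to commute the limits in $n$ and $k$, namely to establish the uniform bound
\[
\lim_{k\to\infty}\ \sup_{n\geq 1}\ \N_{\ve_n}\Big(\big|\z_0-\alpha_k^{-2}\Phi_k\big|\ \Big|\ \tilde M>\delta\Big)\ =\ 0.
\]
The plan for this is to apply the special Markov property (Proposition \ref{SMP}) to the domain $(\alpha_k,\infty)$ under $\N_{\ve_n}$ (for $\alpha_k<\ve_n$), exactly as in the proof of Lemma \ref{approx}: conditionally on $\mathcal{E}^{(\alpha_k,\infty)}$, $\Phi_k$ has the law of a subordinator of mean $\alpha_k^2$ at time $1$ evaluated at $\z_{\alpha_k}$, and by a law-of-large-numbers argument its difference from $\alpha_k^2\z_{\alpha_k}$ has an $L^p$-norm controlled by $\alpha_k^{2}(\z_{\alpha_k})^{1/p}$ for some $p\in(1,3/2)$. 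The moment bound of Lemma \ref{exit-moments} and the continuity of $\z_a$ at $a=0$ (giving $\z_{\alpha_k}\to\z_0$) then furnish control of $|\z_0-\alpha_k^{-2}\Phi_k|$; the crux is to show that this control is uniform in $n$, which requires combining Lemma \ref{exit-moments} with the scaling \eqref{loi-max-tech} for $\N_{\ve_n}(\tilde M>\delta)$ to compensate for the $\ve_n^{-1}$ blow-up of the conditioning. This uniform bound, reconciling the moments of the exit measures with the asymptotics of the conditioning weight, is the technical heart of the argument; once it is in place, the proposition follows at once.
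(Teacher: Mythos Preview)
Your setup via the coupling of Proposition \ref{as-conv} is exactly the one the paper uses, and your observation that the integrals $\Phi_k(\tilde W^{\delta,\ve_n})$ are increasing in $n$ (as subtrajectories fill up $W^{\delta,0}$) is correct and useful. Where the two approaches diverge is in how one closes the gap between $Z'_0:=\lim_n \z_0(\tilde W^{\delta,\ve_n})$ and $Z^*_0(W^{\delta,0})$.

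The step you label ``the technical heart'' --- the uniform bound
\[
\lim_{k\to\infty}\ \sup_{n\geq 1}\ \N_{\ve_n}\Big(\big|\z_0-\alpha_k^{-2}\Phi_k\big|\ \Big|\ \tilde M>\delta\Big)=0
\]
--- is not established, and the tools you cite do not obviously yield it. The difficulty is the conditioning: $\N_{\ve_n}(\tilde M>\delta)\sim c\,\ve_n$, so any crude bound of the form $\N_{\ve_n}(|\cdot|\mid\tilde M>\delta)\leq \N_{\ve_n}(|\cdot|)/\N_{\ve_n}(\tilde M>\delta)$ inserts an $\ve_n^{-1}$ factor. Your subordinator estimate (which, done carefully, gives $\N_{\ve_n}(|\alpha_k^{-2}\Phi_k-\z_{\alpha_k}|^p)\leq C\,\alpha_k^{2p-2}$ via $S_{\alpha_k}(t)\stackrel{d}{=}\alpha_k^4 S_1(\alpha_k^{-2}t)$) is uniform in $n$ only \emph{before} dividing by $\N_{\ve_n}(\tilde M>\delta)$; afterwards the bound is $C\,\alpha_k^{2p-2}/\ve_n$, which blows up as $n\to\infty$. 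The event $\{\tilde M>\delta\}$ is not $\mathcal{E}^{(\alpha_k,\infty)}$-measurable, so it is not independent of the error term given $\z_{\alpha_k}$, and Lemma \ref{exit-moments} gives $O(1)$ moments for $\z_a$ under $\N_{\ve_n}$, with no spare $\ve_n$ factor to absorb the blow-up. There is no evident source of the missing $\ve_n$.

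The paper bypasses this entirely. From the monotonicity you already have, one gets $Z'_0\leq Z^*_0(W^{\delta,0})$ for free. To force equality, the paper proves the \emph{unconditional} identity
\[
\lim_{\ve\to 0}\frac{1}{\ve}\,\N_{\ve}\Big(e^{-\lambda \z_0}\big(1-e^{-\mu\mathcal{Y}_0}\big)\Big)
=\N^*_0\Big(e^{-\lambda Z^*_0}\big(1-e^{-\mu\sigma}\big)\Big)
\]
by computing both sides explicitly (the left side from the derivative of $u_{\lambda,\mu}$ at $0$, the right side from Proposition \ref{loisZ0sigma}), then uses a Fatou argument (applied on $\{\gamma<\tilde M\leq\delta\}$ and its complement) to transfer this to the statement conditioned on $\{\tilde M>\delta\}$. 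The factor $(1-e^{-\mu\sigma})$ is the device that makes the $\ve^{-1}$ normalisation produce a finite limit without ever needing a uniform error estimate. That explicit Laplace-transform input is the genuine replacement for your missing step.
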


\begin{proof}
We may argue along a sequence $(\ve_n)_{n\geq 1}$ strictly decreasing to $0$. To simplify notation, we
set $W^n= W^{\delta,\ve_n}$ and $\tilde W^n=\tilde W^{\delta,\ve_n}$. From Proposition \ref{as-conv},
we may
construct (on a suitable probability space) the whole sequence $(W^n)_{n\geq 1}$ and the snake trajectory $W^{\delta,0}$ in such a way that
$W^n$ is an excursion of $W^m$ outside $(0,\ve_n)$ for every $n<m$, 
$\tilde W^n$ is a subtrajectory of $W^{\delta,0}$ for every $n\geq 1$, $\tilde W^n\la W^{\delta,0}$ in $\S$ as $n\to\infty$, a.s., and moreover
$\sigma(\tilde W^n)\uparrow \sigma(W^{\delta,0})$ as $n\to\infty$. These properties imply that, for every $\gamma>0$
and every $1\leq n\leq m$, we have
$$\int_0^{\sigma(W^n)} \mathrm{d}s\,\mathbf{1}_{\{\zeta^n_s\leq \tau_0(W^n_s),\,\hat W^n_s<\gamma\}}
\leq \int_0^{\sigma(W^m)} \mathrm{d}s\,\mathbf{1}_{\{\zeta^m_s\leq \tau_0(W^m_s),\,\hat W^m_s<\gamma\}}
\leq \int_0^{\sigma(W^{\delta,0})}  \mathrm{d}s\,\mathbf{1}_{\{\hat W^{\delta,0}_s<\gamma\}}.$$
If we multiply this inequality by $\gamma^{-2}$ and let $\gamma$ tend to $0$, we obtain that, for every $1\leq n\leq m$, 
$$\z_0(W^n)\leq \z_0(W^m)\leq Z^*_0(W^{\delta,0}).$$
In particular the almost sure increasing limit
$$Z'_0:=\lim_{n\to\infty}\uparrow \z_0(W^n)$$
exists and we have $Z'_0\leq Z^*_0(W^{\delta,0})$. The result of the proposition will follow
if we can verify that we have indeed $Z'_0= Z^*_0(W^{\delta,0})$ a.s. To this end, fix $\lambda>0$ and $\mu>0$. 
Write $E[\cdot]$ for the expectation on the probability space where 
the sequence $(W^n)_{n\geq 1}$ and $W^{\delta,0}$ are defined. 
We note that
\begin{equation}
\label{conv-exit-tech0}
E[\exp(-\lambda Z'_0)(1-\exp(-\mu \sigma(W^{\delta,0})))]
\leq \liminf_{n\to\infty} E[\exp(-\lambda \z_0(W^n))(1-\exp(-\mu \sigma(\tilde W^n)))]
\end{equation}
by Fatou's lemma. We will verify that
\begin{equation}
\label{conv-exit-tech}
\liminf_{n\to\infty} E[\exp(-\lambda \z_0(W^n))(1-\exp(-\mu \sigma(\tilde W^n)))]\leq E[\exp(-\lambda Z^*_0(W^{\delta,0}))
(1-\exp(-\mu \sigma(W^{\delta,0})))].
\end{equation}
If \eqref{conv-exit-tech} holds, then by combining this with the previous display, we get
$$E[\exp(-\lambda Z'_0)(1-\exp(-\mu \sigma(W^{\delta,0})))]\leq E[\exp(-\lambda Z^*_0(W^{\delta,0}))
(1-\exp(-\mu \sigma(W^{\delta,0})))],$$
and since we already know that $Z'_0\leq Z^*_0(W^{\delta,0})$, this is only possible if $Z'_0= Z^*_0(W^{\delta,0})$ a.s. 

\smallskip
Let us prove  \eqref{conv-exit-tech}. Since $W^n$ is distributed according to $\N_{\ve_n}(\cdot\mid \tilde M>\delta)$, 
$W^{\delta,0}$ is distributed according to $\N^*_0(\cdot\mid \tilde M>\delta)$, and 
we know that $\N_\ve(\tilde M>\delta) \sim \ve\,\N^*_0(M>\delta)$
as $\ve\to 0$ (see the proof of Lemma \ref{loi-max}), we see that \eqref{conv-exit-tech} is equivalent to
\begin{equation}
\label{conv-exit-tech2}
\liminf_{n\to\infty} \frac{1}{\ve_n}\,\N_{\ve_n}\Big(\exp(-\lambda \z_0)(1-\exp(- \mu \mathcal{Y}_0))\,\mathbf{1}_{\{\tilde M>\delta\}}\Big)
\leq \N^*_0 (\exp(-\lambda Z^*_0)(1-\exp(-\mu \sigma))\,\mathbf{1}_{\{M>\delta\}}),
\end{equation} 
where we recall that
$$\mathcal{Y}_0=\int_0^\sigma \mathrm{d}s\,\mathbf{1}_{\{\tau_{0}(W_s)=\infty\}}.$$
Observe that, for any choice of $\gamma\in(0,\delta)$, the argument leading to \eqref{conv-exit-tech0}
(using also the fact that $M(\tilde W^n)$ converges a.s. to $M(W^{\delta,0})$)  gives
\begin{equation}
\label{conv-exit-tech3}
\liminf_{n\to\infty} \frac{1}{\ve_n}\,\N_{\ve_n}\Big(\exp(-\lambda \z_0)(1-\exp(- \mu \mathcal{Y}_0))\,\mathbf{1}_{\{\gamma <\tilde M\leq\delta\}}\Big)
\geq \N^*_0 (\exp(-\lambda Z^*_0)(1-\exp(-\mu \sigma))\,\mathbf{1}_{\{\gamma<M\leq \delta\}}),
\end{equation}
and by letting $\gamma$ tend to $0$,
$$\liminf_{n\to\infty} \frac{1}{\ve_n}\,\N_{\ve_n}\Big(\exp(-\lambda \z_0)(1-\exp(- \mu \mathcal{Y}_0))\,\mathbf{1}_{\{\tilde M\leq\delta\}}\Big)
\geq \N^*_0 (\exp(-\lambda Z^*_0)(1-\exp(-\mu \sigma))\,\mathbf{1}_{\{M\leq \delta\}}).
$$
So if \eqref{conv-exit-tech2} fails, we get
$$\liminf_{n\to\infty} \frac{1}{\ve_n}\,\N_{\ve_n}\Big(\exp(-\lambda \z_0)(1-\exp(- \mu \mathcal{Y}_0))\Big)
>  \N^*_0 (\exp(-\lambda Z^*_0)(1-\exp(-\mu \sigma))).$$
We will prove that we have
\begin{equation}
\label{conv-exit-tech4}
\lim_{n\to\infty} \frac{1}{\ve_n}\,\N_{\ve_n}\Big(\exp(-\lambda \z_0)(1-\exp(- \mu \mathcal{Y}_0))\Big)
=  \N^*_0 (\exp(-\lambda Z^*_0)(1-\exp(-\mu \sigma))),
\end{equation}
showing by contradiction that \eqref{conv-exit-tech2} and thus also \eqref{conv-exit-tech} hold. 

The right-hand side of \eqref{conv-exit-tech4} can be  computed from the formula
$$ \N^*_0 (\sigma \exp(-\lambda Z^*_0-\mu \sigma))
=\sqrt{\frac{3}{2}} \Big(\lambda + \sqrt{2\mu}\Big)^{-1/2},$$
which was obtained in the proof of Proposition \ref{loisZ0sigma}. We get
\begin{align}
\label{conv-exit-tech5}
 \N^*_0 (\exp(-\lambda Z^*_0)(1-\exp(-\mu \sigma)))
 &= \N^*_0 \Big(\exp(-\lambda Z^*_0)\int_0^\mu \mathrm{d}\mu'
 \,\sigma \exp(-\mu'\sigma)\Big)\\
 &=\int_0^\mu  \mathrm{d}\mu'\, \sqrt{\frac{3}{2}} \Big(\lambda + \sqrt{2\mu'}\Big)^{-1/2}\nonumber\\
 &=\sqrt{\frac{3}{2}}  \int_0^{\sqrt{2\mu}} \mathrm{d}x\,x\,(\lambda+x)^{-1/2}\nonumber\\
 &=\sqrt{\frac{2}{3}} \Big( (\lambda+\sqrt{2\mu})^{3/2} - 3\lambda\,(\lambda+\sqrt{2\mu})^{1/2}+ 2\lambda^{3/2}\Big). \nonumber
\end{align}

On the other hand, we have, for every $\ve>0$,
\begin{align*}
\N_{\ve}\Big(\exp(-\lambda \z_0)(1-\exp(- \mu \mathcal{Y}_0))\Big)
&=\N_{\ve}\Big(1-\exp(-\lambda \z_0- \mu \mathcal{Y}_0)\Big)- \N_\ve\Big(1-\exp(-\lambda \z_0)\Big)\\
&=u_{\lambda,\mu}(\ve)- \Big(\frac{1}{\sqrt{\lambda}} +\ve\sqrt{\frac{2}{3}}\,\Big)^{-2},
\end{align*}
recalling \eqref{Laplace-exit} and using the notation introduced  in the proof of Proposition \ref{loisZ0sigma}. Formula (26) in \cite{Hull} gives
$$\lim_{\ve\to 0} \frac{1}{\ve}\,(u_{\lambda,\mu}(\ve)-\lambda) = \frac{\mathrm{d}}{\mathrm{d}\ve}u_{\lambda,\mu}(\ve)_{|\ve=0}
= \sqrt{\frac{2}{3}}\,(\lambda + \sqrt{2\mu})^{1/2}\,(\sqrt{2\mu}-2\lambda).$$
It follows that
$$\lim_{n\to\infty} \frac{1}{\ve_n}\,\N_{\ve_n}\Big(\exp(-\lambda \z_0)(1-\exp(- \mu \mathcal{Y}_0))\Big)
=  \sqrt{\frac{2}{3}}\,(\lambda + \sqrt{2\mu})^{1/2}\,(\sqrt{2\mu}-2\lambda)+ 2  \sqrt{\frac{2}{3}} \,\lambda^{3/2},$$
and one immediately verifies that the right-hand side of the last display coincides with the right-hand side of \eqref{conv-exit-tech5}.
This completes the proof of \eqref{conv-exit-tech4} and of the proposition.
\end{proof}

In view of our applications, it will be important to define the measure $\N^*_0$ conditioned on 
a given value of the exit measure. This is the goal of the next proposition. Before that, we mention a 
useful scaling property. Recall the definition of the scaling operator $\theta_\lambda$ at the
end of Section \ref{sec:construct}. Then for every $\lambda>0$, we have for every $\omega\in \S$,
\begin{equation}
\label{exit-scaling}
Z^*_0 \circ \theta_\lambda(\omega) = \lambda\,Z^*_0(\omega).
\end{equation}
The proof is easy, recalling from \eqref{def-exit-gene} the definition of $Z^*_0(\omega)$ for an arbitrary $\omega\in \S$ and writing
\begin{align*}
Z^*_0 \circ \theta_\lambda(\omega) &=
\liminf_{n\to\infty} \alpha_n^{-2}\int_0^{\lambda^2\sigma(\omega)} \mathrm{d}s\,\mathbf{1}_{\{\sqrt{\lambda}\hat W_{s/\lambda^2}(\omega)<\sqrt{\lambda}W_*+\alpha_n\}}\\
&= \lambda \liminf_{n\to\infty} (\alpha_n/\sqrt{\lambda})^{-2} \int_0^{\sigma(\omega)}  \mathrm{d}s\,\mathbf{1}_{\{\hat W_s<
W_*+\alpha_n/\sqrt{\lambda}\}}\\
&=\lambda Z^*_0(\omega).
\end{align*}

\begin{proposition}
\label{conditioned-exit}
There exists a unique collection $(\N^{*,z}_0)_{z>0}$ of probability measures on $\S$
such that:
\begin{enumerate}
\item[\rm(i)] We have
$$\N^*_0=
 \sqrt{\frac{3}{2 \pi}} 
\int_0^\infty \mathrm{d}z\,z^{-5/2}\, \N^{*,z}_0.$$
\item[\rm(ii)] For every $z>0$, $\N^{*,z}_0$ is supported on $\{Z^*_0=z\}$.
\item[\rm(iii)] For every $z,z'>0$, $\N^{*,z'}_0=\theta_{z'/z}(\N^{*,z}_0)$.
\end{enumerate}
We will write $\N^{*,z}_0=\N^*_0(\cdot\mid Z^*_0=z)$.
\end{proposition}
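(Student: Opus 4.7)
\medskip
\noindent\emph{Proof plan.} The strategy is to disintegrate the $\sigma$-finite measure $\N^*_0$ with respect to the random variable $Z^*_0$, and then use the scaling properties established in Lemma \ref{scaling-N*} and equation \eqref{exit-scaling} to promote an a.e.-in-$z$ disintegration to a family indexed by all $z>0$ satisfying the covariance relation (iii).

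First, I would apply the standard disintegration theorem on the Polish space $\S$: since $\N^*_0$ is $\sigma$-finite (indeed finite on each $\S^{(\delta)}$) and by Proposition \ref{loisZ0sigma} the ``law'' of $Z^*_0$ under $\N^*_0$ has density $g(z)=\sqrt{3/(2\pi)}\,z^{-5/2}$ on $(0,\infty)$, there exists a family $(\nu_z)_{z>0}$, uniquely defined up to a Lebesgue-null set of values of $z$, such that each $\nu_z$ is a probability measure on $\S$ supported on $\{Z^*_0=z\}$ and
\begin{equation*}
\N^*_0 \;=\; \sqrt{\tfrac{3}{2\pi}}\int_0^\infty z^{-5/2}\,\nu_z\,\mathrm{d}z.
\end{equation*}
Next, I would combine this identity with Lemma \ref{scaling-N*} and \eqref{exit-scaling}. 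For any fixed $\lambda>0$, applying $\theta_\lambda$ to the disintegration and performing the change of variable $z'=\lambda z$ (noting that $g(z'/\lambda)=\lambda^{5/2}g(z')$ and using $\theta_\lambda(\N^*_0)=\lambda^{3/2}\N^*_0$) gives a second disintegration of $\N^*_0$ in which $\nu_{z'}$ is replaced by $\theta_\lambda(\nu_{z'/\lambda})$. The uniqueness part of the disintegration theorem then yields $\theta_\lambda(\nu_z)=\nu_{\lambda z}$ for Lebesgue-a.e.\ $(\lambda,z)$; by Fubini, I can fix a single $z_0>0$ for which $\theta_\lambda(\nu_{z_0})=\nu_{\lambda z_0}$ holds for a.e.\ $\lambda$.

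Setting $\mu:=\theta_{1/z_0}(\nu_{z_0})$, which is a probability measure supported on $\{Z^*_0=1\}$, I define for every $z>0$
\begin{equation*}
\N^{*,z}_0 \;:=\; \theta_z(\mu).
\end{equation*}
Property (ii) follows from $Z^*_0\circ\theta_z = zZ^*_0$; property (iii) is immediate from $\theta_{z'/z}\circ\theta_z=\theta_{z'}$. For property (i), one has $\N^{*,z}_0=\theta_{z/z_0}(\nu_{z_0})=\nu_z$ for a.e.\ $z$ by the choice of $z_0$, so modifying $\nu_z$ on a Lebesgue-null set does not affect the disintegration identity. Uniqueness follows by the same token: any other family $(\tilde\N^{*,z}_0)$ satisfying (i) and (ii) must agree with $(\N^{*,z}_0)$ for a.e.\ $z$, and (iii) together with the invertibility of $\theta_\lambda$ propagates equality to every $z>0$. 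The main technical point, which requires some care but no new ideas, is the measure-theoretic bookkeeping to move from the a.e.-in-$z$ scaling covariance of the raw disintegration to a pointwise-in-$z$ family, which is handled by selecting a single good value $z_0$ via Fubini as above.
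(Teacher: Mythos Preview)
Your proposal is correct and follows essentially the same approach as the paper: disintegrate $\N^*_0$ with respect to $Z^*_0$, use the scaling identities (Lemma~\ref{scaling-N*} and \eqref{exit-scaling}) to obtain a.e.\ covariance of the raw disintegration under $\theta_\lambda$, select a good base point $z_0$ via Fubini, and then define $\N^{*,z}_0:=\theta_{z/z_0}(\nu_{z_0})$ for all $z$. The only cosmetic difference is that the paper phrases uniqueness via the weak continuity of $z\mapsto\N^{*,z}_0$ implied by (iii), whereas you invoke (iii) directly to propagate a.e.\ agreement to pointwise agreement; both arguments are equivalent.
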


\begin{proof}
Recall from Proposition \ref{loisZ0sigma} that the ``law'' of $Z^*_0$
under $\N^*_0$ is the measure $\mathbf{1}_{\{z>0\}}\,\sqrt{3/2\pi} \,z^{-5/2}\,\mathrm{d}z$, which
we denote here by $\nu(\mathrm{d}z)$ to simplify notation.
The existence of a collection of probability measures on $\S$ that satisfy both (i) and (ii) in
the proposition is a consequence of standard disintegration theorems (see e.g. 
\cite[Chapter III, Paragraphs (70--74)]{DellacherieMeyer}). Two such collections coincide up to
a negligible set of values of $z$. We need to verify that we can choose this collection so
that the additional scaling property (iii) also holds (which will imply the stronger uniqueness 
in the proposition). 

We start with any measurable collection $(\Q_z)_{z>0}$
of probability measures on $\S$ such that the properties stated in (i) and (ii)
hold when $(\N^{*,z}_0)_{z>0}$ is replaced by $(\Q_z)_{z>0}$. From Lemma \ref{scaling-N*}, we get
that, for every $\lambda>0$,
$$\int \theta_\lambda(\Q_z)\,\nu(\mathrm{d}z)= \theta_\lambda(\N^*_0)=\lambda^{3/2}\N^*_0= 
\lambda^{3/2}\int \Q_z\,\nu(\mathrm{d}z).$$
From the change of variables $z=z'/\lambda$ in the first integral, we thus get
$$\int \theta_\lambda(\Q_{z/\lambda})\,\nu(\mathrm{d}z)= \int \Q_z\,\nu(\mathrm{d}z).$$
Using the scaling property \eqref{exit-scaling}, we see that the collection $(\theta_\lambda(\Q_{z/\lambda}))_{z>0}$
also satisfy the conditions (i) and (ii), and so we get for every fixed $\lambda>0$,
$$\theta_\lambda(\Q_{z/\lambda})=\Q_z\;,\quad \mathrm{d}z\ \hbox{a.e.}$$
From Fubini's theorem, we have then $\theta_\lambda(\Q_{z/\lambda})=\Q_z$, $\mathrm{d}\lambda$ a.e.,
 $\mathrm{d}z$ a.e. At this stage, we can pick $z_0>0$ such that the equality $\theta_\lambda(\Q_{z_0/\lambda})=\Q_{z_0}$
 holds $\mathrm{d}\lambda$ a.e., and define $\N^{*,z}_0:=\theta_{z/z_0}(\Q_{z_0})$ for every $z>0$. 
We have then $\N^{*,z}_0=\Q_z$, $\mathrm{d}z$ a.e., so that (i) holds for the collection $(\N^{*,z}_0)_{z>0}$. Similarly
(ii) holds because $\Q_{z_0}$ is supported on $\{Z^*_0=z_0\}$, and we use the scaling property
\eqref{exit-scaling}. Property (iii) holds by construction.

To get uniqueness, observe that (iii) implies that the mapping $z\mapsto \N^{*,z}_0$ 
is continuous for the weak convergence of probability measures. The uniqueness is then a simple consequence of this continuous dependence
and the fact that two collections that satisfy both (i) and (ii) must coincide up to
a negligible set of values of $z$.
\end{proof}

\section{The excursion process}
\label{sec:excu-pro}

For technical reasons in this section, it is preferable to argue under a probability measure rather than under $\N_0$. 
So we fix $\beta>0$, and we argue under the conditional measure $\N^{(\beta)}_0:=\N_0(\cdot \mid W_*<-\beta)$. 
We will then consider,
under $\N^{(\beta)}_0$, the excursion debuts whose level is smaller than $-\beta$. For every
$\delta>0$, we write $u^\delta_1,\ldots,u^\delta_{N_\delta}$ for the excursion debuts with height greater than $\delta$
whose level is smaller than $-\beta$, listed in decreasing order of the levels, so that
$$V_{u^\delta_{N_\delta}}<V_{u^\delta_{N_\delta-1}}<\cdots<V_{u^\delta_1}<-\beta.$$
Notice that $N_\delta$ and $u^\delta_1,\ldots,u^\delta_{N_\delta}$ depend on the choice of $\beta$,
which will remain fixed in the first three subsections below (although on a couple of occasions we mention the consequences
that one derives by letting
$\beta$ tend to $0$, but this should create no confusion). For every integer $i\geq 1$, we also set
$$T^\delta_i:=\left\{\begin{array}{ll}
-V_{u^\delta_i}\qquad&\hbox{if}\ i\leq N_\delta,\\
\infty&\hbox{if}\ i> N_\delta.
\end{array}
\right.
$$
It is easy to verify that, for every $a>0$, the event $\{T^\delta_i<a\}$ belongs to the $\sigma$-field $\mathcal{E}^{(-a,\infty)}$
(the knowledge of $\mathcal{E}^{(-a,\infty)}$ gives enough information to recover the excursion debuts -- and the
corresponding heights -- such that $V_u>-a$). Since $\{T^\delta_i=a\}$ is $\N_0$-negligible, it follows that
$T^\delta_i$ is a stopping time of the filtration $(\mathcal{E}^{(-a,\infty)})_{a\geq 0}$, where, by convention,
$\mathcal{E}^{(0,\infty)}$ is the $\sigma$-field generated by the $\N_0$-negligible sets. Finally, it will also be useful to
write $N^\circ_\delta=\# D_\delta$ for the total number of excursion debuts with height greater than $\delta$.

\subsection{The excursions with height greater than $\delta$}

Recall the notation $\tilde W^{(u)}$ for the excursion starting at the excursion debut $u\in D$. 

\begin{proposition}
\label{sequence-excu}
Let $j\geq 1$. Then, under the conditional probability measure $\N^{(\beta)}_0(\cdot\mid N_\delta \geq j)$,
$\tilde W^{(u^\delta_j)}$ is independent of the $\sigma$-field generated by $(\tilde W^{(u^\delta_1)},\ldots,\tilde W^{(u^\delta_{j-1})})$ 
and $\mathcal{E}^{(-\beta,\infty)}$, and is distributed
according to $\N^*_0(\cdot\mid M>\delta)$.
\end{proposition}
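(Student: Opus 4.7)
The plan is to approximate excursion debuts by discrete truncations as in Lemma \ref{construct-tech}, apply the special Markov property at the appropriate discrete level, and pass to the limit using Corollary \ref{approx-*}. The preliminary observation I will use is that for any $a>0$, on the event $\{T^\delta_{j-1}<a\}$ each of the variables $\tilde W^{(u^\delta_1)},\dots,\tilde W^{(u^\delta_{j-1})}$ is $\mathcal{E}^{(-a,\infty)}$-measurable: indeed, the paths retained by the truncation of $W^{(u^\delta_i)}$ at $0$ are those descending from $u^\delta_i$ which never drop below $V_{u^\delta_i}>-a$ before their lifetime, so they lie above $-a$.

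Fix a bounded continuous $F:\S\to\R_+$ vanishing on $\{\|\omega\|\leq \delta/2\}$ and a bounded test function $G=\Phi(\tilde W^{(u^\delta_1)},\dots,\tilde W^{(u^\delta_{j-1})})\cdot H$ with $\Phi$ continuous on $\S^{j-1}$ and $H$ bounded and $\mathcal{E}^{(-\beta,\infty)}$-measurable. The aim is to prove
$$\N_0\Big[F(\tilde W^{(u^\delta_j)})\,G\,\mathbf{1}_{\{N_\delta\geq j,\,W_*<-\beta\}}\Big]=\N^*_0\big(F\mid M>\delta\big)\cdot \N_0\Big[G\,\mathbf{1}_{\{N_\delta\geq j,\,W_*<-\beta\}}\Big].$$
For each $\varepsilon>0$ and integer $k$ with $k\varepsilon>\beta$, enumerate the excursions of $W$ outside $(-k\varepsilon,\infty)$ whose translated-truncated versions $\tilde\omega^{k,\varepsilon}_i$ satisfy $\tilde M>\delta$; list them in order of increasing $k$ (with measurable tie-breaking inside each $k$) as $\omega^\varepsilon_{[1]},\omega^\varepsilon_{[2]},\dots$, and denote by $K^\varepsilon_m$ the $k$-value of $\omega^\varepsilon_{[m]}$. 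By Lemma \ref{construct-tech} combined with the reasoning in the proof of Theorem \ref{construct-N}, for every sufficiently small $\varepsilon$ (depending on the realization) this list is in bijection with $u^\delta_1,\dots,u^\delta_{N_\delta}$ in matching order, and $\omega^\varepsilon_{[m]}\to \tilde W^{(u^\delta_m)}$ in $\S$ as $\varepsilon\to 0$.

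The $\varepsilon$-level identity is the crux. Since $K^\varepsilon_j$ is a stopping time of the filtration $(\mathcal{E}^{(-(k+1)\varepsilon,\infty)})_{k\geq 0}$, one may decompose according to the value of $K^\varepsilon_j$ and apply Proposition \ref{SMP} at that level. Combined with the fact that $\omega^\varepsilon_{[1]},\dots,\omega^\varepsilon_{[j-1]}$ and $H$ are $\mathcal{E}^{(-K^\varepsilon_j\varepsilon,\infty)}$-measurable on $\{K^\varepsilon_j<\infty\}$, this yields
$$\N_0\Big[F(\omega^\varepsilon_{[j]})\,G^\varepsilon\,\mathbf{1}_{\{K^\varepsilon_j<\infty,\,W_*<-\beta\}}\Big]=\N_\varepsilon\big[F(\tilde W)\mid \tilde M>\delta\big]\cdot \N_0\Big[G^\varepsilon\,\mathbf{1}_{\{K^\varepsilon_j<\infty,\,W_*<-\beta\}}\Big],$$
where $G^\varepsilon=\Phi(\omega^\varepsilon_{[1]},\dots,\omega^\varepsilon_{[j-1]})\cdot H$ is the natural discrete analog of $G$.

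Finally, let $\varepsilon\to 0$. Corollary \ref{approx-*} gives $\N_\varepsilon(F(\tilde W)\mid \tilde M>\delta)\to \N^*_0(F\mid M>\delta)$; the almost-sure convergences $\omega^\varepsilon_{[i]}\to \tilde W^{(u^\delta_i)}$ together with continuity of $F$ and $\Phi$ give $F(\omega^\varepsilon_{[j]})\,G^\varepsilon\to F(\tilde W^{(u^\delta_j)})\,G$ on $\{N_\delta\geq j\}$, while $\mathbf{1}_{\{K^\varepsilon_j<\infty\}}\to \mathbf{1}_{\{N_\delta\geq j\}}$. A uniform integrability argument modeled on the estimate at \eqref{loi-max-tech07} --- using the Burkholder--Davis--Gundy bound on the Poisson martingale of excursion counts together with the moment bound of Lemma \ref{exit-moments} on $Z_\beta$ --- justifies dominated convergence, and a standard approximation extends the identity from the test functions considered to general bounded measurable ones. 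The principal obstacle is this uniform integrability together with careful bookkeeping of the $\varepsilon$-level identity; once the special Markov property is correctly invoked at the discrete stopping time $K^\varepsilon_j$, the rest is essentially routine.
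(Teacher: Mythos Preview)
Your approach follows the same discretize--apply--SMP--pass--to--limit template as the paper (Lemma~\ref{construct-tech} and Corollary~\ref{approx-*}), but you organize the argument differently: the paper first settles $j=1$ and then proceeds by induction, applying the special Markov property at a level just \emph{below} $u^\delta_{j-1}$ and invoking the $j=1$ case inside each of the resulting Brownian snake excursions. You instead try to handle general $j$ in one stroke via the discrete stopping time $K^\varepsilon_j$.

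There is a genuine gap in your $\varepsilon$-level identity. You assert that $\omega^\varepsilon_{[1]},\dots,\omega^\varepsilon_{[j-1]}$ are $\mathcal{E}^{(-K^\varepsilon_j\varepsilon,\infty)}$-measurable on $\{K^\varepsilon_j<\infty\}$. This fails on the event $\{K^\varepsilon_{j-1}=K^\varepsilon_j\}$: if the $(j-1)$-th and $j$-th qualifying excursions occur at the \emph{same} discrete level $k$, then $\omega^\varepsilon_{[j-1]}=\tilde\omega^{k,\varepsilon}_i$ for some $i$, and this is only $\mathcal{E}^{(-(k+1)\varepsilon,\infty)}$-measurable, not $\mathcal{E}^{(-k\varepsilon,\infty)}$-measurable. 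So when you decompose on $\{K^\varepsilon_j=k\}$ and apply Proposition~\ref{SMP} at level $-k\varepsilon$, the factor $G^\varepsilon$ does not come out of the conditional expectation. The fix is easy but must be made explicit: restrict to the event $\{K^\varepsilon_{j-1}<K^\varepsilon_j\}$, on which your measurability claim is correct and the Poisson argument (picking the first atom with $\tilde M>\delta$ at level $K^\varepsilon_j$) gives exactly $\N_\varepsilon(F(\tilde W)\mid\tilde M>\delta)$; then observe that $\N^{(\beta)}_0(K^\varepsilon_{j-1}=K^\varepsilon_j,\ N_\delta\geq j)\to 0$ as $\varepsilon\to 0$, since the levels $V_{u^\delta_1},\dots,V_{u^\delta_{N_\delta}}$ are a.e.\ pairwise distinct and $D_\delta$ is finite. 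The paper's inductive organization sidesteps this issue entirely because at each step it conditions at a fixed level strictly below $T^\delta_{j-1}$, where the first $j-1$ excursions are manifestly measurable.
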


\noindent{\it Important remark.} In view of the analogous statement for linear
Brownian motion, one might naively expect that $\tilde W^{(u^\delta_1)},\ldots,\tilde W^{(u^\delta_{j})}$
are (independent and) identically distributed under $\N_0^{(\beta)}(\cdot\mid N_\delta \geq j)$. This is not true as soon as
$j\geq 2$: The point is that the knowledge of the event $\{N_\delta \geq j\}$ influences the 
distribution of $(\tilde W^{(u^\delta_1)},\ldots,\tilde W^{(u^\delta_{j-1})})$. 

\begin{proof}
The first step of the proof is to determine the law of $\tilde W^{(u^\delta_1)}$ under $\N^{(\beta)}_0(\cdot\mid N_\delta \geq 1)$.
We fix two bounded nonnegative functions $G$ and $g$ defined respectively on $\S$ and on $\R$. We assume that $G$ is bounded and
continuous on the set $\{\omega:M(\omega)>\delta\}$, and vanishes outside this set. The function
$g$ is assumed to be continuous with compact support contained in $(-\infty,-\beta]$. 

We retain much of the notation of the proof of Theorem \ref{construct-N}. In particular, for every integers $n\geq1$ and $k\geq 1$,
we let
$\n^{2^{-n}}_k$
be the point measure of excursions of the Brownian snake outside $(-k2^{-n},\infty)$, and we write
$$\n^{2^{-n}}_k=\sum_{i\in I^{2^{-n}}_k} \delta_{\omega^{k,2^{-n}}_i}.$$
Recall that, for every atom $\omega^{k,2^{-n}}_i$, $\tilde\omega^{k,2^{-n}}_i$ stands  for $\omega^{k,2^{-n}}_i$
translated so that its starting point is $2^{-n}$ and then  truncated at level $0$. Furthermore, we let $A_{n,k}$ stand for the event
$\{T^\delta_1\geq k2^{-n}\}=\{V_{u^\delta_1}\leq -k2^{-n}\}$. Finally, we let $B\in \mathcal{E}^{(-\beta,\infty)}$. 

We then claim that
\begin{equation}
\label{seq-excu-key}
\N_0^{(\beta)}\Big(\mathbf{1}_B\,g(V_{u^\delta_1})\,G(\tilde W^{(u^\delta_1)})\,\mathbf{1}_{\{N_\delta\geq 1\}}\Big)
=\lim_{n\to\infty}
\N_0^{(\beta)}\Bigg(\mathbf{1}_B\,\sum_{k=1}^\infty \mathbf{1}_{A_{n,k}} g(-k2^{-n})\sum_{i\in I^{2^{-n}}_k} G(\tilde\omega^{k,2^{-n}}_i)\Bigg),
\end{equation}
In order to verify our claim,  we first observe that 
\begin{equation}
\label{seq-excu-key2}
\sum_{k=1}^\infty \mathbf{1}_{A_{n,k}} g(-k2^{-n})\sum_{i\in I^{2^{-n}}_k} G(\tilde\omega^{k,2^{-n}}_i)\build{\la}_{n\to\infty}^{}
g(V_{u^\delta_1})\,G(\tilde W^{(u^\delta_1)})\,\mathbf{1}_{\{N_\delta\geq 1\}},\qquad \N_0\ \hbox{a.e.}
\end{equation}
To see this, note that if $N_\delta=0$ then, for $n$ large enough, all quantities $G(\tilde\omega^{k,2^{-n}}_i)$ vanish (the point
is that, if $G(\tilde\omega^{k,2^{-n}}_i)>0$, then the excursion $\omega^{k,2^{-n}}_i$ must ``contain'' an excursion debut with
height greater than $\delta-2^{-n}$, and no such excursion debut exists when $n$ is large enough, under the condition 
$N_\delta=0$). 
Then, if $N_\delta\geq 1$, similar arguments show that, for $n$ large enough, the only nonzero term in the sum over $k$ in the 
left-hand side of \eqref{seq-excu-key2} corresponds to the integer $k_0=k_0(n)$ such that $-(k_0+1)2^{-n}<V_{u^\delta_1}\leq -k_02^{-n}$.
Indeed, we have $\mathbf{1}_{A_{n,k}} =0$ if $k>k_0$, since $A_{n,k}=\{V_{u^\delta_1}\leq -k2^{-n}\}$. On the other hand, if $n$ is large enough, then, for $k<k_0$, the quantities 
$G(\tilde\omega^{k,2^{-n}}_i)$, $i\in I^{2^{-n}}_k$, vanish by the same argument as used above in the case $N_\delta=0$, recalling
that $G$ is zero outside the set 
$\{\omega:M(\omega)>\delta\}$, 

Next, for $k=k_0$, the sum over $i\in I^{2^{-n}}_k$ reduces (for
$n$ large enough) to a single term, namely $i=i_0=i_{u^1_\delta,2^{-n}}$ with the notation of Lemma \ref{construct-tech}.
The last assertion of Lemma \ref{construct-tech} yields that $G(\tilde\omega^{k_0,2^{-n}}_{i_0})$ converges to $G(\tilde W^{(u^\delta_1)})$
as $n\to\infty$, and \eqref{seq-excu-key2} follows. 

To derive \eqref{seq-excu-key} from \eqref{seq-excu-key2}, we use exactly the same uniform integrability argument as
in the proof of Theorem \ref{construct-N} to justify the convergence \eqref{key-conv-N*}. 

Next recall that $A_{n,k}$ is measurable with respect to the $\sigma$-field $\mathcal{E}^{(-k2^{-n},\infty)}$, and
note that $g(-k2^{-n})=0$ if $k\leq 2^n\beta$. By 
applying the special Markov property, we then get 
\begin{align*}
&\N_0^{(\beta)}
\Bigg(\mathbf{1}_B\,\sum_{k\geq 2^n\beta} \mathbf{1}_{A_{n,k}} g(-k2^{-n})\sum_{i\in I^{2^{-n}}_k} G(\tilde\omega^{k,2^{-n}}_i)\Bigg)\\
&\qquad=\sum_{k\geq 2^n\beta} g(-k2^{-n})\, \N_0^{(\beta)}\Bigg(\mathbf{1}_B\mathbf{1}_{A_{n,k}} \N_0^{(\beta)}\Bigg( \sum_{i\in I^{2^{-n}}_k} G(\tilde\omega^{k,2^{-n}}_i)
\,\Bigg|\,\mathcal{E}^{(-k2^{-n},\infty)}\Bigg)\Bigg)\\
&\qquad=\sum_{k\geq 2^n\beta} g(-k2^{-n})\, \N_0^{(\beta)}\Big(\mathbf{1}_B\mathbf{1}_{A_{n,k}}\, Z_{k2^{-n}}\,\N_{2^{-n}}\big(G(\tilde W)\big)\Big)\\
&\qquad=\Bigg(\sum_{k\geq 2^n\beta} g(-k2^{-n})\, \N_0^{(\beta)}\Big(\mathbf{1}_B\mathbf{1}_{A_{n,k}}\, Z_{k2^{-n}}\Big)\Bigg)\times \N_{2^{-n}}\big(G(\tilde W)\big).
\end{align*}
Recalling \eqref{seq-excu-key}, we have thus obtained 
\begin{equation}
\label{seq-excu-key3}
\lim_{n\to \infty} \Bigg(\sum_{k\geq 2^n\beta} g(-k2^{-n}) \N_0^{(\beta)}\Big(\mathbf{1}_B\mathbf{1}_{A_{n,k}}\, Z_{k2^{-n}}\Big)\Bigg)\times \N_{2^{-n}}\big(G(\tilde W)\big) = \N_0^{(\beta)}\Big(\mathbf{1}_B\,g(V_{u^\delta_1})\,G(\tilde W^{(u^\delta_1)})\,\mathbf{1}_{\{N_\delta\geq 1\}}\Big).
\end{equation}
In the particular case $G=\mathbf{1}_{\{M>\delta\}}$ this gives
\begin{equation}
\label{seq-excu-key4}
\lim_{n\to \infty} \Bigg(\sum_{k\geq 2^n\beta} g(-k2^{-n}) \N_0^{(\beta)}\Big(\mathbf{1}_B\mathbf{1}_{A_{n,k}}\, Z_{k2^{-n}}\Big)\Bigg)\times \N_{2^{-n}}\big(\tilde M>\delta\big) = \N_0^{(\beta)}\Big(\mathbf{1}_B\,g(V_{u^\delta_1})\,\mathbf{1}_{\{N_\delta\geq 1\}}\Big),
\end{equation}
since $M(\tilde W^{(u^\delta_1)})>\delta$ by construction. It follows from 
\eqref{seq-excu-key3} and \eqref{seq-excu-key4} that
\begin{align*}
\N_0^{(\beta)}\Big(\mathbf{1}_B\,g(V_{u^\delta_1})\,G(\tilde W^{(u^\delta_1)})\,\mathbf{1}_{\{N_\delta\geq 1\}}\Big)
&= \N_0^{(\beta)}\Big(\mathbf{1}_B\,g(V_{u^\delta_1})\,\mathbf{1}_{\{N_\delta\geq 1\}}\Big) \times \lim_{n\to\infty}
\frac{\N_{2^{-n}}(G(\tilde W))}{\N_{2^{-n}}(\tilde M>\delta)}\\
& = \N_0^{(\beta)}\Big(\mathbf{1}_B\,g(V_{u^\delta_1})\,\mathbf{1}_{\{N_\delta\geq 1\}}\Big)
\times \N^*_0(G\mid M>\delta),
\end{align*}
by Corollary \ref{approx-*}. The last display shows both that $\tilde W^{(u^\delta_1)}$ is distributed according to $\N^*_0(\cdot\mid M>\delta)$ under $\N_0^{(\beta)}(\cdot\mid N_\delta\geq 1)$ (take a sequence of functions $g$ that increase to the indicator function of $(-\infty,-\beta)$)
and that $\tilde W^{(u^\delta_1)}$ is independent of the $\sigma$-field generated by $V_{u^\delta_1}$
and $\mathcal{E}^{(\beta,\infty)}$, still under $\N_0^{(\beta)}(\cdot\mid N_\delta\geq 1)$. 

We have obtained that the law of the first excursion above the minimum with height greater than $\delta$
and level smaller than $-\beta$,
under $\N^{(\beta)}_0(\cdot\mid N_\delta \geq 1)$, is $\N^*_0(\cdot\mid M>\delta)$. By letting $\beta$
tend to $0$, we deduce that the law of the  the first excursion above the minimum with height greater than $\delta$,
under $\N_0(\cdot\mid N^\circ_\delta\geq 1)$, is also $\N^*_0(\cdot\mid M>\delta)$ -- we recall our notation $N^\circ_\delta$
for the total number of excursion debuts with height greater than $\delta$.
Moreover, the same passage to the limit shows that this first excursion is independent of the level at which it occurs.
These remarks will be useful in the second part of the proof.

The general statement of the proposition can be deduced 
from the special case $j=1$, via an induction argument
using the special Markov property. Let us explain this argument in detail when $j=2$ (the reader will be able
to fill in the details needed for a general value of $j$). Let $G_1$ and $G_2$ be two nonnegative
measurable functions on $\S$, and consider again $B\in \mathcal{E}^{(-\beta,\infty)}$. 
Recall that $T^\delta_1>\beta$ by definition. By monotone convergence, we have
\begin{align}
\label{seq-excu-1}
&\N_0^{(\beta)}\Big(\mathbf{1}_B\, G_1(\tilde W^{(u^\delta_1)})\,G_2(\tilde W^{(u^\delta_2)})\,\mathbf{1}_{\{N_\delta\geq 2\}}\Big)\\
&\qquad=\lim_{n\to\infty} \sum_{k\geq 2^n\beta}
\N_0^{(\beta)}\Big(\mathbf{1}_B\,G_1(\tilde W^{(u^\delta_1)})\,G_2(\tilde W^{(u^\delta_2)})\,\mathbf{1}_{\{k2^{-n}\leq T^\delta_1<(k+1)2^{-n}\leq T^\delta_2<\infty\}}
\Big).\nonumber
\end{align}
Then, for every $k\geq 2^n\beta$, noting that $\mathbf{1}_B\,G_1(\tilde W^{(u^\delta_1)})\,\mathbf{1}_{\{T^\delta_1<(k+1)2^{-n}\}}$ is 
$\mathcal{E}^{(-(k+1)2^{-n},\infty)}$-measurable, we get
\begin{align}
\label{seq-excu-11}
&\N_0^{(\beta)}\Big(\mathbf{1}_B\,G_1(\tilde W^{(u^\delta_1)})\,G_2(\tilde W^{(u^\delta_2)})\,\mathbf{1}_{\{k2^{-n}\leq T^\delta_1<(k+1)2^{-n}\leq T^\delta_2<\infty\}}
\Big)\\
&\ = \N_0^{(\beta)}\Big(\mathbf{1}_B\,G_1(\tilde W^{(u^\delta_1)})\,\mathbf{1}_{\{k2^{-n}\leq T^\delta_1<(k+1)2^{-n}\leq T^\delta_2\}}
\,\N_0^{(\beta)}\Big(G_2(\tilde W^{(u^\delta_2)})\,\mathbf{1}_{\{T^\delta_2<\infty\}}\,\Big|\,\mathcal{E}^{(-(k+1)2^{-n},\infty)}\Big)\Big).
\nonumber
\end{align}
Applying the special Markov property (Proposition \ref{SMP}) to the interval $(-(k+1)2^{-n},\infty)$ now gives on the event
$\{T^\delta_1<(k+1)2^{-n}\leq T^\delta_2\}$,
\begin{equation}
\label{seq-excu-2}
\N_0^{(\beta)}\Big(G_2(\tilde W^{(u^\delta_2)})\,\mathbf{1}_{\{T^\delta_2<\infty\}}\,\Big|\,\mathcal{E}^{(-(k+1)2^{-n},\infty)}\Big)
= \Big(1-\exp(-Z_{(k+1)2^{-n}}\,\N_0(N_\delta\geq 1))\Big)\,\N_0^*(G_2\mid M>\delta).
\end{equation}
Let us explain this. From the special Markov property, there is a Poisson number $\nu$
with parameter $Z_{(k+1)2^{-n}}\N_0(N^\circ_\delta\geq 1)$ of Brownian snake excursions
outside $(-(k+1)2^{-n},\infty)$ that contain at least one excursion debut with height greater than $\delta$,
and these excursions are independent and distributed according to $\N_{0}(\cdot \mid N^\circ_\delta\geq 1)$, modulo the
obvious translation by $(k+1)2^{-n}$. For each of these $\nu$ excursions, the first excursion above the minimum with height
greater than $\delta$ is distributed according to $\N^*_0(\cdot\mid M>\delta)$, and is independent of the level
at which it occurs (by the first part of the proof). On the event $\{T^\delta_1<(k+1)2^{-n}\leq T^\delta_2\}$, $\tilde W^{(u^\delta_2)}$
is well defined if $T^\delta_2<\infty$, which is equivalent to $\nu\geq 1$, and is
obtained by taking, among these first excursions above the mimimum with height
greater than $\delta$, the one that occurs at the highest level. Clearly it is also distributed according to $\N^*_0(\cdot\mid M>\delta)$.

Since  we have $1-\exp(-Z_{(k+1)2^{-n}}\,\N_0(N^\circ_\delta\geq 1))=\N_0^{(\beta)}(T^\delta_2<\infty\mid\mathcal{E}^{(-(k+1)2^{-n},\infty)})$
on the event
$\{T^\delta_1<(k+1)2^{-n}\leq T^\delta_2\}$, we deduce from \eqref{seq-excu-11} and \eqref{seq-excu-2} that, for every $k\geq 2^n\beta$,
\begin{align*}
&\N_0^{(\beta)}\Big(\mathbf{1}_B\,G_1(\tilde W^{(u^\delta_1)})\,G_2(\tilde W^{(u^\delta_2)})\,\mathbf{1}_{\{k2^{-n}\leq T^\delta_1<(k+1)2^{-n}\leq T^\delta_2<\infty\}}\Big)\\
&\qquad=\N_0^{(\beta)}\Big(\mathbf{1}_B\,G_1(\tilde W^{(u^\delta_1)})\,\mathbf{1}_{\{k2^{-n}\leq T^\delta_1<(k+1)2^{-n}\leq T^\delta_2\}}\,
\N_0^{(\beta)}(T^\delta_2<\infty\mid\mathcal{E}^{(-(k+1)2^{-n},\infty)})\Big)\times \N_0^*(G_2\mid M>\delta)\\
&\qquad= \N_0^{(\beta)}\Big(\mathbf{1}_B\,G_1(\tilde W^{(u^\delta_1)})\,\mathbf{1}_{\{k2^{-n}\leq T^\delta_1<(k+1)2^{-n}\leq T^\delta_2<\infty\}}\Big)\times \N_0^*(G_2\mid M>\delta).
\end{align*}
Finally, returning to \eqref{seq-excu-1}, we obtain by monotone convergence
$$\N_0^{(\beta)}\Big( \mathbf{1}_B\,G_1(\tilde W^{(u^\delta_1)})\,G_2(\tilde W^{(u^\delta_2)})\,\mathbf{1}_{\{N_\delta\geq 2\}}\Big)
= \N_0^{(\beta)}\Big(\mathbf{1}_B\,G_1(\tilde W^{(u^\delta_1)})\,\mathbf{1}_{\{N_\delta\geq 2\}}\Big)\,\N_0^*(G_2\mid M>\delta).$$
This gives the case $j=2$ of the proposition.
\end{proof}

\noindent{\it Remark}. We could have shortened the proof a little by using a strong version 
of the special Markov property (applying to a random interval $(-T,\infty)$) of the type discussed in \cite{Hull}. 

\smallskip

The next lemma shows that the sequence $(\tilde W^{(u^\delta_1)},\ldots,\tilde W^{(u^\delta_{N_\delta})})$
can be viewed as the beginning of an i.i.d. sequence. 

\begin{lemma}
\label{complete-seq}
On an auxiliary probability space $(\Omega,\mathcal{F},\P)$, consider a sequence $(\overline W^{\delta,1},
\overline W^{\delta,2},\ldots)$
of independent random variables distributed according to $\N^*_0(\cdot\mid M>\delta)$. Under the product probability 
measure $\P\otimes \N_0^{(\beta)}$, consider the sequence $(W^{\delta,1}, W^{\delta,2},\ldots)$
defined by
$$W^{\delta,j}=\left\{\begin{array}{ll}
 \tilde W^{(u^\delta_j)}\qquad&\hbox{if }1\leq j\leq N_\delta\\
 \overline W^{\delta,j-N_\delta}\qquad&\hbox{if }j> N_\delta
 \end{array}
 \right.
 $$
 Then $(W^{\delta,1}, W^{\delta,2},\ldots)$ is a sequence of i.i.d. random variables distributed according to
 $\N^*_0(\cdot\mid M>\delta)$, and this sequence is independent of the $\sigma$-field $\mathcal{E}^{(-\beta,\infty)}$.
 \end{lemma}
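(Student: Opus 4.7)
The plan is to prove by induction on $n$ the factorization identity
\begin{equation*}
(\E \otimes \N_0^{(\beta)})\Big[\mathbf{1}_B \prod_{j=1}^n F_j(W^{\delta,j})\Big] \;=\; \N_0^{(\beta)}(B)\prod_{j=1}^n \N_0^*(F_j\mid M>\delta),
\end{equation*}
valid for every $n\geq 1$, every $B\in\mathcal{E}^{(-\beta,\infty)}$, and every collection of bounded nonnegative measurable functions $F_1,\dots,F_n$ on $\S$. Once this identity is established, a standard monotone class argument yields both conclusions of the lemma simultaneously: the factorization with respect to $\mathbf{1}_B$ expresses the independence from $\mathcal{E}^{(-\beta,\infty)}$, while the product form on the right-hand side gives the i.i.d.\ property with common law $\N_0^*(\cdot\mid M>\delta)$.

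The base case $n=1$ is handled by splitting the expectation over the partition $\{N_\delta\geq 1\}\cup\{N_\delta=0\}$. On $\{N_\delta\geq 1\}$, $W^{\delta,1}=\tilde W^{(u_1^\delta)}$ and the desired factorization follows directly from Proposition \ref{sequence-excu} with $j=1$. On $\{N_\delta=0\}$, $W^{\delta,1}=\overline W^{\delta,1}$ is independent of the Brownian snake and has law $\N_0^*(\cdot\mid M>\delta)$, so the same factor emerges. Adding the two contributions gives the identity for $n=1$.

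For the inductive step $n\to n+1$, I would split the expectation according to whether $N_\delta\geq n+1$ or $N_\delta=k$ for some $k\leq n$. On $\{N_\delta\geq n+1\}$ we have $W^{\delta,j}=\tilde W^{(u_j^\delta)}$ for every $j\leq n+1$, and Proposition \ref{sequence-excu} with $j=n+1$, applied to the functional $H=\mathbf{1}_B\prod_{j=1}^n F_j(\tilde W^{(u_j^\delta)})$ --- which is measurable with respect to $\mathcal{E}^{(-\beta,\infty)}\vee\sigma\big(\tilde W^{(u_1^\delta)},\ldots,\tilde W^{(u_n^\delta)}\big)$ --- extracts the factor $\N_0^*(F_{n+1}\mid M>\delta)$. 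On $\{N_\delta=k\}$ with $k\leq n$, $W^{\delta,n+1}=\overline W^{\delta,n+1-k}$ is independent of the Brownian snake and of the other auxiliary variables $\overline W^{\delta,1},\dots,\overline W^{\delta,n-k}$ appearing as $W^{\delta,k+1},\dots,W^{\delta,n}$, so the same factor emerges. The key algebraic observation is that, since $W^{\delta,j}=\tilde W^{(u_j^\delta)}$ also holds on $\{N_\delta\geq n+1\}$ for every $j\leq n$, once $\N_0^*(F_{n+1}\mid M>\delta)$ has been factored out of each piece the residual expression collapses exactly to $(\E \otimes \N_0^{(\beta)})[\mathbf{1}_B\prod_{j=1}^n F_j(W^{\delta,j})]$, to which the inductive hypothesis applies.

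The main obstacle is the case bookkeeping in the inductive step: one must verify that $\N_0^*(F_{n+1}\mid M>\delta)$ emerges uniformly across the $\{N_\delta\geq n+1\}$ branch (via Proposition \ref{sequence-excu}) and each $\{N_\delta=k\}$ branch (via independence of the auxiliary sequence), and that the residual pieces reassemble into the $n$-step expectation on $(W^{\delta,1},\dots,W^{\delta,n})$. Once this bookkeeping is in place, the induction proceeds mechanically from Proposition \ref{sequence-excu}.
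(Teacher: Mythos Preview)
Your proposal is correct and follows essentially the same approach as the paper's proof: both establish the factorization identity by induction on the number of factors, splitting according to whether $N_\delta$ reaches the index of the last variable, applying Proposition~\ref{sequence-excu} on the branch where the last variable is a genuine excursion, and using independence of the auxiliary i.i.d.\ sequence on the complementary branch. The only cosmetic difference is that the paper groups the cases $\{N_\delta=0\},\ldots,\{N_\delta=n\}$ into the single event $\{N_\delta<n+1\}$, whereas you handle them separately; the algebra is identical.
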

 
 \begin{proof} This follows from Proposition \ref{sequence-excu} by an argument which is valid in a 
 much more general setting. Let us give a few details. Let $k\geq 2$, and let $\phi_1,\ldots,\phi_k$  be bounded nonnegative 
 measurable functions defined  on $\S$. Also let $B\in \mathcal{E}^{(-\beta,\infty)}$.
 We need to verify that
 \begin{equation}
\label{comple-seq}
E\Big[ \mathbf{1}_B\,\phi_1(W^{\delta,1})\,\phi_2(W^{\delta,2})\cdots \phi_k(W^{\delta,k})\Big]
 = \N_0^{(\beta)}(B)\times \prod_{i=1}^k \N^*_0(\phi_i\mid M>\delta),
 \end{equation}
 where $E[\cdot]$ stands for the expectation under $\P\otimes \N_0^{(\beta)}$.  By dealing separately with the possible values of $N_\delta$ and using the independence of the
 $\overline W^{\delta,j}$'s, we immediately get that
\begin{align*}
&E\Big[\mathbf{1}_{\{N_\delta < k\}}\mathbf{1}_B\,\phi_1(W^{\delta,1})\,\phi_2(W^{\delta,2})\cdots \phi_k(W^{\delta,k})\Big]\\
&\qquad= 
 E\Big[\mathbf{1}_{\{N_\delta < k\}}\mathbf{1}_B\,\phi_1(W^{\delta,1})\cdots \phi_{k-1}(W^{\delta,k-1})\Big] \times  \N^*_0(\phi_k\mid M>\delta).
 \end{align*}
 On the other hand,  Proposition \ref{sequence-excu} exactly says that
\begin{align*}
 &E\Big[\mathbf{1}_{\{N_\delta\geq k\}}\mathbf{1}_B\,\phi_1(W^{\delta,1})\,\phi_2(W^{\delta,2})\cdots \phi_k(W^{\delta,k})\Big]\\
 &\qquad=E\Big[\mathbf{1}_{\{N_\delta\geq k\}}\mathbf{1}_B\,\phi_1( \tilde W^{(u^\delta_1)})\,\phi_2( \tilde W^{(u^\delta_2)})\cdots \phi_k( \tilde W^{(u^\delta_k)})\Big]\\
&\qquad=E\Big[\mathbf{1}_{\{N_\delta\geq k\}}\mathbf{1}_B\,\phi_1(W^{\delta,1})\cdots \phi_{k-1}(W^{\delta,k-1})\Big]\times 
\N^*_0(\phi_k\mid M>\delta).
\end{align*}
By summing the last two displays, we get
$$E\Big[\mathbf{1}_B\,\phi_1(W^{\delta,1})\,\phi_2(W^{\delta,2})\cdots \phi_k(W^{\delta,k})\Big]
= E\Big[\mathbf{1}_B\,\phi_1(W^{\delta,1})\cdots \phi_{k-1}(W^{\delta,k-1})\Big] \times 
\N^*_0(\phi_k\mid M>\delta),$$
and the proof of \eqref{comple-seq} is completed by an induction argument.
 \end{proof}

\subsection{Excursion debuts and discontinuities of the exit measure process}

We start with a first proposition that relates levels of excursion debuts to discontinuity times for the process
$(Z_x)_{x>0}$.

\begin{proposition}
\label{discont}
$\N_0$ a.e., discontinuity times for the process $(Z_x)_{x>0}$ are exactly all reals of the
form $-V_u$ for $u\in D$.
\end{proposition}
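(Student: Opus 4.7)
The plan is to prove the two inclusions separately.

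For the forward inclusion $\{-V_u : u \in D\} \subseteq \{\text{discontinuities of } Z\}$, I fix $\delta,\beta>0$ and work under $\N_0^{(\beta)}$. Consider $u = u^\delta_j \in D$ for some $j \leq N_\delta$. By Proposition~\ref{sequence-excu} and Lemma~\ref{complete-seq}, $\tilde W^{(u)}$ has law $\N^*_0(\cdot \mid M > \delta)$, and Proposition~\ref{loisZ0sigma} gives $Z^*_0(\tilde W^{(u)}) > 0$ almost surely. The stronger statement I would establish is $\Delta Z_{-V_u} = Z^*_0(\tilde W^{(u)})$, which implies the forward inclusion. Setting $a_0 = -V_u$ and applying the special Markov property to $(-a_1,\infty)$ for $a_1$ slightly below $a_0$, exactly one among the Poisson-distributed excursions outside this set, call it $E^{a_1}$, eventually contains $u$. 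As $a_1 \uparrow a_0$, Proposition~\ref{conv-exit-M} combined with the almost-sure construction of Section~\ref{a.s.cons} shows that the appropriately shifted and truncated version of $E^{a_1}$ converges to $\tilde W^{(u)}$, jointly with the convergence of its exit measure at level $V_u$ to $Z^*_0(\tilde W^{(u)})$. Since the contributions of all other outside excursions to $Z_a$ are continuous at $a=a_0$ (no such excursion carries an internal debut exactly at $V_u$, the debut levels being almost surely distinct), the entire jump $\Delta Z_{a_0}$ is carried by $E^{a_1}$.

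For the reverse inclusion I use an intensity-matching argument. By Theorem~\ref{construct-N} and Proposition~\ref{loisZ0sigma}, for any bounded interval $A\subset(0,\infty)$,
$$\N_0\bigl[\#\{u\in D:-V_u\in A,\ Z^*_0(\tilde W^{(u)})>\delta\}\bigr]=|A|\,\N^*_0(Z^*_0>\delta)=|A|\cdot\tfrac{2}{3}\sqrt{\tfrac{3}{2\pi}}\,\delta^{-3/2}.$$
A direct L\'evy--Khintchine calculation identifies the L\'evy measure of the CSBP $(Z_a)$ with $\psi(\lambda)=\sqrt{8/3}\,\lambda^{3/2}$ as $\pi(\mathrm{d}z)=\sqrt{3/(2\pi)}\,z^{-5/2}\,\mathrm{d}z$. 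Combined with $\N_0(Z_a)=1$ from \eqref{first-moment-exit} and the compensation formula for CSBP jumps, the $\N_0$-expected number of jumps of $Z$ in $A$ of size exceeding $\delta$ is also $|A|\,\pi((\delta,\infty))$. Since the first part establishes that each level $-V_u$ with $Z^*_0(\tilde W^{(u)})>\delta$ is a jump of $Z$ of size $>\delta$, and the two counts have equal finite expectations, the corresponding sets of times coincide almost surely. Letting $\delta\downarrow 0$ and $\beta\downarrow 0$ yields the full result.

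The main obstacle is the jump-size identification in the first part: one must show that the cumulative contribution of the infinitely many other excursions outside $(-a_1,\infty)$ to $Z_a$ is a continuous function of $a$ near $a_0$, so that the only ``source'' of a jump at $a_0$ is $E^{a_1}$. Heuristically this follows from debut levels being almost surely distinct, but a rigorous argument requires combining this with the joint convergence in Proposition~\ref{conv-exit-M} (not merely the weak convergence of Corollary~\ref{approx-*}), together with a truncation in size of the other excursions (those with minimum $\leq -a_0-\chi$ are finitely many by \eqref{law-mini}, while the remainder contributes uniformly little to $Z_a$ for $a$ close to $a_0$).
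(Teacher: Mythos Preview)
Your approach is genuinely different from the paper's, and both inclusions are correct in outline, though the forward one is only a sketch.

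The paper avoids the jump-size identification entirely at this stage. It introduces the auxiliary process $\mathcal{Y}_x=\int_0^\sigma \mathrm{d}s\,\mathbf{1}_{\{\tau_{-x}(W_s)=\infty\}}$, shows by elementary pathwise arguments that $x$ is a discontinuity of $\mathcal{Y}$ if and only if $\int_0^\sigma \mathrm{d}s\,\mathbf{1}_{\{\underline W_s=-x\}}>0$, and checks that this is equivalent to $-x=V_u$ for some $u\in D$. It then invokes a result from \cite{Hull} to say that $\mathcal{Y}$ and $Z$ have the same discontinuity set. This is short and requires no analysis of jump sizes; the identification $\Delta Z_{-V_u}=Z^*_0(\tilde W^{(u)})$ is deferred to the separate Proposition~\ref{identi-jump2}.

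Your route instead front-loads Proposition~\ref{identi-jump2}: your forward inclusion \emph{is} that proposition, and your reverse inclusion is an intensity-matching argument. The intensity argument is correct and rather elegant: the equality $\N_0[Z_a]=1$ together with the CSBP jump compensator $Z_{a-}\,\mathrm{d}a\otimes\pi(\mathrm{d}z)$ gives $\N_0[\#\{a\in A:\Delta Z_a>\delta\}]=|A|\,\pi((\delta,\infty))$, and this matches $|A|\,\N^*_0(Z^*_0>\delta)$ by Proposition~\ref{loisZ0sigma}, so once the forward inclusion (with sizes) is known, equality of expectations forces equality of the sets. This buys you Propositions~\ref{discont} and~\ref{identi-jump2} simultaneously and is self-contained (no appeal to \cite{Hull}).

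The cost is that your forward inclusion is the hard part. What you call ``the main obstacle'' is exactly the content of claim~\eqref{jump2} in the paper's proof of Proposition~\ref{identi-jump2}: one must show that the aggregate exit-measure contribution of all excursions outside $(-a_1,\infty)$ \emph{other than} the one containing $u$ is continuous at $a_0$. Your heuristic (distinct debut levels plus a size truncation) is on the right track, but the rigorous version requires the careful conditioning and Poisson estimates that occupy most of that proof; invoking Proposition~\ref{conv-exit-M} and Section~\ref{a.s.cons} handles the distinguished excursion but not the tail of the others. So your sketch is valid as a program, but completing it amounts to reproducing the full proof of Proposition~\ref{identi-jump2}.
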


\begin{proof}
Recall that, for every $x\geq 0$ we have set
$$\mathcal{Y}_x=\int_0^\sigma \mathrm{d}s\,\mathbf{1}_{\{\tau_{-x}(W_s)=\infty\}}.$$
If $(x_n)$ is a monotone increasing sequence that converges to $x>0$, 
then the indicator functions $\mathbf{1}_{\{\tau_{-x_n}(W_s)=\infty\}}$
converge to $\mathbf{1}_{\{\tau_{-x}(W_s)=\infty\}}$, and by dominated convergence
it follows that $(\mathcal{Y}_x)_{x>0}$ has left-continuous sample paths. On the 
other hand, if  $(x_n)$ is a monotone decreasing sequence that converges to $x>0$,
with $x_n>x$ for every $n$, one immediately gets that
$$\int_0^\sigma\mathrm{d}s\, \mathbf{1}_{\{\tau_{-x_n}(W_s)=\infty\}}\build{\la}_{n\to\infty}^{}
\int_0^\sigma\mathrm{d}s\, \mathbf{1}_{\{\underline W_s\geq -x\}}.$$
It follows that $(\mathcal{Y}_x)_{x>0}$ also has right limits, and that $x$
is a discontinuity point of $\mathcal{Y}$ if and only if
$$\int_0^\sigma \mathrm{d}s\, \mathbf{1}_{\{\underline W_s= -x\}}>0.$$
The latter condition holds if and only if there exists $s\in[0,\sigma]$
such that $\hat W_s>-x$ and $\underline W_s= -x$ (we use the fact
that $\N_0$ a.e.  for every $y\in \R$, $\int_0^\sigma\mathrm{d}s\,\mathbf{1}_{\{\hat W_s=y\}}=0$,
which follows from the existence of local times for the tip process of the Brownian snake, see
e.g. \cite{BMJ}). However, the existence of $s\in[0,\sigma]$
such that $\hat W_s>-x$ and $\underline W_s= -x$ implies that there is an excursion debut $u$
with $V_u=-x$, and the converse is also true. Summarizing, we have obtained that
discontinuity times for the process $(\mathcal{Y}_x)_{x>0}$ are exactly all reals of the
form $-V_u$ for $u\in D$.

To complete the proof of the proposition, we use the fact that 
discontinuity times for $(\mathcal{Y}_x)_{x>0}$ are the same as discontinuity times
for $(Z_x)_{x>0}$, as a consequence of Corollary 4.9 in \cite{Hull} which essentially identifies the
joint distribution of this pair of processes. To be precise the latter
result is not concerned with 
the processes $Z$ and $\mathcal{Y}$ under $\N_0$ but with superpositions of these processes corresponding
to a Poisson measure with intensity $\N_0$. A simple argument however shows that this implies the result we need.
\end{proof}

We now identify the value of the jump of the process $Z$ at the time $-V_u$ when $u\in D$. 
For every $u\in D$, the exit measure $Z^*_0(\tilde W^{(u)})$ makes sense by
\eqref{def-exit-gene}, and can also be defined by the approximation in
Proposition \ref{construct-exit}, using
Proposition \ref{sequence-excu} to relate properties of $\tilde W^{(u)}$ to those valid a.e. under $\N^*_0$. 

\begin{proposition}
\label{identi-jump2}
$\N_0$ a.e. for every $u\in D$, the jump of the process $Z$ at time $-V_u$ is equal to
$Z^*_0(\tilde W^{(u)})$.
\end{proposition}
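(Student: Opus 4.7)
Fix $u\in D$ with $V_u=-a$ for some $a>0$. The goal is to show $\Delta Z_a = Z_0^*(\tilde W^{(u)})$. The plan is to localize the jump to a single excursion using the special Markov property, then identify its value by passing to the limit in a truncation parameter.

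For small $\ve\in(0,a)$, I apply the special Markov property (Proposition \ref{SMP}) to $U_\ve=(-a+\ve,\infty)$. By an argument parallel to Lemma \ref{construct-tech}, for $\ve$ small enough there is a unique excursion $\omega^\ve_*$ of $W$ outside $U_\ve$ whose subtree contains $u$; its shift $\bar\omega^\ve:=\kappa_a(\omega^\ve_*)\in\S_\ve$ satisfies $\mathrm{tr}_0(\bar\omega^\ve)\to\tilde W^{(u)}$ in $\S$ as $\ve\to 0$. Since the paths of $\mathrm{tr}^{U_\ve}(W)$ stay strictly above $-a+\ve$, they cannot exit $(y,\infty)$ for $y\leq-a+\ve$, yielding the additive decomposition $\z_y(W)=\sum_i\z_y(\omega^{U_\ve}_i)$ in a neighborhood of $y=-a$. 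Since the levels $\{V_v:v\in D\}$ are a.s.\ pairwise distinct, Proposition \ref{discont} shows that $u$ is the only excursion debut at level $-a$ and that it lies in $\omega^\ve_*$; consequently, only $\omega^\ve_*$ contributes to the jump of $y\mapsto\z_y(W)$ at $y=-a$:
\begin{equation}
\label{plan-jump-decomp}
\Delta Z_a=\z_{-a}(\omega^\ve_*)-\lim_{y\downarrow -a}\z_y(\omega^\ve_*)=\z_0(\bar\omega^\ve)-\lim_{\delta\downarrow 0}\z_\delta(\bar\omega^\ve).
\end{equation}
The key observation is that \eqref{plan-jump-decomp} holds for every sufficiently small $\ve>0$, so the right-hand side is independent of $\ve$ even though each of its two summands depends on $\ve$.

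I would then let $\ve\to 0$ in \eqref{plan-jump-decomp} and identify the two limits. For the first summand $\z_0(\bar\omega^\ve)$, I plan to adapt the argument of Proposition \ref{conv-exit-M}. Working under the conditional measure $\N_0^{(\beta)}(\cdot\mid N_\delta\geq j)$ with $u=u^\delta_j$, Proposition \ref{sequence-excu} identifies the law of $\tilde W^{(u)}$ as $\N_0^*(\cdot\mid M>\delta)$, while the special Markov property places $\bar\omega^\ve$ within the framework of Proposition \ref{conv-exit-M}. Combining the almost sure construction of Section \ref{a.s.cons}, the monotonicity of the truncated trajectories along that construction, and the Laplace-transform computation \eqref{Laplace-exit}, just as in the proof of that proposition, should then yield $\z_0(\bar\omega^\ve)\to Z_0^*(\tilde W^{(u)})$.

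The hard part will be to show that the residual term $\lim_{\delta\downarrow 0}\z_\delta(\bar\omega^\ve)$ tends to $0$ as $\ve\to 0$. The intuition is that, since $\bar\omega^\ve$ starts from the vanishing level $\ve$, the process $b\mapsto\z_{\ve-b}(\bar\omega^\ve)$, which by Section \ref{sec:exitprocess} has the transition kernels of the CSBP with branching mechanism $\psi(\lambda)=\sqrt{8/3}\,\lambda^{3/2}$, has essentially no time to accumulate a nonzero value before its ``forced'' jump at $b=\ve$ associated with the debut $u$; its left limit $\lim_{\delta\downarrow 0}\z_\delta(\bar\omega^\ve)$ should therefore vanish. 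Making this rigorous will require quantitative moment estimates based on \eqref{Laplace-exit} and in the spirit of Lemma \ref{exit-moments}, taking into account the size-biased selection of $\bar\omega^\ve$ imposed by the conditioning on $u\in D$. Once both limits in \eqref{plan-jump-decomp} are in hand, we obtain $\Delta Z_a=Z_0^*(\tilde W^{(u)})$ for the fixed $u$, and a countable union argument over $\beta>0$, $\delta>0$ and $j\geq 1$ promotes the identity to hold simultaneously for every $u\in D$, $\N_0$-a.e.
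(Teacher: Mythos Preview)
Your decomposition $\Delta Z_a=\z_0(\bar\omega^\ve)-\lim_{\delta\downarrow 0}\z_\delta(\bar\omega^\ve)$ is correct and is closely related to the paper's \eqref{jump0}. The gap is that your two limits are taken in the wrong order, and the first one cannot be established as you propose.

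You plan to show $\z_0(\bar\omega^\ve)\to Z^*_0(\tilde W^{(u)})$ by adapting Proposition~\ref{conv-exit-M}. But that proposition is purely a convergence \emph{in distribution} of the pair $(\tilde W^{\delta,\ve},\z_0(W^{\delta,\ve}))$; it does not give a pointwise or in-probability limit for your specific, conditionally-selected excursion. The monotone coupling of Section~\ref{a.s.cons} does not rescue this: there the truncated trajectories \emph{increase} to $W^{\delta,0}$, which produces the one-sided bound $Z'_0\le Z^*_0(W^{\delta,0})$ that is then saturated by a Laplace-transform identity in law. Your $\bar\omega^\ve$ \emph{decrease} as $\ve\downarrow 0$, so the monotonicity runs the wrong way, and the Laplace-transform matching would in any case require an exact identification of the conditional law of $\bar\omega^\ve$ under the size-biased selection, which you have not carried out. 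In effect, asserting $\z_0(\bar\omega^\ve)\to Z^*_0(\tilde W^{(u)})$ directly is assuming the conclusion.

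The paper reverses the order. It first handles (the analog of) your ``hard part'': the contribution of the excursions \emph{other than} the distinguished one is shown to converge to $Z_{b-}$ (this is equivalent to your $R_\ve\to 0$, via $Z_{a-}=R_\ve+\sum_{i\ne *}\z_0(\kappa_a(\omega^{U_\ve}_i))$ and the fact that the non-distinguished excursions carry no jump at $-a$). But instead of attacking the CSBP under a size-biased law, the paper uses a direct Poisson argument under the special Markov property (see \eqref{jump2} and its proof), exploiting that the other excursions are \emph{unconditioned}. Together with right-continuity of $Z$ this yields $\z_0(\omega_{(n)})\to\Delta Z_b$ in probability. \emph{Only then} is Proposition~\ref{conv-exit-M} invoked: the conditional law of $\omega_{(n)}$ is identified as that of $\tilde W$ under $\N_{2^{-n}}(\cdot\mid O(\tilde W)>\delta)$, so $(\omega_{(n)},\z_0(\omega_{(n)}))$ converges in distribution to $(W^{\delta,0},Z^*_0(W^{\delta,0}))$; since it also converges in probability to $(\tilde W^{(u^\delta_1)},\Delta Z_b)$, the joint law forces $\Delta Z_b=Z^*_0(\tilde W^{(u^\delta_1)})$. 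Your outline becomes a valid proof if you establish $R_\ve\to 0$ first (most cleanly by recasting it as the Poisson estimate on the other excursions), deduce $\z_0(\bar\omega^\ve)\to\Delta Z_a$, and then apply this distributional identification, rather than aiming at $Z^*_0(\tilde W^{(u)})$ head-on.
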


\begin{proof} We fix $\delta>0$, and we will prove that the assertion of the proposition 
holds $\N^{(\beta)}_0$ a.e.  when $u=u^{\delta}_1$, the first excursion debut with level smaller than $-\beta$ and height greater than $\delta$, on the
event $\{N_\delta\geq 1\}$. We then observe that, for any excursion debut $u$, there are choices of rationals $\beta$ and $\delta$
that make $u$ the first excursion debut with level smaller than $-\beta$ and height greater than $\delta$.
This gives the 
desired result for every $u\in D$.

So from now on we focus on the case $u=u^\delta_1$, and in what follows
we restrict our attention to the event $\{N_\delta\geq 1\}$, so that
$u^\delta_1$ is well defined. Recall that for integers $n\geq 1$
and $k\geq 1$, $(\omega^{k,2^{-n}}_i)_{i\in I^{2^{-n}}_k}$ is the collection of excursions of
the Brownian snake outside $(-k2^{-n},\infty)$, and we 
keep using the notation $\tilde\omega^{k,2^{-n}}_i$ for $\omega^{k,2^{-n}}_i$ translated so that its starting point is $2^{-n}$
and then truncated at level $0$. Let $n_0$ be the first integer such that $2^{n_0}\beta\geq 1$. From now
on we consider values of $n$ such that $n\geq n_0$. We define
 $H_n=\lfloor - 2^nV_{u^\delta_1}\rfloor\geq 1$, in such a way that
\begin{equation}
\label{bound-jump}
H_n\,2^{-n}\leq -V_{u^\delta_1} < (H_n+1)2^{-n}.
\end{equation}
If we set for $\omega\in \S$,
$$O(\omega)=\sup\{ \hat W_s(\omega)-\underline W_s(\omega) : 0\leq s\leq \sigma\},$$
then $H_n$ is the first integer $k\geq 1$ such that 
$O(\tilde\omega^{k,2^{-n}}_i)>\delta$ for some $i\in I^{2^{-n}}_k$. This index $i$ may be not unique, and for this
reason we introduce the event $A_n\subset \{N_\delta\geq 1\}$ where the property $O(\tilde\omega^{k,2^{-n}}_i)>\delta$ holds for $k=H_n$ for exactly one
index $i=i_n\in I^{2^{-n}}_{H_n}$. On the event $A_n$, we let $\omega_{(n)}=\tilde\omega^{H_n,2^{-n}}_{i_n}$
be the corresponding excursion and on the complement of $A_n$
we let $\omega_{(n)}$ be the trivial snake path with duration $0$ in $\S_0$. Notice that, on the event $A_n$, the excursion debut $u^\delta_1$ must then belong to (the subtree coded by
the interval corresponding to) the excursion $\omega^{H_n,2^{-n}}_{i_n}$.
We also note that the sequence $(A_n)_{n\geq n_0}$ 
is monotone increasing, and that $\N^{(\beta)}_0(A_n\mid N_\delta\geq 1)$ converges to $1$ as $n\to\infty$
because there cannot be two excursion debuts at the same level (and therefore, recalling Lemma \ref{large-excu}, two excursion debuts with height 
greater than $\delta$ must be ``macroscopically separated''). 

Furthermore, we claim that the distribution of $\omega_{(n)}$
under $\N^{(\beta)}_0(\cdot\mid A_n)$ is the law of $\tilde W$ under $\N_{2^{-n}}(\cdot\mid O(\tilde W)>\delta)$.  This is basically a consequence
of the special Markov property, but we will provide a few details. Let $\Phi$ be a nonnegative measurable function on $\S$, such that
$\Phi(\omega)=0$ if $O(\omega)\leq \delta$. For every $k\geq 1$, let $B_{n,k}$ be the event where there is a unique index $i\in I^{2^{-n}}_k$ such that 
$O(\tilde\omega^{k,2^{-n}}_i)>\delta$. Then, 
\begin{align*}
\N^{(\beta)}_0\Big( \mathbf{1}_{A_n}\,\mathbf{1}_{\{H_n=k\}}\Phi(\omega_{(n)})\Big)
&=\N^{(\beta)}_0\Bigg( \mathbf{1}_{\{H_n\geq k\}}\,\mathbf{1}_{B_{n,k}}\,\sum_{i\in I^{2^{-n}}_k}\Phi(\tilde\omega^{k,2^{-n}}_i)\Bigg).
\end{align*}
We observe that the event $\{H_n\geq k\}$ is measurable with respect to the $\sigma$-field $\mathcal{E}^{(-k2^{-n},\infty)}$, 
because, if $j<k$, the property $O(\tilde \omega^{j,2^{-n}}_i)>\delta$ for some $ i\in I^{2^{-n}}_j$ can be checked 
from the snake $W$ truncated at level $-k2^{-n}$. Therefore
we can apply the special Markov property, using the fact that, if a Poisson measure with intensity $\mu$ is 
conditioned to have a single atom in a measurable set $C$ of positive and finite $\mu$-measure, the law of this atom
is  $\mu(\cdot\mid C)$. It follows that the quantities in the last display are equal to
$$\N^{(\beta)}_0\Big( \mathbf{1}_{\{H_n\geq k\}}\,\mathbf{1}_{B_{n,k}}\,\N_{2^{-n}}(\Phi(\tilde W)\mid O(\tilde W)>\delta)\Big)
=\N^{(\beta)}_0\Big( \mathbf{1}_{A_n}\,\mathbf{1}_{\{H_n=k\}}\Big)\times  \N_{2^{-n}}(\Phi(\tilde W)\mid O(\tilde W)>\delta).$$
We then sum over $k\geq 1$ to get the desired claim. 

We then note that, for every $n\geq n_0$, we have on the event $A_n$,
\begin{equation}
\label{jump0}
Z_{(H_n+1)2^{-n}}= \sum_{i\in I^{2^{-n}}_{H_n}} \z_0(\tilde \omega^{H_n,2^{-n}}_i)=\z_0(\omega_{(n)})+ \sum_{i\in I^{2^{-n}}_{H_n},i\not=i_n} \z_0(\tilde \omega^{H_n,2^{-n}}_i).
\end{equation}
To simplify notation, we write $b= -V_{u^\delta_1}$. We claim that 
\begin{equation}
\label{jump2}
\sum_{i\in I^{2^{-n}}_{H_n},i\not=i_n} \z_0(\tilde\omega^{H_n,2^{-n}}_i)
\build{\la}_{n\to\infty}^{} Z_{b-},
\end{equation}
where the convergence holds in probability under $\N_0^{(\beta)}(\cdot\mid N_\delta\geq1)$ --- the fact
that  $i_n$ is only defined on $A_n$ creates no problem here since  $\N^{(\beta)}_0(A_n\mid N_\delta\geq 1)$ converges to $1$.

\medskip
\noindent{\it Proof of \eqref{jump2}}. It will be convenient to introduce the point measure
$$\tilde\n^{2^{-n}}_k=\sum_{i\in I^{2^{-n}}_k} \delta_{\tilde \omega^{k,2^{-n}}_i},$$
for every $n\geq 1$ and $k\geq 1$.
We first observe that, on the event $A_n$, we have the equality
$$\sum_{i\in I^{2^{-n}}_{H_n},i\not=i_n} \z_0(\tilde\omega^{H_n,2^{-n}}_i)
=\int_{\{O\leq \delta\}} \tilde\n^{2^{-n}}_{H_n}(\mathrm{d}\omega) \,\z_0(\omega).$$

Since $\N^{(\beta)}_0(A_n\mid N_\delta\geq1)$ converges to $1$, the proof of \eqref{jump2} reduces to checking that
$$\int_{\{O\leq \delta\}} \tilde\n^{2^{-n}}_{H_n}(\mathrm{d}\omega) \,\z_0(\omega) 
\build{\la}_{n\to\infty}^{} Z_{b-}.$$
Since $2^{-n}H_n\uparrow -V_{u^\delta_1}= b$, we have 
$Z_{2^{-n}H_n} \la Z_{b-}$, a.e. under $\N_0^{(\beta)}(\cdot\mid N_\delta\geq 1)$, and so it is enough to prove 
that
$$\int_{\{O\leq \delta\}} \tilde\n^{2^{-n}}_{H_n}(\mathrm{d}\omega) \,\z_0(\omega) - Z_{2^{-n}H_n}
\build{\la}_{n\to\infty}^{} 0.$$

Note that we may have $H_n=\lfloor - 2^nV_{u^\delta_1}\rfloor<2^n\beta$ although $-V_{u^\delta_1}\geq \beta$, but this occurs
with $\N^{(\beta)}_0$-probability tending to $0$. Thanks to this observation, the preceding convergence will
hold provided that, for every $\ve >0$, the quantities in the next display tend to $0$
as $n\to\infty$:
\begin{align*}
&\N_0^{(\beta)}\Bigg(\Big\{\Big|\int_{\{O\leq \delta\}} \tilde\n^{2^{-n}}_{H_n}(\mathrm{d}\omega) \,\z_0(\omega) - Z_{2^{-n}H_n}\Big|>\ve\Big\}
\cap \{N_\delta\geq 1\}\cap\{H_n\geq 2^n\beta\}\Bigg)\\
&\qquad=\sum_{k\geq 2^n\beta} \N_0^{(\beta)}\Bigg(\Big\{\Big|\int_{\{O\leq \delta\}} 
\tilde\n^{2^{-n}}_{k}(\mathrm{d}\omega) \,\z_0(\omega) - Z_{k2^{-n}}\Big|>\ve\Big\}
\cap \{N_\delta\geq 1\} \cap \{H_n= k\}\Bigg)\\
&\qquad=\sum_{k\geq 2^n\beta} \N_0^{(\beta)}\Bigg(\Big\{\Big|\int_{\{O\leq \delta\}} 
\tilde\n^{2^{-n}}_{k}(\mathrm{d}\omega) \,\z_0(\omega) - Z_{k2^{-n}}\Big|>\ve\Big\}
\cap \{\tilde\n^{2^{-n}}_k(O>\delta)\geq 1\} \cap \{H_n\geq k\}\Bigg).
\end{align*}
The last equality holds because the event $\{N_\delta\geq 1\} \cap \{H_n= k\}$ coincides with 
$\{\tilde\n^{2^{-n}}_k(O>\delta)\geq 1\} \cap \{H_n\geq k\}$. Next we recall that the event $\{H_n\geq k\}$ is $\mathcal{E}^{(-k2^{-n},\infty)}$-measurable
and we notice that, under $\N^{(\beta)}_0$, conditionally on $\mathcal{E}^{(-k2^{-n},\infty)}$, $\tilde\n^{2^{-n}}_{k}$ is a Poisson measure whose 
intensity is $Z_{k2^{-n}}$ times the ``law'' of $\tilde W$ under $\N_{2^{-n}}$. It follows that the quantities in the last display are
also equal to 
\begin{equation}
\label{jump2tech}
\sum_{k\geq 2^n\beta} \N_0^{(\beta)}\Big(\psi^n_\ve(Z_{k2^{-n}})\,\mathbf{1}_{ \{\tilde\n^{2^{-n}}_k(O>\delta)\geq 1\} \cap \{H_n\geq k\}}\Bigg),
\end{equation}
where, for every $a\geq 0$,
$$\psi^n_\ve(a)= P\Bigg( \Big|\int_{\{O\leq \delta\}} \n_{n,a}(\mathrm{d}\omega)\,\z_0(\omega) - a\Big| >\ve\Bigg),$$
if $\n_{n,a}$ denotes a Poisson measure whose intensity is $a$ times the ``law'' of $\tilde W$ under $\N_{2^{-n}}$. 
It is easy to verify that $\psi^n_\ve(a)$ tends to $0$ as $n\to\infty$, for every fixed $a$. First note that we can 
remove the restriction to $\{O\leq \delta\}$ since $P(\n_{n,a}(O>\delta)>0)$ tends to $0$. Then we just have
to observe that
$\int \n_{n,a}(\mathrm{d}\omega)\,\z_0(\omega)$ converges in probability to $a$ as $n\to\infty$,
as a straightforward consequence of \eqref{Laplace-exit}. Furthermore, a simple monotonicity argument shows that
the convergence of $\psi^n_\ve(a)$ to $0$ holds uniformly when $a$ varies over a compact subset of $\R_+$.

Finally, using again the fact that $\{\tilde\n^{2^{-n}}_k(O>\delta)\geq 1\} \cap \{H_n\geq k\}=\{N_\delta\geq 1\} \cap \{H_n= k\}$,
the quantity in \eqref{jump2tech} is bounded by
$$\N_0^{(\beta)}\Big(\psi^n_\ve(Z_{2^{-n}H_n})\,\mathbf{1}_{\{N_\delta\geq 1\}}\Big),$$
and this tends to $0$ as $n\to\infty$ by the previous observations and the fact that $\sup\{Z_a:a\geq 0\}<\infty$, $\N_0$ a.e. This completes the proof of our claim \eqref{jump2}. \hfill$\square$

\medskip
Let us complete the proof of the proposition.
We already noticed that the distribution of $\omega_{(n)}$
under $\N^{(\beta)}_0(\cdot\mid A_n)$ is the law of $\tilde W$ under $\N_{2^{-n}}(\cdot\mid O(\tilde W)>\delta)$. We observe that,
for every $\ve>0$, 
the following inclusions hold $\N_{\ve}$ a.e.
$$\{\tilde M>\delta+\ve\}\subset \{O(\tilde W)>\delta\} \subset \{\tilde M>\delta\}$$
and moreover the ratio $\N_{\ve}( \tilde M\geq \delta+\ve)/ \N_{\ve}( \tilde M>\delta)$
tends to $1$ as $\ve\to 0$. It follows that the result of Proposition \ref{conv-exit-M} remains valid if, in the definition
of $W^{\delta,\ve}$, the
conditioning by $\{ \tilde M>\delta\}$ is replaced by $\{O(\tilde W)>\delta\}$. Thanks to this simple
observation, we can deduce  from Proposition \ref{conv-exit-M} that 
\begin{equation}
\label{jump4}
(\omega_{(n)},\z_0(\omega_{(n)}))\build{\la}_{n\to\infty}^{\rm(d)}(W^{\delta,0},Z^*_0(W^{\delta,0})),
\end{equation}
where $W^{\delta,0}$ is distributed according to $\N^*_0(\cdot\mid M>\delta)$
and the convergence holds in distribution under $\N_0^{(\beta)}(\cdot\mid N_\delta\geq 1)$. 
Furthermore, from
the last assertion of Lemma \ref{construct-tech}, and the fact that $\omega^{H_n,2^{-n}}_{i_n}$ is, on the event $A_n$, the excursion
outside $(-H_n2^{-n},\infty)$ that ``contains'' $u^\delta_1$, we get that
$\omega_{(n)}$ converges to $\tilde W^{(u^\delta_1)}$, $\N_0^{(\beta)}$  a.e. on $\{N_\delta\geq 1\}$. 

On the other hand, \eqref{bound-jump}
and the right-continuity of sample paths of $Z$
imply that 
\begin{equation}
\label{jump8}
Z_{(H_n+1)2^{-n}}\build{\la}_{n\to\infty}^{}Z_b,
\end{equation}
$\N_0^{(\beta)}$ a.s. on $\{N_\delta\geq1\}$.
Then, using \eqref{jump0}, \eqref{jump2} and \eqref{jump8}, we immediately get that $\z_0(\omega_{(n)})$ converges
to the random variable $Z_b-Z_{b-}$, in probability under $\N_0^{(\beta)}(\cdot\mid N_\delta\geq 1)$.
So we know that the pair $(\omega_{(n)},\z_0(\omega_{(n)}))$ converges in probability to 
$(\tilde W^{(u^\delta_1)},Z_b-Z_{b-})$ under $\N_0^{(\beta)}(\cdot\mid N_\delta\geq 1)$,
and it follows from \eqref{jump4} that the law of $(\tilde W^{(u^\delta_1)},Z_b-Z_{b-})$ 
under $\N_0^{(\beta)}(\cdot\mid N_\delta\geq 1)$ is the law of $(W^{\delta,0},Z^*_0(W^{\delta,0}))$. This forces
$Z_b-Z_{b-}= Z^*_0(\tilde W^{(u^\delta_1)}))$, which completes the proof.
\end{proof}

\subsection{The Poisson process of excursions}
\label{sec:Poisson}

The following proposition is reminiscent of It\^o's famous Poisson point process 
of excursions of linear Brownian motion. We recall that $\beta>0$ is fixed and that 
$u^\delta_1,\ldots,u^\delta_{N_\delta}$ are the successive excursion debuts 
with height greater than $\delta$ and level smaller than $-\beta$.

\begin{proposition}
\label{PoissonPP}
We can find an auxiliary probability space $(\Omega,\f,\P)$ such that, on the 
product space $\Omega\times \S$ equipped with the 
probability measure $\P\otimes \N^{(\beta)}_0$, we can
 construct a Poisson measure $\mathcal{P}$ on 
$\R_+\times \S$ with intensity $\mathrm{d}t\otimes \N^*_0(\mathrm{d}\omega)$
so that the following holds.  For every $\delta>0$, if $(t^\delta_1,\omega^\delta_1),(t^\delta_2,\omega^\delta_2),\ldots$ is the sequence of atoms
of the measure $\mathcal{P}(\cdot\cap(\R_+\times\{M>\delta\}))$, ranked so that $t^\delta_1<t^\delta_2<\cdots$, we have
$\tilde W^{(u^\delta_i)}=\omega^\delta_i$ for every $1\leq i\leq N_\delta$. Furthermore, the Poisson measure $\mathcal{P}$
is independent of $\mathcal{E}^{(-\beta,\infty)}$. 
\end{proposition}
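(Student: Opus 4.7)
The strategy uses Lamperti's time change transforming the exit measure process $(Z_x)_{x>\beta}$ into a spectrally positive stable L\'evy process. By Propositions \ref{discont} and \ref{identi-jump2}, the excursion debuts $u\in D$ with $V_u<-\beta$ are in bijection with jumps of $(Z_x)$, with level $-V_u$ and jump size $Z^*_0(\tilde W^{(u)})$. Since $(Z_x)$ is a CSBP with $\psi(\lambda)=\sqrt{8/3}\,\lambda^{3/2}$, Lamperti's theorem produces a $3/2$-stable L\'evy process $Y$ whose L\'evy measure is $\sqrt{3/(2\pi)}\,z^{-5/2}\,\mathrm{d}z$ --- precisely the density of $Z^*_0$ under $\N_0^*$ given by Proposition \ref{loisZ0sigma}. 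Attaching marks via the conditional laws $\N_0^*(\cdot\mid Z^*_0=z)$ from Proposition \ref{conditioned-exit} should then yield the desired Poisson measure.

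Concretely, on the auxiliary space $(\Omega,\f,\P)$ I would define the Lamperti time $T_u:=\int_0^{-V_u-\beta}Z_{\beta+s}\,\mathrm{d}s$ for each $u\in D$ with $V_u<-\beta$, and place an atom of $\mathcal{P}$ at $(T_u,\tilde W^{(u)})$. Since the Lamperti time change is strictly increasing, $T_{u^\delta_1}<T_{u^\delta_2}<\cdots$ matches the level-ordering of the debuts, so the first $N_\delta$ atoms of $\mathcal{P}|_{\R_+\times\{M>\delta\}}$ ranked by time automatically coincide with $(T_{u^\delta_i},\tilde W^{(u^\delta_i)})_{i\le N_\delta}$, as required. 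The Lamperti times fill only $[0,t^*)$ with $t^*:=\int_0^\infty Z_{\beta+s}\,\mathrm{d}s<\infty$ almost surely, so I would augment $\mathcal{P}$ with an independent Poisson measure on $(t^*,\infty)\times\S$ of intensity $\mathrm{d}t\otimes\N_0^*$, constructed on $(\Omega,\f,\P)$.

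The verification reduces to three ingredients: (i) by Lamperti's theorem, conditionally on $\mathcal{E}^{(-\beta,\infty)}$, the jumps of $Y$ on $[0,t^*)$ form a Poisson point process with intensity $\mathrm{d}t\otimes\sqrt{3/(2\pi)}\,z^{-5/2}\mathrm{d}z$; (ii) conditionally on the exit measures $(Z^*_0(\tilde W^{(u)}))_u$ and on $\mathcal{E}^{(-\beta,\infty)}$, the marks $\tilde W^{(u)}$ are independent and distributed according to $\N_0^*(\cdot\mid Z^*_0=\cdot)$, using Lemma \ref{complete-seq} together with Proposition \ref{conditioned-exit}; (iii) the augmented measure has intensity $\mathrm{d}t\otimes\N_0^*$ on all of $\R_+\times\S$ and is independent of $\mathcal{E}^{(-\beta,\infty)}$, because the L\'evy measure of $Y$ does not depend on its starting point $Z_\beta$, so the dependence of the Lamperti jumps on $\mathcal{E}^{(-\beta,\infty)}$ is carried only by the random boundary $t^*$, which is absorbed once an independent Poisson extension with the same intensity is glued at $t^*$.

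The main obstacle is step (ii), which asserts a joint conditional-independence statement over all excursion debuts simultaneously. The argument proceeds via the special Markov property (Proposition \ref{SMP}) applied to $(-\beta,\infty)$, decomposing $\N_0^{(\beta)}$ into an independent superposition of Brownian snake excursions outside $(-\beta,\infty)$; within each such snake excursion, the joint distribution of the sub-excursions above the minimum is governed by Lemma \ref{complete-seq}, while Proposition \ref{conditioned-exit} identifies their conditional laws given their exit measures. Assembling these pieces into a single coherent statement over all levels $\delta>0$ is the technical heart of the proof and the reason that "extra randomness" (the auxiliary Poisson extension beyond $t^*$) is needed to formulate the construction.
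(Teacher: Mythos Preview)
Your route via the Lamperti transformation is natural and, a posteriori, the times $T_u$ you define do coincide with the time coordinates of the Poisson measure the paper eventually constructs (this identification is the content of Lemma \ref{exit-recover}). The gap is in step (ii). To show that $\{(T_u,\tilde W^{(u)}): u\in D,\,V_u<-\beta\}$, augmented past $t^*$, is Poisson with intensity $\mathrm{d}t\otimes\N^*_0$, you must verify that, conditionally on the point measure $\{(T_u,Z^*_0(\tilde W^{(u)}))\}$ --- equivalently, since the $3/2$-stable process is pure jump, conditionally on $(Z_{\beta+r})_{r\geq 0}$ --- the excursions $\tilde W^{(u)}$ are independent with laws $\N^*_0(\cdot\mid Z^*_0=z)$. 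That statement is precisely Theorem \ref{mainT}, which in the paper is proved \emph{using} Proposition \ref{PoissonPP}. Lemma \ref{complete-seq} alone does not yield it: that lemma controls the level-ordered sequence of excursions with height exceeding a fixed $\delta$, whereas the Lamperti times $T_u$ depend on the entire process $Z$ and hence on the exit measures of \emph{all} excursions, including those of arbitrarily small height. The special Markov property applied at level $-\beta$ does not disentangle this either, since each snake excursion outside $(-\beta,\infty)$ reproduces the same difficulty internally.

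The paper avoids this circularity by a different idea: it never writes down the times explicitly. Working with the auxiliary Poisson measure $\bar{\mathcal P}$, it observes that $\bar{\mathcal P}$ is a.s.\ a measurable function $\Phi$ of the ordered mark sequences $(\bar W^{k,j})_{j\geq 1}$ over dyadic thresholds $\delta_k=2^{-k}$ (the times are recovered as limits of normalised atom counts). Lemma \ref{complete-seq} together with the obvious consistency in $k$ shows that the family $(W^{k,j})_{j\geq 1,k\geq 1}$ (genuine excursions completed by independent auxiliary ones) has the same law as $(\bar W^{k,j})_{j\geq 1,k\geq 1}$; one then \emph{defines} $\mathcal P:=\Phi\big((W^{k,j})\big)$. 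This uses only the i.i.d.\ structure at each height level and requires no conditional independence given $Z$; the Lamperti relation and Theorem \ref{mainT} are deduced afterwards from the Poisson measure so obtained.
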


This proposition means that all excursions above the minimum (with level smaller than $\beta$) can be viewed as the atoms of a certain Poisson point process.
In contrast with the classical It\^o theorem of excursion theory for Brownian motion,
we have enlarged the underlying probability space in order to construct the Poisson measure $\mathcal{P}$. 

\begin{proof}
We first explain how we can choose the auxiliary random variables $\ov{W}^{\delta,j}$ of
Lemma \ref{complete-seq} in a consistent way when $\delta$ varies. 
We set $\delta_k=2^{-k}$ for every $k\geq 1$ and we restrict our attention
to values of $\delta$ in the sequence $(\delta_k)_{k\geq 1}$.
On 
an auxiliary probability space $(\Omega,\mathcal{F},\P)$, let $\ov{\mathcal{P}}$ be a Poisson measure on 
$\R_+\times \S$ with intensity $\mathrm{d}t\otimes \N^*_0(\mathrm{d}\omega)$. 
For every $k\geq 1$, let $(\ov{t}^{k,j},\ov{W}^{k,j})_{j\geq 1}$ be the sequence of atoms of
$\ov{\mathcal{P}}$ that fall in the set $\R_+\times \{M>\delta_k\}$ (ordered so that $\ov{t}^{k,1}<\ov{t}^{k,2}<\cdots$).
Then, for every $k\geq 1$, $(\ov{W}^{k,1},\ov{W}^{k,2},\ldots)$ forms an i.i.d. sequence of variables
distributed according to $\N^*_0(\cdot\mid M>\delta_k)$. By Lemma \ref{complete-seq},
under the product probability 
measure $\P\otimes \N^{(\beta)}_0$, the sequence $(W^{k,1}, W^{k,2},\ldots)$
defined by
$$W^{k,j}=\left\{\begin{array}{ll}
 \tilde W^{(u^{\delta_k}_j)}\qquad&\hbox{if }1\leq j\leq N_\delta\\
 \overline{W}^{k,j-M_\delta}\qquad&\hbox{if }j> N_\delta
 \end{array}
 \right.
 $$
is also a sequence of i.i.d. random variables distributed according to
 $\N^*_0(\cdot\mid M>\delta_k)$, and is independent 
 of the $\sigma$-field $\mathcal{E}^{(-\beta,\infty)}$. 

Obviously, 
 if $k<k'$, the excursions  $\tilde W^{(u^{\delta_k}_j)}$ $1\leq j\leq N_{\delta_k}$ are
 obtained by considering the elements of the finite sequence $\tilde W^{(u^{\delta_{k'}}_j)}$, $1\leq j\leq N_{\delta_{k'}}$
 that belong to the set $\{M>\delta_k\}$, and similarly the sequence $(\ov{W}^{k,j})_{j\geq 1}$ 
 consists of those terms of the
sequence $(\ov{W}^{k',j})_{j\geq 1}$ that belong to the set $\{M>\delta_k\}$. 
 It follows that, for every $k<k'$,
the sequence $(W^{k,j})_{j\geq 1}$ is obtained by keeping only those terms of the
sequence $(W^{k',j})_{j\geq 1}$ that belong to the set $\{M>\delta_k\}$. Note that the law of the collection 
$$( W^{k,j})_{j\geq 1,k\geq 1}$$
is then completely determined by this consistency property and the fact that, for 
every fixed $k\geq 1$,  $(W^{k,j})_{j\geq 1}$ is a sequence of i.i.d. random variables distributed according to
 $\N^*_0(\cdot\mid M>\delta_k)$. In particular,
 \begin{equation}
\label{identi-Poisson}
( W^{k,j})_{j\geq 1,k\geq 1}\build{=}_{}^{\rm(d)} ( \ov{W}^{k,j})_{j\geq 1,k\geq 1}
\end{equation}
 Also note that the collection $( W^{k,j})_{j\geq 1,k\geq 1}$ is independent 
 of the $\sigma$-field $\mathcal{E}^{(-\beta,\infty)}$.

It is a simple exercise on Poisson measures to verify that $\ov{\mathcal{P}}$ is equal a.s. to
a measurable function of the collection $( \ov{W}^{k,j})_{j\geq 1,k\geq 1}$. Indeed, it suffices to verify that the
times $(\ov{t}^{k,j})_{j\geq 1,k\geq 1}$ are (a.s.) measurable functions of this collection. Let us 
outline the argument in the case $k=j=1$. If, for every $k\geq 1$, we write
$$m_k:=\#\{j\geq 1: \ov{t}^{k,j}<\ov{t}^{1,1}\}$$
then $m_k$ is just the number of terms in the sequence $( \ov{W}^{k,j})_{j\geq 1}$ before the first term
that belongs to $\{M>\delta_1\}$, and is thus a function of $( \ov{W}^{\ell,j})_{j\geq 1,\ell\geq 1}$. Elementary arguments 
using Lemma \ref{loi-max} show that
we have the almost sure convergence
$$\N^*_0(M>\delta_{k})^{-1}\; m_k \build{\la}_{k\to\infty}^{} \ov{t}^{1,1},$$
thus giving the desired measurability property. 

So there 
exists a measurable function $\Phi$ such that we have a.s.
$$\ov{\mathcal{P}}= \Phi\Big( (\ov{W}^{k,j})_{j\geq 1,k\geq 1}\Big).$$
Then we can just set
$$\mathcal{P}=\Phi\Big(( W^{k,j})_{j\geq 1,k\geq 1}\Big).$$
By \eqref{identi-Poisson}, $\mathcal{P}$ has the same distribution as $\ov{\mathcal{P}}$. By construction, the 
properties stated in the proposition hold when $\delta=\delta_k$, for every $k\geq 1$. This implies that they
hold for every $\delta>0$.
\end{proof}

In what follows, we will use not only the statement of Proposition \ref{PoissonPP} but also the explicit construction of $\mathcal{P}$ that is given
in the preceding proof (we did not include this explicit construction in the statement of Proposition \ref{PoissonPP}
for the sake of conciseness). 

We now state an important lemma, which shows that the process $(Z_{\beta+r})_{r\geq 0}$
can be recovered from ($Z_\beta$ and) the Poisson measure $\mathcal{P}$. To this end,
we introduce the point measure $\mathcal{P}^\circ$ defined as
the image of $\mathcal{P}$ under the mapping $(t,\omega)\la(t,Z^*_0(\omega))$. 
From the form of the ``law'' of $Z^*_0$ under $\N^*_0$ given in
Proposition \ref{loisZ0sigma}, $\mathcal{P}^\circ$ is (under $\P\otimes \N^{(\beta)}_0$) a Poisson measure
on $\R_+\times (0,\infty)$ with intensity
$$\mathrm{d}t\otimes  \sqrt{\frac{3}{2 \pi}} z^{-5/2}\,\mathrm{d}z.$$
We can associate with this point measure a centered L\'evy process $U=(U_t)_{t\geq 0}$ 
(with no negative jumps) started from $0$, such that
$$\sum_{t\in \mathcal{D}_U} \delta_{(t,\Delta U_t)} = \mathcal{P}^\circ,$$
where $\mathcal{D}_U$ is the set of discontinuity times of $U$. Note that the 
Laplace transform of $U_t$ is
$$E[\exp(-\lambda U_t)] = \exp(t\psi(\lambda)),$$
where 
$$\psi(\lambda)= \sqrt{\frac{3}{2 \pi}} \int_0^\infty (e^{-\lambda z}-1+\lambda z)\,z^{-5/2}\,\mathrm{d}z=\sqrt{\frac{8}{3}} \,\lambda^{3/2}.$$
Notice that we get the same function $\psi(\lambda)$ as in Section \ref{sec:exitprocess}.

\begin{lemma}
\label{exit-recover}
Set $X_t=Z_\beta + U_t$ for every $t\geq 0$. Then, we have, $\P\otimes \N_0^{(\beta)}$ a.s.,
$$Z_{\beta+r}=X_{\inf\{t\geq 0:\int_0^t (X_s)^{-1}\mathrm{d}s >r\}}\;,\quad \hbox{for every }0\leq r<-W_*-\beta.$$
\end{lemma}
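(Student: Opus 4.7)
The plan is to invoke Lamperti's time-change representation of continuous-state branching processes and identify $X_t=Z_\beta+U_t$ as the spectrally positive L\'evy process underlying $(Z_{\beta+r})_{r\ge 0}$. Recall from Section \ref{sec:exitprocess} that under $\N_0^{(\beta)}$, conditionally on $\mathcal{E}^{(-\beta,\infty)}$, the process $(Z_{\beta+r})_{r\ge 0}$ is a continuous-state branching process with branching mechanism $\psi(\lambda)=\sqrt{8/3}\,\lambda^{3/2}$, started from $Z_\beta$ and stopped on hitting $0$ at time $T:=-W_*-\beta$. Setting $A(r):=\int_0^r Z_{\beta+s}\,\mathrm{d}s$ and letting $A^{-1}$ denote its right-continuous inverse, Lamperti's theorem asserts that $\tilde X_t:=Z_{\beta+A^{-1}(t)}$, for $t\in[0,A(T))$, is (conditionally on $\mathcal{E}^{(-\beta,\infty)}$) a spectrally positive L\'evy process started at $Z_\beta$ with Laplace exponent $\psi$, stopped at the first passage time to $0$. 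Equivalently, $Z_{\beta+r}=\tilde X_{\tilde\eta(r)}$ for $r\in[0,T)$, with $\tilde\eta(r)=\inf\{t\ge 0:\int_0^t \tilde X_s^{-1}\,\mathrm{d}s>r\}$.

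It therefore suffices to identify $X$ with $\tilde X$ on $[0,A(T))$. Since $\psi$ has neither drift nor diffusive component, both $X$ and $\tilde X$ are pure-jump L\'evy processes, and it is enough to match their jump structures. By Propositions \ref{discont} and \ref{identi-jump2}, the jumps of $(Z_{\beta+r})$ occur at times $-V_u-\beta$ for $u\in D$ with $V_u<-\beta$, with respective sizes $Z^*_0(\tilde W^{(u)})$; via the Lamperti time change, $\tilde X$ therefore has a jump of size $Z^*_0(\tilde W^{(u)})$ at time $A(-V_u-\beta)$ for every such $u$. On the other hand, the jumps of $X=Z_\beta+U$ are by construction the atoms of $\mathcal{P}^\circ$, and by Proposition \ref{PoissonPP} the atoms of $\mathcal{P}$ in $\R_+\times\{M>\delta\}$, ranked in increasing order of their time coordinate, are exactly $\tilde W^{(u^\delta_i)}$ for $1\le i\le N_\delta$.

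The key step is thus to verify that the point measure
$$\tilde{\mathcal{P}}:=\sum_{u\in D,\,V_u<-\beta}\delta_{(A(-V_u-\beta),\,\tilde W^{(u)})}$$
satisfies (under $\N_0^{(\beta)}$, conditionally on $\mathcal{E}^{(-\beta,\infty)}$) all the properties of the Poisson measure $\mathcal{P}$ constructed in Proposition \ref{PoissonPP}: it is Poisson on $\R_+\times\S$ with intensity $\mathrm{d}t\otimes\N^*_0(\mathrm{d}\omega)$, and its atoms in $\{M>\delta\}$, ordered by the time coordinate, are the $\tilde W^{(u^\delta_i)}$. The latter is immediate since $A$ is strictly increasing on $[0,T)$ while the enumeration $u^\delta_1,u^\delta_2,\ldots$ is in increasing order of $-V_u$. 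For the former, the jump point measure of $\tilde X$ on $\R_+\times(0,\infty)$ has intensity $\mathrm{d}t\otimes\nu(\mathrm{d}z)$ with $\nu(\mathrm{d}z)=\sqrt{3/(2\pi)}\,z^{-5/2}\,\mathrm{d}z$ (the L\'evy measure of $\psi$), and combining this with the disintegration $\N^*_0(\mathrm{d}\omega)=\int_0^\infty\sqrt{3/(2\pi)}\,z^{-5/2}\,\N^{*,z}_0(\mathrm{d}\omega)\,\mathrm{d}z$ from Proposition \ref{conditioned-exit} yields the desired intensity, provided one shows that conditionally on the sequence of jump sizes, the excursions $\tilde W^{(u)}$ are independent with respective conditional laws $\N^{*,z}_0$. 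Granting this, one may take $\mathcal{P}=\tilde{\mathcal{P}}$ in Proposition \ref{PoissonPP}, so that $X=\tilde X$ and the identity of the lemma is precisely the Lamperti formula. The main obstacle will be this conditional independence statement, which should be obtained by iterated applications of the special Markov property at the levels of the excursion debuts, along the lines of the argument in Proposition \ref{sequence-excu} and using the scaling identity of Lemma \ref{scaling-N*} to recognise $\N^{*,z}_0$ as the appropriate conditional law.
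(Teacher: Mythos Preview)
Your Lamperti setup is correct, and you are right that the proof reduces to identifying $X$ with the Lamperti transform $\tilde X$ of $(Z_{\beta+r})_{r\ge 0}$ almost surely. The gap lies in how you propose to carry out this identification.

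You argue that $\tilde{\mathcal{P}}:=\sum_{u}\delta_{(A(-V_u-\beta),\,\tilde W^{(u)})}$ satisfies the conclusion of Proposition \ref{PoissonPP} and therefore ``one may take $\mathcal{P}=\tilde{\mathcal{P}}$''. But $\mathcal{P}$ is a \emph{specific} object already constructed in Proposition \ref{PoissonPP} (using the auxiliary Poisson measure $\overline{\mathcal{P}}$), and $U$ and $X$ in the lemma are defined from that specific $\mathcal{P}$. Even if $\tilde{\mathcal{P}}$ were a Poisson measure with the right intensity and the right ordering property, that would only give equality in distribution with $\mathcal{P}$, not almost-sure equality; and the lemma is an almost-sure statement about a fixed $X$. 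Note also that your $\tilde{\mathcal{P}}$ has atoms only on $[0,A(T))\times\S$, so it is not a Poisson measure on all of $\R_+\times\S$ and does not satisfy Proposition \ref{PoissonPP} as stated.

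Your proposed ``key step'' --- that conditionally on the jump sizes the excursions $\tilde W^{(u)}$ are independent with respective laws $\N^{*,z}_0$ --- is precisely the content of Theorem \ref{mainT}, which the paper proves \emph{using} Lemma \ref{exit-recover}. Invoking it here would be circular in the paper's logical structure. Proposition \ref{sequence-excu} gives independence and the law $\N^*_0(\cdot\mid M>\delta)$ for a fixed height threshold, but upgrading this to conditioning on the individual values of $Z^*_0$ is substantially harder and is exactly what the rest of Section \ref{sec:excu-pro} accomplishes.

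The paper's approach avoids both issues. It does not try to identify the time coordinates of the atoms of $\mathcal{P}$ with the Lamperti times $A(-V_u-\beta)$. Instead it extends $X'=\tilde X$ past its hitting time of $0$ using the \emph{same} auxiliary measure $\overline{\mathcal{P}}$ that entered the construction of $\mathcal{P}$, and then checks directly from that explicit construction that for every $\alpha>0$ the ordered sequence of jumps of size greater than $\alpha$ is identical for $X$ and $X'$. Since both are L\'evy processes with the same Laplace exponent and initial value, this forces $X=X'$ a.s.\ without any appeal to the conditional law of excursions given $Z^*_0$.
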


\noindent{\it Remark}. We have $Z_r=0$ for every $r\geq -W_*$, so that the formula of the lemma indeed 
expresses $(Z_{\beta+r})_{r\geq 0}$ as a function of $X$, which is itself  defined in terms of 
$Z_\beta$ and the point measure $\mathcal{P}^\circ$. 

\begin{proof}
First notice that $(U_t)_{t\geq 0}$ is independent of $Z_\beta$ because $\mathcal{P}$ is
independent of $\mathcal{E}^{(-\beta,\infty)}$. Therefore, $(X_t)_{t\geq 0}$ is a L\'evy process 
started from $Z_\beta$. On the other hand, we know that $(Z_{\beta+r})_{r\geq 0}$ is under 
$\N_0^{(\beta)}$ a continuous-state branching process with branching mechanism $\psi$. By the
classical Lamperti transformation (see e.g. \cite{Bin}), if we set $T'_0:=\int_0^\infty Z_{\beta+t}\,\mathrm{d}t$ and, for every $0\leq r < T'_0$,
\begin{equation}
\label{Lamperti1}
X'_r:=Z_{\beta + \inf\{s\geq 0:\int_0^s Z_{\beta+t}\,\mathrm{d}t>r\}},
\end{equation}
the process $(X'_r)_{0\leq r< T'_0}$ has the same distribution as $(X_r)_{0\leq r<T_0}$, where
$T_0:=\inf\{t\geq 0: X_t=0\}$. Furthermore, by inverting \eqref{Lamperti1}, we have also 
\begin{equation}
\label{Lamperti2}
Z_{\beta+r}=X'_{\inf\{t\geq 0:\int_0^t (X'_s)^{-1}\mathrm{d}s >r\}}\;,\quad \hbox{for every }0\leq r<T^Z_0,
\end{equation}
where $T^Z_0=-W_*-\beta$ is the hitting time of $0$ by $Z$. 

Comparing \eqref{Lamperti2} with the statement of the lemma, we see that we only need to verify that we have the
a.s. equality $(X_r)_{0\leq r<T_0}=(X'_r)_{0\leq r< T'_0}$. 
To this end, we first extend the definition of $X'_t$ to values $t\geq T_0$. Recalling the Poisson measure $\ov{\mathcal{P}}$ in the
proof of Proposition \ref{PoissonPP},
we define $\overline{\mathcal{P}}^\circ$ as
the image of $\overline{\mathcal{P}}$ under the mapping $(t,\omega)\mapsto(t,Z^*_0(\omega))$, and associate with
$\overline{\mathcal{P}}^\circ$ a L\'evy process $(\overline U_t)_{t\geq 0}$ having the same distribution as
$(U_t)_{t\geq 0}$. We complete the definition of $X'$ by setting for every $t\geq 0$,
$$X'_{T'_0+t}= \overline U_t.$$

We then observe that $X$ and $X'$ are two L\'evy processes with the same distribution
and the same (random) initial value $Z_\beta$. Furthermore, a.s. for every $\alpha>0$, the ordered sequence of jumps of
size greater than $\alpha$ is the same for $X'$ and for $X$. First note that the jumps of $X'$  that occur before the hitting time of
$0$ are the same as the jumps of $Z$ after time $\beta$, and, by Proposition \ref{identi-jump2}, these are exactly the quantities
$Z^*_0(\tilde W^{(u)})$ when $u$ varies over the excursion debuts with level smaller than $-\beta$. Recalling our construction
of $X$ from the point measure $\mathcal{P}^\circ$, we obtain that, for every $\alpha>0$, the ordered sequence 
of jumps of $X'$ of
size greater than $\alpha$ that occur before the hitting time of
$0$  will also appear as the first $n_\alpha$ jumps of $X$ of size greater than $\alpha$,  for some random integer 
$n_\alpha$
depending on $\alpha$. Then, the ordered sequence of jumps of $X'$ of size greater than $\alpha$  that occur after the hitting time of
$0$ consists of the quantities $Z^*_0(\omega)$ where $(t,\omega)$ varies over the atoms of 
$\overline{\mathcal{P}}$ such that $Z^*_0(\omega)>\alpha$ and these quantities are ranked according to the
values of $t$. Recalling the way $\mathcal{P}$ was defined, we see that the same sequence will 
appear as the sequence of jumps of $X$ of size greater than $\alpha$ occurring after the $n_\alpha$-th one. 

Finally, once we know that, for every $\alpha>0$, the ordered sequence of jumps of
size greater than $\alpha$ is the same for $X'$ and for $X$, the fact that $X$ and $X'$ are two L\'evy processes with the same distribution
and the same initial value implies that they are a.s. equal, which completes the proof.
\end{proof}

\subsection{The main theorem}
\label{subsec:main}

Our main result identifies the conditional distribution of excursions above the
minimum given the exit measure process $Z$. We let $\mathcal{D}_Z$ stand for the set of all jump times of $Z$.
Recall from Proposition \ref{discont}
that there is a one-to-one correspondence between $\mathcal{D}_Z$
and excursions above the minimum. If $u$ is an excursion debut, and $r=-V_u$
is the associated element of $\mathcal{D}_Z$, we write $\tilde W^{(r)}=\tilde W^{(u)}$
in the following statement. We let $\D(0,\infty)$ stand for the usual Skorokhod space
of c\`adl\`ag functions from $(0,\infty)$ into $\R$. 

\begin{theorem}
\label{mainT}
Let $F$ be a nonnegative measurable function on $\D(0,\infty)$, and let 
$G$ be a nonnegative measurable function on $\R_+\times \S$.
Then,
$$\N_0\Bigg(F(Z)\,\exp\Big(-\sum_{r\in \mathcal{D}_Z} G(r,\tilde W^{(r)})\Big)\Bigg)
=\N_0\Bigg(F(Z)\,\prod_{r\in  \mathcal{D}_Z} \N^*_0\Big(\exp(-G(r,\cdot))\,\Big| \,Z^*_0=\Delta Z_r\Big)\Bigg).$$
In other words, under $\N_0$ and conditionally on the exit measure process $Z$, the 
excursions above the minimum are independent, and, for every $r\in  \mathcal{D}_Z$, the
conditional law of the associated excursion is $\N^*_0(\cdot\mid Z^*_0=\Delta Z_r)$.
\end{theorem}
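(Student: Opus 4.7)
The plan is to lift everything to the enlarged probability space of Proposition \ref{PoissonPP}, where the Poisson measure $\mathcal{P}$ with intensity $\mathrm{d}t\otimes \N^*_0(\mathrm{d}\omega)$ encodes the excursions above the minimum, and then to recognize that conditioning on $Z$ is essentially the same as conditioning on the ``projected'' Poisson measure $\mathcal{P}^\circ$ of jump sizes. First I would fix $\beta>0$ and work under $\N^{(\beta)}_0$, proving the analogous identity with the sum and product restricted to jump times $r\in \mathcal{D}_Z\cap(\beta,-W_*)$; the full statement then follows by letting $\beta\downarrow 0$ and using monotone convergence in $G\geq 0$.

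Next I would apply the disintegration $\N^*_0=\int_0^\infty \sqrt{3/(2\pi)}\,z^{-5/2}\,\N^{*,z}_0\,\mathrm{d}z$ from Proposition \ref{conditioned-exit}, together with the standard marking property of Poisson measures. This says that if $\mathcal{P}^\circ$ denotes the image of $\mathcal{P}$ under $(t,\omega)\mapsto(t,Z^*_0(\omega))$, then conditionally on $\mathcal{P}^\circ$ the atoms $\omega_i$ of $\mathcal{P}$ are independent, with respective conditional laws $\N^{*,z_i}_0$, where $z_i$ is the mark of the corresponding atom $(t_i,z_i)$ of $\mathcal{P}^\circ$. Moreover $\mathcal{P}^\circ$ is a Poisson measure on $\R_+\times(0,\infty)$ with intensity $\mathrm{d}t\otimes\sqrt{3/(2\pi)}\,z^{-5/2}\,\mathrm{d}z$, independent of $\mathcal{E}^{(-\beta,\infty)}$, so this conditional description holds jointly with the data of $Z_\beta\in\mathcal{E}^{(-\beta,\infty)}$.

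The crucial step is then to identify, on $\{W_*<-\beta\}$, the $\sigma$-field generated by $Z$ with that generated by $Z_\beta$ together with $\mathcal{P}^\circ$ restricted to $[0,T_0)$, where $T_0=\inf\{t\geq 0:X_t=0\}$ as in Lemma \ref{exit-recover}. That lemma shows $(Z_{\beta+r})_{0\leq r<-W_*-\beta}$ is a deterministic function, through the Lamperti time change $r(t)=\int_0^t X_s^{-1}\,\mathrm{d}s$, of $Z_\beta$ and the L\'evy process $U$ coded by $\mathcal{P}^\circ$; conversely, the atoms of $\mathcal{P}^\circ$ with $t<T_0$ are in strictly increasing bijection (via this same Lamperti change) with the jumps of $Z$ in $(\beta,-W_*)$, and by Proposition \ref{identi-jump2} the mark $z_i$ equals the corresponding jump $\Delta Z_{\beta+r(t_i)}$. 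Combining this measurability with the conditional-independence result of the previous paragraph then yields, conditionally on $Z$ under $\N^{(\beta)}_0$, the independence of the excursions $\tilde W^{(u)}$ for excursion debuts $u$ with $V_u<-\beta$, each distributed according to $\N^{*,\Delta Z_{r}}_0$ with $r=-V_u$.

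The main obstacle will be carefully justifying the measurable bijective character of the Lamperti correspondence between atoms of $\mathcal{P}^\circ$ before $T_0$ and jumps of $Z$ in $(\beta,-W_*)$, and checking that the ``spurious'' atoms of $\mathcal{P}$ with $t\geq T_0$ (originating from the auxiliary i.i.d. sequences $\overline{W}^{k,j}$ used in the proof of Proposition \ref{PoissonPP}) do not enter the conditional distribution of the genuine excursions $\tilde W^{(u)}$. Once these technicalities are settled, the main identity reduces to a routine Poisson integral manipulation.
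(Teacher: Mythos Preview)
Your approach is essentially the same as the paper's: work under $\N_0^{(\beta)}$, use the Poisson measure $\mathcal{P}$ of Proposition~\ref{PoissonPP}, apply the marking property to see that conditionally on $\mathcal{P}^\circ$ the $\omega$-components are independent with laws $\N^{*,z_i}_0$, and use Lemma~\ref{exit-recover} and Proposition~\ref{identi-jump2} to translate this into a statement conditional on $Z$.

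Two small refinements from the paper resolve exactly the obstacles you flagged. First, the paper also truncates on jump size: it fixes $\alpha>0$ and restricts the sum and product to jumps with $\Delta Z_r>\alpha$, so only finitely many atoms are in play at each stage. Second, and more importantly, the paper does \emph{not} try to restrict $\mathcal{P}^\circ$ to $[0,T_0)$. Instead it conditions on the $\sigma$-field $\mathcal{H}$ generated by $\mathcal{E}^{(-\beta,\infty)}$ and the \emph{full} point measure $\mathcal{P}^\circ$. Since $T_0$ is a measurable function of $Z_\beta$ and $U$ (hence of $\mathcal{H}$), the integer $n_\alpha$ counting genuine atoms, the jump times $r^\alpha_i$, and the jump sizes $z^\alpha_i$ are all $\mathcal{H}$-measurable; so the spurious atoms with $t\geq T_0$ are simply identified and discarded after conditioning, and never interfere. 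This also cleanly handles the dependence of $F(Z)$ on $(Z_r)_{0<r\leq\beta}$, which sits in $\mathcal{E}^{(-\beta,\infty)}\subset\mathcal{H}$ rather than being determined by $Z_\beta$ alone as your phrasing suggested.
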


\begin{proof} Let us a start with simple reductions of the proof. First we may assume that $\N_0(F(Z))<\infty$
since the general case will follow by monotone convergence. Then, we may assume that
$G(r,\omega)=0$ if  $r\leq \gamma$, for some $\gamma>0$, and it is also sufficient
to prove that the statement holds when $\N_0$ is replaced by $\N^{(\beta)}_0$ for some fixed $\beta >0$.
Finally, we may restrict the sum or the product over $r$ to jump times such that $\Delta Z_r>\alpha$, for some
fixed $\alpha>0$.

In view of the preceding observations, we only need to verify that, for every $\alpha>0$ and $\beta>0$,
$$\N_0^{(\beta)}\Bigg(F(Z)\,\exp\Bigg(-\build{\sum_{r\in \mathcal{D}_Z^{(\beta)}}}_{\Delta Z_r>\alpha}^{} G(r,\tilde W^{(r)})\Bigg)\Bigg)
=\N_0^{(\beta)}
\Bigg(F(Z)\,\build{\prod_{r\in \mathcal{D}_Z^{(\beta)}}}_{\Delta Z_r>\alpha}^{}\N^*_0\Big(\exp(-G(r,\cdot))\,\Big| \,Z^*_0=\Delta Z_r\Big)\Bigg),$$
where $\mathcal{D}_Z^{(\beta)}=\mathcal{D}_Z\cap(\beta,\infty)$.

From now on, we fix $\alpha>0$ and $\beta>0$. We will use the notation and definitions of the previous subsections, 
where $\beta>0$ was fixed and we argued under $\N_0^{(\beta)}$. In particular it will be convenient to
consider the product probability measure $\P\otimes \N_0^{(\beta)}$ as in Section \ref{sec:Poisson}. Recall 
the definition of the Poisson measure $\mathcal{P}$ and of the process $X$ in Lemma \ref{exit-recover}
(these objects depend on the choice of $\beta$, which is fixed here), and the notation $T_0=\inf\{t\geq 0: X_t=0\}$.
Also recall that $\mathcal{P}^\circ $ is the image of $\mathcal{P}$ under the mapping $(t,\omega)\mapsto (t,Z_0^*(\omega))$.

The first step is to rewrite the quantity
$$\build{\sum_{r\in \mathcal{D}_Z^{(\beta)}}}_{\Delta Z_r>\alpha}^{} G(r,\tilde W^{(r)})
$$
in a different form. Recall from Lemma \ref{exit-recover} that every jump time $r$ of
$Z$ after time $\beta$, hence every excursion debut $u$ with level smaller than $-\beta$, corresponds to
a jump time of $X$ before time $T_0$, and is 
therefore associated with an atom $(t,\omega)$ of $\mathcal{P}$, with $t<T_0$, such that $\omega= \tilde W^{(u)}$
and $Z^*_0(\omega)=Z^*_0(\tilde W^{(u)})=\Delta Z_r$ , where the last equality is Proposition \ref{identi-jump2}. Then,
let $(t^\alpha_1,\omega^\alpha_1),(t^\alpha_2,\omega^\alpha_2),\ldots$ be the time-ordered sequence of all
atoms $(t,\omega)$ of $\mathcal{P}$ such that $Z^*_0(\omega)>\alpha$. Also set $n_\alpha=\max\{i\geq 1: t^\alpha_i<T_0\}$.
For every $1\leq i\leq n_\alpha$, write $z^\alpha_i=Z^*_0(\omega^\alpha_i)$ and 
$r^{\alpha}_i$ for the jump time of $Z$ corresponding to the jump $z^\alpha_i$.
We can rewrite
$$\build{\sum_{r\in \mathcal{D}_Z^{(\beta)}}}_{\Delta Z_r>\alpha}^{} G(r,\tilde W^{(r)})= \sum_{i=1}^{n_\alpha} G(r^\alpha_i,\omega^\alpha_i).$$

Writing $E[\cdot]$ for the expectation under $\P\otimes \N_0^{(\beta)}$, we then have
$$\N_0^{(\beta)}\Bigg(F(Z)\,\exp\Bigg(-\build{\sum_{r\in \mathcal{D}_Z^{(\beta)}}}_{\Delta Z_r>\alpha}^{} G(r,\tilde W^{(r)})\Bigg)\Bigg)=E\Big[F(Z)\,\exp\Big(-\sum_{i=1}^{n_\alpha} G(r^\alpha_i,\omega^\alpha_i)\Big)\Big],$$
We evaluate the right-hand side by conditioning first with respect to the $\sigma$-field $\mathcal{H}$
generated by $\mathcal{E}^{(-\beta,\infty)}$ and the point measure $\mathcal{P}^\circ$. Notice that the process $Z$
is measurable with respect to $\mathcal{H}$ (because $U$ is obviously a measurable function of $\mathcal{P}^\circ$, and we can use Lemma \ref{exit-recover}). The finite sequence $r^\alpha_1,\ldots,r^\alpha_{n_\alpha}$
is also measurable with respect to $\mathcal{H}$ as it is the sequence of jump times of $Z$ (after time $\beta$)
corresponding to jumps of size greater than $\alpha$. In particular, $n_\alpha$ is measurable with respect to $\mathcal{H}$. 
Finally the quantities $z^\alpha_1,\ldots,z^\alpha_{n_\alpha}$ are the corresponding jumps and therefore are also
measurable with respect to $\mathcal{H}$. 

On the other hand, by standard properties of Poisson measures, we know that the sequence 
$\omega^\alpha_1,\omega^\alpha_2,\ldots$ is a sequence of i.i.d. variables distributed according to
$\N^*_0(\cdot\mid Z^*_0>\alpha)$. Recalling that $\mathcal{P}$ is independent of $\mathcal{E}^{(-\beta,\infty)}$, we
see that conditioning this sequence on the $\sigma$-field $\mathcal{H}$ has the effect of conditioning on the values of
$Z^*_0(\omega^\alpha_1),Z^*_0(\omega^\alpha_2),\ldots$. In a more precise way,  the conditional
distribution of $\omega^\alpha_1,\omega^\alpha_2,\ldots$ knowing $\mathcal{H}$ is the distribution of
a sequence of independent variables distributed respectively according to $\N^*_0(\cdot\mid Z^*_0=z^\alpha_1),
\,\N^*_0(\cdot\mid Z^*_0=z^\alpha_2),\ldots$, where these conditional measures are defined thanks to Proposition \ref{conditioned-exit}.  

By combining the preceding considerations, we get
$$E\Big[F(Z)\,\exp\Big(-\sum_{i=1}^{n_\alpha} G(r^\alpha_i,\omega^\alpha_i)\Big)\Big]
= E\Big[F(Z)\,\prod_{i=1}^{n_\alpha} \N^*_0\big(\exp(-G(r^\alpha_i,\cdot)\mid Z^*_0=z^\alpha_i\big)\Big].$$
Now note that, with our definitions,
$$  \prod_{i=1}^{n_\alpha} \N^*_0\big(\exp(-G(r^\alpha_i,\cdot)\mid Z^*_0=z^\alpha_i\big)=
 \build{\prod_{r\in \mathcal{D}_Z^{(\beta)}}}_{\Delta Z_r>\alpha}^{}\N^*_0\Big(\exp(-G(r,\cdot))\,\Big| \,Z^*_0=\Delta Z_r\Big),$$
 and so we have obtained
 \begin{align*}
 \N_0^{(\beta)}\Bigg(F(Z)\,\exp\Bigg(-\build{\sum_{r\in \mathcal{D}_Z^{(\beta)}}}_{\Delta Z_r>\alpha}^{} G(r,\tilde W^{(r)})\Bigg)\Bigg)
 &=E\Big[F(Z)\,\build{\prod_{r\in \mathcal{D}_Z^{(\beta)}}}_{\Delta Z_r>\alpha}^{}\N^*_0\Big(\exp(-G(r,\cdot))\,\Big| \,Z^*_0=\Delta Z_r\Big)\Big]\\
 &=\N^{(\beta)}_0\Bigg(F(Z)\,\build{\prod_{r\in \mathcal{D}_Z^{(\beta)}}}_{\Delta Z_r>\alpha}^{}\N^*_0\Big(\exp(-G(r,\cdot))\,\Big| \,Z^*_0=\Delta Z_r\Big)\Bigg),
 \end{align*}
 which completes the proof of the theorem.
\end{proof}

\section{Excursions away from a point}
\label{excupoint}

In this section, we briefly explain how we can derive the results stated in the 
introduction from our statements concerning excursions above the minimum.
This derivation relies on the famous theorem of L\'evy stating that, if $(B_t)_{t\geq 0}$
is a linear Brownian motion starting from $0$, and if $(L^0_t(B))_{t\geq 0}$ is its local time process
at $0$, then the pair of processes
$$(B_t-\min\{B_r:0\leq r\leq t\},-\min\{B_r:0\leq r\leq t\})_{t\geq 0}$$
has the same distribution as $(|B_t|,L^0_t(B))_{t\geq 0}$. Notice that 
$L^0_t(B)$ can also be interpreted as the local time of $|B|$ at $0$, provided we
consider here the ``symmetric local time'', namely
$$L^0_t(|B|)=\lim_{\ve\to 0} \frac{1}{2\ve}\int_0^t \mathbf{1}_{[-\ve,\ve]}(|B_r|)\,\mathrm{d}r.$$

L\'evy's  identity will show that
(absolute values of) excursions away from $0$
 for our tree-indexed process have the same distribution as
excursions above the minimum, which is essentially what we need to 
derive the results stated in the 
introduction.

Let us explain this in greater detail. For any finite path $\w\in \W_0$,
define two other finite paths $\w^\bullet$
and $\ell^\bullet_\w$ with the same lifetime as $\w$ by the formulas
\begin{align*}
\w^\bullet(t)&:=\w(t)-\min\{\w(r):0\leq r\leq t\}\\
\ell^\bullet_{\w}(t)&:=-\min\{\w(r):0\leq r\leq t\}.
\end{align*}
On our canonical space $\mathcal{S}_0$ of snake trajectories, we 
can then make sense of $W^\bullet_s$ and $\ell^\bullet_{W_s}$
for every $s\geq 0$, and we write $L^\bullet_s=\ell^\bullet_{W_s}$
to simplify notation. Then, under $\N_0$, the pair $(W^\bullet_s,L^\bullet_s)_{s\geq 0}$ 
defines a random element of the space of two-dimensional snake trajectories
with initial point $(0,0)$ (the latter space 
is defined by an obvious extension of Definition \ref{def:snakepaths}). 
Thanks to L\'evy's theorem recalled above, it is then a simple matter to
verify that the ``law'' of the pair $(W^\bullet_s,L^\bullet_s)_{s\geq 0}$
under $\N_0$ is
the excursion measure from the point $(0,0)$ of the Brownian snake whose 
spatial motion is the Markov process $(|B_t|,L^0_t(B))$. We refer to
\cite[Chapter IV]{livrevert} for the definition of the Brownian snake 
associated with a general spatial motion and of its excursion measures.
In a way similar to the beginning of Section \ref{sec:construct}, we then set
$$V^\bullet_u=\hat W^\bullet_s= \hat W_s-\min\{W_s(t):0\leq t\leq \zeta_s\}
=V_u - \min\{V_v:v\in \llbracket \rho,u \rrbracket\},$$
for every $u\in\t_\zeta$ and $s\geq 0$ such that $p_\zeta(s)=u$.

Say that $u\in \t_\zeta$ is an excursion debut away from $0$ for $V^\bullet$
if 
\begin{enumerate}
\item[(i)] $V^\bullet_u=0$;
\item[(ii)] $u$ has a strict descendant $w$ such that $V^\bullet_v\not =0$
for all $v\in \rrbracket \rho,w \rrbracket$.
\end{enumerate}
It follows from our definitions that $u$ is an excursion debut away from $0$ for $V^\bullet$
if and only if $u$ is an excursion debut above the minimum in the sense of Section 
 \ref{sec:construct}, that is, if and only if $u\in D$. Then, Proposition
 \ref{connec-compo} shows that the connected
 components of the open set $\{u\in\t_\zeta: V^\bullet_u>0\}$
 are exactly the sets $\mathrm{Int}(C_u)$, $u\in D$. Furthermore, for every
 $u\in D$, the values of $V^\bullet$ over $C_u$ are described by the snake trajectory
 $\tilde W^{(u)}$ (which can thus be viewed as the excursion of $V^\bullet$
 away from $0$ corresponding to $u$). 
 
 In order to recover the setting of the introduction, we still need to assign signs 
 to the excursions of $V^\bullet$ away from $0$. To this end, we let
 $(v_1,v_2,\ldots)$ be a measurable enumeration of $D$ -- formally we should
 rather enumerate times $s_1,s_2,\ldots$ such that $p_\zeta(s_1)=v_1,
 p_\zeta(s_2)=v_2,\ldots$. On an auxiliary probability space
 $(\Omega,\f,\P)$, we then consider a sequence $(\xi_1,\xi_2,\ldots)$
 of i.i.d. random variables such that
 $$\P(\xi_i=1)=\P(\xi_i=-1)=\frac{1}{2}$$
 for every $i\geq 1$. Under the product measure $\P\otimes \N_0$, we then set, for every $u\in \t_\zeta$,
 $$V^*_u:=\left\{
 \begin{array}{ll}
 \xi_i\,V^\bullet_u\qquad&\hbox{if }u\in \mathrm{Int}(C_{v_i})\hbox{ for some }i\geq 1,\\
 0&\hbox{if }V^\bullet_u=0.
 \end{array}
 \right.
 $$
 The fact that $u\mapsto V^\bullet_u$ is continuous implies that 
 $u\mapsto V^*_u$ is also continuous on $\t_\zeta$. Furthermore the
 pair $(V^*_{p_\zeta(s)},\zeta_s)_{s\geq 0}$ is a tree-like path, and we denote the
 associated snake trajectory by $(W^*_s)_{s\geq 0}$. Then, the ``law''
 of $(W^*_s)_{s\geq 0}$ under $\P\otimes \N_0$ is just the excursion
 measure $\N_0$. This is a consequence of the fact that,
 starting from a process distributed as $(|B_t|)_{t\geq 0}$, one can reconstruct
 a linear Brownian motion started from $0$  by assigning independently
 signs $+1$ or $-1$ with probability $1/2$ to the excursions away from
 $0$. We omit the details.
 
Since the law
 of $(W^*_s)_{s\geq 0}$ under $\P\otimes \N_0$ is  $\N_0$,
we may replace the process $(W_s)_{s\geq 0}$ under $\N_0$ by the process
 $(W^*_s)_{s\geq 0}$ under $\P\otimes \N_0$ in order to prove the various statements of the introduction.
 To this end, we first notice that
the excursion debuts away from $0$ for $V^*$ (obviously defined by 
properties (i) and (ii) with $V^\bullet$ replaced by $V^*$) are the same
 as excursion debuts away from $0$ for $V^\bullet$, and thus the same as 
 excursion debuts above the minimum in the sense of Section 
 \ref{sec:construct}. Moreover, for 
 every $i=1,2,\ldots$, the excursion of $V^*$
 corresponding to $v_i$ is described by
 $$\tilde W^{*(v_i)}=\left\{
 \begin{array}{ll}
 \tilde W^{(v_i)}\qquad&\hbox{if } \xi_i=1,\\
 -\tilde W^{(v_i)}\qquad&\hbox{if }\xi_i=-1.
 \end{array}
  \right.
  $$
 In addition, if $a_i$ is such that $p_\zeta(a_i)=v_i$, the local time at $0$ of the path $W^*_{a_i}$
 is equal to the (symmetric) local time at $0$ of $|W^*_{a_i}|=W^\bullet_{a_i}$,
 $$\ell^*_i= {\hat L}^\bullet_{a_i}= -\underline{W}_{a_i}= - V_{v_i}.$$

From the preceding remarks, it is now easy to derive Theorem \ref{construct} from 
Theorem \ref{construct-N}. Indeed, the left hand side of the formula of 
Theorem \ref{construct} can be rewritten as
$$\P\otimes \N_0\Big(\sum_{i=1}^\infty \Phi(\ell^*_i,W^{*(v_i)})\Big)$$
and, by
the previous observations, the last display is equal to
\begin{align*}
\P\otimes \N_0\Big(\sum_{i=1}^\infty \Phi(-V_{v_i},\xi_i \tilde W^{(v_i)})\Big)
 &= \frac{1}{2}\,\N_0\Big(\sum_{i=1}^\infty (\Phi(-V_{v_i},\tilde W^{(v_i)})
 + \Phi(-V_{v_i},-\tilde W^{(v_i)})\Big) \\
 &= \frac{1}{2}\int \N^*_0(\mathrm{d}\omega)\Big(\int_0^\infty \mathrm{d}x\,(\Phi(x,\omega)+\Phi(x,-\omega))\Big)
 \end{align*}
 where the last equality follows from Theorem \ref{construct-N}. This shows that Theorem \ref{construct} holds
 with $\M_0=\frac{1}{2}(\N^*_0 + \check \N^*_0)$, where $\check\N^*_0$ is the image of 
 $\N^*_0$ under $\omega \mapsto -\omega$. Then Proposition \ref{exit-definition} follows
 from Proposition \ref{construct-exit}. 
 
 In order to derive Proposition \ref{identi-jump}, we note that, for every $r>0$, the (total mass of the) exit measure of
 the snake $(W^\bullet,L^\bullet)$ outside the open set $\Delta_r:=\R_+\times [0,r)$, which is denoted by $\mathcal{X}_r$, satisfies the following approximation
 $\N_0$ a.e.,
 $$\mathcal{X}_r=\lim_{\ve\to 0} \frac{1}{\ve} \int_0^\sigma \mathrm{d}s\,\mathbf{1}_{\{\zeta_s-\ve< \tau_{\Delta_r}(W_s^\bullet,L^\bullet_s)
 < \zeta_s\}},$$
 where $\tau_{\Delta_r}(W_s^\bullet,L^\bullet_s)$ stands for the first exit time from $\Delta_r$
 of the path $(W_s^\bullet(t),L^\bullet_s(t))_{0\leq t\leq \zeta_s}$. This is indeed the analog of the approximation
 result \eqref{limit-exit}, which holds in a very general setting:
 see \cite[Proposition V.1]{livrevert}. Coming back to the definition of $W^\bullet_s$ and $L^\bullet_s$ in terms
 of $W_s$, we see that we have
 $$\mathcal{X}_r=\lim_{\ve\to 0} \frac{1}{\ve} \int_0^\sigma \mathrm{d}s\,\mathbf{1}_{
 \{\zeta_s-\ve< \tau_{-r}(W_s)<\zeta_s\}} = \z_{-r},$$
 where the last equality follows from \eqref{limit-exit}. This simple remark
 allows us to identify the process $(\mathcal{X}_r)_{r>0}$ with the exit measure process $(Z_r)_{r>0}$, and justifies
 the observations preceding Proposition \ref{identi-jump} in the introduction. Proposition \ref{identi-jump}
 itself then follows from Propositions \ref{discont} and \ref{identi-jump2}. Finally, Theorem \ref{main-intro}
 is a consequence of Theorem \ref{mainT} and the fact that the excursions $\tilde W^{*(v_i)}$
 can be written in the form $\xi_i\,\tilde W^{(v_i)}$, for $i=1,2,\ldots$.

\end{document}